\documentclass{amsart}
\usepackage{amsmath}
\usepackage{amssymb}
\usepackage{amsthm}
\usepackage{enumerate}
\usepackage[pdftex]{graphicx}
\usepackage{caption}
\theoremstyle{definition}
\newtheorem{definition}{Definition}[section]
\theoremstyle{plain}
\newtheorem{lemma}[definition]{Lemma}
\newtheorem{theorem}[definition]{Theorem}

\newtheorem{proposition}[definition]{Proposition}
\newtheorem{corollary}[definition]{Corollary}
\theoremstyle{remark}
\newtheorem{remark}[definition]{Remark}

\newtheorem{example}[definition]{Example}

\makeatletter
\@namedef{subjclassname@2020}{
  \textup{2020} Mathematics Subject Classification}
\makeatother

\newcommand{\myint}{\operatorname{int}}
\newcommand{\mysucc}{\operatorname{succ}}
\newcommand{\myindr}{\mathcal R_{\text{ind}}}

\begin{document}

\title[almost o-minimal structures and $\mathfrak X$-structures]{Almost o-minimal structures and $\mathfrak X$-structures}
\author[M. Fujita]{Masato Fujita}
\address{Department of Liberal Arts,
Japan Coast Guard Academy,
5-1 Wakaba-cho, Kure, Hiroshima 737-8512, Japan}
\email{fujita.masato.p34@kyoto-u.jp}

\begin{abstract}
We propose new structures called almost o-minimal structures and $\mathfrak X$-structures.
The former is a first-order expansion of a dense linear order without endpoints such that the intersection of a definable set with a bounded open interval is a finite union of points and open intervals.
%The latter is a variant of van den Dries and Miller's analytic geometric categories.
The latter is a variant of van den Dries and Miller's analytic geometric categories and Shiota's $\mathfrak X$-sets and $\mathfrak Y$-sets.
In them, the family of definable sets are closed only under proper projections unlike first-order structures.
We demonstrate that an $\mathfrak X$-expansion of an ordered divisible abelian group always contains an o-minimal expansion of an ordered group such that all bounded $\mathfrak X$-definable sets are definable in the structure.
%We also give a sufficient condition for an $\mathfrak X$-expansion of an ordered divisible abelian group being an $\mathfrak X$-expansion of an ordered real closed field.

Another contribution of this paper is a uniform local definable cell decomposition theorem for almost o-minimal expansions of ordered groups $\mathcal M=(M,<,0,+,\ldots)$.
Let $\{A_\lambda\}_{\lambda\in\Lambda}$ be a finite family of definable subsets of $M^{m+n}$.
Take an arbitrary positive element $R \in M$ and set $B=]-R,R[^n$.
Then, there exists a finite partition into definable sets 
\begin{equation*}
M^m \times B = X_1 \cup \ldots \cup X_k
\end{equation*}
such that $B=(X_1)_b \cup \ldots \cup (X_k)_b$ is a definable cell decomposition of $B$ for any $b \in M^m$ and either $X_i \cap A_\lambda = \emptyset$ or $X_i \subseteq A_\lambda$ for any $1 \leq i \leq k$ and $\lambda \in \Lambda$.
Here, the notation $S_b$ denotes the fiber of a definable subset $S$ of $M^{m+n}$ at $b \in M^m$.
We introduce the notion of multi-cells and demonstrate that any definable set is a finite union of multi-cells in the course of the proof of the above theorem.
\end{abstract}

\subjclass[2020]{Primary 03C64; Secondary 14P99}

\keywords{almost o-minimal structure, $\mathfrak X$-structure, semi-definable set}

\maketitle

\section{Introduction}\label{sec:intro}
O-minimal structures \cite{vdD, KPS,PS} have been studied model theoretically and geometrically.
An expansion of a dense linear order without endpoints $\mathcal M=(M,<,\ldots)$ is \textit{o-minimal} if any definable subset of $M$ is a finite union of points and open intervals.
Studies on o-minimal structures are too many to be presented here. 
One of main interests in studying o-minimal structures is their tame topology.
They possess various tame topological properties such as monotonicity theorem and definable cell decomposition theorem. 

An interesting question is what topological properties are remained when the definition of o-minimal structures is relaxed. 
In fact, many structures relaxing the definition of o-minimal structures have been proposed and their topological properties have been investigated.
Here is an incomplete list; weakly o-minimal structures \cite{MMS, W}, structures having o-minimal open core \cite{DMS, F}, d-minimal structures \cite{M2,T}, locally o-minimal structures \cite{TV, KTTT}, models of DCTC \cite{S} and uniformly locally o-minimal structures of the second kind \cite{Fuji}.
We propose a new relative of these structures named an \textit{almost o-minimal structure} in this paper.
Why do we propose a new structure though many structures have been already proposed?
We explain why.
The notation $\mathcal M$ denotes a structure and $M$ denotes its universe below.

Toffalori and Vozoris proposed a locally o-minimal structure \cite{TV}, which is defined by simply localizing the definition of an o-minimal structure.
An expansion of a dense linear order without endpoints $\mathcal M=(M,<,\ldots)$ is \textit{locally o-minimal} if, for any definable subset $X$ of $M$ and any point $x \in M$, there exists an open interval $I$ containing the point $x$ such that the intersection $I \cap X$ is a finite union of points and open intervals.
In spite of its similarity to the definition of o-minimal structures, a locally o-minimal structure does not enjoy the localized properties possessed by o-minimal structures.
Schoutens introduced a \textit{model of DCTC} generalizing a locally o-minimal expansion of an ordered field \cite{S}.
Roughly speaking, a model of DCTC is a locally o-minimal structure which is o-minimal at the infinities $\pm \infty$.
More precisely, for any set $X$ in $M$ definable in a model of DCTC, there exist $a,b \in M$ such that $X \cap \{x<a\}$ and $X \cap \{x>b\}$ are empty sets or open intervals. 
A locally o-minimal expansion of an ordered field and a model of DCTC possess several tame topological properties.
Readers who are interested in them should consult \cite{F,S, Fuji4}. 

The author have pursued another direction. 
His initial purpose is to find a necessary and sufficient condition for a locally o-minimal structure admitting a local definable cell decomposition \cite{Fuji}.
The answer was a uniformly locally o-minimal structure of the second kind when the structure is definably complete.
\begin{definition}\label{def:second}
We consider an expansion $\mathcal M=(M,<,\ldots)$ of a dense linear order without endpoints.
It is \textit{definably complete} if every definable subset of $M$ has both a supremum and an infimum in $M \cup \{ \pm \infty\}$ \cite{M}.
A definably complete expansion of an ordered group is divisible and abelian \cite[Proposition 2.2]{M}.

A locally o-minimal structure $\mathcal M=(M,<,\ldots)$ is a \textit{uniformly locally o-minimal structure of the second kind} if, for any positive integer $n$, any definable set $X \subseteq M^{n+1}$, $a \in M$ and $b \in M^n$, there exist an open interval $I$ containing the point $a$ and an open box $B$ containing $b$ such that the definable sets $X_y \cap I$ are finite unions of points and open intervals for all $y \in B$.
Here, $X_y$ denotes the fiber $\{x \in M\;|\; (y,x) \in X\}$.
When we can choose $B=M^n$, the structure $\mathcal M$ is called a \textit{uniformly locally o-minimal structure of the first kind}.
\end{definition}
We frequently consider definably complete uniformly locally o-minimal expansions of the second kind of ordered groups.
We simply call them \textit{DCULOAS structures}.
Their local tame topological properties have been clarified in a series of papers \cite{Fuji, Fuji3, Fuji4}.%, Fuji5}.

In many potential applications to other mathematical branches such as geometry and analysis, the universe is the set of reals $\mathbb R$.
The author also demonstrated that a locally o-minimal expansion of the ordered group of reals admits local definable cell decomposition better than a general definably complete uniformly locally o-minimal structure of the second kind in an unpublished paper \cite{Fuji2}, which is a special case of Theorem \ref{thm:udcd}.
A DCULOAS structure is not an excellent abstraction of locally o-minimal expansion of the ordered group of reals.
The significant difference between the real case and the general case is that any bounded definable set is a finite union of points and open intervals in the former, but it may not be in the latter. 
%In other word, the former is o-minimal on bounded intervals, but the latter is not.
This is the reason why we focus the following notion:
\begin{definition}
An expansion $\mathcal M=(M,<,\ldots)$ of densely linearly ordered set without endpoints is \textit{almost o-minimal} if any bounded definable set in $M$ is a finite union of points and open intervals.
\end{definition}
Note that an locally o-minimal expansion of the ordered set of reals $(\mathbb R,<)$ is almost o-minimal.
Roughly speaking, an almost o-minimal structure is o-minimal on bounded regions.
The notion of almost o-minimality is a complementary notion of DCTC in a sense.
A locally o-minimal structure is o-minimal if and only if it is simultaneously a model of DCTC and almost o-minimal as demonstrated in Proposition \ref{prop:almost1}.

The notion of subanalytic sets is another useful geometrical concept \cite{BM, H}.
A subset of $X$ of $\mathbb R^n$ is \textit{subanalytic} if each point of $\mathbb R^n$ has a neighborhood $U$ such that $X \cap U$ is a finite union of sets of the form $\operatorname{Im}(f_1) \setminus \operatorname{Im}(f_2)$, where $f_1$ and $f_2$ are proper real analytic maps from real analytic manifolds to $\mathbb R^n$.
The projection image of a subanalytic set is not necessarily subanalytic, but
its image under a proper projection is again subanalytic.
The family of subanalytic sets are not the family of sets definable in a first-order language because it is not closed under taking the projection image.

In \cite{vdDM}, van den Dries and Miller generalized the notion of subanalytic sets and proposed an \textit{analytic-geometric category} and clarified the relation between the analytic-geometric category of subanalytic sets and  an o-minimal structure which is called the restricted analytic field $\mathbb R_{\text{an}}$.
Shiota also proposed \textit{$\mathfrak X$-sets} and \textit{$\mathfrak Y$-sets} in \cite{Shiota}.
They are also generalization of subanalytic sets.
The family of sets `definable' in them is only closed under taking the image under a proper projection.
In addition, their underlying set is the set of reals $\mathbb R$.
We want to generalize their concepts when underlying set is a densely linearly ordered set without endpoints.
We propose the following structure generalizing Shiota's $\mathfrak X$-sets and $\mathfrak Y$-sets.

\begin{definition}\label{def:x}
Let $(M,<)$ be a densely linearly ordered set without endpoints.
A map $p$ from a subset $X$ of $M^m$ to $M^n$ is \textit{proper} if the inverse image $ p^{-1}(U)$ of an arbitrary bounded open box $U$ in $M^n$ is bounded.

An \textit{$\mathfrak X$-structure} is a triple $\mathcal X = (M,<,\mathcal S=\{\mathcal S_n\}_{n \in \mathbb N})$ of a densely linearly ordered set without endpoints $(M,<)$ and the families $\mathcal S_n$ of subsets in $M^n$ satisfying the following conditions:
\begin{enumerate}
\item[(1)] For all $x \in M$, the singletons $\{x\}$ belong to $S_1$. 
All open intervals also belong to $S_1$.
\item[(2)] The sets $\{(x,y) \in M^2\;|\; x= y\}$ and $\{(x,y) \in M^2\;|\; x< y\}$ belong to $S_2$.
\item[(3)] $\mathcal S_n$ is a boolean algebra and $M^n \in \mathcal S_n$;
\item[(4)] We have $X_1 \times X_2 \in \mathcal S_{m+n}$ whenever $X_1 \in \mathcal S_m$ and $X_2 \in \mathcal S_n$;
\item[(5)] For any permutation $\sigma$ of $\{1, \ldots, n\}$, the image $\widetilde{\sigma}(X)$ belongs to $\mathcal S_n$ when $X \in \mathcal S_n$ and the notation $\widetilde{\sigma}:M^n \rightarrow M^n$ denotes the map given by $\widetilde{\sigma}(x_1,\ldots, x_n)=(x_{\sigma(1)},\ldots,x_{\sigma(n)})$; 
\item[(6)] Let $\pi:M^n \rightarrow M^m$ be a coordinate projection and $X \in \mathcal S_n$ such that the restriction $\pi|_{X}$ of $\pi$ to $X$ is proper.
Then, the image $\pi(X)$ belongs to $\mathcal S_m$.
\item[(7)] The intersection $I \cap X$ is a finite union of points and open intervals when $X \in \mathcal S_1$ and $I$ is a bounded open interval.
\end{enumerate}
The set $M$ is called the \textit{universe} and the \textit{underlying set} of the $\mathfrak X$-structure $\mathcal X$.
A subset $X$ of $M^n$ is called \textit{$\mathfrak X$-definable} in $\mathcal X$ when $X$ is an element of $\mathcal S_n$.
A set $\mathfrak X$-definable in $\mathcal X$ is simply called $\mathfrak X$-definable when  $\mathcal X$ is clear from the context.
A map from a subset of $M^m$ to $M^n$ is \textit{$\mathfrak X$-definable} if its graph is $\mathfrak X$-definable.

When $(M,<,0,+)$ is an ordered divisible abelian group and the addition is $\mathfrak X$-definable, we call the $\mathfrak X$-structure an \textit{$\mathfrak X$-expansion of an ordered divisible abelian group}.
We define an \textit{$\mathfrak X$-expansion of an ordered real closed field} in the same manner.
\end{definition}
In Shiota's formulation, an $\mathfrak X$-set is locally a finite union of points and open intervals.
Here, we call that $X$ is locally a finite union of points and open intervals when, for any point $x \in \mathbb R$, there exists an open interval $I$ containing the point $x$ such that $I \cap X$ is a finite union of points and open intervals.
The formulation by van den Dries and Miller is similar.
If a subset $X$ of $\mathbb R$ is locally a finite union of points and open intervals, it satisfies the condition (7) in Definition \ref{def:x} because the closed bounded interval is compact in $\mathbb R$.
But, it is not true in a general densely linearly ordered set without endpoints $(M,<)$.
In Shiota's original formulation, we cannot deduce several good properties enjoyed by the $\mathfrak X$-structure defined in our formulation when the underlying set is a general $M$.

An almost o-minimal structure is an $\mathfrak X$-structure.
The following is another important example of $\mathfrak X$-structures.
\begin{definition}
Let $\mathcal R=(M,<,\ldots)$ be an o-minimal structure.
A subset $X$ of $M^n$ is \textit{semi-definable in $\mathcal R$} if the intersection $U \cap X$ is definable in $\mathcal R$ for any bounded open box $U$ in $M^n$. 
A map from a subset of $M^m$ to $M^n$ is \textit{semi-definable} if its graph is semi-definable.
The family $\mathcal S(\mathcal R)=\{\mathcal S(\mathcal R)_n\}_{n \in \mathbb N}$ of all semi-definable sets satisfies the conditions in Definition \ref{def:x}.
The $\mathfrak X$ structure $\mathfrak X(\mathcal R)=(M,<,\mathcal S(\mathcal R))$ is called the \textit{$\mathfrak X$-structure of semi-definable sets in $\mathcal R$}.
\end{definition} 

We study general $\mathfrak X$-structures in Section \ref{sec:x}.
The main theorems of this section are the structure theorems Theorem \ref{thm:in_omin} and Theorem \ref{thm:xstr}.
The former says that an $\mathfrak X$-expansion of an ordered divisible abelian group always contains an o-minimal expansion $\mathcal R$ of an ordered group such that all bounded $\mathfrak X$-definable sets are definable in the structure $\mathcal R$.
The latter gives a sufficient condition for an $\mathfrak X$-expansion of an ordered divisible abelian group being an $\mathfrak X$-expansion of an ordered real closed field.
The basic property of dimension of $\mathfrak X$-definable sets are also investigated in this section.

The $\mathfrak X$-structures of semi-definable sets in an o-minimal structure are studied in Section \ref{sec:semi-definable}.
The notion of semi-definable connectedness is introduced in this section.
The main theorem of this section is Theorem \ref{thm:connected}, which gives equivalent conditions for a semi-definable set to be semi-definably connected and also demonstrates the existence of semi-definably connected components.

Section \ref{sec:almost} is devoted for the study of almost o-minimal structures.
After we investigate the basic properties of almost o-minimal structures, we prove a uniform local definable cell decomposition theorem.
It is the last main theorem of this paper.
The definition of cells and the local definable cell decomposition theorem for a definably complete uniformly locally o-minimal structures of the second kind are as follows:

\begin{definition}[Definable cell decomposition]
Consider an expansion of dense linear order without endpoints $\mathcal M=(M,<,\ldots)$.
Let $(i_1, \ldots, i_n)$ be a sequence of zeros and ones of length $n$.
\textit{$(i_1, \ldots, i_n)$-cells} are definable subsets of $M^n$ defined inductively as follows:
\begin{itemize}
\item A $(0)$-cell is a point in $M$ and a $(1)$-cell is an open interval in $M$.
\item An $(i_1,\ldots,i_n,0)$-cell is the graph of a definable continuous function defined on an $(i_1,\ldots,i_n)$-cell.
An $(i_1,\ldots,i_n,1)$-cell is a definable set of the form $\{(x,y) \in C \times M\;|\; f(x)<y<g(x)\}$, where $C$ is an $(i_1,\ldots,i_n)$-cell and $f$ and $g$ are definable continuous functions defined on $C$ with $f<g$.
\end{itemize}
A \textit{cell} is an $(i_1, \ldots, i_n)$-cell for some sequence $(i_1, \ldots, i_n)$ of zeros and ones.
The sequence $(i_1, \ldots, i_n)$ is called the \textit{type} of an $(i_1, \ldots, i_n)$-cell.
An \textit{open cell} is a $(1,1, \ldots, 1)$-cell.
The dimension of an $(i_1, \ldots, i_n)$-cell is defined by $\sum_{j=1}^n i_j$.

We inductively define a \textit{definable cell decomposition} of an open box $B \subseteq M^n$.
For $n=1$, a definable cell decomposition of $B$ is a partition $B=\bigcup_{i=1}^m C_i$ into finite cells.
For $n>1$, a definable cell decomposition of $B$ is a partition $B=\bigcup_{i=1}^m C_i$ into finite cells such that $\pi(B)=\bigcup_{i=1}^m \pi(C_i)$ is a definable cell decomposition of $\pi(B)$, where $\pi:M^n \rightarrow M^{n-1}$ is the projection forgetting the last coordinate.
Consider a finite family $\{A_\lambda\}_{\lambda \in \Lambda}$ of definable subsets of $B$.
A \textit{definable cell decomposition of $B$ partitioning $\{A_\lambda\}_{\lambda \in \Lambda}$} is a definable cell decomposition of $B$ such that the definable sets $A_{\lambda}$ are unions of cells for all $\lambda \in \Lambda$.

\end{definition}

\begin{theorem}[Local definable cell decomposition theorem, {\cite[Theorem 4.2]{Fuji}}]\label{thm:dcd}
Consider a definably complete uniformly locally o-minimal structure of the second kind $\mathcal M=(M,<,\ldots)$.
Let $n$ be an arbitrary positive integer.
Let $\{A_\lambda\}_{\lambda\in\Lambda}$ be a finite family of definable subsets of $M^n$.
For any point $a \in M^n$, there exist an open box $B$ containing the point $a$ and a definable cell decomposition of $B$ partitioning the finite family $\{B \cap A_\lambda\;|\; \lambda \in \Lambda \text{ and }  B \cap A_\lambda \not= \emptyset\}$.
\end{theorem}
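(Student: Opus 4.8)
The plan is to argue by induction on $n$, adapting the classical inductive proof of cell decomposition in o-minimal structures to the local and uniform setting. As is standard, I would prove simultaneously two statements: $(C_n)$, the cell-decomposition assertion of the theorem, and $(F_n)$, asserting that for any definable function $f$ defined on a neighborhood of a point in $M^n$ there is a local definable cell decomposition on each cell of which the restriction of $f$ is continuous. For $n=1$, statement $(C_1)$ is immediate from local o-minimality: choose an open interval $I \ni a$ on which each $A_\lambda \cap I$ is a finite union of points and open intervals, then refine to a common partition of $I$ into finitely many points and open subintervals, which is a cell decomposition. Statement $(F_1)$ follows from the local monotonicity theorem available for definably complete locally o-minimal structures, after shrinking $I$ so that $f$ is continuous (indeed constant or strictly monotone) there.

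For the inductive step I would write $a=(a',a_n) \in M^{n-1}\times M$ and let $\pi\colon M^n \to M^{n-1}$ forget the last coordinate. The crucial input is uniform local o-minimality of the second kind: applied to each $A_\lambda$ it produces an open box $B' \ni a'$ and an open interval $I \ni a_n$ such that every fiber $(A_\lambda)_{x'} \cap I$ is a finite union of points and open intervals for all $x' \in B'$. After establishing a \emph{uniform} bound on the number of pieces over $B'$ — uniform local finiteness, which I would extract from definable completeness together with the second-kind hypothesis — the endpoints of these pieces, as $x'$ ranges over $B'$, are governed by finitely many definable boundary functions $x' \mapsto \zeta_j(x')$; these are definable precisely because the relevant suprema and infima exist by definable completeness.

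Next I would gather all these boundary functions, for all $\lambda$, together with the definable subsets of $M^{n-1}$ that record the combinatorial type of each fiber (how many boundary points lie in $I$ and in what order), and apply the inductive hypotheses $(C_{n-1})$ and $(F_{n-1})$ at the point $a'$. After possibly shrinking $B'$ this yields a local cell decomposition of $B'$ into cells $D_1,\ldots,D_r$ on each of which every relevant boundary function is continuous, the functions are totally ordered, and the combinatorial type of the fiber is constant. Over each $D_s$, the graphs of consecutive boundary functions and the open regions between them are exactly the $(i_1,\ldots,i_{n-1},0)$- and $(i_1,\ldots,i_{n-1},1)$-cells inside $D_s \times I$, and by construction each $A_\lambda$ is a union of them. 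Setting $B=B' \times I$ gives the desired decomposition, and $(F_n)$ is obtained by the same argument applied to the graph of $f$.

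The principal difficulty, and the genuine locus where the structure's hypotheses are needed, is establishing the uniform bound on the number of fiber pieces over $B'$ and producing continuous, definable boundary functions. Unlike the global o-minimal case, finiteness of each individual fiber does \emph{not} automatically yield a uniform bound, so this must be wrung from the second-kind uniformity together with definable completeness before boundary functions can even be defined coherently; rendering them continuous then relies on $(F_{n-1})$. A secondary point requiring care is that each successive shrinking of $B'$ must still deliver a bona fide cell decomposition compatible with the cells already constructed, so the restriction and refinement operations have to be arranged to preserve the cell structure rather than destroy it.
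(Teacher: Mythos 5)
Your proposal cannot be compared against a proof in this paper, because the paper gives none: Theorem \ref{thm:dcd} is quoted verbatim from \cite[Theorem 4.2]{Fuji}, and is used here as a black box. Measured against the argument of that cited paper (which is the local analogue of van den Dries's classical simultaneous induction), your architecture is the right one: induction on $n$, a companion piecewise-continuity statement, an application of the second-kind uniformity to obtain a common box $B'\ni a'$ and interval $I\ni a_n$, definable boundary functions whose existence rests on definable completeness, and a stacking of cells over a decomposition of $B'$ supplied by the inductive hypothesis.

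There is, however, a genuine gap, and it sits exactly where you flag ``the principal difficulty'': the uniform bound on the number of pieces of the fibers $(A_\lambda)_{x'}\cap I$ as $x'$ ranges over $B'$. You assert this ``uniform local finiteness'' can be ``wrung from the second-kind uniformity together with definable completeness,'' but you give no argument, and no short one exists. The second-kind hypothesis makes each fiber's boundary set finite on a common box, but bounds nothing; and definable completeness is not compactness, so shrinking $B'$ cannot by itself kill unboundedness. Without the bound, the sets $S_i=\{x'\in B'\;:\;|\operatorname{bd}((A_\lambda)_{x'}\cap I)|=i\}$ are each definable, but possibly infinitely many of them are nonempty, so the picture is not governed by finitely many boundary functions $\zeta_j$ and the inductive hypothesis $(C_{n-1})$ — which only partitions finite families — cannot be applied; the entire stacking step then has nothing to stand on. In the classical o-minimal proof this statement is the uniform finiteness assertion $(I_n)$ of van den Dries's three-fold induction $(I_n),(II_n),(III_n)$, and its proof consumes the lower-dimensional cell decomposition and piecewise-continuity statements in an essential, nontrivial way; in the present local, definably complete setting the corresponding lemma is precisely the main technical content of \cite{Fuji}. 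Your induction, carrying only $(C_n)$ and $(F_n)$, cannot produce it as a side effect: it must be added as a third, inductively proved statement with its own substantial argument. (A smaller point of the same nature: the local monotonicity theorem you invoke for $(F_1)$ is itself a nontrivial result about definably complete locally o-minimal structures and needs proof or citation.)
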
 

The above theorem says nothing about the relationship between decompositions at two distinct points.
When the considered structure is an almost o-minimal expansion of an ordered group, we can obtain the following uniform local definable cell decomposition theorem:
\begin{theorem}[Uniform local definable cell decomposition]\label{thm:main}
Consider an almost o-minimal expansion of an ordered group $\mathcal M=(M,<,0,+,\ldots)$.
Let $\{A_\lambda\}_{\lambda\in\Lambda}$ be a finite family of definable subsets of $M^{m+n}$.
Take an arbitrary positive element $R \in M$ and set $B=]-R,R[^n$.
Then, there exists a finite partition into definable sets 
\begin{equation*}
M^m \times B = X_1 \cup \ldots \cup X_k
\end{equation*}
such that $B=(X_1)_b \cup \ldots \cup (X_k)_b$ is a definable cell decomposition of $B$ for any $b \in M^m$ and either $X_i \cap A_\lambda = \emptyset$ or $X_i \subseteq A_\lambda$ for any $1 \leq i \leq k$ and $\lambda \in \Lambda$.
Furthermore, the type of the cell $(X_i)_b$ is independent of the choice of $b$ with $(X_i)_b \not= \emptyset$.
Here, the notation $S_b$ denotes the fiber of a definable subset $S$ of $M^{m+n}$ at $b \in M^m$.
\end{theorem}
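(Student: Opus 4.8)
The plan is to prove the theorem by induction on $n$, the dimension of the bounded box $B$, while simultaneously establishing the auxiliary structural statement announced in the abstract: every definable subset of $M^m\times B$ is a finite union of \emph{multi-cells}. By a multi-cell I mean a definable set $X\subseteq M^m\times B$ carrying a fixed type $(i_1,\dots,i_n)$ such that every nonempty fiber $X_b$ is an $(i_1,\dots,i_n)$-cell of $B$ whose boundary data are given by definable functions of $b$; thus a finite multi-cell partition of $M^m\times B$ restricts over each $b$ to a cell decomposition of $B$ of constant type, which is exactly the conclusion sought. Before starting I would record, from the basic properties of almost o-minimal structures established earlier, that $\mathcal M$ is in particular a definably complete uniformly locally o-minimal structure of the second kind, so that Theorem~\ref{thm:dcd} together with the attendant local monotonicity and dimension theory are available.

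For the base case $n=1$ I would fix $\lambda$ and note that, for each $b\in M^m$, the fiber $(A_\lambda)_b\cap B$ is a bounded definable subset of $M$ and hence, by almost o-minimality, a finite union of points and open intervals. The crucial point is \emph{global} uniform finiteness: the number of these points and intervals is bounded independently of $b\in M^m$. I would prove this by contradiction. If the fiberwise component count were unbounded, definable completeness together with a definable curve selection argument (available in these structures) would produce a definable curve $\gamma\colon\,]0,\varepsilon[\,\to M^m$ along which the count tends to infinity; then the definable planar set $\{(t,y)\in\,]0,\varepsilon[\,\times B : y\in (A_\lambda)_{\gamma(t)}\}$ is bounded in $M^2$, so a cell decomposition of it (Theorem~\ref{thm:dcd} globalised on the bounded box by almost o-minimality) bounds the number of components of its fibers, a contradiction. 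Granted this bound, the endpoints of the intervals depend definably on $b$, and partitioning $M^m$ according to the finitely many possible orderings and coincidence patterns of these endpoints yields the required finite multi-cell partition compatible with all the $A_\lambda$.

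For the inductive step I would single out the last coordinate and regard $M^m\times B'$ with $B'=\,]-R,R[^{n-1}$ as an enlarged parameter space. The base-case analysis, now with parameters ranging over $M^m\times B'$, produces finitely many definable boundary functions $\xi_1\le\cdots\le\xi_p$ on $M^m\times B'$ whose graphs and the bands between them are compatible with every $A_\lambda$. Invoking the inductive hypothesis in dimension $n-1$ for the family consisting of the domains of the $\xi_j$, their graphs, and the relevant projections of the $A_\lambda$ gives a finite multi-cell partition of $M^m\times B'$; after refining it, using local monotonicity, so that on each lower multi-cell the $\xi_j$ are continuous and their order and coincidences are constant, I would stack the graphs and the intervening bands to obtain cells in $M^m\times B$. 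By construction every nonempty fiber over a fixed lower multi-cell is a cell of one fixed type, so the resulting finite partition $\{X_i\}$ of $M^m\times B$ is a multi-cell partition compatible with $\{A_\lambda\}$, and its fibers over any $b$ form a cell decomposition of $B$ with types independent of $b$.

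The main obstacle, I expect, is precisely the global-in-$b$ uniform finiteness underlying the base case: the standard o-minimal mechanism, and even Theorem~\ref{thm:dcd}, deliver only \emph{local} bounds near each parameter, whereas the parameter space $M^m$ is unbounded, so the bound must be extracted from the genuinely global structure that almost o-minimality imposes on bounded sets. The curve-selection reduction above is the device I would rely on to convert the unbounded-parameter problem into a bounded planar one, where almost o-minimality applies directly. A secondary technical burden is to propagate continuity of the boundary functions and constancy of cell types coherently through the induction, which is handled by repeatedly refining the multi-cell partition in the parameter directions.
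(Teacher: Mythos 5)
Your overall skeleton---reduce the theorem to a uniform bound, independent of $b\in M^m$, on the number of connected components of the fibers $(A_\lambda)_b\cap B$; then extract definable endpoint functions, make them continuous via Proposition \ref{prop:olddim}(2), partition the parameter space by order, coincidence and containment patterns, and stack graphs and bands in an induction on $n$---is essentially the same as the paper's proof of Theorem \ref{thm:main}. The genuine gap is in the single step you yourself identify as the main obstacle: the uniform finiteness over the \emph{unbounded} parameter space. Your argument for it is a contradiction via ``definable curve selection,'' and that step does not work. Curve selection in this setting (Corollary \ref{cor:curve_selection}) produces a curve converging to an actual point $a\in M^m$ lying in the frontier of a definable set; it cannot see behaviour that degenerates only ``at infinity.'' And the bounded case is not where the difficulty lies: if the component count $c(b)$ were unbounded on a bounded box $U$, then $\bigcup_\lambda A_\lambda\cap (U\times B)$ would be a bounded definable set, hence definable in the o-minimal reduct $\mathcal R$ of Theorem \ref{thm:in_omin}, and o-minimal uniform finiteness already gives a contradiction with no curve needed. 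So the only case requiring work is when the sets $S_N=\{b\in M^m\;|\;c(b)\geq N\}$ are all unbounded, and precisely there your device is unavailable: the $S_N$ form a countable decreasing family with empty intersection which is \emph{not} uniformly definable (the index $N$ is a metatheoretic integer), so no compactness or first-order argument yields a curve threading through all of them; moreover, in a non-o-minimal almost o-minimal structure unbounded definable sets can be discrete (Corollary \ref{lem:almost2}), so an unbounded definable set need not contain any definable curve tending to infinity at all. The implication ``$c$ unbounded $\Rightarrow$ there exists a definable $\gamma:\,]0,\varepsilon[\,\rightarrow M^m$ with $c(\gamma(t))\to\infty$'' is therefore unjustified, and it is not a minor repair: it is essentially equivalent to the statement being proved.

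What the paper does at this point is quite different and constitutes the real content of Section \ref{sec:almost}. Uniform finiteness is the Uniformity Theorem (Theorem \ref{thm:uniform}), and it is deduced from the multi-cell partition theorem (Theorem \ref{thm:multi-cell}) together with representative sets of semi-definably connected components (Lemma \ref{lem:onept}). Note that the paper's multi-cells are not your ``definable families of cells of constant type'' (that notion merely restates the conclusion of Theorem \ref{thm:main}); they are defined through semi-definably connected components over the o-minimal reduct $\myindr(\mathcal M)$, and the force of Theorem \ref{thm:multi-cell} (proved via Lemmas \ref{lem:multi-cell-pre} and \ref{lem:multi-cell-pre2} and the connectedness theory of Section \ref{sec:semi-definable}) is that the fiberwise component count of a multi-cell is constant along each semi-definably connected component of its projection. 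This transfers the counting problem from the unbounded parameter directions to the bounded last coordinate: a representative set of components projects to a discrete definable subset of $]-R,R[$, which is finite \emph{by almost o-minimality}. That is where the uniform bound $K$ actually comes from, and none of this machinery appears in your proposal.
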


We introduce the terms and notations used in this paper.
When a first-order structure is fixed, the term `definable' means `definable in the structure with parameters.'
The notation $f|_A$ denotes the restriction of a map $f:X \rightarrow Y$ to a subset $A$ of $X$.
Consider a linearly ordered set without endpoints $(M,<)$.
An open interval is a nonempty set of the form $\{x \in M\;|\; a < x < b\}$ for some $a,b \in M \cup \{\pm \infty\}$.
It is denoted by $]a,b[$ in this paper.
The closed interval is defined similarly and denoted by $[a,b]$.
We use the notations $]a,b]$ and $[a,b[$ for half open intervals.
The set $M$ equips the order topology induced from the order $<$. 
The affine space $M^n$ equips the product topology of the order topology.
We consider these topologies unless otherwise stated.
An open box is the Cartesian product of open intervals.
For a topological space $T$ and its subset $A$, the notations $\overline{A}$, $\myint(A)$, $\partial A$ and $\operatorname{bd}(A)$ denote the closure, interior, frontier and boundary of $A$, respectively.
The notation $|S|$ denotes the cardinality of a set $S$.
It also denotes the absolute value of an element.
This abuse of notation will not confuse readers.

\section{Geometry of $\mathfrak X$-structures}\label{sec:x}
We study $\mathfrak X$-structures in this section.
\subsection{$\mathfrak X$-definable maps}
We first investigate $\mathfrak X$-definable maps.
Note that the domain of definition of an $\mathfrak X$-definable map is not necessarily $\mathfrak X$-definable.
We can easily get the following lemma.
\begin{lemma}\label{lem:restriction}
Consider an $\mathfrak X$-structure whose underlying set is $M$ and an $\mathfrak X$-definable map $\varphi:X \rightarrow M^n$.
Take an $\mathfrak X$-definable subset $Y$ of $X$.
The restriction $\varphi|_Y$ of $\varphi$ to $Y$ is $\mathfrak X$-definable. 
\end{lemma}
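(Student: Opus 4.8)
The plan is to reduce the statement to the closure axioms (3) and (4) of Definition~\ref{def:x} by writing the graph of the restriction as an intersection of two $\mathfrak X$-definable sets. Write $X \subseteq M^m$, so that $\varphi|_Y$ is a map from $Y \subseteq M^m$ to $M^n$, and recall that $\varphi$ being $\mathfrak X$-definable means, by definition, that its graph $\Gamma(\varphi) = \{(x,\varphi(x)) \;|\; x \in X\}$ belongs to $\mathcal S_{m+n}$.

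First I would record the set-theoretic identity
\begin{equation*}
\Gamma(\varphi|_Y) = \Gamma(\varphi) \cap (Y \times M^n),
\end{equation*}
which holds because a pair $(x,y)$ lies in the graph of the restriction exactly when $x \in Y$ and $y = \varphi(x)$, i.e. when $(x,y) \in \Gamma(\varphi)$ and the first block of coordinates lies in $Y$.

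Next I would verify that each factor on the right-hand side is $\mathfrak X$-definable. Since $Y$ is an $\mathfrak X$-definable subset of $X$, we have $Y \in \mathcal S_m$; and $M^n \in \mathcal S_n$ by axiom~(3). Axiom~(4) then gives $Y \times M^n \in \mathcal S_{m+n}$. Finally, $\mathcal S_{m+n}$ is a boolean algebra by axiom~(3), hence closed under finite intersection, so $\Gamma(\varphi|_Y) \in \mathcal S_{m+n}$. This is precisely the assertion that $\varphi|_Y$ is $\mathfrak X$-definable.

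The argument invokes neither the properness hypothesis in axiom~(6) nor any of the order axioms, relying only on closure under products and boolean operations; accordingly I do not expect any genuine obstacle. The sole point deserving a moment's attention is the choice to intersect with $Y \times M^n$ rather than with $Y$ alone, so that both the domain restriction and the full target dimension are accounted for within $M^{m+n}$.
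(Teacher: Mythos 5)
Your proof is correct, and it is precisely the standard argument the paper has in mind when it declares the lemma ``Easy'' and omits the proof: write $\Gamma(\varphi|_Y) = \Gamma(\varphi) \cap (Y \times M^n)$ and apply axioms (3) and (4) of Definition \ref{def:x}. Nothing further is needed.
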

\begin{proof}
Easy. We omit the proof.
\end{proof}

We investigate when the image and the inverse image of an $\mathfrak X$-definable set under an $\mathfrak X$-definable map are again $\mathfrak X$-definable.
\begin{lemma}\label{lem:image}
Consider an $\mathfrak X$-structure whose underlying set is $M$.
Let $X$ be an $\mathfrak X$-definable subset of $M^m$ and $\varphi:X \rightarrow M^n$ be an $\mathfrak X$-definable map.
The image $\varphi(X)$ is $\mathfrak X$-definable when $X$ is bounded or $\varphi$ is proper.
\end{lemma}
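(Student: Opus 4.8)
The plan is to realize $\varphi(X)$ as a coordinate projection of the graph of $\varphi$ and then invoke closure under proper projections, which is condition (6) of Definition \ref{def:x}. Let $\Gamma \subseteq M^{m+n}$ denote the graph of $\varphi$. Since $\varphi$ is $\mathfrak X$-definable, the set $\Gamma$ is $\mathfrak X$-definable by the very definition of an $\mathfrak X$-definable map. Let $\pi : M^{m+n} \rightarrow M^n$ be the coordinate projection onto the last $n$ coordinates. Then $\pi(\Gamma) = \varphi(X)$, so it suffices to show that the restriction $\pi|_\Gamma$ is proper in the sense of Definition \ref{def:x}; condition (6) will then immediately give that $\varphi(X)$ is $\mathfrak X$-definable.

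To verify properness, I would fix an arbitrary bounded open box $U \subseteq M^n$ and analyze the set $(\pi|_\Gamma)^{-1}(U) = \{(x,\varphi(x)) \mid x \in X,\ \varphi(x) \in U\}$. The last $n$ coordinates of every element of this set lie in $U$, hence are bounded, so the only issue is to bound the first $m$ coordinates, that is, the set $\varphi^{-1}(U) = \{x \in X \mid \varphi(x) \in U\}$. This is precisely where the two hypotheses enter. If $X$ is bounded, then $\varphi^{-1}(U) \subseteq X$ is bounded; if instead $\varphi$ is proper, then $\varphi^{-1}(U)$ is bounded directly by the definition of a proper map. In either case one has the inclusion $(\pi|_\Gamma)^{-1}(U) \subseteq \varphi^{-1}(U) \times U$, a product of two bounded sets, so $(\pi|_\Gamma)^{-1}(U)$ is bounded and $\pi|_\Gamma$ is proper. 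Applying condition (6) to $\Gamma$ then yields that $\pi(\Gamma) = \varphi(X)$ is $\mathfrak X$-definable, settling both cases simultaneously.

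I do not expect a genuine obstacle here. The single point that requires care is recognizing that both hypotheses are tailored to the \emph{same} requirement, namely that the fibers $\varphi^{-1}(U)$ over bounded boxes $U$ be bounded, and that this is exactly the input condition (6) demands of $\pi|_\Gamma$. The identification $\pi(\Gamma) = \varphi(X)$ and the choice of the forgetful coordinate projection are routine, and no use of the order-theoretic axioms (1), (2), (7) is needed beyond what is already packaged into the $\mathfrak X$-definability of $\Gamma$.
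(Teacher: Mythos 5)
Your proof is correct and follows exactly the paper's argument: realize $\varphi(X)$ as the image of the graph $\Gamma(\varphi)$ under the projection forgetting the first $m$ coordinates, and apply Definition \ref{def:x}(6) after checking that this restricted projection is proper. The only difference is that you spell out the properness verification (the inclusion $(\pi|_\Gamma)^{-1}(U) \subseteq \varphi^{-1}(U) \times U$ covering both hypotheses at once), which the paper dismisses as easy.
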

\begin{proof}
Consider the graph $\Gamma(\varphi)=\{(x,y) \in X \times M^n\;|\; y=\varphi(x)\}$.
Consider the projection forgetting the first $m$ coordinates.
The image $\varphi(X)$ is the projection image of the graph.
We can easily demonstrate that the restriction of the projection to the graph is proper.
\end{proof}

\begin{definition}
Let $(M,<)$ be a linearly ordered set without endpoints.
Let $X$ be a subset of $M^m$ and $f:X \rightarrow M^n$ be a map.
The map $f$ satisfies the \textit{bounded image condition} if the image $f(X \cap V)$ is bounded for any bounded open box $V$ of $M^m$.
\end{definition}

\begin{lemma}\label{lem:inverse}
Consider an $\mathfrak X$-structure whose underlying set is $M$.
Let $X$ and $Y$ be $\mathfrak X$-definable subsets of $M^m$ and $M^n$, respectively.
Let $\varphi:X \rightarrow M^n$ be an $\mathfrak X$-definable map.
The inverse image $\varphi^{-1}(Y)$ is $\mathfrak X$-definable when $Y$ is bounded or $\varphi$ satisfies the bounded image condition.
\end{lemma}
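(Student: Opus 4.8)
The plan is to realize $\varphi^{-1}(Y)$ as the image of an $\mathfrak X$-definable set under a proper coordinate projection, so that condition (6) of Definition \ref{def:x} applies. This is the exact dual of the strategy used for Lemma \ref{lem:image}: there the image was a proper projection of the graph, here the inverse image will be. First I would form the set $Z = \Gamma(\varphi) \cap (M^m \times Y)$, where $\Gamma(\varphi)$ denotes the graph of $\varphi$. This $Z$ is $\mathfrak X$-definable: $\Gamma(\varphi)$ is $\mathfrak X$-definable by the hypothesis that $\varphi$ is $\mathfrak X$-definable, the product $M^m \times Y$ is $\mathfrak X$-definable by conditions (3) and (4) (using $M^m \in \mathcal S_m$ and $Y \in \mathcal S_n$), and $\mathcal S_{m+n}$ is closed under intersection because it is a boolean algebra. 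Concretely, $Z = \{(x,\varphi(x)) \mid x \in \varphi^{-1}(Y)\}$.

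Next, letting $\pi : M^{m+n} \to M^m$ be the projection onto the first $m$ coordinates, I would observe that $\pi(Z) = \varphi^{-1}(Y)$. By condition (6) it then suffices to verify that the restriction $\pi|_Z$ is proper, that is, that $(\pi|_Z)^{-1}(U) = \{(x,\varphi(x)) \mid x \in \varphi^{-1}(Y) \cap U\}$ is bounded for every bounded open box $U \subseteq M^m$. The first $m$ coordinates of this set already lie in the bounded box $U$, so the whole task reduces to bounding the last $n$ coordinates, and each of the two hypotheses accomplishes exactly this. If $Y$ is bounded, then $\varphi(x) \in Y$ forces the last $n$ coordinates into the bounded set $Y$. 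If instead $\varphi$ satisfies the bounded image condition, then, using the inclusion $\varphi^{-1}(Y) \cap U \subseteq X \cap U$, the last $n$ coordinates lie in $\varphi(X \cap U)$, which is bounded by definition of the bounded image condition. In either case $(\pi|_Z)^{-1}(U)$ is bounded, so $\pi|_Z$ is proper and condition (6) yields $\varphi^{-1}(Y) = \pi(Z) \in \mathcal S_m$.

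I do not expect a genuine obstacle here, as the argument is short and parallels the proof of Lemma \ref{lem:image}. The only point demanding a little care is which hypothesis is used to bound the fibre: one must apply the bounded image condition to $X \cap U$, not to $\varphi^{-1}(Y) \cap U$ directly, and pass through the inclusion $\varphi^{-1}(Y) \cap U \subseteq X \cap U$. Beyond this, it is worth noting explicitly that \emph{both} stated sufficient conditions funnel through the single requirement that $\pi|_Z$ be proper, which clarifies why these two seemingly different hypotheses produce the same conclusion.
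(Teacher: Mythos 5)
Your proof is correct and is precisely the argument the paper intends: the paper omits the proof of this lemma, stating only that it is ``similar to that of Lemma \ref{lem:image}'', and your construction (intersecting the graph $\Gamma(\varphi)$ with $M^m \times Y$, projecting onto the first $m$ coordinates, and verifying properness of the restricted projection via either the boundedness of $Y$ or the bounded image condition applied to $X \cap U$) is exactly that dual argument carried out in full.
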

\begin{proof}
The proof is similar to that of Lemma \ref{lem:image}.
We omit the proof.
\end{proof}

\begin{corollary}\label{cor:semialg}
Consider an $\mathfrak X$-structure whose underlying set is $M$.
Let $\varphi:X \rightarrow M$ be an $\mathfrak X$-definable function satisfying the bounded image condition.
Take $c \in M$.
The sets $\{x \in X\;|\;\varphi(x)=c\}$,  $\{x \in X\;|\;\varphi(x)<c\}$ and $\{x \in X\;|\;\varphi(x)>c\}$ are $\mathfrak X$-definable.
\end{corollary}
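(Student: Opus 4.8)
The plan is to realize each of the three sets as the inverse image under $\varphi$ of a suitable $\mathfrak X$-definable subset of $M$, and then to appeal to Lemma \ref{lem:inverse}. Explicitly, I would record the identities
\begin{align*}
\{x \in X\;|\;\varphi(x)=c\} &=\varphi^{-1}(\{c\}),\\
\{x \in X\;|\;\varphi(x)<c\} &=\varphi^{-1}(]-\infty,c[),\\
\{x \in X\;|\;\varphi(x)>c\} &=\varphi^{-1}(]c,+\infty[).
\end{align*}
Everything then reduces to checking that each set on the right-hand side is the inverse image of an $\mathfrak X$-definable subset of $M$ under a map to which Lemma \ref{lem:inverse} applies.

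First I would verify that the three target sets inside $M$ are $\mathfrak X$-definable. The singleton $\{c\}$ is $\mathfrak X$-definable by condition (1) of Definition \ref{def:x}, and the two half-lines $]-\infty,c[$ and $]c,+\infty[$ are open intervals in the sense of this paper (with one endpoint equal to $\pm\infty$), so they too belong to $\mathcal S_1$ by condition (1). Thus in each case the set $Y$ appearing in Lemma \ref{lem:inverse} is $\mathfrak X$-definable.

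Next, since $\varphi$ satisfies the bounded image condition by hypothesis, Lemma \ref{lem:inverse} applies directly with $Y$ taken to be each of $\{c\}$, $]-\infty,c[$, and $]c,+\infty[$, and yields that the corresponding inverse image is $\mathfrak X$-definable. This finishes the argument.

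I do not anticipate a genuine obstacle, as the statement is a direct consequence of Lemma \ref{lem:inverse}; the only point worth flagging is why the hypothesis on $\varphi$ cannot be dropped. The two half-lines are \emph{unbounded}, so the ``$Y$ bounded'' alternative in Lemma \ref{lem:inverse} is unavailable for the strict-inequality sets, and it is precisely the bounded image condition that is needed (and assumed) to treat them. In the graph–projection mechanism underlying Lemma \ref{lem:inverse}, this is exactly what guarantees that the projection of $\Gamma(\varphi)\cap(M^m\times Y)$ onto the first $m$ coordinates is proper: for a bounded open box $V$, the image $\varphi(X\cap V)$ is bounded, which controls the last coordinate of the intersection and hence makes the coordinate projection of condition (6) applicable.
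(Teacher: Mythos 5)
Your proof is correct and takes essentially the same route as the paper: both arguments realize the three sets as the inverse images of $\{c\}$, $]-\infty,c[$ and $]c,\infty[$ under $\varphi$ and then invoke Lemma \ref{lem:inverse}, using the bounded image condition since the half-lines are unbounded. Your closing remark explaining why the bounded image hypothesis cannot be dropped is a sound elaboration of what the paper leaves implicit.
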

\begin{proof}
The sets given in the corollary are the inverse images of $\{c\}$, $]-\infty,c[$ and $]c,\infty[$ under the function $\varphi$.
The corollary follows from Lemma \ref{lem:inverse}.
\end{proof}

\begin{example}\label{ex:x2}
Consider a definably complete structure $\mathcal M=(M,<,\ldots)$.
The image of a definable closed and bounded set under a definable continuous map $f$ is again definable, closed and bounded by \cite[Proposition 1.10]{M}.
It means that the map $f$ satisfies the bounded image condition in this case.
It is not true in an $\mathfrak X$-structure.

The ordered field $(\mathbb R_{\text{alg}},<,+,\cdot,0,1)$ of the real numbers algebraic over $\mathbb Q$ is an ordered real closed field and the induced structure is o-minimal.
In particular, the structure is definably complete.
Consider the $\mathfrak X$-structure of semi-definable sets in this o-minimal structure.
The set of positive integers $\mathbb N$ is semi-definable.
Take $a_n,b_n \in \mathbb Q$ so that $a_n < a_{n+1} < \pi < b_{n+1} < b_n$ for all $n \in \mathbb N$ and $\lim_{n \to \infty}a_n = \lim_{n \to \infty}b_n = \pi$.
Here, $\pi$ denotes the pi $=3.14\ldots$.
We define a semi-definable function $f$ on $[a_1,b_1]$.
The graph of the restriction of $f$ to $[a_i,a_{i+1}]$ is the segment connecting the points $(a_i,i)$ and $(a_{i+1},i+1)$ for any $i \in \mathbb N$.
We define the restriction of $f$ to $[b_i,b_{i+1}]$ in the same manner.
The function $f$ is $\mathfrak X$-definable and continuous, but its image is not bounded.
\end{example}

The composition of two $\mathfrak X$-definable map is not necessarily $\mathfrak X$-definable.
We find a sufficient condition for the composition being $\mathfrak X$-definable.

\begin{lemma}\label{lem:composition}
Consider an $\mathfrak X$-structure.
Let $\varphi:X \rightarrow Y$ and $\psi: Y \rightarrow Z$ be two $\mathfrak X$-definable maps.
The composition $\psi \circ \varphi$ is $\mathfrak X$-definable if $\psi$ is proper or $\varphi$ satisfies the bounded image condition.
\end{lemma}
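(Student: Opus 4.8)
The plan is to prove that $\psi \circ \varphi$ is $\mathfrak X$-definable by realizing it as a projection of a suitable fiber product of the two graphs and checking that the relevant projection is proper, so that condition (6) of Definition \ref{def:x} applies. Concretely, let $X \subseteq M^m$, $Y \subseteq M^n$, $Z \subseteq M^p$, and consider the graphs $\Gamma(\varphi) \subseteq M^{m+n}$ and $\Gamma(\psi) \subseteq M^{n+p}$, both of which are $\mathfrak X$-definable by hypothesis. First I would form the set
\begin{equation*}
W = \{(x,y,z) \in M^{m+n+p} \;|\; (x,y) \in \Gamma(\varphi) \text{ and } (y,z) \in \Gamma(\psi)\},
\end{equation*}
which is $\mathfrak X$-definable: using the product and permutation closure (conditions (4) and (5)), the condition $(x,y) \in \Gamma(\varphi)$ cuts out $\Gamma(\varphi) \times M^p$ and the condition $(y,z) \in \Gamma(\psi)$ cuts out $M^m \times \Gamma(\psi)$ after a coordinate reordering, and $W$ is their intersection, which lies in $\mathcal S_{m+n+p}$ since $\mathcal S_{m+n+p}$ is a boolean algebra (condition (3)). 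The graph $\Gamma(\psi \circ \varphi) \subseteq M^{m+p}$ is then exactly the image of $W$ under the coordinate projection $\pi : M^{m+n+p} \rightarrow M^{m+p}$ forgetting the middle $n$ coordinates. Thus it suffices to show that $\pi|_W$ is proper.

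The main obstacle, and the only place where the two alternative hypotheses enter, is verifying properness of $\pi|_W$. Following Definition \ref{def:x}, I must show that for any bounded open box $U \subseteq M^{m+p}$ the set $(\pi|_W)^{-1}(U)$ is bounded. Write $U = U_1 \times U_3$ with $U_1 \subseteq M^m$ and $U_3 \subseteq M^p$ bounded open boxes. A point $(x,y,z) \in (\pi|_W)^{-1}(U)$ satisfies $x \in U_1$, $y = \varphi(x)$, and $z = \psi(y) \in U_3$, so the $x$- and $z$-coordinates are automatically confined to the bounded sets $U_1$ and $U_3$; the entire difficulty is in bounding the middle coordinate $y$. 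Here the two cases diverge. If $\varphi$ satisfies the bounded image condition, then $y = \varphi(x)$ ranges over $\varphi(X \cap U_1)$, which is bounded by definition, so $y$ lies in a bounded region and we are done. If instead $\psi$ is proper, then $y$ is constrained by the requirement $\psi(y) \in U_3$, i.e. $y \in \psi^{-1}(U_3)$; properness of $\psi$ tells us precisely that this inverse image of a bounded open box is bounded, which again confines $y$.

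In either case $(\pi|_W)^{-1}(U)$ is contained in a bounded box, hence bounded, so $\pi|_W$ is proper and condition (6) yields that $\Gamma(\psi \circ \varphi) = \pi(W)$ is $\mathfrak X$-definable; by the definition of $\mathfrak X$-definable map this means $\psi \circ \varphi$ is $\mathfrak X$-definable. I expect no genuinely hard step beyond this case analysis: the construction of $W$ is routine boolean-algebra and permutation bookkeeping, and the properness verification reduces, in each of the two hypotheses, to a one-line application of the relevant definition. The one point demanding care is ensuring that the bounded open box $U$ in $M^{m+p}$ really does factor as $U_1 \times U_3$ so that the constraints on $x$ and $z$ decouple cleanly from the constraint on $y$; since an open box is by definition a Cartesian product of open intervals, this factorization is immediate.
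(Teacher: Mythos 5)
Your proof is correct and follows essentially the same route as the paper: both form the fiber product of the two graphs (the paper calls it $A$, with the middle coordinates moved to the last position; you keep them in the middle), realize $\Gamma(\psi\circ\varphi)$ as its image under a coordinate projection, and verify properness of that projection by bounding the middle coordinate via $\psi^{-1}(U_3)$ when $\psi$ is proper, or via $\varphi(X\cap U_1)$ when $\varphi$ satisfies the bounded image condition. The only (cosmetic) difference is the coordinate ordering, which your appeal to permutation closure handles just as the paper does.
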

\begin{proof}
Let $M$ be the underlying set of the $\mathfrak X$-structure.
Let $M^l$, $M^m$ and $M^n$ be the ambient spaces of $X$, $Y$ and $Z$, respectively.
Consider the set $A=\{(x,y,z) \in X \times Z \times Y\;|\;z=\varphi(x),\ y=\psi(z)\}$.
It is $\mathfrak X$-definable by Definition \ref{def:x}(3), (4) and (5).
Let $\pi:M^{l+m+n} \rightarrow M^{l+n}$ be the projection forgetting the last $m$ coordinates.
The graph of the composition $\psi \circ \varphi$ is the image of $A$ under the projection $\pi$.
If the restriction of $\pi$ to $A$ is proper, the graph is $\mathfrak X$-definable by Definition \ref{def:x}(6).
We have only to demonstrate that the restriction is proper.

Take a bounded open box $U$ in $M^l$ and a bounded open box $W$ in $M^n$.
We show that $C=\pi^{-1}(U \times W) \cap A$ is bounded.
When $\psi$ is proper, the inverse image $\psi^{-1}(W)$ is bounded.
The set $C$ is contained in $U \times W \times \psi^{-1}(W)$, and it is bounded.
When $\varphi$ satisfies the bounded image condition, the set $\varphi(X \cap U)$ is bounded.
The set $C$ is contained in $U \times W \times \varphi(X \cap U)$, and it is bounded.
\end{proof}

The following lemma is easy to prove.
The proofs are left to readers.
\begin{lemma}\label{lem:bounded}
The following assertions hold true.
\begin{enumerate}
\item[(1)] Consider an ordered group.
The addition satisfies the bounded image condition.
The addition of a constant is proper and satisfies the bounded image condition.
\item[(2)] Consider a divisible abelian group.
Multiplication by a rational constant is proper and satisfies the bounded image condition.
\item[(3)] Consider an ordered field. 
The multiplication satisfies the bounded image condition.
The multiplication by a constant is proper and satisfies the bounded image condition.
\item[(4)] Consider an $\mathfrak X$-structure and let $\varphi:X \rightarrow Y$ and $\psi: Y \rightarrow Z$ be two $\mathfrak X$-definable maps.
The composition $\psi \circ \varphi$ is proper if both $\varphi$ and $\psi$ are proper.
\item[(5)] Let $\varphi$ and $\psi$ be as in (4).
The composition $\psi \circ \varphi$ satisfies the bounded image condition if both $\varphi$ and $\psi$ satisfy the bounded image condition.
\end{enumerate}
\end{lemma}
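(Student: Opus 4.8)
The plan is to treat the five assertions in two groups. Assertions (1)--(3) concern the arithmetic operations and are verified by direct computation using the compatibility of the order with the group (resp. field) structure, while assertions (4) and (5) are purely formal, following from the set-theoretic behaviour of images and inverse images under composition together with one elementary observation about bounded sets.

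For the arithmetic operations I would argue as follows. For the bounded image condition of addition $+\colon M^2\to M$, given a bounded open box $V=\,]a_1,b_1[\,\times\,]a_2,b_2[$, order compatibility yields $a_1+a_2<x+y<b_1+b_2$ for every $(x,y)\in V$, so the image of $V$ under $+$ is contained in $]a_1+a_2,b_1+b_2[$ and is bounded. The translation $x\mapsto x+c$ is an order isomorphism with inverse $x\mapsto x-c$, so it carries bounded intervals to bounded intervals in both directions, which gives simultaneously properness and the bounded image condition. For (2), if $q\neq 0$ then multiplication by $q$ on a divisible abelian group is again an order isomorphism (order-preserving if $q>0$, order-reversing if $q<0$) with inverse multiplication by $q^{-1}$, so the same reasoning applies; I would note in passing that the degenerate case $q=0$ gives the constant map $0$, which satisfies the bounded image condition trivially but is not proper, so the statement is understood for nonzero $q$. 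For (3), given a bounded open box $V=\,]a_1,b_1[\,\times\,]a_2,b_2[$ and $(x,y)\in V$, setting $c_i=\max(|a_i|,|b_i|)$ yields $|xy|=|x|\,|y|<c_1c_2$, so the multiplication has bounded image; multiplication by a nonzero constant is once more an order isomorphism and is handled exactly as the translation in (1).

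For (4) and (5) the key point is that, although properness and the bounded image condition are phrased in terms of bounded open \emph{boxes}, composition naturally produces bounded \emph{sets} that need not be boxes. The remedy is the elementary fact that any bounded subset $S\subseteq M^m$ is contained in a bounded open box, obtained by taking the product of intervals bounding each coordinate. For (4), given a bounded open box $U$ in the target ambient space of $\psi$, properness of $\psi$ makes $\psi^{-1}(U)$ bounded, hence contained in a bounded open box $U'$; then $(\psi\circ\varphi)^{-1}(U)=\varphi^{-1}(\psi^{-1}(U))\subseteq\varphi^{-1}(U')$, which is bounded by properness of $\varphi$. For (5), given a bounded open box $V$ in the domain ambient space of $\varphi$, the set $\varphi(X\cap V)$ is bounded by the bounded image condition for $\varphi$, hence contained in a bounded open box $V'$; then $(\psi\circ\varphi)(X\cap V)\subseteq\psi(Y\cap V')$ is bounded by the bounded image condition for $\psi$. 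I do not anticipate any genuine obstacle, consistent with the lemma being elementary; the only point requiring care is precisely this reduction from a bounded set to a bounding box, which is what allows the box-based definitions to compose correctly.
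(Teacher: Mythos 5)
Your proof is correct and complete; the paper itself leaves this lemma unproved (``the proofs are left to readers''), and your direct verification---order-isomorphism arguments for (1)--(3), and for (4)--(5) the reduction of an arbitrary bounded set to a bounding open box (which uses that $M$ has no endpoints, so strict bounds always exist)---is exactly the elementary argument intended. Your side remark that the zero constant must be excluded in (2) and (3), since the constant map $0$ satisfies the bounded image condition but is not proper, is a genuine if minor correction to the lemma as stated.
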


\begin{corollary}\label{cor:linear}
Consider an $\mathfrak X$-expansion of an ordered divisible abelian group whose underlying set is $M$.
Consider a linear function $l(\overline{x})=\sum_{i=1}^n q_ix_i +c$ with $q_i \in \mathbb Q$ for all $1 \leq i \leq n$ and $c \in M$, where $\overline{x}=(x_1,\ldots, x_n)$.
The function $l(\overline{x})$ is $\mathfrak X$-definable and the sets of the form $\{\overline{x} \in M^n\;|\; l(\overline{x}) * 0\}$ for $* \in \{=,<,>\}$ are $\mathfrak X$-definable.
\end{corollary}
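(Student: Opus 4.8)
The plan is to build the linear function $l$ as a composition of $\mathfrak X$-definable maps each of which satisfies the bounded image condition, and then read off the three level sets from Corollary \ref{cor:semialg}. The essential point — and the reason one cannot simply invoke the slogan that definability is preserved under composition and images — is that in an $\mathfrak X$-structure images and compositions are controlled only under properness or the bounded image condition (Lemmas \ref{lem:image}, \ref{lem:inverse}, \ref{lem:composition}). So I would carry the bounded image condition as an invariant through every construction step, which is precisely what Lemma \ref{lem:bounded} is designed to permit, and it is the base case of this bookkeeping that requires the most care.

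First I would bootstrap multiplication by rationals out of the $\mathfrak X$-definable addition. For a positive integer $k$ I claim the map $a_k\colon x\mapsto kx$ is $\mathfrak X$-definable and satisfies the bounded image condition, by induction on $k$: the base case $a_1$ is the diagonal, which is $\mathfrak X$-definable by Definition \ref{def:x}(2), and the inductive step writes $a_{k+1}$ as the composition of $x\mapsto(a_k(x),x)$ with addition, both of which satisfy the bounded image condition, so Lemma \ref{lem:composition} and Lemma \ref{lem:bounded}(5) apply (the intermediate map is $\mathfrak X$-definable by forming products and intersections of $\Gamma(a_k)$ with the diagonal via Definition \ref{def:x}(3),(4),(5)). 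Negation $x\mapsto -x$ is $\mathfrak X$-definable because its graph $\{(x,y)\mid x+y=0\}$ is the zero set of the addition function, which satisfies the bounded image condition by Lemma \ref{lem:bounded}(1), so Corollary \ref{cor:semialg} applies. Division by a positive integer $m$ is handled the same way: its graph $\{(x,y)\mid my-x=0\}$ is the zero set of the $\mathfrak X$-definable, bounded-image function $(x,y)\mapsto a_m(y)-x$, so Corollary \ref{cor:semialg} again yields $\mathfrak X$-definability, and Lemma \ref{lem:bounded}(2) supplies the bounded image condition. Composing these (with negation when the numerator is negative) shows that multiplication $\mu_q\colon x\mapsto qx$ by an arbitrary rational $q$ is $\mathfrak X$-definable and satisfies the bounded image condition.

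Next I would assemble $l$ itself. I form the coordinatewise map $g\colon M^n\to M^n$, $\overline{x}\mapsto(q_1x_1,\ldots,q_nx_n)$, from the individual $\mu_{q_i}$ using products and coordinate permutations (Definition \ref{def:x}(4),(5)); it satisfies the bounded image condition by Lemma \ref{lem:bounded}(5). I compose $g$ with the iterated-sum map $M^n\to M$, $\overline{y}\mapsto y_1+\cdots+y_n$, which is again $\mathfrak X$-definable with bounded image by an induction from addition, and finally with translation by $c$, which is proper and satisfies the bounded image condition by Lemma \ref{lem:bounded}(1). By Lemma \ref{lem:composition} and Lemma \ref{lem:bounded}(5) the resulting map $l$ is $\mathfrak X$-definable and satisfies the bounded image condition.

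Finally, the three sets $\{\overline{x}\in M^n\mid l(\overline{x})*0\}$ for $*\in\{=,<,>\}$ are exactly the sets produced by Corollary \ref{cor:semialg} applied to the function $l$ and the constant $c=0$, which is legitimate precisely because $l$ satisfies the bounded image condition. The main obstacle is thus conceptual rather than computational: one must resist treating $\mathfrak X$-definability as ordinary first-order definability and instead verify, at each composition, that the bounded image condition persists, so that Lemma \ref{lem:composition} and Corollary \ref{cor:semialg} remain applicable; the delicate stage is the base case, where integer multiplication must be extracted from addition while keeping this invariant intact.
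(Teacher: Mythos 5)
Your proposal is correct and follows essentially the same route as the paper: the paper's proof likewise establishes that $l$ is $\mathfrak X$-definable and satisfies the bounded image condition via Lemma \ref{lem:composition} and Lemma \ref{lem:bounded}, and then obtains the three level sets from Corollary \ref{cor:semialg}. The only difference is one of detail: you explicitly bootstrap $\mathfrak X$-definability of multiplication by rationals (integer multiples by induction, negation and division by integers as zero sets via Corollary \ref{cor:semialg}), a verification the paper leaves implicit in its citation of Lemma \ref{lem:bounded}(2).
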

\begin{proof}
The function $l(\overline{x})$ is $\mathfrak X$-definable and satisfies the bounded image condition by Lemma \ref{lem:composition} and Lemma \ref{lem:bounded}.
The sets given in the corollary are $\mathfrak X$-definable by Corollary \ref{cor:semialg}.
\end{proof}

\subsection{O-minimal structure contained in $\mathfrak X$-structure}
Any $\mathfrak X$-structure has an o-minimal structure $\mathcal R$ such that any bounded $\mathfrak X$-definable set is definable in the o-minimal structure $\mathcal R$.
\begin{lemma}\label{lem:in_omin}
Consider an $\mathfrak X$-structure whose underlying set is $M$.
There exists an o-minimal structure $\mathcal R$ having the same underlying set and satisfying the following conditions:
\begin{enumerate}
\item[(i)] Any set definable in $\mathcal R$ is $\mathfrak X$-definable.
\item[(ii)] Any bounded $\mathfrak X$-definable set is definable in $\mathcal R$.
\end{enumerate}
\end{lemma}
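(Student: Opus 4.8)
The plan is to build $\mathcal R$ as the smallest first-order expansion of $(M,<)$ in which every bounded $\mathfrak X$-definable set is definable; concretely I would take the language consisting of $<$ together with an $n$-ary relation symbol for each bounded $\mathfrak X$-definable subset of $M^n$, interpreted in the obvious way, and let $\mathcal R$ be the resulting structure. Condition (ii) is then immediate, since every bounded $\mathfrak X$-definable set is atomic in $\mathcal R$. All the work goes into (i) and into o-minimality, and both reduce to understanding how the $\mathfrak X$-definable sets behave under the two operations that generate $\mathcal R$ from its basic relations: Boolean combinations and coordinate projections (existential quantification).

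Boolean combinations, products and permutations cause no trouble: by Definition \ref{def:x}(3),(4),(5) the $\mathfrak X$-definable sets are closed under all of these. The crucial observation for projections is that, although an $\mathfrak X$-structure is closed only under \emph{proper} projections, a coordinate projection restricted to a bounded set is automatically proper: if $S\subseteq M^{n+1}$ is bounded and $\pi\colon M^{n+1}\to M^n$ is a coordinate projection, then for any bounded open box $U$ the set $(\pi|_S)^{-1}(U)\subseteq S$ is bounded. Hence by Definition \ref{def:x}(6) the family of bounded $\mathfrak X$-definable sets is closed under arbitrary coordinate projections, as well as under Boolean operations performed inside a fixed box and under products. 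This is the engine that will let bounded existential quantifiers be eliminated while staying inside the bounded $\mathfrak X$-definable world.

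Using condition (7) as the one-variable base case, I would first establish a cell-decomposition theorem for bounded $\mathfrak X$-definable sets relative to each bounded box $]-R,R[^n$: any finite family of such sets admits a compatible partition into $\mathfrak X$-definable cells, the cell data being $\mathfrak X$-definable functions. The base case is exactly condition (7), and the inductive step runs the standard o-minimal cell-decomposition argument, with the closure of bounded $\mathfrak X$-definable sets under projection playing the role that closure under existential quantification plays in a genuine structure. I would then prove (i) and o-minimality together by induction on the number of variables. The point is that in an $\mathcal R$-formula each quantifier $\exists y$ splits into a contribution from witnesses that, relative to the bounded predicates occurring, lie in a bounded $\mathfrak X$-definable set — eliminated by the closure and cell-decomposition results above — and a contribution coming from pure order conditions on $y$ (for instance $y$ lying in an unbounded tail of a cell), which produces only points, rays and finite unions thereof from Definition \ref{def:x}(1),(2). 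Consequently every $\mathcal R$-definable set is bounded $\mathfrak X$-definable on each bounded box, and in particular every $\mathcal R$-definable subset of $M$ is a finite union of points and open intervals on each bounded box; controlling its finitely many germs toward $\pm\infty$ then upgrades this local description to a global one, yielding o-minimality.

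The main obstacle is precisely this control of definable sets toward infinity. The family of all $\mathfrak X$-definable sets is \emph{not} o-minimal — for example $\mathbb N$ is semi-definable in the structure of Example \ref{ex:x2} yet is not a finite union of points and intervals — so the argument must guarantee that the sets one actually builds inside $\mathcal R$ never acquire such wild behaviour at infinity, i.e.\ that no globally discrete or otherwise non-tame subset of $M$ arises from bounded predicates through Boolean operations and projections. Making rigorous the claim that every quantifier in an $\mathcal R$-formula is, up to order conditions, effectively bounded — so that $\mathcal R$ genuinely lands in both the o-minimal and the $\mathfrak X$-definable worlds — is the heart of the proof, and it is exactly here that restricting the generators to \emph{bounded} $\mathfrak X$-definable sets, rather than all of them, is indispensable.
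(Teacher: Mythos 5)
Your construction of $\mathcal R$ is exactly the paper's (the language $<$ together with a predicate $P_X$ for each bounded $\mathfrak X$-definable set), condition (ii) is indeed immediate, and your key observation --- that a coordinate projection restricted to a bounded set is automatically proper, so the bounded $\mathfrak X$-definable sets are closed under arbitrary coordinate projections --- is also the engine of the paper's proof. But the proposal stops short of the actual argument. Everything hinges on your claim that every $\mathcal R$-formula is ``up to order conditions, effectively bounded,'' and you explicitly leave making this rigorous as ``the heart of the proof''; that claim is not a detail, it \emph{is} the lemma. The paper proves it as a syntactic normal form, by induction on formula complexity: every $L$-formula with parameters is equivalent in $\operatorname{Th}(\mathcal R)$ to a disjunction of a formula $P_X(\overline{x})$ (with $X$ bounded $\mathfrak X$-definable) and finitely many \emph{semi-simple} formulas, i.e.\ finite conjunctions of atomic order formulas $x_i \ast c$ and $x_i \ast x_j$ with $\ast \in \{=,<,>\}$. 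Two mechanisms make this induction go through, and both are absent from your sketch: for negation, one picks a box $]a,b[^n \supseteq X$ and rewrites $\neg P_X$ as $P_{]a,b[^n \setminus X} \vee \psi'$, where $\psi'$ is a disjunction of semi-simple formulas defining the complement of the box, so that negation never creates an unbounded set that is not order-definable; for the existential quantifier, projections of bounded $\mathfrak X$-definable sets stay bounded $\mathfrak X$-definable (your observation) and projections of semi-simple sets are again semi-simple.

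Without this normal form your final step fails. Knowing that every $\mathcal R$-definable subset of $M$ is a finite union of points and open intervals \emph{on each bounded box} does not give o-minimality: an unbounded discrete set has exactly this property (this is precisely the difference between almost o-minimality and o-minimality around which the whole paper is built), and ``controlling its finitely many germs toward $\pm\infty$'' presupposes that there \emph{are} only finitely many germs, which is exactly what the unproven claim must deliver. In the paper this finiteness at infinity comes for free from the normal form: the unbounded part of any definable set is a finite union of semi-simple sets, and a semi-simple subset of $M$ is a point or an interval. Incidentally, the cell-decomposition theorem for bounded $\mathfrak X$-definable sets that you propose as a first step is not needed for this lemma (the paper invokes cell decomposition only afterwards, once $\mathcal R$ is known to be o-minimal); if you do want it, the cleaner route is to note that the $\mathfrak X$-definable subsets of $]-R,R[^n$, $n \geq 1$, already form an o-minimal structure with universe $]-R,R[$, rather than to re-run the o-minimal cell-decomposition argument by hand.
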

\begin{proof}
For any bounded $\mathfrak X$-definable set $X$, we define the predicate $P_X$ and interpret it naturally.
The notation $\mathcal S_{\text{bdd}}$ denote the set of all bounded $\mathfrak X$-sets. 
Consider the language $L=(<,(P_X)_{X \in \mathcal S_{\text{bdd}}})$.
The set $M$ is naturally the underlying set of an $L$-structure $\mathcal R$.
The structure $\mathcal R$ obviously satisfies the condition (ii).
We demonstrate that $\mathcal R$ is an o-minimal structure satisfying the condition (i).

We need a preparation.
An $L$-formula $\phi(\overline{x})$ with parameters in $M$ is called \textit{simple} when it is one of the following formula:
\begin{align*}
& x_i =c,\  x_i <c,\  x_i>c, \ x_i=x_j,\ x_i<x_j \text{ and }x_i > x_j\text{,}
\end{align*}
where $\overline{x}=(x_1,\ldots, x_n)$, $c \in M$ and $1 \leq i <j \leq n$.
An $L$-formula $\phi(\overline{x})$ with parameters in $M$ is \textit{semi-simple} if it is a finite conjunction of simple formulas.
A subset $X$ in $M^n$ definable in $\mathcal R$ is \text{semi-simple} if it is defined by a semi-simple formula. 
An open box is semi-simple and the complement of an open box is a finite union of semi-simple sets.
We demonstrate the following claim:
\medskip

\textbf{Claim.} Any $L$-formula $\phi(\overline{x})$ with parameters in $M$ is equivalent to either a finite disjunction of semi-simple formulas, a formula of the form $P_X(\overline{x})$ or their disjunction in $\operatorname{Th}(\mathcal R)$. 
Here, the notation $\operatorname{Th}(\mathcal R)$ denotes the set of all the $L$-sentences valid in the structure $\mathcal R$.
\medskip

We demonstrate the claim by induction on the complexity of the formula $\phi(\overline{x})$.
The claim is obvious when $\phi(\overline{x})$ is an atomic formula.
The conjunction of $P_X(\overline{x})$ and $P_Y(\overline{x})$ is equivalent to $P_{X \cap Y}(\overline{x})$.
Their disjunction is equivalent to $P_{X \cup Y}(\overline{x})$.
Let $\phi_1(\overline{x})$ and $\phi_2(\overline{x})$ be finite disjunctions of semi-simple formulas.
The conjunction $\phi_1(\overline{x}) \wedge \phi_2(\overline{x})$ is obviously a finite disjunction of semi-simple formulas.
When $\phi_1(\overline{x})$ is a finite disjunction of semi-simple formulas and $X$ is a bounded $\mathfrak X$-definable set, the set $Y=X \cap \{\overline{x} \in M^n\;|\; \mathcal M \models \phi_1(\overline{x})\}$ is a bounded $\mathfrak X$-definable set.
We have $\mathcal R \models \forall \overline{x}\ ((\phi_1(\overline{x}) \wedge P_X(\overline{x})) \leftrightarrow P_Y(\overline{x}))$.
Therefore, the claim is true for the conjunction of two formulas $\phi_1(\overline{x})$ and $\phi_2(\overline{x})$ satisfying the claim.

We consider the case in which $\phi(\overline{x})$ is the negation of the formula $\psi(\overline{x})$ satisfying the claim.
The formula $\phi(\overline{x})$ clearly satisfies the claim when the formula $\psi(\overline{x})$ is equivalent to a finite disjunction of semi-simple formulas.
We next consider the case in which $\psi(\overline{x})=P_X(\overline{x})$ for some bounded $\mathfrak X$-definable subset $X$ of $M^n$.
There exists $a,b$ in $M$ with $a<b$ and $X \subseteq ]a,b[^n$.
The set $]a,b[^n \setminus X$ is a bounded $\mathfrak X$-definable set and it belongs to $\mathcal S_{\text{bdd}}$.
It is obvious that there exists a finite disjunction $\psi'(\overline{x})$ of semi-simple formulas such that $\mathcal R \models \forall \overline{x}\ (\psi'(\overline{x}) \leftrightarrow x \not\in ]a,b[^n)$.
Therefore, we have $\mathcal R \models \forall \overline{x}\ (\neg P_X(\overline{x}) \leftrightarrow (P_{]a,b[^n\setminus X}(\overline{x}) \vee \psi'(\overline{x})))$.
Using these facts, we can demonstrate that $\phi(\overline{x})=\neg\psi(\overline{x})$ satisfies the claim.
We omit the details.
 
 The projection image of a set in $\mathcal S_{\text{bdd}}$ is again an element of $\mathcal S_{\text{bdd}}$
 Using this fact, we can prove the claim when the formula $\phi(\overline{x})$ is of the form $\exists y\ \psi(\overline{x},y)$.
 We also omit the details.
 We have demonstrated the claim.
\medskip

Take an arbitrary subset $X$ of $M^n$ definable in $\mathcal R$.
It is either a finite union of semi-simple sets, a bounded $\mathfrak X$-definable set or their union by the claim.
A semi-simple set is obviously $\mathfrak X$-definable.
Hence, the set $X$ is $\mathfrak X$-definable.
We have demonstrated that the condition (i) holds true.

We finally show that a subset $X$ of $M$ definable in $\mathcal R$ is a finite union of points and open intervals.
A bounded $\mathfrak X$-definable subset of $M$ is a finite union of points and open intervals by Definition \ref{def:x}(7).
A semi-simple subset of $M$ is obviously a finite union of points and open intervals.
Therefore, $X$ is a finite union of points and open intervals by the claim.
\end{proof}

A locally o-minimal structure \textit{admits  local definable cell decomposition} if local definable cell decomposition in Theorem \ref{thm:dcd} is always available.
Note that a locally o-minimal structure which admits local definable cell decomposition is always a uniformly locally o-minimal structure of the second kind.

\begin{corollary}\label{cor:second}
An almost o-minimal structure admits local definable cell decomposition.
In particular, it is a uniformly locally o-minimal structure of the second kind.
\end{corollary}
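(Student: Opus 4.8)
The plan is to reduce local definable cell decomposition in $\mathcal M$ to the cell decomposition theorem for o-minimal structures \cite{vdD}, using the auxiliary o-minimal structure provided by Lemma \ref{lem:in_omin}. I would begin with two preliminary observations about an almost o-minimal structure $\mathcal M=(M,<,\ldots)$. First, $\mathcal M$ is locally o-minimal: given a definable set $X \subseteq M$ and a point $x \in M$, the density of the order (and the absence of endpoints) yields a bounded open interval $I$ with $x \in I$, and then $I \cap X$ is a bounded definable subset of $M$, hence a finite union of points and open intervals by almost o-minimality. Second, recall that $\mathcal M$ is an $\mathfrak X$-structure whose $\mathfrak X$-definable sets are exactly its definable sets, so Lemma \ref{lem:in_omin} furnishes an o-minimal structure $\mathcal R$ on the same universe $M$ such that every $\mathcal R$-definable set is $\mathcal M$-definable and every bounded $\mathcal M$-definable set is $\mathcal R$-definable.

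Next I would establish local definable cell decomposition directly, rather than through Theorem \ref{thm:dcd} (thereby avoiding circularity). Fix a finite family $\{A_\lambda\}_{\lambda\in\Lambda}$ of definable subsets of $M^n$ and a point $a \in M^n$, and choose a bounded open box $B$ with $a \in B$. Each set $B$ and each $B \cap A_\lambda$ is bounded and $\mathcal M$-definable, hence $\mathcal R$-definable. Since $\mathcal R$ is o-minimal, the cell decomposition theorem gives a cell decomposition of $M^n$ into $\mathcal R$-definable cells partitioning the finite family $\{B\} \cup \{B \cap A_\lambda \mid \lambda \in \Lambda\}$; the cells contained in $B$ then form a cell decomposition of $B$ partitioning $\{B \cap A_\lambda \mid \lambda \in \Lambda\}$.

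The decisive point is to transfer this decomposition back to $\mathcal M$. Every cell is $\mathcal R$-definable, hence $\mathcal M$-definable by condition (i) of Lemma \ref{lem:in_omin}, and the continuous functions whose graphs and whose intermediate regions describe the cells are $\mathcal R$-definable; their graphs are therefore $\mathcal M$-definable, and continuity is meant with respect to the order topology, which $\mathcal R$ and $\mathcal M$ share. Consequently each $\mathcal R$-cell is a cell in the sense of $\mathcal M$ of the same type, and the decomposition of $B$ obtained in $\mathcal R$ is verbatim a definable cell decomposition of $B$ in $\mathcal M$ partitioning $\{B \cap A_\lambda\}_{\lambda \in \Lambda}$. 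This is precisely the conclusion of local definable cell decomposition. Since $\mathcal M$ is then a locally o-minimal structure admitting local definable cell decomposition, it is a uniformly locally o-minimal structure of the second kind by the remark preceding the statement.

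I expect the main obstacle to be exactly this transfer step: one must verify that all the data attached to a cell — its type $(i_1,\ldots,i_n)$, the defining continuous functions, and the projection compatibility built into the notion of a cell decomposition — are preserved when the ambient structure is enlarged from $\mathcal R$ to $\mathcal M$. Conditions (i) and (ii) of Lemma \ref{lem:in_omin} do the essential work here, together with the fact that $\mathcal R$ and $\mathcal M$ carry the same order topology, so that \emph{continuous} and \emph{cell of a given type} have identical meaning in both structures.
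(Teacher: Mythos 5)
Your proposal is correct and follows exactly the paper's own route: the paper's proof simply invokes Lemma \ref{lem:in_omin} (applied to the almost o-minimal structure viewed as an $\mathfrak X$-structure) together with the o-minimal cell decomposition theorem \cite[Chapter 3, Theorem 2.11]{vdD}, which is precisely the reduction you carry out. Your write-up merely makes explicit the details the paper leaves implicit — that bounded definable sets transfer to $\mathcal R$, that $\mathcal R$-cells are verbatim $\mathcal M$-cells since both structures share the order topology, and that local o-minimality plus the preceding remark yields the ``in particular'' clause.
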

\begin{proof}
Recall that an almost o-minimal structure is an $\mathfrak X$-structure.
The corollary follows from Lemma \ref{lem:in_omin} and \cite[Chapter 3, Theorem 2.11]{vdD}.
\end{proof}

We quote \cite[Fact 1.7]{E} which is originally proved in \cite[Proposition 5.1(1)]{LP}:
\begin{proposition}\label{prop:lp1}
Let $\mathcal V=(V,+,<,a,(d)_{d \in D},(P)_{P \in \mathcal P})$ be an expansion of an ordered vector space $(V,+,<,(d)_{d \in D})$ over an ordered division ring $D$ by predicates $P \in \mathcal P$ on a bounded subset of $[-a,a]^n$, such that $\mathcal P$ contains predicates for all subsets of $[-a,a]^n$ which are $a$-definable in the vector space structure.
Then $\operatorname{Th}(\mathcal V)$ has quantifier elimination in its language.
\end{proposition}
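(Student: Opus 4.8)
\section*{Proof proposal}

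The plan is to establish quantifier elimination through the usual reduction to a single existential quantifier: it suffices to show that for every quantifier-free formula $\theta(\overline{x},y)$ the formula $\exists y\, \theta(\overline{x},y)$ is equivalent, modulo $\operatorname{Th}(\mathcal V)$, to a quantifier-free formula. Writing $\theta$ in disjunctive normal form and distributing $\exists y$ over the disjunction, I may assume $\theta$ is a conjunction of literals. These literals fall into three groups: those not mentioning $y$, which pass outside the quantifier unchanged; linear literals $\ell(\overline{x},y)\,\square\,0$ with $\square\in\{=,\neq,<,>\}$ whose coefficients lie in $D$ together with integer multiples of the named constant $a$; and predicate literals $P(\tau_1,\dots,\tau_k)$ or their negations, where each $\tau_i$ is an affine-linear term in $(\overline{x},y)$.

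First I would dispose of the linear literals by the classical quantifier elimination for ordered vector spaces over an ordered division ring, treating $a$ as a named constant. Fourier--Motzkin elimination on the linear literals that mention $y$ shows that, modulo the linear part, one of two things happens. Either some equation pins $y=\rho(\overline{x})$ for an affine term $\rho$, in which case substituting $\rho$ into the predicate terms already reduces $\exists y\,\theta$ to a quantifier-free formula in $\overline{x}$ alone; or $y$ is confined to an interval $L(\overline{x})<y<U(\overline{x})$ with endpoints affine in $\overline{x}$ (with $\pm\infty$ allowed). The remaining task is therefore to eliminate $y$ from
\begin{equation*}
\exists y\, \bigl(L(\overline{x})<y<U(\overline{x}) \ \wedge\ \textstyle\bigwedge_j P_j^{\pm}(\tau_j(\overline{x},y))\bigr).
\end{equation*}

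The crux, and the step I expect to be the main obstacle, is this predicate part, and it is exactly here that the boundedness hypothesis and the closure assumption on $\mathcal P$ must be brought to bear. Since each $P_j$ is supported inside some box $[-a,a]^{k_j}$, every positive predicate literal already forces its argument into the box, so I would first make explicit the induced linear constraints $-a\le\tau_{j,i}(\overline{x},y)\le a$; this confines the admissible $y$ (for fixed $\overline{x}$) to a bounded sub-segment of $(L(\overline{x}),U(\overline{x}))$. Collecting the predicate arguments into a single affine map $y\mapsto\tau(\overline{x},y)$ whose image is an affine line, the existential assertion becomes the statement that the segment cut out of this line by the linear constraints meets the bounded set defined by the conjunction of predicate conditions. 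The heart of the argument is to show that the resulting existence condition is again a Boolean combination of predicate atoms and linear atoms in $\overline{x}$; this is precisely the assertion that the class of bounded sets expressible by quantifier-free formulas is closed under projection. The hypothesis that $\mathcal P$ names \emph{every} $a$-definable subset of every box supplies the vector-space side of this closure, and the induction itself must simultaneously verify closure for the genuinely new predicate data. The delicate bookkeeping I expect to fight with is the finite case split governing how the affine line enters and exits the box, together with the boundary configurations in which an endpoint $L(\overline{x})$ or $U(\overline{x})$ coincides with a face $\pm a$: in each of these finitely many configurations the projected condition is $a$-definable in the vector-space reduct, hence by hypothesis a predicate. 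Assembling the finitely many cases yields the desired quantifier-free equivalent and closes the induction on formula complexity.
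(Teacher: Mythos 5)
The paper offers no proof of this proposition --- its ``proof'' is a citation to \cite[Proposition 5.1(1)]{LP}, via \cite[Fact 1.7]{E} --- so the only question is whether your outline stands on its own. It does not: there is a genuine gap at exactly the step you call the heart of the argument, namely the claim that the bounded sets cut out by quantifier-free formulas are closed under projection. You correctly observe that the stated hypothesis (that $\mathcal P$ names every $a$-definable subset of every box) covers only the vector-space side of this closure, and you then assert that ``the induction itself'' verifies closure for the genuinely new predicate data. No induction on formula complexity can do this: the problematic formula $\exists y\, P(\overline{x},y)$ already has an atomic matrix, so there is no simpler formula to which an induction hypothesis could apply. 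Worse, under this reading of the hypotheses the needed closure is simply false. Take $V=\mathbb R$, $D=\mathbb Q$, $a=1$, let $B=\mathbb Q\cap[0,1]$, and let $\mathcal P$ consist of all $a$-definable subsets of the boxes together with the single predicate $P=\{(t,t^2)\;|\;t\in B\}\subseteq[-1,1]^2$. Any non-constant affine map $s\mapsto(q_1s+r_1,q_2s+r_2)$ meets the parabola, hence $P$, in at most two points, so every quantifier-free formula in one free variable defines a Boolean combination of $\mathbb Q$-semilinear sets and finite sets; but $\exists y\,P(x,y)$ defines $B$, which is dense and codense in $[0,1]$ and therefore not of that form. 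This $\mathcal V$ satisfies the hypotheses as literally stated, yet $\operatorname{Th}(\mathcal V)$ does not eliminate quantifiers.

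What makes the result true in Loveys--Peterzil's setting, and in every use this paper makes of it, is an additional property of $\mathcal P$ that your proof never states or invokes: the family $\mathcal P$ must itself be closed (modulo $a$-definable sets) under coordinate projections, as happens when $\mathcal P$ consists of the traces on the boxes $[-a,a]^n$ of the definable sets of some fixed structure. That is precisely the situation when Proposition \ref{prop:lp1} is applied in the proof of Theorem \ref{thm:in_omin}: there the predicates are the bounded $\mathfrak X$-definable subsets of $[-a,a]^n$, and these are closed under coordinate projections because a coordinate projection restricted to a bounded set is proper, so Definition \ref{def:x}(6) applies. With that closure hypothesis made explicit and used at the point where you appeal to ``the induction,'' your Fourier--Motzkin skeleton does assemble into a correct proof; without it, the central claim is unsupported --- and, as the example above shows, unprovable.
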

\begin{proof}
\cite[Proposition 5.1(1)]{LP}.
\end{proof}

The following theorem is a better variant of Lemma \ref{lem:in_omin}. 
It claims that an $\mathfrak X$-expansion of an ordered divisible abelian group always contains an o-minimal expansion of an ordered group.
\begin{theorem}\label{thm:in_omin}
Consider an $\mathfrak X$-expansion of an ordered divisible abelian group whose underlying set is $M$.
There exists an o-minimal expansion $\mathcal R$ of an ordered group having the same underlying set $M$ and satisfying the following conditions:
\begin{enumerate}
\item[(i)] Any set definable in $\mathcal R$ is $\mathfrak X$-definable.
\item[(ii)] Any bounded $\mathfrak X$-definable set is definable in $\mathcal R$.
\end{enumerate}
\end{theorem}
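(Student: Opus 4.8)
The plan is to refine the construction of Lemma~\ref{lem:in_omin} by building the group operation into the language and then invoking the quantifier elimination of Proposition~\ref{prop:lp1} to keep the enlarged family of definable sets under control. Since an ordered divisible abelian group is exactly an ordered vector space over $\mathbb{Q}$, I would take $D=\mathbb{Q}$, view $(M,<,0,+)$ as an ordered $\mathbb{Q}$-vector space, and recall that every scalar multiplication $x\mapsto qx$ ($q\in\mathbb{Q}$) is $\mathfrak{X}$-definable and already definable in $(M,<,0,+)$ (Corollary~\ref{cor:linear} and Lemma~\ref{lem:bounded}(2)). Let $\mathcal{S}_{\text{bdd}}$ denote the family of all bounded $\mathfrak{X}$-definable sets and let $\mathcal{R}$ be the structure on $M$ in the language $L=(<,0,+,(q\cdot)_{q\in\mathbb{Q}},(P_X)_{X\in\mathcal{S}_{\text{bdd}}})$, each $P_X$ interpreted as $X$. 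Then $\mathcal{R}$ is an expansion of the ordered group $(M,<,0,+)$ and condition~(ii) holds trivially, so the two genuine tasks are condition~(i) and o-minimality.

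Rather than fixing one box, I would, for each positive $a\in M$, introduce the structure $\mathcal{V}_a$ obtained from the ordered $\mathbb{Q}$-vector space by adjoining a constant for $a$ together with predicates for all bounded $\mathfrak{X}$-definable subsets of $[-a,a]^n$, over all $n$. Every subset of $[-a,a]^n$ that is $a$-definable in the vector space structure is bounded and $\mathfrak{X}$-definable by Corollary~\ref{cor:linear}, hence lies among these predicates, so the hypotheses of Proposition~\ref{prop:lp1} are met and $\operatorname{Th}(\mathcal{V}_a)$ eliminates quantifiers in the language of $\mathcal{V}_a$.

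Now let $S\subseteq M^n$ be defined in $\mathcal{R}$ by a formula $\phi(\overline{x})$ involving finitely many predicates $P_{X_1},\dots,P_{X_k}$ and finitely many parameters. As each $X_i$ is bounded I can pick a positive $a$ with $X_i\subseteq[-a,a]^{n_i}$ for all $i$; then every $P_{X_i}$ is a predicate of $\mathcal{V}_a$ and $\phi$ is a $\mathcal{V}_a$-formula. Treating the parameters as extra free variables and applying quantifier elimination in $\mathcal{V}_a$, I would reduce $\phi(\overline{x})$ to a boolean combination of atomic formulas of two types: linear (in)equalities $\sum_i q_i x_i\;\square\; d$ with $q_i\in\mathbb{Q}$, $d\in M$ and $\square\in\{=,<,>\}$, and predicate atoms $P_X(\ell_1(\overline{x}),\dots,\ell_m(\overline{x}))$ in which each $\ell_j$ is affine with rational linear part. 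The former define $\mathfrak{X}$-definable sets by Corollary~\ref{cor:linear}. For the latter, the map $\ell=(\ell_1,\dots,\ell_m)\colon M^n\to M^m$ is $\mathfrak{X}$-definable and satisfies the bounded image condition (Lemma~\ref{lem:composition} and Lemma~\ref{lem:bounded}), so $\ell^{-1}(X)$ is $\mathfrak{X}$-definable by Lemma~\ref{lem:inverse} since $X$ is bounded. As the $\mathfrak{X}$-definable sets form a boolean algebra (Definition~\ref{def:x}(3)), $S$ is $\mathfrak{X}$-definable, which is condition~(i).

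For o-minimality I would rerun this analysis with $n=1$. A predicate atom $P_X(\ell_1(x),\dots,\ell_m(x))$ then defines $\ell^{-1}(X)\subseteq M$ for an affine $\ell\colon M\to M^m$ with rational linear part; if all the linear parts vanish this set is $\emptyset$ or $M$, and otherwise it is contained in $\ell^{-1}([-a,a]^m)$, which is a bounded interval, so $\ell^{-1}(X)$ is a bounded $\mathfrak{X}$-definable subset of $M$ and hence a finite union of points and open intervals by Definition~\ref{def:x}(7). The linear atoms likewise define finite unions of points and intervals, and this class is closed under the boolean operations, so every $\mathcal{R}$-definable subset of $M$ is a finite union of points and open intervals. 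I expect the only delicate points to be the bookkeeping that lets a single $a$ absorb all predicates occurring in a given formula, together with the verification through Lemma~\ref{lem:inverse} and Definition~\ref{def:x}(7) that preimages of bounded predicate sets under rational affine maps remain bounded and $\mathfrak{X}$-definable; once these are settled, both condition~(i) and o-minimality fall out of the quantifier elimination.
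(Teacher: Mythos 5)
Your proposal is correct and takes essentially the same route as the paper's proof: work in the language of an ordered $\mathbb{Q}$-vector space with a constant $a$ and predicates for the bounded $\mathfrak X$-definable subsets of $[-a,a]^n$, invoke the Loveys--Peterzil quantifier elimination of Proposition \ref{prop:lp1} after absorbing the finitely many predicates of a given formula into a single $a$, and then handle the resulting atoms via Corollary \ref{cor:linear} and Definition \ref{def:x}(7). The only differences are cosmetic: where you cite Lemma \ref{lem:inverse} for the predicate atoms, the paper re-derives that step by a graph-and-proper-projection argument, and your explicit treatment of the degenerate atom whose linear parts all vanish (defining $\emptyset$ or $M$) is in fact slightly more careful than the paper's blanket claim that a predicate atom always defines a bounded set.
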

\begin{proof}
Since $(M,0,+,<)$ is an ordered divisible abelian group, it is naturally a $\mathbb Q$-vector space.
For any bounded $\mathfrak X$-definable set $X$, we define the predicate $P_X$ and interpret it naturally.
The notation $\mathcal S_{\text{bdd}}$ denote the set of all bounded $\mathfrak X$-sets. 
Set $\mathcal S_{\text{bdd}}(a)=\{X \in \mathcal S_{\text{bdd}}\;|\; X \subseteq [-a,a]^n \text{ when } X \in M^n\}$ for all positive $a \in M$.
Consider the language $L=(+,<,(a)_{a \in M}, (d)_{d \in \mathbb Q}, (P_X)_{X \in \mathcal S_{\text{bdd}}})$.
The set $M$ is naturally the underlying set of an $L$-structure $\mathcal R$.
The structure $\mathcal R$ obviously satisfies the condition (ii).
We demonstrate that $\mathcal R$ is an o-minimal structure satisfying the condition (i).

Take an arbitrary $L$-formula $\phi(\overline{x})$ with parameters in $M$.
Here, $\overline{x}$ is an $m$-tuple of variables.
We first show that the set defined by $\phi(\overline{x})$ is $\mathfrak X$-definable.
Since only finitely many symbols are involved in the $L$-formula $\phi(\overline{x})$,
we may assume that $\phi(\overline{x})$ is $L(a)$-formula with parameters for some $a>0$, where $L(a)=(+,<,a, (d)_{d \in \mathbb Q}, (P_X)_{X \in \mathcal S_{\text{bdd}}(a)})$.
There exists a quantifier-free formula $\psi(x)$ equivalent to $\phi(\overline{x})$ by Proposition \ref{prop:lp1}.
We may assume that $\phi(\overline{x})$ is a quantifier-free formula.
The formula $\phi(\overline{x})$ is a finite disjunction of a finite conjunction of  formulas of the forms $P_X(t(\overline{x},\overline{c}))$ and $t_1(\overline{x},\overline{c})\ *\ t_2(\overline{x},\overline{c})$ and their negations, where the notation $t(\overline{x},\overline{c})$ denotes an $n$-tuple of terms with parameters $\overline{c}$, $X$ is a subset of $M^n$ and $t_1(\overline{x},\overline{c})$ and $t_2(\overline{x},\overline{c})$ are terms with parameters $\overline{c}$.
The symbol $*$ is one of $=$, $<$ and $>$.

The terms are of the form $l(\overline{x})=\sum_{i=1}^n q_ix_i+c$, where $\overline{x}=(x_1,\ldots, x_n)$, $c \in M$ and $q_i \in \mathbb Q$ for all $1 \leq i \leq n$.
The definable set of the form $t_1(\overline{x},\overline{c})\ * \ t_2(\overline{x},\overline{c})$ is $\mathfrak X$-definable by Corollary \ref{cor:linear}.
Consider a formula of the form $P_X(t(\overline{x},\overline{c}))$.
Introduce new variables $\overline{y}=(y_1,\ldots, y_n)$.
The set defined by $$\{(\overline{x},\overline{y}) \in M^m \times M^n\;|\; \overline{y} \in X \text{ and } \overline{y}=t(\overline{x},\overline{c})\}$$ is $\mathfrak X$-definable because the graph of $t(\overline{x},\overline{c})$ is $\mathfrak X$-definable by Corollary \ref{cor:linear} and the intersection of $\mathfrak X$-definable sets is again $\mathfrak X$-definable by Definition \ref{def:x}(3).
The set defined by the formula $P_X(t(\overline{x},\overline{c}))$ is the image of the above $\mathfrak X$-definable set under the proper projection forgetting $\overline{y}$.
It is also $\mathfrak X$-definable by Definition \ref{def:x}(6).
The set defined by $\phi(\overline{x})$ is a boolean combination of sets of the above forms.
It is also $\mathfrak X$-definable because family of $\mathfrak X$-definable sets are closed under boolean algebra.
We have proven that the condition (i) is satisfied.

The next task is to demonstrate that the definable set defined by the formula $\phi(\overline{x})$ is a finite union of points and open intervals when $m=1$.
We may assume that $\phi(\overline{x})$ is a quantifier-free formula for the same reason as above.
A boolean combination of finite unions of points and open intervals is again a finite union of points and open intervals.
So we have only to demonstrate that the sets defined by formulas the forms $P_X(t(\overline{x},\overline{c}))$ and $t_1(\overline{x},\overline{c})* t_2(\overline{x},\overline{c})$ with $* \in \{=,<,>\}$ are finite unions of points and open intervals.
It is trivial in the latter case.
In the former case, the set defined by $P_X(t(\overline{x},\overline{c}))$ is a bounded $\mathfrak X$-definable set.
It is a finite union of points and open intervals by Definition \ref{def:x}(7).
\end{proof}

The following corollary indicates that the o-minimal structure $\mathcal R$ given in Theorem \ref{thm:in_omin} is the minimal $\mathfrak X$-structure and the $\mathfrak X$-structure of semi-definable sets in $\mathcal R$ is the maximal $\mathfrak X$-structure containing the o-minimal structure $\mathcal R$.
\begin{corollary}\label{cor:in_omin}
Consider an $\mathfrak X$-expansion $\mathcal X$ of an ordered divisible abelian group whose underlying set is $M$.
There exists an o-minimal expansion $\mathcal R$ of an ordered group whose underlying set is $M$ satisfying the following conditions:
\begin{enumerate}
\item[(i)] Any set definable in $\mathcal R$ is $\mathfrak X$-definable in $\mathcal X$.
\item[(ii)] Any set $\mathfrak X$-definable in $\mathcal X$ is $\mathfrak X$-definable in $\mathfrak X(R)$.
\end{enumerate}
Here, the notation $\mathfrak X(R)$ denotes the $\mathfrak X$-structure of semi-definable sets in $\mathcal R$.
\end{corollary}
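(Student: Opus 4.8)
The plan is to reuse the o-minimal structure $\mathcal R$ produced by Theorem \ref{thm:in_omin} verbatim; no new construction is needed. Condition (i) of the corollary is word-for-word identical to condition (i) of Theorem \ref{thm:in_omin}, so it holds for this $\mathcal R$ with no further argument. The entire content of the corollary therefore lies in deducing condition (ii) from the already-established condition (ii) of Theorem \ref{thm:in_omin} (namely, that every bounded $\mathfrak X$-definable set of $\mathcal X$ is definable in $\mathcal R$).

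To prove (ii), I would unwind the definition of $\mathfrak X(\mathcal R)$: a set is $\mathfrak X$-definable in $\mathfrak X(\mathcal R)$ exactly when it is semi-definable in $\mathcal R$, that is, when its intersection with every bounded open box is definable in $\mathcal R$. So fix an arbitrary set $X \subseteq M^n$ that is $\mathfrak X$-definable in $\mathcal X$ and an arbitrary bounded open box $U \subseteq M^n$; the goal is to show that $U \cap X$ is definable in $\mathcal R$. First I would observe that $U$ itself is $\mathfrak X$-definable in $\mathcal X$: it is a finite product of open intervals, hence $\mathfrak X$-definable by Definition \ref{def:x}(1) and (4). Consequently $U \cap X$ is $\mathfrak X$-definable by the boolean-algebra axiom Definition \ref{def:x}(3), and it is bounded because it is contained in the bounded box $U$.

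The conclusion is then immediate: since $U \cap X$ is a bounded $\mathfrak X$-definable set, Theorem \ref{thm:in_omin}(ii) guarantees that it is definable in $\mathcal R$. As $U$ was an arbitrary bounded open box, $X$ is semi-definable in $\mathcal R$, which is precisely the assertion that $X$ is $\mathfrak X$-definable in $\mathfrak X(\mathcal R)$. This establishes (ii). I do not anticipate any genuine obstacle: the corollary is essentially a repackaging of Theorem \ref{thm:in_omin} through the definition of semi-definability, and the only points that merit a word of justification — that a bounded open box is $\mathfrak X$-definable and that the intersection remains bounded — are both immediate from the axioms in Definition \ref{def:x}.
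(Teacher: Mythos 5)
Your proposal is correct and follows essentially the same route as the paper: take the $\mathcal R$ from Theorem \ref{thm:in_omin}, get (i) for free, and deduce (ii) by noting that the intersection of an $\mathfrak X$-definable set with any bounded open box is a bounded $\mathfrak X$-definable set, hence definable in $\mathcal R$ by Theorem \ref{thm:in_omin}(ii), which is exactly semi-definability. Your write-up merely makes explicit the small steps (the box is $\mathfrak X$-definable by Definition \ref{def:x}(1),(4) and the intersection is $\mathfrak X$-definable by (3)) that the paper leaves implicit.
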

\begin{proof}
Let $\mathcal R$ be the o-minimal structure given in Theorem \ref{thm:in_omin}.
The condition (i) follows from the theorem.
Take an arbitrary subset $X$ of $M^n$ $\mathfrak X$-definable in $\mathcal X$.
For any bounded open box $B$, the intersection $B \cap X$ is definable in $\mathcal R$.
It means that the set $X$ is semi-definable in $\mathcal R$.
We have demonstrated that the condition (ii) is satisfied.
\end{proof}

Here is another corollary.
Its proof illustrates a typical procedure for translating an assertion for o-minimal expansions of ordered groups into the corresponding assertion on bounded $\mathfrak X$-definable sets. 
\begin{corollary}[Curve selection lemma]\label{cor:curve_selection}
Consider an $\mathfrak X$-expansion $\mathcal X$ of an ordered divisible abelian group whose underlying set is $M$.
Let $X$ be an $\mathfrak X$-definable subset of $M^n$ and take a point $a \in \partial X$.
Let $\mathcal R$ be the o-minimal structure given in Theorem \ref{thm:in_omin}.
There exist a positive $\varepsilon \in M$ and a continuous map $\gamma:]0,\varepsilon[ \rightarrow X$ definable in $\mathcal R$ such that the image of $\gamma$ is bounded and $\lim_{ t \to 0}\gamma(t)=a$.  
\end{corollary}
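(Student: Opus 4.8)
The plan is to reduce the Curve Selection Lemma for the $\mathfrak X$-structure $\mathcal X$ to the classical curve selection lemma in the o-minimal expansion $\mathcal R$ of an ordered group supplied by Theorem \ref{thm:in_omin}. Since curve selection is a purely local statement near the point $a$, the obstruction to applying the o-minimal result directly is that $X$ itself need not be definable in $\mathcal R$; only its \emph{bounded} pieces are, by Theorem \ref{thm:in_omin}(ii). So the first step is to localize. Choose a positive $\delta \in M$ and set $V = ]a_1 - \delta, a_1 + \delta[ \times \cdots \times ]a_n - \delta, a_n + \delta[$, a bounded open box containing $a$. Then $X \cap V$ is a bounded $\mathfrak X$-definable set, hence definable in $\mathcal R$ by Theorem \ref{thm:in_omin}(ii).

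Second, I would check that the local geometry near $a$ is preserved by this truncation, namely that $a \in \partial(X \cap V)$ in $\mathcal R$. Because $a \in \partial X$ and $V$ is an open neighborhood of $a$, every neighborhood of $a$ meets both $X$ and its complement; intersecting with the open set $V$ shows the same for $X \cap V$, so indeed $a$ lies in the frontier of the $\mathcal R$-definable set $X \cap V$. (One should be mildly careful about whether $a \in X$ or not, but the frontier $\partial(X\cap V)$ is defined as $\overline{X \cap V} \setminus (X \cap V)$ and the argument handles both cases; if $a \in X \cap V$ one works instead with a boundary point just outside, or uses the standard convention that curve selection applies to $a$ in the closure.)

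Third, I would invoke the o-minimal curve selection lemma for the o-minimal expansion $\mathcal R$ of an ordered group. This is a standard result available once $\mathcal R$ is known to be o-minimal and to expand an ordered group (so that limits and the order topology behave as expected); it yields a positive $\varepsilon \in M$ and a continuous curve $\gamma : \,]0,\varepsilon[\, \to X \cap V$ definable in $\mathcal R$ with $\lim_{t \to 0}\gamma(t) = a$. This $\gamma$ is the desired curve: it is definable in $\mathcal R$, it maps into $X \cap V \subseteq X$, and its image lies inside the bounded box $V$, so the image is bounded. Thus all the conclusions of the corollary hold.

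The main obstacle, and the whole point of the corollary as advertised in the text preceding it, is the \emph{passage from the unbounded $\mathfrak X$-definable set $X$ to a bounded $\mathcal R$-definable set} without losing the frontier point $a$; this is exactly the localization in the first two steps, and once it is in place the result is an immediate application of the o-minimal theory. A secondary point worth confirming is that continuity and the limit statement transfer cleanly, but since $\gamma$ is $\mathcal R$-definable and takes values in $V \subseteq X$, and the topologies on $M^n$ used throughout are the fixed order-product topologies, no genuine difficulty arises there.
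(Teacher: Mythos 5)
Your proposal is correct and follows essentially the same route as the paper's own proof: intersect $X$ with a bounded open box around $a$, use Theorem \ref{thm:in_omin}(ii) to get $\mathcal R$-definability, apply the o-minimal curve selection lemma in $\mathcal R$, and observe boundedness of the image from containment in the box. Your extra verification that $a \in \partial(X \cap V)$ is a detail the paper leaves implicit (and your caveat about $a \in X \cap V$ is vacuous, since $\partial$ denotes the frontier $\overline{X}\setminus X$ here, so $a \notin X$), but it does not change the argument.
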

\begin{proof}
Let $\mathcal R$ be the o-minimal structure given in Theorem \ref{thm:in_omin}.
Take a bounded open box $U$ containing the point $a$.
The set $X \cap U$ is definable in $\mathcal R$ by Theorem \ref{thm:in_omin}.
Since $\mathcal R$ is an o-minimal expansion of an ordered group, there exists a continuous map $\gamma:]0,\varepsilon[ \rightarrow X \cap U$ definable in $\mathcal R$ such that $\lim_{ t \to 0}\gamma(t)=a$ by the curve selection lemma for o-minimal expansions of ordered groups \cite[Chapter 6, Corollary 1.5]{vdD}.
The image of $\gamma$ is bounded because it is contained in $U$.
\end{proof}

\subsection{Dimension}
We define the dimension of an $\mathfrak X$-definable set and investigate its basic properties.

\begin{definition}\label{def:dimension}
Consider a densely linearly ordered set without endpoints $(M,<)$.
Let $X$ be a subset of $M^n$.
If $X$ is an empty set, we set $\dim X=-\infty$.
The nonempty set $X$ is of dimension $\geq m$ if there exist a point $x \in M^n$ and a coordinate projection $\pi:M^n \rightarrow M^m$ such that, for any open box $B$ containing the point $x$, the projection image $\pi(B \cap X)$ has a nonempty interior.
The set is of dimension $m$ when it is of dimension $\geq m$ and not of dimension of $\geq m+1$.
\end{definition}

\begin{remark}
The dimensions of sets definable in an o-minimal structure and definable in a locally o-minimal admitting local definable cell decomposition are defined differently in \cite[Chapter 4, (1.1)]{vdD} and \cite[Definition 5.1]{Fuji}, respectively.
However, they coincide with the definition given above by \cite[Corollary 5.3]{Fuji}.
We use this fact without any notification in the rest of this paper.
\end{remark}

\begin{lemma}\label{lem:dim0}
Consider an $\mathfrak X$-structure.
A nonempty $\mathfrak X$-definable set is of dimension zero if and only if it is discrete.
A discrete $\mathfrak X$-definable set is closed.
\end{lemma}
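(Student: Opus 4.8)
The plan is to reduce every claim to the o-minimal structure $\mathcal R$ furnished by Lemma \ref{lem:in_omin}, so that $\mathfrak X$-dimension and discreteness can be tested on bounded pieces where ordinary o-minimal dimension theory is available. For any bounded open box $U$ the set $U \cap X$ is a bounded $\mathfrak X$-definable set (a box is $\mathfrak X$-definable by Definition \ref{def:x}(1),(4), and the intersection is $\mathfrak X$-definable by Definition \ref{def:x}(3)), hence definable in $\mathcal R$ by Lemma \ref{lem:in_omin}(ii). I will use two standard o-minimal facts about $\mathcal R$ \cite{vdD}: a definable set is finite if and only if it has dimension $0$ (so a discrete definable set, being $0$-dimensional, is finite), and a finite subset of $M$ has empty interior since $M$ is densely ordered.

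For \emph{discrete $\Rightarrow$ closed}, suppose $X$ is discrete and $a \in \overline{X}$. Choose a bounded box $U \ni a$; then $U \cap X$ is discrete and definable in $\mathcal R$, hence finite, say $\{x_1,\dots,x_k\}$. If $a \notin X$ then all $x_i \neq a$, and since $M^n$ is Hausdorff I can shrink $U$ to a box $U' \ni a$ missing each $x_i$, giving $U' \cap X = \emptyset$ and contradicting $a \in \overline{X}$. Thus $a \in X$ and $X$ is closed. For \emph{discrete $\Rightarrow \dim X = 0$}, if instead $\dim X \geq 1$ then Definition \ref{def:dimension} supplies a point $y$ and a coordinate projection $\pi \colon M^n \to M$ with $\pi(B \cap X)$ having nonempty interior for every box $B \ni y$; taking $B$ bounded makes $B \cap X$ finite, so $\pi(B \cap X)$ is finite with empty interior, a contradiction.

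The remaining direction, \emph{$\dim X = 0 \Rightarrow$ discrete}, is the substantive one. Assuming $X$ is not discrete, fix a non-isolated point $a \in X$ and a bounded box $U \ni a$. Then $U \cap X$ is definable in $\mathcal R$ and has $a$ as a non-isolated point, so it is infinite and $\dim_{\mathcal R}(U \cap X) \geq 1$. A cell decomposition of $U \cap X$ therefore contains a cell $C \subseteq U \cap X \subseteq X$ with $\dim C \geq 1$. Let $p \colon M^n \to M^d$ be the projection onto the coordinates where the type of $C$ equals $1$; then $p|_C$ is a homeomorphism onto an open cell $C' \subseteq M^d$. Fix a further coordinate projection $q \colon M^d \to M$ and set $\pi = q \circ p$, and pick any $y \in C$. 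For every box $B \ni y$ the set $B \cap C$ is a relatively open neighbourhood of $y$ in $C$, so $p(B \cap C)$ is open in $M^d$ (being relatively open in the open set $C'$), whence $\pi(B \cap C) = q(p(B \cap C))$ has nonempty interior. As $B \cap C \subseteq B \cap X$, the image $\pi(B \cap X)$ has nonempty interior for \emph{every} box $B \ni y$, so $\dim X \geq 1$ by Definition \ref{def:dimension}, contradicting $\dim X = 0$. Hence every point of $X$ is isolated.

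I expect the main obstacle to be exactly this last step: producing a single witnessing point $y$ at which the projected image remains somewhere dense for \emph{all} boxes around $y$, not merely for one bounded box. A bare dimension inequality $\dim_{\mathcal R}(U \cap X) \geq 1$ does not by itself guarantee this persistence, and it is the homeomorphism between a cell and its coordinate projection that supplies it, since relative openness of $B \cap C$ in $C$ is preserved under $p|_C$ for boxes $B$ of arbitrary size.
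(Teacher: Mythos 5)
Your proof is correct and takes essentially the same route as the paper's: the paper's entire proof is the single remark that Lemma \ref{lem:in_omin} and the cell decomposition theorem \cite[Chapter 3, Theorem 2.11]{vdD} immediately imply the lemma, and your argument is exactly that reduction with the details filled in (finiteness of discrete definable sets in $\mathcal R$, and the cell-to-open-cell projection homeomorphism used to verify Definition \ref{def:dimension} at a single witnessing point). The final step you flagged as the main obstacle is handled precisely the way the paper implicitly intends, so there is nothing to change.
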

\begin{proof}
Lemma \ref{lem:in_omin} and the cell decomposition theorem \cite[Chapter 3, Theorem 2.11]{vdD} immediately imply this lemma.
\end{proof}

The following lemma is well-known:
\begin{lemma}\label{lem:omin_tmp}
Consider an o-minimal structure whose underlying set is $M$.
Let $C_1, \ldots, C_N$ be definable subsets of $M^n$.
If the union $\bigcup_{i=1}^N C_i$ has a nonempty interior, $C_k$ has a nonempty interior for some $1 \leq k \leq N$. 
\end{lemma}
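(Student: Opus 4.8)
The plan is to reduce the statement to the standard dimension theory of o-minimal structures. The facts I would invoke are that, in an o-minimal structure, a definable subset of $M^n$ has nonempty interior if and only if its dimension equals $n$, and that dimension is maximal over finite unions, i.e.\ $\dim\!\left(\bigcup_{i=1}^N C_i\right) = \max_{1 \le i \le N} \dim(C_i)$ for definable $C_i$. Both are part of van den Dries's treatment of o-minimal dimension \cite[Chapter 4]{vdD}, the framework singled out in the Remark following Definition \ref{def:dimension} as agreeing with the dimension used in this paper; neither relies on the present statement, so there is no circularity.

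Granting these, the argument is immediate. Since $\bigcup_{i=1}^N C_i$ has nonempty interior, it has dimension $n$. By maximality of dimension over the finite union, $\max_i \dim(C_i) = n$, so there is an index $k$ with $\dim(C_k) = n$. A definable subset of $M^n$ of full dimension $n$ has nonempty interior, so $C_k$ has nonempty interior, which is the desired conclusion.

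For an argument phrased directly in terms of the cell decomposition theorem \cite[Chapter 3, Theorem 2.11]{vdD} already used above, I would instead apply cell decomposition to the family $\{C_1, \ldots, C_N\}$, obtaining a finite partition of $M^n$ into cells each of which is, for every $i$, either contained in or disjoint from $C_i$. The union contains an open box $B$; writing $B$ as the disjoint union of its intersections with these cells, I note that every cell of dimension strictly less than $n$ has empty interior, so the union of such cells meets $B$ in a set of dimension $<n$, which cannot exhaust the $n$-dimensional set $B$. Hence some open (that is, $(1,\dots,1)$-) cell $D$ meets $B$; by the partitioning property $D$ lies in a single $C_k$, and since $D$ is open, $C_k$ has nonempty interior.

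The statement presents essentially no obstacle beyond correctly quoting o-minimal dimension theory. The one point meriting care is the penultimate step of the cell-decomposition variant, namely that a finite union of cells of dimension $<n$ cannot cover an open box; this is precisely the $n$-dimensional content of the lemma, and it is supplied cleanly by the maximality of $\dim$ over finite unions rather than argued afresh, which is why I would favor the dimension-theoretic proof as the primary one.
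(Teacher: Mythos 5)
Your proposal is correct and matches the paper's proof in substance: the paper disposes of this lemma as ``an easy corollary of the cell decomposition theorem \cite[Chapter 3, Theorem 2.11]{vdD},'' which is exactly your second argument, and your primary dimension-theoretic phrasing is the same machinery repackaged, since the facts $\dim X = n \Leftrightarrow \operatorname{int}(X)\neq\emptyset$ and $\dim\bigl(\bigcup_i C_i\bigr)=\max_i \dim C_i$ are themselves consequences of cell decomposition in \cite[Chapter 4]{vdD}. Both versions are sound and non-circular.
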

\begin{proof}
It is an easy corollary of the cell decomposition theorem \cite[Chapter 3, Theorem 2.11]{vdD}.
\end{proof}

We give another expression of dimension.
\begin{lemma}\label{lem:equiv_dim}
Consider an $\mathfrak X$-structure whose underlying set is $M$.
Let $X$ be an $\mathfrak X$-definable subset of $M^n$.
We have $$\dim X = \sup\{ \dim (U \cap X) \;|\; U \text{ is a bounded open box in }M^n\}\text{.}$$
\end{lemma}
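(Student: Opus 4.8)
The plan is to establish the two inequalities separately, after first recording a monotonicity property of dimension that does not appear explicitly in the excerpt but follows at once from Definition \ref{def:dimension}.

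First I would prove monotonicity: if $Y \subseteq X$ then $\dim Y \le \dim X$. Granting $\dim Y \ge m$ with witnessing point $y$ and coordinate projection $\pi$, I note that $B \cap Y \subseteq B \cap X$ for every open box $B$ containing $y$, so $\pi(B \cap Y) \subseteq \pi(B \cap X)$; since $\myint$ is monotone and $\pi(B \cap Y)$ has nonempty interior, so does $\pi(B \cap X)$. Hence $y$ and $\pi$ also witness $\dim X \ge m$, giving $\dim Y \le \dim X$. Applying this to $Y = U \cap X \subseteq X$ for each bounded open box $U$ yields $\dim(U \cap X) \le \dim X$, and taking the supremum proves the inequality $\le$.

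For the reverse inequality I would argue as follows. The case $X = \emptyset$ is trivial, both sides being $-\infty$, so assume $\dim X \ge m$ for some $m \ge 0$, witnessed by a point $x$ and a coordinate projection $\pi$. I would fix an arbitrary bounded open box $U$ containing $x$ and claim that the same pair witnesses $\dim(U \cap X) \ge m$. The key observation is that for any open box $B'$ containing $x$, the intersection $B' \cap U$ is again a bounded open box containing $x$, since each coordinate is the intersection of two open intervals sharing the common point $x_i$. Applying the hypothesis to the box $B = B' \cap U$ shows that $\pi\bigl(B' \cap (U \cap X)\bigr) = \pi\bigl((B' \cap U) \cap X\bigr)$ has nonempty interior (in particular $U \cap X \neq \emptyset$, as seen by taking $B' = M^n$). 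As $B'$ was arbitrary among boxes through $x$, this proves $\dim(U \cap X) \ge m$, hence $\sup_U \dim(U \cap X) \ge m$. Taking $m = \dim X$ finishes this direction, and the two inequalities together give the stated equality.

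I expect the only genuine subtlety—and hence the main obstacle—to be the reverse inequality: Definition \ref{def:dimension} quantifies over all open boxes through the base point, including unbounded ones, so one must verify that truncating by a fixed bounded box $U$ preserves the witnessing condition, which is exactly the remark that $B' \cap U$ remains an open box containing $x$. It is worth noting that the $\mathfrak X$-structure hypothesis plays no role in the argument; the proof is purely order-topological.
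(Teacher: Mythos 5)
Your proof is correct, but it is a genuinely different and substantially more elementary argument than the paper's. The paper proves both inequalities by transferring the problem to o-minimal dimension theory: it invokes Lemma \ref{lem:in_omin} to realize $U \cap X$ as a set definable in an o-minimal structure $\mathcal R$, then applies the cell decomposition theorem and Lemma \ref{lem:omin_tmp} (for the inequality $\dim X \leq d$ it decomposes $X \cap B$ into cells and finds a cell whose $\pi$-image has interior; for $d \leq \dim X$ it extracts a cell $C \subseteq U \cap X$ and uses the structure of cells to produce a witnessing point), implicitly relying on the coincidence of Definition \ref{def:dimension} with the o-minimal dimension. You instead chase Definition \ref{def:dimension} directly in both directions: monotonicity of dimension under inclusion gives $\sup_U \dim(U \cap X) \leq \dim X$, and the observation that $B' \cap U$ is again an open box through $x$ shows that a witnessing pair $(x,\pi)$ for $X$ survives truncation by any bounded box $U \ni x$, giving the reverse inequality. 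Two remarks on what each route buys. First, your argument needs no definability hypothesis at all: it proves the equality for an arbitrary subset of $M^n$ over any densely linearly ordered set without endpoints, whereas the paper's proof genuinely uses $\mathfrak X$-definability of $X$; your closing observation to this effect is accurate. Second, a point worth making explicit if this were written up: the paper states monotonicity only later, as Proposition \ref{prop:dim}(a), and derives it \emph{from} this lemma, so you were right to prove monotonicity directly from Definition \ref{def:dimension} rather than cite it --- doing otherwise would be circular. The paper's heavier route is consistent with its overall strategy of reducing $\mathfrak X$-structure statements to o-minimal ones, but for this particular lemma your definition-level argument is shorter, avoids the cell machinery entirely, and yields a strictly more general statement.
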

\begin{proof}
Let $\mathcal R$ be the o-minimal structure given in Lemma \ref{lem:in_omin}.
Let $d$ be the right hand of the equality in the lemma.
We first demonstrate that $d \leq \dim X$.
There exists a bounded open box $U$ in $M^n$ such that $d=\dim (U \cap X)$.
The set $U \cap X$ is definable in $\mathcal R$.
There exists a cell $C$ contained in $U \cap X$ and a coordinate projection $\pi:M^n \rightarrow M^d$ such that the image $\pi(C)$ has a nonempty interior by the definition of dimension of a set definable in the o-minimal structure $\mathcal R$.

Take an arbitrary point $x \in C$.
The projection image $\pi(C \cap B)$ has a nonempty interior for any open box $B$ containing the point $x$ by the definition of cells.
 Since $C$ is a subset of $X$, the projection image $\pi(X \cap B)$ has a nonempty interior for any open box $B$ containing the point $x$.
 It means that $d \leq \dim X$.
 
We next demonstrate the opposite inequality $\dim X \leq d$.
There exist a point $x \in M$ and a coordinate projection $\pi:M^n \rightarrow M^{\dim X}$ such that, for any open box $B$ containing the point $x$, the projection image $\pi(B \cap X)$ has a nonempty interior.
 Fix a bounded open box $B$ containing the point $x$.
 The intersection $X \cap B$ is definable in $\mathcal R$ by Lemma \ref{lem:in_omin}.
 Apply the cell decomposition theorem \cite[Chapter 3, Theorem 2.11]{vdD}.
 We get a finite partition into cells $X \cap B=\bigcup_{i=1}^N C_i$.
 The image $\pi(C_k)$ has a nonempty interior for some $1 \leq k \leq N$ by Lemma \ref{lem:omin_tmp}.
 It implies that $\dim X \leq \dim C_k \leq \dim (B \cap X)$.
 It means $\dim X \leq d$.
\end{proof}

We summarize the basic properties of dimension.
\begin{proposition}\label{prop:dim}
Consider an $\mathfrak X$-structure whose underlying set is $M$.
The following assertions hold true:
\begin{enumerate}
\item[(a)] We have $\dim (X) \leq \dim (Y)$ for any $\mathfrak X$-definable sets $X$ and $Y$ with $X \subseteq Y$.
\item[(b)] The equality $\dim (X \cup Y)=\max\{\dim(X),\dim(Y)\}$ holds true for any $\mathfrak X$-definable subsets $X$ and $Y$ of $M^n$.
\item[(c)] The equality $\dim (X \times Y)=\dim(X)+\dim(Y)$ holds true for any $\mathfrak X$-definable sets $X$ and $Y$.
\item[(d)] Let $X$ be an $\mathfrak X$-definable set.
We get $\dim (\partial X) < \dim(X)$ and $\dim(\overline{X})=\dim X$ when $\partial X$ is $\mathfrak X$-definable.
\end{enumerate}
\end{proposition}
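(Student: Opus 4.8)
The plan is to reduce each assertion to the corresponding well-known statement for o-minimal structures via the characterization of dimension in Lemma \ref{lem:equiv_dim}, which expresses $\dim X$ as the supremum over bounded open boxes $U$ of $\dim(U \cap X)$, where $U \cap X$ is definable in the o-minimal structure $\mathcal R$ of Lemma \ref{lem:in_omin}. The point is that dimension is a \emph{local and bounded} invariant, so every global statement about $\mathfrak X$-definable sets should follow from the local o-minimal facts together with elementary supremum manipulations.

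For (a), monotonicity is immediate: if $X \subseteq Y$ then $U \cap X \subseteq U \cap Y$ for every bounded open box $U$, so $\dim(U \cap X) \leq \dim(U \cap Y)$ by the o-minimal dimension theory applied to sets definable in $\mathcal R$, and taking suprema gives $\dim X \leq \dim Y$. For (b), I would again pass to bounded boxes: $U \cap (X \cup Y) = (U \cap X) \cup (U \cap Y)$, and for o-minimal sets we have $\dim\big((U\cap X)\cup(U\cap Y)\big)=\max\{\dim(U\cap X),\dim(U\cap Y)\}$. Taking the supremum over $U$ and using that the supremum of a maximum equals the maximum of the suprema yields the claim; the inequality $\dim(X\cup Y)\geq \max\{\dim X,\dim Y\}$ also follows directly from (a). For (c), the product formula, I would first note that a bounded open box in $M^{n+n'}$ can be taken of the form $U \times V$ with $U, V$ bounded open boxes, and that $(U\times V)\cap(X\times Y)=(U\cap X)\times(V\cap Y)$. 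The o-minimal product formula gives $\dim$ of this equal to $\dim(U\cap X)+\dim(V\cap Y)$, and taking suprema over $U$ and $V$ independently (using that the supremum of a sum of quantities depending on disjoint variables is the sum of the suprema) produces $\dim X + \dim Y$.

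Part (d) is the one requiring genuine care and is where I expect the main obstacle to lie, precisely because it is not purely local: the frontier $\partial X$ and closure $\overline{X}$ involve limit behavior, and the hypothesis that $\partial X$ is $\mathfrak X$-definable is needed to even speak of $\dim(\partial X)$. Here I would fix a point $x$ and bounded open box $B$ witnessing $\dim(\partial X)$, intersect with a bounded open box $U$ so that both $U \cap X$ and $U \cap \partial X$ are definable in $\mathcal R$, and exploit that within $U$ the frontier of $U \cap X$ relative to $U$ agrees with $U \cap \partial X$ up to the boundary of $U$. The delicate issue is that $\partial(U\cap X)$ may differ from $U \cap \partial X$ along $\operatorname{bd}(U)$; I would circumvent this by shrinking $U$ around the witnessing point so the witnessing open box $B$ sits strictly inside $U$, whence within $B$ the two frontiers coincide and the o-minimal inequality $\dim \partial(U\cap X) < \dim(U \cap X)$ transfers. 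Combining with Lemma \ref{lem:equiv_dim} gives $\dim(\partial X)<\dim X$, and then $\dim \overline{X}=\max\{\dim X,\dim \partial X\}=\dim X$ follows from (b) and the identity $\overline{X}=X\cup\partial X$.
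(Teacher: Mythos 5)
Your proposal is correct and takes essentially the same route as the paper: the paper's proof is exactly the one-line reduction via Lemma \ref{lem:equiv_dim} to the basic o-minimal dimension facts of \cite[Chapter 4, Proposition 1.3, Corollary 1.6, Theorem 1.8]{vdD}, which is what you carry out in detail. Your extra care in part (d) --- noting that $U \cap \partial X = U \cap \partial(U \cap X)$ for an open box $U$, so any discrepancy between the two frontiers lies only on $\operatorname{bd}(U)$ --- is precisely the detail the paper leaves implicit in its appeal to those facts.
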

\begin{proof}
This proposition immediately follows from Lemma \ref{lem:equiv_dim} and the basic properties of the dimension of sets definable in an o-minimal structure \cite[Chapter 4, Proposition 1.3, Corollary 1.6, Theorem 1.8]{vdD}.
\end{proof}

\subsection{Structure theorem}
Loveys and Peterzil investigated necessary and sufficient conditions for an o-minimal expansion of an ordered group being linear in \cite{LP}.
Using their results, we investigate the structure of an $\mathfrak X$-expansion $\mathcal X$ of an ordered divisible abelian group when there exists an $\mathfrak X$-definable strictly monotone homeomorphism between a bounded open interval and an unbounded open interval. 
We first prove the following lemma.

\begin{lemma}\label{lem:xbij}
Consider an $\mathfrak X$-expansion of an ordered divisible abelian group whose underlying set is $M$.
Assume that there exists an $\mathfrak X$-definable strictly monotone homeomorphism between a bounded open interval and an unbounded open interval. 
Then, there exists an $\mathfrak X$-definable strictly increasing homeomorphism $\psi$ between an arbitrary bounded open interval and $M$ such that $$\psi(\text{the middle point of the bounded open interval})=0\text{.}$$
\end{lemma}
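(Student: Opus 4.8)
The plan is to reduce the given homeomorphism to a convenient normal form, then to manufacture a homeomorphism onto all of $M$ by a reflection trick, and finally to rescale it onto the prescribed interval.

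\emph{Step 1 (Normalization).} Let $h\colon ]p,q[\to J$ be the given homeomorphism with $J$ unbounded. Composing with the negation $x\mapsto -x$ on the target if necessary (which is proper and $\mathfrak X$-definable by Lemma \ref{lem:bounded}(2)), I may assume $h$ is strictly increasing; the same device reduces the case $J=]-\infty,b[$ to $J=]a,\infty[$, while $J=M$ is treated directly below. Subtracting the constant $a$ from the target and translating the domain so that it is symmetric about $0$—both being proper maps, so that the compositions stay $\mathfrak X$-definable by Lemma \ref{lem:composition}—I obtain an $\mathfrak X$-definable increasing homeomorphism $G\colon ]-\ell_0,\ell_0[\to\,]0,\infty[$ with $\lim_{x\to\ell_0^-}G(x)=+\infty$ and $\lim_{x\to-\ell_0^+}G(x)=0$.

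\emph{Step 2 (Reflection onto $M$).} I set $H(x)=G(x)-G(-x)$ on $]-\ell_0,\ell_0[$. Since $x\mapsto G(-x)$ is $\mathfrak X$-definable (Lemma \ref{lem:composition}, the negation satisfying the bounded image condition), the set $A=\{(x,y_1,y_2)\;|\;y_1=G(x),\ y_2=G(-x)\}$ is $\mathfrak X$-definable, and adjoining the linear condition $z=y_1-y_2$ (Corollary \ref{cor:linear}) gives an $\mathfrak X$-definable set $B$ whose projection forgetting $(y_1,y_2)$ is exactly the graph of $H$. The map $H$ is odd, strictly increasing and continuous, with $\lim_{x\to\ell_0^-}H(x)=+\infty$, $\lim_{x\to-\ell_0^+}H(x)=-\infty$ and $H(0)=0$, hence a homeomorphism onto $M$. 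The point to verify is that this projection is proper, which is where the blow-up of $G$ might seem to obstruct matters. However, on the fibre over a bounded box $U_x\times U_z$ one has $z=y_1-y_2$ bounded, say $|z|\le Z$, while the monotonicity of $G$ forces $\min(y_1,y_2)\le G(0)$ (for $x>0$ one has $y_2=G(-x)<G(0)$, and symmetrically for $x<0$); hence $y_1,y_2\in\,]0,G(0)+Z[$ and the fibre is bounded. By Definition \ref{def:x}(6) the graph of $H$ is $\mathfrak X$-definable, so $H\colon ]-\ell_0,\ell_0[\to M$ is an $\mathfrak X$-definable increasing homeomorphism with $H(0)=0$.

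\emph{Step 3 (Rescaling to an arbitrary interval).} Given a bounded open interval, I translate by a proper map so that its midpoint is $0$ and write it as $]-\ell,\ell[$. If $\ell$ is a rational multiple $q\ell_0$ of $\ell_0$, then $t\mapsto H(t/q)$ already works, the scaling by $1/q$ being proper and satisfying the bounded image condition (Lemma \ref{lem:bounded}(2)), so the composition is $\mathfrak X$-definable by Lemma \ref{lem:composition}. For general $\ell$ I would produce an $\mathfrak X$-definable increasing homeomorphism $]-\ell,\ell[\to\,]-\ell_0,\ell_0[$ and postcompose with $H$; as both intervals are bounded, such a map is definable in the o-minimal structure $\mathcal R$ of Theorem \ref{thm:in_omin}, so it suffices to build it inside $\mathcal R$. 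Here I invoke the Loveys--Peterzil analysis of o-minimal expansions of ordered groups: either $\mathcal R$ defines a real closed field, in which case the affine map $t\mapsto(\ell_0/\ell)\,t$ does the job and is $\mathfrak X$-definable once multiplication is, or $\mathcal R$ is linear and one rescales by a piecewise-linear map with rational slopes.

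\emph{Main obstacle.} The genuine difficulty is precisely this last rescaling for arbitrary $\ell$. The reflection trick yields the homeomorphism only for the single half-length $\ell_0$ dictated by the hypothesis, and rational scalings reach only half-lengths in $\mathbb Q\,\ell_0$. Passing to a general $\ell$ demands an $\mathfrak X$-definable stretching of one bounded interval onto another of incommensurable length, and controlling this—either by extracting a real closed field structure from $H$ via Loveys--Peterzil, or by a direct piecewise-linear construction whose graph is semi-definable despite having breakpoints accumulating at the endpoints—is the crux of the argument and the step I expect to require the most care, not least because in a non-Archimedean group the naive stretchings need not be surjective onto $M$.
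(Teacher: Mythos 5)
Your Steps 1--2 are essentially sound and produce the desired homeomorphism for \emph{one} bounded interval, namely the (translated) domain $]-\ell_0,\ell_0[$ of the given map: the reflection $H(x)=G(x)-G(-x)$, together with your properness argument for the projection defining its graph, is a legitimate alternative to the paper's construction, which instead splits the interval in half and maps the two halves onto $]-\infty,0[$ and $]0,\infty[$ separately. One caveat even here: you assert that $H$, being continuous, strictly increasing and unbounded in both directions, is \emph{onto} $M$. In a general densely ordered group there is no intermediate value theorem, so surjectivity is not free. It can be repaired: each restriction $H|_{[a,b]}$ has bounded graph, hence is definable in the o-minimal structure $\mathcal R$ of Theorem \ref{thm:in_omin}, and o-minimality gives $H([a,b])=[H(a),H(b)]$, whence $\operatorname{Im}(H)=M$. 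The paper never faces this issue because all of its maps are translates of restrictions of the given bijection $\varphi$, so surjectivity is inherited rather than deduced.

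The genuine gap is Step 3, and it is not merely a step ``requiring care'': the reduction you chose cannot work. You try to reach an arbitrary interval $]-\ell,\ell[$ by building an $\mathfrak X$-definable homeomorphism $]-\ell,\ell[\,\rightarrow\,]-\ell_0,\ell_0[$ and composing with $H$. Any such map has bounded graph, hence is definable in $\mathcal R$; but if $\mathcal R$ is linear and the group is non-Archimedean with $\ell$ infinitely larger than $\ell_0$, no such homeomorphism exists: a piecewise linear bijection with rational slopes $q_i$ on finitely many pieces of total length $2\ell$ has image of total length $\sum q_i\delta_i=2\ell_0$, forcing $\ell\leq q\,\ell_0$ for some rational $q$, a contradiction. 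So the incommensurability obstacle you flag is real and fatal for your reduction, yet the lemma is still true. The idea you are missing (and the paper's key trick) is that no bounded-to-bounded rescaling is ever needed: one adjusts lengths against the \emph{unbounded} end of the image, using translations only. Normalize $\varphi:\,]0,u[\,\rightarrow\,]0,\infty[$ increasing. For a target interval $]0,v[$ with $v<u$, the map $t\mapsto\varphi(t+u-v)-\varphi(u-v)$ carries $]0,v[$ onto $]0,\infty[$, because restricting an increasing bijection to a terminal subinterval of its domain still surjects onto a terminal ray; for $v>u$, paste the identity on $]0,v-u]$ with $t\mapsto\varphi(t+u-v)+v-u$ on $]v-u,v[$. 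Every map involved is a composition with translations, which are proper and satisfy the bounded image condition, so $\mathfrak X$-definability follows from Lemma \ref{lem:composition} and Lemma \ref{lem:bounded}(1); no field structure, no rational scaling, and no Loveys--Peterzil input is required. With these ray homeomorphisms available for every length, your own reflection trick (or the paper's two-half pasting) finishes the proof.
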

\begin{proof}
Let $\varphi:I \rightarrow J$ be the given $\mathfrak X$-definable strictly monotone homeomorphism between a bounded open interval $I$ and an unbounded open interval $J$.
It is easy to construct strictly increasing homeomorphisms between $]a,b[$ and $]0,b-a[$, between $]a,\infty[$ and $]0,\infty[$, and between $]-\infty,a[$ and $]-\infty,0[$ for $a,b \in M$ using the addition.
The composition of $\varphi$ with them is $\mathfrak X$-definable by Lemma \ref{lem:composition} and Lemma \ref{lem:bounded}(1).
Hence, we may assume that $I=]0,u[$ for some $u>0$.
If $J=M$, consider the restriction of $\varphi$ to $]0,\varphi^{-1}(0)[$ instead of $\varphi$.
We may assume that $J=]-\infty,0[$ or $J=]0,\infty[$.
We have to check $\mathfrak X$-definability in the same manner as above every time we construct a new map but we omit them in the proof.

We may further assume that $J=]0,\infty[$ and $\varphi$ is strictly increasing by composing $\varphi$ with the maps given by $x \mapsto -x$ and $x \mapsto u-x$.
Take an arbitrary nonempty bounded open interval $I'$.
We will construct an $\mathfrak X$-definable strictly increasing homeomorphism from $I'$ to $M$.
We may assume that $I'$ is of the form $]0,v[$ for some $v>0$ in the same way as above.
We first construct an $\mathfrak X$-definable strictly increasing homeomorphism from $I'$ to $]0,\infty[$.
We have nothing to do when $u=v$.
When $v<u$, the $\mathfrak X$-map given by $\varphi(t+u-v)-\varphi(u-v)$ for all $0 <t <v$ is the desired map.
When $v>u$, consider the the $\mathfrak X$-map which is the identity map on $]0,v-u]$ and which is given by $\varphi(t+u-v)+v-u$ for all $t > v-u$.

We finally construct an $\mathfrak X$-definable strictly increasing homeomorphism from $I'$ to $M$.
We can construct $\mathfrak X$-definable strictly increasing homeomorphisms $\varphi_1:]0,v/2[ \rightarrow ]-\infty,0[$ and $\varphi_1:]v/2,v[ \rightarrow ]0, \infty[$ in the same manner as above.
The map $\psi:]0,v[ \rightarrow M$ given by $\psi(t)=\varphi_1(t)$ if $t<v/2$, $\psi(v/2)=0$ and $\psi(t)=\varphi_2(t)$ if $t>v/2$ is the desired map.
\end{proof}

Recall the definition of a piecewise linear map definable in an o-minimal expansion of an ordered group.
We also define a piecewise linear map definable in an $\mathfrak X$-expansion of an ordered divisible abelian group.
\begin{definition}
Consider an o-minimal expansion of an ordered group $\mathcal M=(M,<,+,0,\ldots)$.
A definable function $F:U \subseteq M^n \rightarrow M$ is \textit{piecewise linear}, if we can partition $U$ into finitely many definable sets $U_1, \ldots, U_k$ such that $F$ is linear on each of them, i.e., given $x,y \in U_i$ and $t \in M^n$, if $x+t,y+t \in U_i$, then $F(x+t)-F(x)=F(y+t)-F(y)$.

Consider an $\mathfrak X$-expansion of an ordered divisible abelian group whose underlying set is $M$.
An $\mathfrak X$-definable function $F:U \subseteq M^n \rightarrow M$ is \textit{piecewise linear}, for any bounded open box $B$ in $M^n$ and a bounded open interval $I$, if we can partition $B \cap U \cap F^{-1}(I)$ into finitely many $\mathfrak X$-definable sets $U_1, \ldots, U_k$ such that $F$ is linear on each of them, i.e., given $x,y \in U_i$ and $t \in M^n$, if $x+t,y+t \in U_i$, then $F(x+t)-F(x)=F(y+t)-F(y)$.
\end{definition}

The following is due to Loveys and Peterzil \cite{LP} and is summarized in \cite{E}.
See also \cite{PS2}.
\begin{proposition}\label{prop:lp2}
Consider an o-minimal expansion of an ordered group $\mathcal M=(M,<,+,0,\ldots)$.
The following are equivalent:
\begin{enumerate}
\item[(1)] Every definable function $F:U \subseteq M^n \rightarrow M$ is piecewise linear.
\item[(2)] There exist no definable binary operations $\oplus, \otimes:I^2 \rightarrow I$ on an interval $I=]-a,a[$, and a positive element $1 \in I$ such that $(I,<_I,0,1,\oplus,\otimes)$ is an ordered real closed field, where $<_I$ denotes the restriction of $<$ to $I$.
\end{enumerate}
\end{proposition}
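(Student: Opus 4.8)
The plan is to prove the two implications separately. The statement is a specialization of the Peterzil--Starchenko trichotomy to o-minimal expansions of ordered groups, so essentially all of the difficulty lies in $(2)\Rightarrow(1)$, while $(1)\Rightarrow(2)$ is an elementary computation. I would treat the latter directly and quote the structure theory of Loveys and Peterzil for the former.

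For $(1)\Rightarrow(2)$ I would argue contrapositively: assuming a definable ordered real closed field $(I,<_I,0,1,\oplus,\otimes)$ on $I=\,]-a,a[$, I would exhibit a definable function that is not piecewise linear, the natural candidate being the multiplication $\otimes\colon I^2\to I$. Since $(I,\oplus,<_I)$ is a definable ordered group inside an o-minimal expansion of an ordered group, the standard one-dimensional theory gives a definable order-preserving reparametrization identifying $\oplus$ near $0$ with the ambient operation $+$, so I may compute increments of $\otimes$ with respect to $+$ and use distributivity over $\oplus=+$. For $F(x_1,x_2)=x_1\otimes x_2$ and a translation $t=(t_1,t_2)$ the increment
\[
F\big((x_1,x_2)+t\big)-F(x_1,x_2)=x_1\otimes t_2+t_1\otimes x_2+t_1\otimes t_2
\]
depends on the base point $(x_1,x_2)$, not on $t$ alone. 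On any piece of a purported finite partition that contains an open box I could then choose $x\neq y$ and a common small $t$ keeping $x+t$ and $y+t$ in the box, making the two increments unequal; this contradicts linearity on that piece, so $F$ is not piecewise linear and $(1)$ fails.

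The implication $(2)\Rightarrow(1)$ is the hard direction and is precisely the content of the Loveys--Peterzil analysis. Here I would invoke the trichotomy theorem: near each point the induced structure is locally affine (trivial, or linear over an ordered division ring) or it defines a real closed field on some interval. Hypothesis $(2)$ excludes the field option uniformly, leaving only local affineness, and the work of Loveys and Peterzil then promotes this pointwise information to the global normal form asserting that every definable function is piecewise linear. I would not reprove the trichotomy, but quote it from \cite{LP}, together with the exposition in \cite{E} and the related \cite{PS2}.

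The main obstacle is therefore entirely contained in $(2)\Rightarrow(1)$: converting the local absence of a definable field into a single global statement about definable functions. The delicate steps there are showing that the local division ring of slopes does not vary from point to point, so that the local affine descriptions glue coherently, and obtaining piecewise linearity in the strong translation-invariant sense of the definition rather than merely along one-dimensional slices. Because these are exactly the facts established in \cite{LP}, my proof would reduce to checking that the hypotheses of that reference are met in the present setting---an o-minimal, hence definably complete, expansion of an ordered (thus divisible abelian) group---and then citing the equivalence.
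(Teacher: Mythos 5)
Your handling of $(2)\Rightarrow(1)$ is exactly what the paper does: the paper's entire proof is the citation \cite[Fact 1.12]{E} (recording the Loveys--Peterzil equivalence), so deferring the hard direction to \cite{LP}/\cite{E} matches it. The problem is your claim that $(1)\Rightarrow(2)$ is an elementary computation: the argument you sketch rests on a false lemma. You assert that, since $(I,\oplus,<_I)$ is a definable ordered group in an o-minimal expansion of an ordered group, ``standard one-dimensional theory'' yields a definable order-preserving reparametrization identifying $\oplus$ near $0$ with the ambient $+$. No such fact holds. Counterexample: let $M=]0,\infty[$ with the ordered group operation given by ordinary multiplication (an ordered divisible abelian group, written multiplicatively), and let the definable sets be all semialgebraic subsets of the $M^n$; this is an o-minimal expansion of that ordered group. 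Transporting the real field through a semialgebraic order-preserving bijection $\varphi:\mathbb R \rightarrow ]1/2,2[$ with $\varphi(0)=1$ produces definable operations $\oplus,\otimes$ making $]1/2,2[$ (the symmetric interval about the identity) an ordered real closed field, so condition $(2)$ fails in this structure. Yet a definable local isomorphism $\psi$ between $(]1/2,2[,\oplus)$ near the identity and the ambient group $(M,\cdot)$ near the identity cannot exist: the composition $\chi=\psi\circ\varphi$ would be a semialgebraic, order-preserving, non-constant local solution of $\chi(s+t)=\chi(s)\cdot\chi(t)$, i.e.\ a local exponential, which is not semialgebraic. So the reparametrization you invoke is unavailable precisely in structures of the kind the proposition concerns.

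Without that identification the rest of your computation collapses. Piecewise linearity in this paper is measured by increments with respect to the ambient $+$, whereas distributivity of $\otimes$ holds only over $\oplus$; your displayed identity for $F(x_1,x_2)=x_1\otimes x_2$ silently uses $\oplus=+$ and conflates ambient differences with field differences. Relating the ambient group operation to a definable field on an interval is exactly the nontrivial content of Loveys--Peterzil, and with the paper's definition neither implication is a one-line computation. The clean repair is the paper's own proof: quote \cite[Fact 1.12]{E} (equivalently \cite{LP}) for both implications. If you insist on a self-contained proof of $(1)\Rightarrow(2)$, the correct starting point is that $(1)$ makes $\oplus$ itself piecewise linear with respect to $+$, and one must then carry out a genuine (and substantially longer) analysis producing the local affine normal form of $\oplus$ and propagating it through the field axioms before any distributivity computation can be performed.
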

\begin{proof}
\cite[Fact 1.12]{E}.
\end{proof}

We are now ready to prove the structure theorem.
\begin{theorem}\label{thm:xstr}
Consider an $\mathfrak X$-expansion of an ordered divisible abelian group whose underlying set is $M$.
Assume further that there exists an $\mathfrak X$-definable strictly monotone homeomorphism between a bounded open interval and an unbounded open interval. 
Then, exactly one of the following holds true:
\begin{enumerate}
\item[(1)] Any $\mathfrak X$-definable function is piecewise linear.
\item[(2)] The structure is an $\mathfrak X$-expansion of an ordered real closed field in the following sense:
There exists elements $1' \in M$ and $\mathfrak X$-definable binary operations $\oplus,\otimes:M^2 \rightarrow M$ such that the tuple $(M,<,0,1',\oplus,\otimes)$ is an ordered real closed field.
\end{enumerate}
\end{theorem}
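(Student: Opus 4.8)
The plan is to reduce the whole statement to the o-minimal dichotomy of Loveys and Peterzil. Let $\mathcal R$ be the o-minimal expansion of the ordered group $(M,<,0,+)$ furnished by Theorem \ref{thm:in_omin}, so that every bounded $\mathfrak X$-definable set is definable in $\mathcal R$ and, conversely, every $\mathcal R$-definable set is $\mathfrak X$-definable. Applying Proposition \ref{prop:lp2} to $\mathcal R$ splits the argument into two mutually exclusive cases: either (A) every $\mathcal R$-definable function is piecewise linear, or (B) there is an interval $I=]-a,a[$ carrying $\mathcal R$-definable operations $\oplus,\otimes$ making $(I,<_I,0,1,\oplus,\otimes)$ an ordered real closed field. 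I will show that (A) forces alternative (1) and (B) forces alternative (2), and that this passage is faithful enough to transport the exclusivity of the o-minimal dichotomy to the two alternatives.

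First I treat the implication (A) $\Rightarrow$ (1). Let $F:U\to M$ be an arbitrary $\mathfrak X$-definable function, fix a bounded open box $B$ and a bounded open interval $I$, and set $W=B\cap U\cap F^{-1}(I)$. The set $W$ is the image of $\Gamma(F)\cap(B\times I)$ under the projection forgetting the last coordinate; since $\Gamma(F)\cap(B\times I)$ is bounded and $\mathfrak X$-definable, this projection is proper, so $W$ is a bounded $\mathfrak X$-definable set and hence definable in $\mathcal R$ by Theorem \ref{thm:in_omin}(ii). The same reasoning shows that $F|_W$, whose graph is exactly $\Gamma(F)\cap(B\times I)$, is definable in $\mathcal R$. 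By (A) the function $F|_W$ is piecewise linear over $\mathcal R$, so $W$ partitions into finitely many $\mathcal R$-definable pieces on which $F$ is linear; each such piece is $\mathfrak X$-definable by Theorem \ref{thm:in_omin}(i). As $B$ and $I$ were arbitrary, this is precisely the definition of $F$ being piecewise linear in the $\mathfrak X$-sense, which gives (1).

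Next I treat (B) $\Rightarrow$ (2). Because $I$ is bounded and $\oplus,\otimes$ take values in $I$, these operations are bounded $\mathcal R$-definable sets, hence $\mathfrak X$-definable. Lemma \ref{lem:xbij} supplies an $\mathfrak X$-definable strictly increasing homeomorphism $\psi:I\to M$ with $\psi(0)=0$, the midpoint of $]-a,a[$ being $0$; its inverse $\psi^{-1}$ is $\mathfrak X$-definable by coordinate permutation, Definition \ref{def:x}(5). I transport the field along $\psi$, setting $1'=\psi(1)$, $x\oplus' y=\psi(\psi^{-1}(x)\oplus\psi^{-1}(y))$ and $x\otimes' y=\psi(\psi^{-1}(x)\otimes\psi^{-1}(y))$; since $\psi$ is an order isomorphism fixing $0$, the tuple $(M,<,0,1',\oplus',\otimes')$ is an ordered real closed field isomorphic to $(I,<_I,0,1,\oplus,\otimes)$. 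To see that $\oplus'$ is $\mathfrak X$-definable, realize its graph as the image of the $\mathfrak X$-definable set $\{(x,y,z,u,v,w):\psi(u)=x,\ \psi(v)=y,\ w=u\oplus v,\ \psi(w)=z\}$ under the projection forgetting $u,v,w$; these three coordinates are forced to lie in the bounded interval $I$, so the projection is proper over any bounded box and the image is $\mathfrak X$-definable by Definition \ref{def:x}(6). The identical argument applies to $\otimes'$, and (2) follows.

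It remains to verify that (1) and (2) cannot hold simultaneously; together with the existence just established this yields exactly one. Suppose both held. Transporting the field on $M$ back to $I$ through $\psi^{-1}$ produces $\mathcal R$-definable operations making $(I,<_I,0,1,\oplus,\otimes)$ a real closed field, since the transported operations are bounded and $\mathfrak X$-definable; thus we are in case (B). The multiplication $\otimes:I^2\to I$ is a bounded $\mathcal R$-definable function, and by (1) it is piecewise linear in the $\mathfrak X$-sense. Choosing $B\supseteq I^2$ and a bounded interval containing the range collapses the $\mathfrak X$-definition to a finite partition of $I^2$ into bounded, hence $\mathcal R$-definable, pieces on which $\otimes$ is $+$-affine, so $\otimes$ would be piecewise linear over $\mathcal R$. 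This is impossible: the multiplication of a real closed field restricts on a line to a strictly convex function whose increments are not translation invariant, so it is not piecewise linear. I expect this last verification to be the main obstacle, because the piecewise-linear condition refers to the ambient group addition $+$ whereas the convexity lives in the field; reconciling the two additive structures is exactly the point where the Loveys--Peterzil structure theory underlying Proposition \ref{prop:lp2} must be invoked.
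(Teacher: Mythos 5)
Your treatment of the dichotomy itself is essentially the paper's proof: take the o-minimal expansion $\mathcal R$ from Theorem \ref{thm:in_omin}, apply Proposition \ref{prop:lp2}, derive alternative (1) from the piecewise-linear case by restricting an $\mathfrak X$-definable function to $B\cap U\cap F^{-1}(I)$ (a bounded, hence $\mathcal R$-definable set), and derive alternative (2) by transporting the $\mathcal R$-definable real closed field on $I=]-a,a[$ to $M$ along the homeomorphism of Lemma \ref{lem:xbij}. Your six-variable projection argument for the $\mathfrak X$-definability of $\oplus'$ is an inlined version of the paper's appeal to Lemma \ref{lem:image} (the graph of $\oplus_I$ is bounded), and it is correct. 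So the ``at least one alternative holds'' part of your proposal is sound and coincides with the paper.

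The gap is in your mutual-exclusivity argument, which, incidentally, the paper does not attempt (it only derives each alternative from the corresponding, mutually exclusive, Loveys--Peterzil case). Two steps fail. First, you assert that pulling the field on $M$ back to $I$ along $\psi^{-1}$ yields operations that are ``bounded and $\mathfrak X$-definable,'' hence $\mathcal R$-definable. Boundedness of the graphs is clear, but $\mathfrak X$-definability is not: the pulled-back graph is the image of the \emph{unbounded} graph of $\oplus$ under $(\psi^{-1})^{\times 3}$, and that map is not proper (the preimage of any bounded box containing $\overline{I}^3$ is all of $M^3$), so Lemma \ref{lem:image} does not apply; your six-variable trick also breaks in this direction, because the auxiliary coordinates now range over all of $M$ rather than over the bounded interval $I$, so the projection is no longer proper. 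Properness is asymmetric here, which is exactly why the paper transports from $I$ to $M$ and never back. (A repair is possible, e.g.\ by rewriting the pulled-back operations as solution sets of field-polynomial equations and using that the field operations satisfy the bounded image condition, but that is a different argument from the one you give.) Second, your closing contradiction --- ``field multiplication is strictly convex, hence not piecewise linear'' --- is not a proof: piecewise linearity in this paper means linearity with respect to the ambient group operation $+$, whereas the convexity of $\otimes$ lives in the field's own additive structure $\oplus$, and nothing you write (nor Proposition \ref{prop:lp2} as stated, which only guarantees that \emph{some} definable function fails to be piecewise linear, not the multiplication itself) relates the two additions. You flag this difficulty yourself; as it stands, the ``exactly one'' clause remains unproven in your write-up.
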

\begin{proof}
Take the o-minimal expansion of an ordered group $\mathcal R$ given in Theorem \ref{thm:in_omin}.
We have the following two cases by Proposition \ref{prop:lp2}.
\begin{enumerate}
\item[(1)] Every function $F:U \subseteq M^n \rightarrow M$ definable in $\mathcal R$ is piecewise linear.
\item[(2)] There exist binary operations $\oplus_I, \otimes_I:I^2 \rightarrow I$ definable in $\mathcal R$ on an interval $I=]-a,a[$, and a positive element $1_I \in I$ such that $(I,<_I,0,1_I,\oplus_I,\otimes_I)$ is an ordered real closed field, where $<_I$ denotes the restriction of $<$ to $I$.
\end{enumerate}

We first consider the case (1).
Take an arbitrary $\mathfrak X$-function $F:U \subseteq M^n \rightarrow M$.
Take a bounded open box $B$ in $M^n$ and a bounded open interval $I$.
The set $\Gamma(F) \cap (B \times I)$ is definable in $\mathcal R$, where $\Gamma(F)$ denotes the graph of the function $F$.
It is the graph of the restriction of $F$ to $U \cap B \cap F^{-1}(I)$.
Since the function $F|_{U \cap B \cap F^{-1}(I)}$ definable in $\mathcal R$ is piecewise linear, the $\mathfrak X$-definable function is also piecewise liner.

We next treat the case (2).
There exists a strictly increasing $\mathfrak X$-definable homeomorphism $\varphi:I \rightarrow M$ with $\varphi(0)=0$ by Lemma \ref{lem:xbij}.
Set $1'=\varphi(1_I)$, $x \oplus y= \varphi(\varphi^{-1}(x) \oplus_I \varphi^{-1}(y))$ and $x \otimes y= \varphi(\varphi^{-1}(x) \otimes_I \varphi^{-1}(y))$ for all $x, y \in M$.
The graph of $\oplus_I$ is a bounded set definable in $\mathcal R$.
In particular, it is $\mathfrak X$-definable.
The graph of $\oplus$ is the image of the graph of $\oplus_I$ under the homeomorphism between $I^3$ and $M^3$ given by $(x,y,z) \mapsto (\varphi(x),\varphi(y),\varphi(z))$.
It is $\mathcal X$-definable by Lemma \ref{lem:image}.
The operator $\otimes$ is also $\mathfrak X$-definable for the same reason.
It is easy to check that the tuple $(M,<,0,1',\oplus,\otimes)$ is an ordered real closed field using \cite[Theorem 1.2.2(ii)]{BCR}.
We omit the details.
\end{proof}

\begin{remark}
Consider an o-minimal expansion $\widetilde{\mathbb R}$ of the ordered group of reals.
The $\mathfrak X$-structure of semi-definable sets in $\widetilde{\mathbb R}$ satisfies the assumption of Theorem \ref{thm:xstr}.
The map $\varphi:]0,1[ \rightarrow ]0,\infty[$ defined by $\varphi(x)=i+2^{i+1}(x-(1-1/2^i))$ for $1-1/2^i<x \leq 1-1/2^{i+1}$ is a semi-definable homeomorphism between the interval $]0,1[$ and the interval $]0,\infty[$.
\end{remark}

\begin{remark}
An assertion for Shiota's $\mathfrak Y$-sets similar to but not identical to Theorem \ref{thm:xstr} is found in \cite[Theorem V.2.2]{Shiota}. 
\end{remark}

\subsection{Topological results}
We summarize other basic topological properties of $\mathfrak X$-structures which are not introduced in the previous subsections.
\begin{proposition}\label{prop:x_int_closure}
Consider an $\mathfrak X$-expansion of an ordered divisible abelian group.
The interior, closure and frontier of an $\mathfrak X$-definable set are $\mathfrak X$-definable.
\end{proposition}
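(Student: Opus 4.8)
The plan is to prove first that the closure $\overline{X}$ of an $\mathfrak X$-definable set $X \subseteq M^n$ is $\mathfrak X$-definable, and then to deduce the other two assertions from the fact that the family of $\mathfrak X$-definable sets is a boolean algebra by Definition \ref{def:x}(3). Indeed, once the closure is available, the interior is $\operatorname{int}(X) = M^n \setminus \overline{M^n \setminus X}$, which is $\mathfrak X$-definable because $M^n \setminus X$ is $\mathfrak X$-definable and taking closures preserves $\mathfrak X$-definability; and the frontier is $\partial X = \overline{X} \cap (M^n \setminus X)$, an intersection of two $\mathfrak X$-definable sets. So the entire content is the closure.

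\textbf{An auxiliary $\mathfrak X$-definable set with a bounded parameter.} Fix any positive element $R \in M$; such an element exists since $(M,<)$ is a dense order without endpoints. I would encode the statement ``the open box of radius $\varepsilon$ centered at $y$ meets $X$'' by the set
\begin{equation*}
\Phi = \Bigl\{ (y,\varepsilon,x) \in M^n \times M \times M^n \;\Big|\; 0<\varepsilon<R,\ x \in X,\ \textstyle\bigwedge_{i=1}^n (-\varepsilon < x_i - y_i < \varepsilon) \Bigr\}.
\end{equation*}
Each inequality $-\varepsilon < x_i - y_i < \varepsilon$ defines an $\mathfrak X$-definable set by Corollary \ref{cor:linear}, the condition $0<\varepsilon<R$ is of the same type, and the condition $x \in X$ is $\mathfrak X$-definable after the appropriate product and permutation of coordinates by Definition \ref{def:x}(4),(5). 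As $\mathfrak X$-definable sets are closed under finite intersection, $\Phi$ is $\mathfrak X$-definable. The crucial feature is that the auxiliary parameter $\varepsilon$ is confined to the bounded interval $]0,R[$.

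\textbf{Passing to $\overline X$ by two proper projections.} Let $\pi_1$ be the coordinate projection forgetting the last $n$ coordinates $x$, and put $E = \pi_1(\Phi) \subseteq M^n \times M$. The restriction $\pi_1|_\Phi$ is proper: for a bounded open box $U$ in the $(y,\varepsilon)$-space, every point of $(\pi_1|_\Phi)^{-1}(U)$ has $y$ and $\varepsilon$ bounded, and then $|x_i - y_i| < \varepsilon$ forces $x$ bounded, so the preimage is bounded. Hence $E$ is $\mathfrak X$-definable by Definition \ref{def:x}(6), and $(y,\varepsilon) \in E$ holds exactly when the $\varepsilon$-box about $y$ meets $X$. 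Since the boxes of radius $\varepsilon \in\, ]0,R[$ form a neighborhood basis at $y$, monotonicity in $\varepsilon$ gives $y \in \overline X \iff (y,\varepsilon) \in E$ for all $\varepsilon \in\, ]0,R[$; capping $\varepsilon$ at $R$ loses nothing. Therefore, setting $F = (M^n \times\, ]0,R[) \setminus E$, which is $\mathfrak X$-definable, and letting $\pi_2$ forget the coordinate $\varepsilon$, I get $M^n \setminus \overline X = \pi_2(F)$. Again $\pi_2|_F$ is proper, because the preimage of a bounded box $U$ in the $y$-space is contained in $U \times\, ]0,R[$ and is thus bounded. Consequently $M^n \setminus \overline X$, and hence $\overline X$, is $\mathfrak X$-definable.

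\textbf{Main obstacle.} The only delicate point is the properness of the two coordinate projections required by Definition \ref{def:x}(6); without it the projections need not preserve $\mathfrak X$-definability. This is precisely why the proof introduces the \emph{bounded} radius parameter $\varepsilon \in\, ]0,R[$ rather than ranging over all positive $\varepsilon$: boundedness of $\varepsilon$ simultaneously bounds $x$ via the box inequalities and bounds $\varepsilon$ itself, making both projections proper. All remaining verifications—closure of $\mathfrak X$-definable sets under boolean operations and the neighborhood-basis equivalence—are routine.
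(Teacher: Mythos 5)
Your proof is correct and follows essentially the same route as the paper's: both encode proximity with a radius parameter confined to a bounded interval $]0,R[$ so that the two coordinate projections (one eliminating the witness point, one eliminating the radius, with a complementation in between) are proper, using Corollary \ref{cor:linear} for the linear inequalities and Definition \ref{def:x}(3)--(6) throughout. The only difference is cosmetic: the paper computes $\myint(X)$ first and obtains $\overline{X}=(\myint(X^c))^c$, whereas you compute $\overline{X}$ (via its complement) first and derive the interior and frontier from it.
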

\begin{proof}
Let $M$ be the underlying space of the given $\mathfrak X$-structure.
Let $X$ be an $\mathfrak X$-definable subset of $M^n$.
Fix a positive $r>0$.
Consider the $\mathfrak X$-definable set 
$$ A=\{(x,y,s) \in M^n \times M^n \times M\;|\; 0<s<r,\ |x_i - y_i|<s (\forall i),\  x \in X,\  y \not\in X\} \text{,}$$
where $x_i$ and $y_i$ are the $i$-th coordinate of $x$ and $y$, respectively.
Set $$B=\{(x,s) \in X \times M\;|\; 0<s<r,\ \exists y \not\in X \text{ and }  |x_i - y_i|<s (\forall i)\}\text{.}$$
The set $B$ is the image of the $\mathfrak X$-definable set $A$ under a proper projection.
The set $B$ is $\mathfrak X$-definable.
The interior $\myint(X)$ of $X$ is the image of $(X \times \{s \in M\;|\;0<s<r\}) \setminus B$ under the projection forgetting the last coordinate.
Therefore,  the interior is $\mathfrak X$-definable.

The closure of $X$ is given by $(\myint(X^c))^c$.
Here, the notation $A^c$ denotes the complement of a set $A$.
The closure is $\mathfrak X$-definable.
The frontier is also $\mathfrak X$-definable by Definition \ref{def:x}(3).
\end{proof}

\begin{corollary}\label{cor:dim}
Consider an $\mathfrak X$-expansion of an ordered divisible abelian group.
Let $X$ be an $\mathfrak X$-definable set.
We get $\dim (\partial X) < \dim(X)$ and $\dim(\overline{X})=\dim X$.
\end{corollary}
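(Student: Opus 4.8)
The plan is to recognize that this corollary is simply the combination of two results already established in this subsection, so the entire content is to verify that their hypotheses match the present setting. Proposition \ref{prop:dim}(d) already asserts precisely the two desired conclusions, namely $\dim(\partial X) < \dim(X)$ and $\dim(\overline{X}) = \dim X$, but only under the extra assumption that the frontier $\partial X$ is $\mathfrak X$-definable. In the general $\mathfrak X$-structures treated in Proposition \ref{prop:dim}, that assumption cannot be dropped, since the frontier of an $\mathfrak X$-definable set need not be $\mathfrak X$-definable without further hypotheses on the structure. The whole point of the present corollary is to observe that, in the more restrictive setting of an $\mathfrak X$-expansion of an ordered divisible abelian group, this assumption becomes automatic.

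First I would invoke Proposition \ref{prop:x_int_closure}, which guarantees that the interior, closure, and frontier of any $\mathfrak X$-definable set are $\mathfrak X$-definable in an $\mathfrak X$-expansion of an ordered divisible abelian group. In particular, the frontier $\partial X$ of the given $\mathfrak X$-definable set $X$ is $\mathfrak X$-definable. This is exactly the hypothesis required by Proposition \ref{prop:dim}(d). Having established $\mathfrak X$-definability of $\partial X$, I would then apply Proposition \ref{prop:dim}(d) directly to obtain both $\dim(\partial X) < \dim(X)$ and $\dim(\overline{X}) = \dim X$, completing the proof.

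I do not expect any genuine obstacle here: the substantive work has already been carried out in proving Propositions \ref{prop:x_int_closure} and \ref{prop:dim}. The only thing to check is that the ambient setting of the corollary (an $\mathfrak X$-expansion of an ordered divisible abelian group) is strong enough to trigger Proposition \ref{prop:x_int_closure}, which it is by construction. Thus the corollary is a short deduction whose sole purpose is to remove the $\mathfrak X$-definability hypothesis on $\partial X$ that appears in the more general Proposition \ref{prop:dim}(d).
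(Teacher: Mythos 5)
Your proof is correct and is exactly the paper's argument: the paper also deduces the corollary by combining Proposition \ref{prop:x_int_closure} (which makes $\partial X$ $\mathfrak X$-definable in this setting) with Proposition \ref{prop:dim}(d). Your additional remarks about why the $\mathfrak X$-definability hypothesis on $\partial X$ becomes automatic simply make explicit what the paper leaves as ``immediate.''
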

\begin{proof}
Immediate from Proposition \ref{prop:x_int_closure} and Proposition \ref{prop:dim}(d).
\end{proof}

\begin{example}\label{ex:x1}
Consider a definably complete structure $\mathcal M=(M,<,\ldots)$.
A \textit{definable family} of subsets of $M^n$, parameterized by $A \subseteq M^m$, is an indexed family $\{Y_a\}_{a \in A}$ of fibers, where $Y \subseteq M^{m+n}$ and $A \subseteq M^m$ are definable.
A definable family $\{Y_a\}_{a \in A}$ is \textit{monotone} if $A \subseteq M$ and either $Y_r \supseteq Y_s$ for all $r,s \in A$ with $r \leq s$ or $Y_r \subseteq Y_s$ for all $r,s \in A$ with $r \leq s$.
We have $\bigcap_{r \in A} Y_r \neq \emptyset$ for all monotone definable families of nonempty definable closed and bounded sets $\{Y_r\}_{r \in A}$ by \cite[Lemma 1.9]{M}.

We can define $\mathfrak X$-definable family similarly.
But the intersection of monotone $\mathfrak X$-definable family of nonempty $\mathfrak X$-definable closed and bounded sets may be an empty set.
In fact, consider the o-minimal structure in Example \ref{ex:x2} and the $\mathfrak X$-structure of semi-definable sets in this o-minimal structure.

The set of positive integers $\mathbb N$ is semi-definable.
Take $a_n,b_n \in \mathbb Q$ as in Example \ref{ex:x2}.
Set $Y_n = [a_n,b_n] \subseteq \mathbb R_{\text{alg}}$.
The family $\{Y_n\}_{n \in \mathbb N}$ is a monotone $\mathfrak X$-definable family of nonempty definable closed and bounded sets.
We obviously have $\bigcap_{n \in \mathbb N} Y_n = \emptyset$.
\end{example}

%We can define an $\mathfrak X$-expansion of an ordered abelian group.
%Let $M$ be its underlying set.
%The distance function $\mydist_n:M^n \times M^n \rightarrow M$ is defined by $$\mydist(\overline{x},\overline{y})=\max\{|x_i-y_i|\;|\; 1 \leq i \leq n\}\text{,}$$
%where $\overline{x}=(x_1, \ldots, x_n)$ and $\overline{y}=(y_1, \ldots, y_n)$.
%Let $X$ be an $\mathfrak X$-definable subset of $M^n$.
%The distance from $X$ is defined by $\mydist_X(\overline{x})=\inf\{\mydist(\overline{x},\overline{y})\;|\; \overline{y} \in X\}$.
%
%\begin{lemma}\label{lem:dist}
%Consider an $\mathfrak X$-expansion of an ordered abelian group whose underlying set is $M$.
%Let $X$ be an $\mathfrak X$-definable subset of $M^n$.
%The distance function $\mydist_X:M^n \rightarrow M$ is $\mathfrak X$-definable.
%\end{lemma}
%\begin{proof}
%Set $f=\mydist_X$ for simplicity.
%The set $Y=\{(y,t) \in M^n \times M\;|\;f(y)<t\}$ is the image of the $\mathfrak X$-definable set $\{(x,y,t) \in X \times M^n \times M\;|\; \mydist_n(x,y)<t\}$ under the proper projection.
%The set $Y$ is $\mathfrak X$-definable.
%The complement $Y^c=\{(y,t) \in M^n \times M\;|\;f(y) \geq t\}$ of $Y$ is $\mathfrak X$-definable.
%On the other hand, the closure $\overline{Y}$ is given by $\{(y,t) \in M^n \times M\;|\;f(y) \leq t\}$ because $f$ is continuous.
%It is $\mathfrak X$-definable by Proposition \ref{prop:x_int_closure}.
%The graph of $f$ is the intersection of $Y^c$ with $\overline{Y}$.
%It is also $\mathfrak X$-definable.
%\end{proof}

We finally study when there exists an unbounded discrete $\mathfrak X$-definable set in $M$.
\begin{lemma}\label{lem:xoromin}
Consider an $\mathfrak X$-expansion of an ordered divisible abelian group whose underlying set is $M$.
Exactly one of the following conditions holds true:
\begin{enumerate}
\item[(1)] Any $\mathfrak X$-definable subset of $M$ is a finite union of points and open interval.
\item[(2)] There exists an unbounded discrete $\mathfrak X$-definable set.
\end{enumerate}
\end{lemma}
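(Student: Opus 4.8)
The plan is to establish the two directions of the ``exactly one'' statement separately: the mutual exclusivity of (1) and (2), and the fact that one of them must hold. Mutual exclusivity is immediate: a finite union of points and open intervals that is discrete can contain no open interval, so it is a finite set of points and hence bounded. Therefore, if (1) holds, every discrete $\mathfrak X$-definable subset of $M$ is bounded, and so (2) fails.

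For the dichotomy I would argue contrapositively: assuming (1) fails, fix an $\mathfrak X$-definable set $X \subseteq M$ that is \emph{not} a finite union of points and open intervals, and produce an unbounded discrete $\mathfrak X$-definable set. The natural witness is the topological boundary $D = \overline{X} \setminus \myint(X)$, which is $\mathfrak X$-definable by Proposition \ref{prop:x_int_closure}. I would first check that $D$ is discrete: for any bounded open interval $I$, openness of $I$ gives $D \cap I = \operatorname{bd}_I(X \cap I)$, and $X \cap I$ is a finite union of points and open intervals by Definition \ref{def:x}(7), whose boundary relative to $I$ is finite; hence each point of $D$ is isolated. Next, a bounded discrete $\mathfrak X$-definable subset of $M$ is finite, since by Theorem \ref{thm:in_omin} it is definable in the o-minimal companion $\mathcal R$, where discrete definable sets are finite. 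Consequently, once $D$ is shown to be infinite it is automatically unbounded, which yields (2).

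The crux is therefore to rule out $D$ being finite. Suppose $D = \{p_1 < \cdots < p_k\}$, set $p_0 = -\infty$, $p_{k+1} = +\infty$ and $J_i = ]p_i, p_{i+1}[$. Each $X \cap J_i$ has empty boundary in $J_i$, so it is clopen in $J_i$. For any bounded open interval $K$ with $\overline{K} \subseteq J_i$, the set $X \cap K$ is bounded and $\mathfrak X$-definable, hence definable in $\mathcal R$, and it is clopen in $K$; since intervals are definably connected in the o-minimal structure $\mathcal R$, the set $X \cap K$ is either $\emptyset$ or $K$. Enclosing a putative pair $a \in X \cap J_i$ and $b \in J_i \setminus X$ inside such a $K$ then forces each $X \cap J_i$ to be empty or all of $J_i$, so $X = (X \cap D) \cup \bigcup_{J_i \subseteq X} J_i$ is a finite union of points and open intervals, contradicting the choice of $X$. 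Hence $D$ is infinite, and the previous paragraph gives (2).

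I expect this last step to be the main obstacle, since it is exactly where one must upgrade the purely local tameness of Definition \ref{def:x}(7) to a global statement. The divisible abelian group hypothesis enters only here, through Proposition \ref{prop:x_int_closure} and Theorem \ref{thm:in_omin}, and the definable connectedness of intervals in $\mathcal R$ is indispensable: in a general dense linear order without endpoints an open interval need not be connected, so passing to the o-minimal companion on bounded pieces is doing the real work.
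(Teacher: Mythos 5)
Your proof is correct, and it follows the paper's skeleton: the same witness $D=\overline{X}\setminus\myint(X)$ (justified, as in the paper, by Proposition \ref{prop:x_int_closure}), shown first to be discrete, then infinite, with unboundedness deduced from the fact that a bounded discrete $\mathfrak X$-definable subset of $M$ is finite. The genuine difference is at the crux you singled out. You pass to the o-minimal companion $\mathcal R$ of Theorem \ref{thm:in_omin} and invoke definable connectedness of intervals in $\mathcal R$; the paper instead argues directly from Definition \ref{def:x}(7): on a bounded open interval $J$ avoiding the (putatively finite) boundary, $X\cap J$ is a finite union of points and open intervals that is clopen in $J$, and such a set must be $\emptyset$ or $J$ --- openness eliminates isolated points and lets one merge the intervals into finitely many disjoint ones with endpoints in $M\cup\{\pm\infty\}$, and closedness in $J$ then forces those endpoints to be endpoints of $J$. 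Consequently your closing claim that the o-minimal companion is ``indispensable'' because intervals in a general dense order can be disconnected is not quite right: the pathological clopen subsets (such as $\{q\in\mathbb Q\;|\;0<q<\pi\}$ inside $\mathbb Q$) are never finite unions of points and open intervals with endpoints in $M\cup\{\pm\infty\}$, so condition (7) alone already does the work; the same substitution applies to your appeal to $\mathcal R$ for ``bounded discrete implies finite'', which the paper also extracts from (7) directly. The trade-off is that your route is shorter at the crux, while the paper's is more elementary and makes clear that the group hypothesis enters only through Proposition \ref{prop:x_int_closure}. One point in your favor: you prove the exclusivity half of ``exactly one'' (for subsets of $M$), which the paper's proof omits entirely; note only that if condition (2) were read as allowing discrete sets in $M^n$ with $n>1$, an additional argument would be needed (e.g.\ pushing such a set down to $M$ via the proper $\mathfrak X$-definable map $(x_1,\ldots,x_n)\mapsto\max_i|x_i|$), but this caveat applies to the paper just as much as to you.
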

\begin{proof}
Assume that the condition (1) is not satisfied.
There exists an $\mathfrak X$-definable subset $X$ of $M$ which is not a finite union of points and open intervals.
Set $Y=\overline{X} \setminus \myint(X)$.
It is $\mathfrak X$-definable by Proposition \ref{prop:x_int_closure}.
We demonstrate that $Y$ is an unbounded discrete $\mathfrak X$-definable set.

We first demonstrate that $Y$ is discrete.
For any bounded open interval $I$ in $M$, the intersection $I \cap Y$ is a finite union of points and open intervals by the definition of a $\mathfrak X$-structure.
Therefore, $Y$ is discrete when it has an empty interior.
Assume that $Y$ has a nonempty interior.
We can take a bounded open interval $J$ contained in $Y$.
Since $X \cap J$ is a finite union of points and open intervals, $J = Y \cap J = (\overline{X} \cap J) \setminus (\myint(X) \cap J)$ consists of finite points.
Contradiction.

We next show that $Y$ is unbounded.
The set $Y$ consists of infinite points.
In fact, assume that $Y$ is a finite set.
There exists a nonempty bounded open interval $I$ which contains $Y$.
The difference $M \setminus I$ consists of a closed interval $J_+$ unbounded above and a closed interval $J_-$ unbounded below.
We have $J_+ \cap X = \emptyset$ or $J_+ \subseteq X$.
Otherwise, we can take points $a \in J_+ \cap X$ and $b \in J_+ \setminus X$.
Take a bounded open interval $J \subseteq J_+$ containing the points $a$ and $b$.
We have $J \cap Y=\emptyset$.
The set $J \cap X$ is a finite union of points and open intervals.
We have $J \cap X \neq \emptyset$ because it contains the point $a$.
We also have $J \cap X \neq J$ because $J \cap X$ does not contain the point $b$.
By the definition of $Y$, we get $Y \cap J \neq \emptyset$ in both the cases in which $X \cap J$ consists of points and in which it contains an open interval.
We have demonstrated that $J_+ \cap X = \emptyset$ or $J_+ \subseteq X$.
We also obtain $J_- \cap X = \emptyset$ or $J_- \subseteq X$ similarly.
Since $I \cap X$ is a finite union of points and open intervals, the set $X$ is a finite union of points and open intervals.
Contradiction.
We have demonstrated that $Y$ consists of infinite points.

If $Y$ is bounded, we can take a bounded open interval $I$ containing the set $Y$.
The set $Y=Y \cap I$ consists of finite points because $Y$ is $\mathfrak X$-definable and $Y$ is discrete.
Contradiction to the fact $Y$ is an infinite set.
\end{proof}

\section{Geometry of semi-definable sets}\label{sec:semi-definable}
We studied $\mathfrak X$-structures in the previous section.
In this section, we treat a special family of $\mathfrak X$-structures; that is, the $\mathfrak X$-structure of semi-definable sets in an o-minimal structure $\mathcal R=(M,<,\ldots)$.
\subsection{Frontier of semi-definable set}
We first consider the frontier, interior and closure of semi-definable sets.
\begin{lemma}\label{lem:frontier}
Consider an o-minimal structure.
The frontier, interior and closure of a semi-definable set are semi-definable.
\end{lemma}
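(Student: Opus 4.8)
The plan is to reduce everything to the single case of the interior, and to exploit the fact that all three topological operations are \emph{local} and therefore compatible with intersecting against a bounded open box. First I would record the (routine) observation that the family of semi-definable sets is closed under Boolean operations: if $X$ is semi-definable and $U$ is a bounded open box, then $U \cap X^c = U \setminus (U \cap X)$ and, for a second semi-definable $Y$, $U \cap (X\cup Y) = (U\cap X)\cup(U\cap Y)$, each of which is definable in the o-minimal structure $\mathcal R$ since $U\cap X$ and $U\cap Y$ are. Hence complements, finite unions and finite differences of semi-definable sets are semi-definable. Granting this, once the interior is handled, the closure is obtained as $\overline{X}=(\myint(X^c))^c$, and the frontier $\partial X$ is a Boolean combination of $\overline{X}$, $X$ (and $\myint(X)$), so both are immediately semi-definable.

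The heart of the argument is thus the localization identity for the interior. Fix a bounded open box $U$; I claim
\begin{equation*}
U \cap \myint(X) = U \cap \myint(X \cap U).
\end{equation*}
The inclusion $\supseteq$ is clear because $X\cap U\subseteq X$. For $\subseteq$, if $x\in U\cap\myint(X)$, choose an open box $V\ni x$ with $V\subseteq X$; since $U$ is open, $V\cap U$ is an open neighbourhood of $x$ contained in $X\cap U$, so $x\in\myint(X\cap U)$. Now $X\cap U$ is definable in $\mathcal R$ by semi-definability, and the interior of a definable set is definable in any o-minimal structure (a point lies in $\myint(S)$ iff there exist box endpoints $a_i<x_i<b_i$ with $\prod_i\,]a_i,b_i[\,\subseteq S$, which is a first-order condition in the language with $<$). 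Therefore $\myint(X\cap U)$ is definable, and so is its intersection with the definable set $U$. Since $U$ was an arbitrary bounded open box, $\myint(X)$ is semi-definable.

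With the interior settled, the closure follows from $\overline{X}=(\myint(X^c))^c$ using that $X^c$ is semi-definable, that the interior of a semi-definable set is semi-definable, and that complements preserve semi-definability; alternatively one may run the same localization identity directly for the closure, namely $U\cap\overline{X}=U\cap\overline{X\cap U}$, together with the standard definability of the closure of a definable set. The frontier is then a Boolean combination of the now-established semi-definable sets $\overline{X}$, $X$ and $\myint(X)$, hence semi-definable. I do not expect a genuine obstacle here: the only point requiring care is verifying the localization identity and confirming that the interior and closure of a \emph{definable} set are definable in the o-minimal structure, both of which are elementary; the substance of the lemma is simply that semi-definability, being a local property, is preserved by the local topological operations.
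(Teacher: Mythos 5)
Your proposal is correct and follows essentially the same route as the paper: both rest on a localization identity for a topological operation over a bounded open box together with the fact that the corresponding operation applied to a definable set is definable in the o-minimal structure. The only difference is cosmetic — the paper localizes the frontier first via $(\partial X)\cap U = (\partial(X\cap U))\cap U$ and then deduces interior and closure, whereas you localize the interior first and recover the closure and frontier by Boolean combinations.
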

\begin{proof}
Let $\mathcal R$ be an o-minimal structure and $M$ be its underlying set.
Let $X$ be a semi-definable subset of $M^n$.
%The notation $\partial X$ denotes the frontier of $X$.
We have $(\partial X) \cap U = (\partial (X \cap U)) \cap U$ for any bounded open box $U$.
The set $X \cap U$ is definable in $\mathcal R$ because $X$ is semi-definable.
The frontier $\partial (X \cap U)$ is also definable.
The intersection $(\partial X) \cap U $ is definable.
It means that $\partial X$ is semi-definable.

Once we know that the frontier is semi-definable, it is easy to demonstrate that the interior and the closure are semi-definable.
\end{proof}

\begin{remark}
When we assume that the o-minimal structure is an expansion of an ordered group, Lemma \ref{lem:frontier} immediately follows from Proposition \ref{prop:x_int_closure}.  
\end{remark}

\subsection{Semi-definable connectedness}
We next introduce the notion of semi-definable connectedness.
\begin{definition}
Consider an o-minimal structure $\mathcal R=(M,<,\ldots)$. 
A semi-definable subset $X$ of $M^n$ is \textit{semi-definably connected} if there are no non-empty proper semi-definable closed and open subsets $Y_1$ and $Y_2$ of $X$ such that $Y_1 \cap Y_2 = \emptyset$ and $X=Y_1 \cup Y_2$.
We define that a definable set is \textit{definably connected} in the same manner.
The semi-definable set $X$ is \textit{semi-definably pathwise connected} if, for any $x,y \in X$, there exist elements $c_1,c_2 \in M$ and a definable continuous map $\gamma:[c_1,c_2] \rightarrow X$ with $\gamma(c_1)=x$ and $\gamma(c_2)=y$.
We define that a definable set is \textit{definably pathwise connected} in the same manner.
\end{definition}

We easily get the following result:
\begin{lemma}[Intermediate value property]\label{lem:intermediate}
Consider an o-minimal structure $\mathcal M=(M,<,\ldots)$.
Let $D$ be a subset of $M^n$ and $f:D \rightarrow M$ be a function whose graph is semi-definable and semi-definably connected.
Take two points $y_1, y_2 \in f(D)$ with $y_1<y_2$.
For any $y \in M$ with $y_1<y<y_2$, there exists $x \in D$ such that $y=f(x)$.
\end{lemma}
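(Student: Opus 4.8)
The plan is to prove the intermediate value property by a reduction to the ordinary intermediate value theorem in the ambient o-minimal structure, exploiting the hypothesis that the graph $\Gamma(f)$ of $f$ is both semi-definable and semi-definably connected. First I would fix $y$ with $y_1 < y < y_2$ and argue by contradiction: suppose that $y \notin f(D)$, that is, the horizontal hyperplane $\{x_{n+1}=y\}$ meets $\Gamma(f)$ nowhere. The natural candidate for a disconnection is
\begin{align*}
Y_1 &= \{(x,t) \in \Gamma(f)\;|\; t < y\}\text{,}\\
Y_2 &= \{(x,t) \in \Gamma(f)\;|\; t > y\}\text{.}
\end{align*}
These are disjoint, cover $\Gamma(f)$ under the standing assumption $y \notin f(D)$, and are nonempty since $(x_i, y_i) \in Y_i$ for points $x_i$ with $f(x_i)=y_i$ (using $y_1 < y < y_2$). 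It remains to check that $Y_1$ and $Y_2$ are semi-definable and relatively closed-and-open in $\Gamma(f)$; the existence of such a decomposition then contradicts semi-definable connectedness.

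The semi-definability of $Y_1$ and $Y_2$ is the step I would treat first because it is where the $\mathfrak X$-structure machinery enters. Each $Y_i$ is the intersection of the semi-definable set $\Gamma(f)$ with a set of the form $\{x_{n+1} < c\}$ or $\{x_{n+1} > c\}$, which is semi-definable (indeed $\mathfrak X$-definable) by the closure properties recorded earlier; since the intersection of semi-definable sets is semi-definable, both $Y_i$ lie in $\mathcal S(\mathcal R)_{n+1}$. For the topological step, I would verify openness and closedness relative to $\Gamma(f)$ directly: $Y_1 = \Gamma(f) \cap \{t<y\}$ is relatively open because $\{t<y\}$ is open in $M^{n+1}$, and it is relatively closed in $\Gamma(f)$ because its complement $Y_2 = \Gamma(f) \cap \{t>y\}$ is likewise relatively open (here the hypothesis $y \notin f(D)$ is exactly what guarantees there are no graph points on the boundary $\{t=y\}$, so the two relatively open pieces partition $\Gamma(f)$). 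Thus each $Y_i$ is simultaneously relatively open and relatively closed.

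Assembling these observations gives a partition $\Gamma(f)=Y_1 \cup Y_2$ into nonempty, disjoint, semi-definable, relatively clopen subsets, contradicting the assumption that $\Gamma(f)$ is semi-definably connected. Hence $y \in f(D)$, i.e. there exists $x \in D$ with $f(x)=y$, as required.

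The part I expect to require the most care is not the open/closed bookkeeping but making sure the two pieces genuinely exhaust $\Gamma(f)$: this rests entirely on the contradiction hypothesis $y \notin f(D)$, so I would state that reduction explicitly at the outset and keep it in force throughout. Everything else — semi-definability via the boolean and order-predicate closure properties from Definition \ref{def:x}, and relative clopenness via the openness of the defining strict inequalities — is routine, so the proof should be short.
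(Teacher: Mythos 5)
Your proof is correct and is essentially identical to the paper's: the paper also assumes $y \notin f(D)$ and observes that $\Gamma(f) \cap (M^n \times \{y'<y\})$ and $\Gamma(f) \cap (M^n \times \{y'>y\})$ are nonempty, disjoint, semi-definable, relatively clopen subsets covering $\Gamma(f)$, contradicting semi-definable connectedness. You have merely spelled out the semi-definability and clopenness checks that the paper leaves implicit.
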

\begin{proof}
Otherwise, the sets $\Gamma(f) \cap (M^n \times \{y' \in M\;|\; y'>y\})$ and $\Gamma(f) \cap (M^n \times \{y' \in M\;|\; y'<y\})$ are nonempty closed and open semi-definable subsets of $\Gamma(f)$.
Here, $\Gamma(f)$ denotes the graph of $f$.
\end{proof}

We next recall the following fact:
\begin{lemma}\label{lem:connected_omin}
Consider an o-minimal structure $\mathcal R=(M,<,\ldots)$. 
Let $X$ be a definable subset of $M^n$ and $U_1 \subseteq U_2$ be open boxes in $M^n$.
Take a definably connected component $C$ of $X \cap U_2$.
The intersection $C \cap U_1$ is the union of the definably connected components of $X \cap U_1$ contained in $C$.
\end{lemma}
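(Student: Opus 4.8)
Looking at the statement of Lemma \ref{lem:connected_omin}, I need to prove that for a definable set $X$, nested open boxes $U_1 \subseteq U_2$, and a definably connected component $C$ of $X \cap U_2$, the intersection $C \cap U_1$ is exactly the union of those definably connected components of $X \cap U_1$ that are contained in $C$.

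\begin{proof}
The plan is to prove the two inclusions corresponding to the set equality, working entirely within the o-minimal structure $\mathcal R$ where all the relevant sets are definable. First I would observe that $C \cap U_1$ is a definable subset of $X \cap U_1$, so it decomposes as a disjoint union of definably connected components of $X \cap U_1$ together possibly with partial pieces of components; the content of the lemma is that no component is split. Concretely, let $D$ be a definably connected component of $X \cap U_1$. The key claim is that either $D \subseteq C$ or $D \cap C = \emptyset$. Granting this claim, the union of those components $D$ with $D \subseteq C$ is contained in $C \cap U_1$, and conversely every point of $C \cap U_1$ lies in some component $D$ of $X \cap U_1$, which must then satisfy $D \subseteq C$ since it meets $C$; this gives both inclusions at once.

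To establish the dichotomy, I would argue by connectedness. Since $D$ is a definably connected subset of $X \cap U_1 \subseteq X \cap U_2$, and $C$ is one of the definably connected components of $X \cap U_2$, the standard fact in o-minimal structures is that a definably connected set meeting a definably connected component must be contained in it. More precisely, if $D \cap C \neq \emptyset$, then $D \cup C$ is a definable subset of $X \cap U_2$; I would verify it is definably connected (a union of two definably connected sets with nonempty intersection is definably connected, using that a separation of the union would restrict to a separation of one of the pieces), and since $C$ is a maximal definably connected subset of $X \cap U_2$, we get $D \cup C = C$, i.e.\ $D \subseteq C$. If instead $D \cap C = \emptyset$ there is nothing more to check. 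This uses the basic theory of definably connected components in o-minimal structures, namely their existence, their pairwise disjointness, and their maximality, all of which follow from cell decomposition \cite[Chapter 3, Theorem 2.11]{vdD}.

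The main obstacle, and the step requiring the most care, is confirming that the definably connected components of $X \cap U_1$ genuinely partition $C \cap U_1$ with no stray behavior near the boundary $\operatorname{bd}(U_1)$; that is, I must be sure that passing to the smaller box $U_1$ does not create a piece of $C$ that fails to be covered by components of $X \cap U_1$. This is handled by noting that $C \cap U_1$ is itself a definable set, hence its own definably connected components exist and cover it, and every such component is a definably connected subset of $X \cap U_1$, so it is contained in a component of $X \cap U_1$; combined with the dichotomy above this closes the argument. No compactness or analytic input is needed, since everything reduces to the purely combinatorial behavior of definably connected components under restriction to a definable open subset.
\end{proof}
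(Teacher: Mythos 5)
Your proof is correct: the dichotomy argument (a definably connected component $D$ of $X \cap U_1$ meeting $C$ must satisfy $D \subseteq C$ by maximality of $C$, via definable connectedness of $D \cup C$) combined with the fact that the finitely many definably connected components of $X \cap U_1$ cover it gives exactly the stated equality. The paper's own proof is just a one-line citation to the cell decomposition theorem \cite[Chapter 3, Theorem 2.11]{vdD}, and your argument is precisely the standard unpacking of that citation, since the existence, finiteness, definability, and maximality of definably connected components all rest on that same theorem; so this is essentially the same approach, made explicit.
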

\begin{proof}
Immediate from the definable cell decomposition theorem for o-minimal structures \cite[Chapter 3, Theorem 2.11]{vdD}.
\end{proof}

We get the following theorem:
\begin{theorem}\label{thm:connected}
Consider an o-minimal expansion $\mathcal R=(M, <, +, 0, \ldots)$ of an ordered group.
Let $X$ be a nonempty semi-definable subset of $M^n$.
The following are equivalent:
\begin{enumerate}
\item[(1)] $X$ is semi-definably connected.
\item[(2)] For any $x,y \in X$, there exists a bounded open box $U$ in $M^n$ such that both the points $x$ and $y$ are contained in some definably connected component of $X \cap U$.
\item[(3)] $X$ is semi-definably pathwise connected.
\end{enumerate}
In addition, for any $x \in X$, there exists a maximal semi-definably connected semi-definable subset $Y$ of $X$ containing the point $x$.
The set $Y$ is called the semi-definably connected component of $X$ containing the point $x$.
A semi-definably connected component of $X$ is closed and open in $X$.
\end{theorem}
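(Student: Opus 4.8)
The plan is to organise everything around the binary relation $\sim$ on $X$ defined by: $a \sim b$ exactly when some bounded open box $U$ in $M^n$ has $a$ and $b$ in a common definably connected component of $X \cap U$. Condition (2) is precisely the statement that $\sim$ has a single class. First I would check that $\sim$ is an equivalence relation; reflexivity and symmetry are immediate, while for transitivity I enlarge the two witnessing boxes $U, V$ to a single bounded open box $W \supseteq U \cup V$ and observe that the witnessing component over $U$ and the one over $V$ are both definably connected subsets of $X \cap W$ sharing the middle point, hence both lie in the component of $X \cap W$ through that point. Having $\sim$, I would prove the theorem through the cycle (1) $\Rightarrow$ (2) $\Rightarrow$ (3) $\Rightarrow$ (1) and then identify the classes of $\sim$ with the semi-definably connected components.

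The two easy implications go as follows. For (2) $\Rightarrow$ (3), given $a, b \in X$ I would take the box $U$ from (2) and the common definably connected component $C$ of $X \cap U$; since $\mathcal R$ is an o-minimal expansion of an ordered group, the definable set $C$ is definably pathwise connected by cell decomposition \cite{vdD}, which yields a definable continuous path from $a$ to $b$ inside $C \subseteq X$. For (3) $\Rightarrow$ (1), I would argue by contradiction: if $X = Y_1 \cup Y_2$ is a partition into nonempty proper disjoint semi-definable clopen subsets, choose $a \in Y_1$, $b \in Y_2$ and a definable path $\gamma : [c_1, c_2] \to X$ joining them. The image of $\gamma$ is closed and bounded by \cite[Proposition 1.10]{M}, so it lies in a bounded open box $U$; then $\gamma^{-1}(Y_i) = \gamma^{-1}(Y_i \cap U)$ is definable and clopen in $[c_1,c_2]$, and these two preimages partition the definably connected interval into nonempty clopen pieces, a contradiction.

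The crux is (1) $\Rightarrow$ (2), which I would prove contrapositively: if $\sim$ has more than one class, I claim each class $Y = [x]_\sim$ is a nonempty proper semi-definable clopen subset of $X$, so $X = Y \cup (X \setminus Y)$ defeats (1). Openness is easy (for $z \in Y$ and a small box $W$ around $z$, the component of $X \cap W$ through $z$ is clopen in $X \cap W$, hence open in $X$ and inside $Y$), whence each class, being the complement of the union of the others, is also closed; and $X \setminus Y$ is semi-definable since $(X \setminus Y) \cap U = (X \cap U) \setminus (Y \cap U)$ is definable for each bounded open box $U$. The genuinely delicate point, which I expect to be the main obstacle, is the semi-definability of $Y$ itself, that is, the definability of $Y \cap V$ for every bounded open box $V$. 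Writing $D_U(x)$ for the definably connected component of $X \cap U$ through $x$, one has $Y \cap V = \bigcup_U \left( D_U(x) \cap V \right)$ over bounded open boxes $U \supseteq V$ through $x$, an increasing union; by Lemma \ref{lem:connected_omin} each $D_U(x) \cap V$ is a union of definably connected components of the definable set $X \cap V$, of which there are only finitely many, so the increasing union stabilizes at a definable set $D_{U_0}(x) \cap V$. This finiteness-and-monotonicity stabilization is exactly what forces the class to be definable on bounded boxes.

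Finally, to match the $\sim$-classes with the semi-definably connected components, I would apply the equivalence just established to $Y = [x]_\sim$: any $a, b \in Y$ share a definably connected component of some $X \cap U$, which is contained in $Y \cap U$ and hence in a single component of $Y \cap U$, so $Y$ satisfies (2) and is semi-definably connected. For maximality, if $Y'$ is any semi-definably connected semi-definable subset of $X$ through $x$, then every $z \in Y'$ is joined to $x$ within a component of some $Y' \cap U \subseteq X \cap U$, forcing $z \sim x$ and $Y' \subseteq Y$; thus $Y$ is the largest such set and is clopen in $X$ by the computation above.
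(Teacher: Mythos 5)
Your proposal is correct and takes essentially the same route as the paper: your equivalence classes $[x]_\sim$ are exactly the sets $C_x$ that the paper assembles from local definably connected components, and both arguments rest on Lemma \ref{lem:connected_omin} together with the finiteness of definably connected components of a definable set to obtain semi-definability of the classes (your stabilization of the increasing union $\bigcup_U D_U(x)\cap V$ is the paper's compatibility identity $C_x(U_2)\cap U_1=C_x(U_1)$ in disguise). The treatments of (2)$\Rightarrow$(3), (3)$\Rightarrow$(1), and of existence/maximality of components likewise coincide with the paper's.
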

\begin{proof}
Fix a point $x \in X$.
We first define a semi-definable closed and open subset $C_x$ of $X$ containing the point $x$.
In this proof, the subscript such as $C_x$ does not denote the fiber of a set, exceptionally.
Let $\mathcal B_x$ be the set of bounded open boxes in $M^n$ containing the point $x$. 
Take an arbitrary element $U \in \mathcal B_x$.
The intersection $U \cap X$ is definable by the definition of semi-definablity and it has finite definably connected components by \cite[Chapter 3, Proposition 2.18]{vdD}.
Let $\mathcal C(U)$ be the set of the definably connected components of $U \cap X$.
The notation $\mathcal L_x(U)$ denotes the subset of $\mathcal C(U)$ of the elements $C$ satisfying that a definably connected component of the intersection $B \cap X$ contains both the point $x$ and $C$ for some bounded open box $B$ containing the open box $U$.
We set $$C_x(U)=\bigcup_{C \in L_x(U)}C\text{.}$$

Let $U_1, U_2 \in \mathcal B_x$ with $U_1 \subseteq U_2$. 
We show the following equality:
$$
C_x(U_2) \cap U_1 = C_x(U_1)\text{.}
$$
Since $C_x(U_2)$ is a finite union of definably connected components of $X \cap U_2$, the set $C_x(U_2) \cap U_1$ is also a finite union of definably connected components of $X \cap U_1$ by Lemma \ref{lem:connected_omin}.
Let $D_1$ be a definably connected component of $X \cap U_1$.
The exists a unique definably connected component $D_2$ of $X \cap U_2$ containing the set $D_1$ by Lemma \ref{lem:connected_omin}.
We have only to demonstrate that $D_1 \in \mathcal L_x(U_1)$ if and only if $D_2 \in \mathcal L_x(U_2)$.
We can easily demonstrate that $D_1 \in \mathcal L_x(U_1)$ when $D_2 \in \mathcal L_x(U_2)$.
We omit the proof.
We consider the opposite implication.
There exists a bounded open box $U$ such that $D_1 \subseteq U$ and a definably connected component of $X \cap U$ contains both $x$ and $D_1$.
Taking a larger bounded open box $V$ containing the both $U$ and $U_2$.
A definably connected component of $X \cap V$ still contains both $x$ and $D_1$.
The definably connected set $D_2$ is also contained in the same definably connected component of $X \cap V$ by Lemma \ref{lem:connected_omin} because $D_2$ contains $D_1$ by the assumption.
It means that $D_2 \in \mathcal L_x(U_2)$.

We are now ready to define the semi-definable closed and open subset $C_x$.
Set $$C_x=\bigcup_{U \in \mathcal B_x}C_x(U)\text{.}$$
We first show that $C_x \cap U = C_x(U)$ for any $U \in \mathcal B_x$.
In fact, the inclusion $C_x(U) \subseteq C_x \cap U$ is obvious from the definition.
We demonstrate the opposite inclusion.
Take an arbitrary element $y \in C_x \cap U$.
There exists $V \in \mathcal B_x$ such that $y \in C_x(V)$ by the definition.
Set $W=U \cap V$.
Since $C_x(V)$ is a subset of $V$, we get $y \in C_x(V) \cap U = C_x(V) \cap (V \cap U) = C_x(V) \cap W = C_x(W) = C_x(U) \cap W \subseteq C_x(U)$ by the above equality. 
We have demonstrated the opposite inclusion.

We demonstrate that $C_x$ is semi-definable.
Take an arbitrary bounded open box $U$ in $M^n$.
We have only to prove that $C_x \cap U$ is definable.
Take a bounded open box $V$ larger than $U$.
If $C_x \cap V$ is definable then $C_x \cap U = (C_x \cap V) \cap U$ is also definable.
We may assume that $U$ contains the point $x$ for the above reason.
We get $C_x \cap U=C_x(U)$, which is a finite union of definably connected components of $X \cap U$.
Hence, it is definable.

The semi-definable set $C_x$ is closed and open in $X$.
In fact, take a point $y \in C_x$.
Take a bounded open box $U$ containing the points $x$ and $y$.
The intersection $C_x \cap U=C_x(U)$ is a finite union of definably connected components of $X \cap U$.
In particular, $C_x \cap U$ is closed and open in $X \cap U$ by \cite[Chapter 3, Proposition 2.18]{vdD}.
Take a sufficiently small open box $V \subseteq U$ containing the point $y$.
We have $X \cap V =(X \cap U) \cap V=(C_x \cap U) \cap V = C_x \cap V$ by the definition of definably connected components.
It means that $C_x$ is open in $X$.
We can demonstrate that $X \setminus C_x$ is open in the same manner. 

We have finished the preparation.
We prove that (1) implies (2).
Assume that the condition (2) does not hold true.
There exist $x,y \in X$ such that, for any bounded open box $U$ containing the point $x$ and $y$, $x$ and $y$ are contained in different definably connected components of $X \cap U$.
It means that $\mathcal L_x(U)$ and $\mathcal L_y(U)$ has an empty intersection for any bounded open box $U$ containing $x$ and $y$.
We have $C_x(U) \cap C_y(U) = \emptyset$ by the definition for any bounded open box $U$ containing both $x$ and $y$.
We easily get $C_x \cap C_y = \emptyset$.
It means that $X$ is not semi-definably connected. 

The next task is to prove that $(2) \Rightarrow (3)$.
Take arbitrary $x,y \in X$.
There exist a bounded open box $U$ containing the points $x$ and $y$ and a definably connected component $Y$ of $X \cap U$ containing the points $x$ and $y$.
It is well-known that a definable connected set is definably pathwise connected \cite[Chapter 6, Proposition 3.2]{vdD}.
There exists a definable continuous map $\gamma:[c_1,c_2] \rightarrow X \cap U$ with $\gamma(c_1)=x$ and $\gamma(c_2)=y$.

The implication $(3) \Rightarrow (1)$ is easy to be proven.
Assume for contradiction that there exist disjoint nonempty semi-definable closed and open subsets $Y_1$ and $Y_2$ of $X$ with $X=Y_1 \cup Y_2$.
Take points $y_1, y_2 \in X$ with $y_i \in Y_i$ for $i=1,2$.
There exists a definable continuous map $\gamma:[c_1, c_2] \rightarrow X$ with $\gamma(c_i)=y_i$ for $i=1,2$.
The image of $\gamma$ is bounded by \cite[Proposition 1.10]{M} because an o-minimal structure is definably complete.
We can take a bounded open box $B$ in $M^n$ containing the image of $\gamma$.
The sets $B \cap X$, $B \cap Y_1$ and $B \cap Y_2$ are all definable.
The closed interval $[c_1,c_2]$ is decomposed into two disjoint definable closed and open subsets $\gamma^{-1}(Y_1 \cap B)$ and  $\gamma^{-1}(Y_2 \cap B)$.
On the other hand, the closed interval is definably connected by \cite[Corollary 1.5]{M}.
It is a contradiction.

The last task is to prove the existence of a semi-definably connected component.
In fact, the semi-definable set $C_x$ is the semi-definably connected component containing the point $x \in X$.
Take arbitrary $y_1, y_2 \in C_x$.
We can take $U_1, U_2 \in \mathcal B_x$ with $y_i \in C_x(U_i)$ for $i=1,2$.
By the definition of $C_x(U_i)$, there exists $V_i \in \mathcal B_x$ such that $x$ and $y_i$ are contained in a definably connected component of $V_i \cap X$.
Take a bounded open box $W$ containing the open boxes $V_1$ and $V_2$.
Three points $x$, $y_1$ and $y_2$ are contained in a definably connected component of $X \cap W$.
This definably connected component is also the definably connected component of $C_x(W)=C_x \cap W$.
Hence, $C_x$ is semi-definably connected by the condition (2). 

We finally show that $C_x$ is maximal.
Take an arbitrary semi-definably connected semi-definable subset $Y$ of $X$ with $x \in Y$.
We have only to demonstrate that $Y$ is contained in $C_x$.
Take an arbitrary point $y \in Y$. 
Since $Y$ is semi-definably connected, there exists a definable continuous map $\gamma:[c_1,c_2] \rightarrow Y$ such that $\gamma(c_1)=x$ and $\gamma(c_2)=y$.
Since the image $\gamma([c_1,c_2])$ is bounded for the same reason as above, we can take a bounded open box $U$ containing the image.
It means that $x$ and $y$ are contained in the same definably connected component of $X \cap U$ because a definably pathwise connected definable set is definably connected.
We have $y \in C_x(U) \subseteq C_x$.
We have finished the proof.
\end{proof}

We introduce a corollary of Theorem \ref{thm:connected}.

\begin{corollary}\label{cor:connected}
Consider an o-minimal expansion of an ordered group.
The closure of a nonempty semi-definably connected semi-definable set is again semi-definably connected.
\end{corollary}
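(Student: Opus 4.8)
The plan is to reduce to the definition of semi-definable connectedness and then run the ordinary ``closure of a connected set is connected'' argument, taking care that every set produced stays in the semi-definable category. Write $X$ for the given nonempty semi-definably connected semi-definable subset of $M^n$. First I would record that $\overline{X}$ is itself semi-definable: this is exactly Lemma \ref{lem:frontier} (equivalently Proposition \ref{prop:x_int_closure}, since $\mathcal R$ expands an ordered group). Hence the phrase ``$\overline{X}$ is semi-definably connected'' is meaningful, and it suffices to exclude a nontrivial semi-definable clopen partition of $\overline{X}$.

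Assume for contradiction that $\overline{X}=Y_1\cup Y_2$ with $Y_1,Y_2$ nonempty, disjoint, and closed and open in the subspace $\overline{X}$, and both semi-definable. The key step is to transport this partition to $X$. Because the family of semi-definable sets is closed under intersection (for any bounded open box $U$ one has $(X\cap Y_i)\cap U=(X\cap U)\cap(Y_i\cap U)$, an intersection of two sets definable in $\mathcal R$), the sets $X\cap Y_1$ and $X\cap Y_2$ are semi-definable; they are disjoint, closed and open in the subspace $X$, and their union is $X$ since $X\subseteq\overline{X}$. Semi-definable connectedness of $X$ then forces one of them to be empty, say $X\cap Y_2=\emptyset$, i.e. $X\subseteq Y_1$.

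It remains to contradict $Y_2\neq\emptyset$. As $Y_1$ is closed in $\overline{X}$, write $Y_1=F\cap\overline{X}$ with $F$ closed in $M^n$. From $X\subseteq Y_1\subseteq F$ and the closedness of $F$ we obtain $\overline{X}\subseteq F$, whence $Y_1=F\cap\overline{X}=\overline{X}$ and therefore $Y_2=\emptyset$, contradicting the choice of $Y_2$. Thus no such partition exists and $\overline{X}$ is semi-definably connected.

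I expect no serious obstacle: the single point requiring care is the verification that restricting a semi-definable clopen decomposition of $\overline{X}$ to $X$ again yields a semi-definable clopen decomposition of $X$, so that the hypothesis on $X$ can actually be invoked; the remainder is the standard topological fact. As an alternative one could instead verify criterion (2) of Theorem \ref{thm:connected} for $\overline{X}$ by approximating points of $\overline{X}$ by nearby points of $X$ inside a common bounded box, but the direct argument above is shorter and avoids any limiting considerations.
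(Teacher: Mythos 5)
Your proof is correct, but it takes a genuinely different route from the paper's. The paper disposes of the corollary in one line by combining Theorem \ref{thm:connected}(3) with the curve selection lemma (Corollary \ref{cor:curve_selection}): each point of $\partial X$ is the limit of a definable curve lying in $X$, so $\overline{X}$ is semi-definably pathwise connected, and the implication $(3)\Rightarrow(1)$ of Theorem \ref{thm:connected} (applicable because $\overline{X}$ is semi-definable) yields connectedness; both ingredients require the ordered group structure. You instead run the classical topological argument that the closure of a connected set is connected, verifying at each step that the sets produced stay in the semi-definable category: $\overline{X}$ is semi-definable by Lemma \ref{lem:frontier}, and the restriction $X\cap Y_i$ of a semi-definable clopen partition of $\overline{X}$ is semi-definable since semi-definability is preserved by intersection, so the connectedness hypothesis on $X$ can legitimately be invoked; the final step --- $Y_1$ is closed in $\overline{X}$, hence closed in $M^n$, hence contains $\overline{X}$ as soon as it contains $X$ --- is sound. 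What your argument buys: it is more elementary (no curve selection, no pathwise connectedness) and strictly more general, since it never uses the group operation and therefore proves the statement over an arbitrary o-minimal structure; the group hypothesis is needed only by the paper's chosen tools, not by the result along your route. What the paper's approach buys: it is an immediate consequence of machinery already established in Section \ref{sec:semi-definable}, and it shows in passing the slightly stronger fact that $\overline{X}$ is semi-definably pathwise connected.
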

\begin{proof}
Immediately follows from Theorem \ref{thm:connected}(3) and Corollary \ref{cor:curve_selection}.
\end{proof}

\subsection{Good manifolds}

We introduce the notion of a good manifold necessary in Section \ref{sec:multi}.

\begin{definition}\label{def:manifold}
Consider an o-minimal structure $\mathcal R=(M,<,\ldots)$.
A subset $X$ of $M^n$ of dimension $d$ is \textit{locally a good submanifold} at $x \in X$ if there exist 
\begin{itemize}
\item a bounded open box $B$ containing the point $x$, 
\item a permutation $\sigma$ of $\{1,\ldots, n\}$ and 
\item a definable continuous map $f: \pi_d(\widetilde{\sigma}(X \cap B)) \rightarrow M^{n-d}$ 
\end{itemize}
such that $\widetilde{\sigma}(X \cap B)$ is the graph of $f$.
Here, the notation $\widetilde{\sigma}$ denotes the map defined in Definition \ref{def:x} and $\pi_d:M^n \rightarrow M^d$ denotes the projection onto the first $d$ coordinates.

The notation $\operatorname{Reg}(X)$ denotes the set of points at which $X$ is locally a good submanifold.
The notation $\operatorname{Sing}(X)$ denotes the singular locus defined by $X \setminus \operatorname{Reg}(X)$.
A semi-definable set is called a \textit{good submanifold} if it is a locally good submanifold at every point in it. 
 
Let $\pi:M^n \rightarrow M^{n-1}$ be the projection forgetting the last coordinate.
A semi-definable subset $X$ of $M^n$ is \textit{locally the graph of a continuous function} at $x \in X$ if there exists a bounded open box $U$ containing the point $x$ such that $\pi(X) \cap \pi(U)$ is a good submanifold and $X \cap U$ is the graph of a continuous function defined on $\pi(X) \cap \pi(U)$.
A semi-definable subset $X$ of $M^n$ is \textit{locally the graph of continuous functions everywhere} if it is locally the graph of a continuous function at every point in $X$.
\end{definition}

\begin{lemma}\label{lem:open_mfd}
Consider an o-minimal structure whose underlying space is $M$ and a good submanifold $X$ of $M^n$.
Let $U$ an open subset of $M^n$.
Then, $X \cap U$ is also a good submanifold.
\end{lemma}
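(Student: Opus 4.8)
The plan is to verify the two clauses in the definition of a good submanifold for $X \cap U$ separately: that $X \cap U$ is semi-definable, and that it is locally a good submanifold at each of its points. Semi-definability is routine once $U$ is itself semi-definable (which I take to be the intended hypothesis, since for a genuinely arbitrary open $U$ the intersection $X \cap U$ need not be semi-definable): for any bounded open box $V$ one has $(X \cap U) \cap V = (X \cap V) \cap U$, and $X \cap V$ is definable because $X$ is semi-definable, so its intersection with the semi-definable open set $U$ is again definable. Thus the substance of the lemma is the local clause, which I would establish pointwise by simply restricting the chart furnished by the hypothesis on $X$.

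Concretely, fix $x \in X \cap U$ and put $d = \dim X$. Since $X$ is locally a good submanifold at $x$, there are a bounded open box $B \ni x$, a permutation $\sigma$ of $\{1,\ldots,n\}$, and a definable continuous map $f$ on $D := \pi_d(\widetilde{\sigma}(X \cap B))$ with $\widetilde{\sigma}(X \cap B) = \Gamma(f)$. Because $U$ is open I may choose a bounded open box $B'$ with $x \in B' \subseteq B \cap U$. As $B' \subseteq U$ and $B' \subseteq B$, we have $(X \cap U) \cap B' = X \cap B' = (X \cap B) \cap B'$, so applying the bijection $\widetilde{\sigma}$ gives $\widetilde{\sigma}((X \cap U) \cap B') = \widetilde{\sigma}(X \cap B) \cap \widetilde{\sigma}(B') = \Gamma(f) \cap \widetilde{\sigma}(B')$. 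Since $\widetilde{\sigma}(B')$ is again a box, this intersection is exactly $\Gamma(f|_{D'})$, where $D' = \pi_d(\widetilde{\sigma}(X \cap B'))$ is the relatively open trace of $D$ inside $\widetilde{\sigma}(B')$. The restriction $f|_{D'}$ is continuous and, by Lemma \ref{lem:restriction}, definable, so the same permutation $\sigma$ together with the box $B'$ exhibits $X \cap U$ as the graph of a definable continuous map at $x$.

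I expect the only real obstacle to be a bookkeeping point about dimensions, and it is worth isolating. The chart for $X$ is recorded with respect to $\pi_d$ for $d = \dim X$, whereas the definition applied to $X \cap U$ calls for $\pi_{d'}$ with $d' = \dim(X \cap U)$, and a priori only $d' \le d$ holds by Proposition \ref{prop:dim}(a). The point that makes the restriction argument close is that a good submanifold is $d$-dimensional near every one of its points: reading the chart condition in its standard sense, $D$ is open in $M^d$, so $X \cap B$ is homeomorphic to an open subset of $M^d$, and $D' \subseteq D$ inherits this. Consequently every nonempty $X \cap U$ again has dimension $d$, so $d' = d$, and the chart $(B', \sigma, f|_{D'})$ produced above is already of the form demanded by the definition. (If one instead allows chart domains that are not open, the local dimension of $X \cap U$ at $x$ may fall strictly below $\dim X$, and the distinguished projection $\pi_d$ must be replaced by a projection onto $d'$ well-chosen coordinates; this can be arranged by applying o-minimal cell decomposition to the definable set $X \cap B'$ and using that an injective coordinate projection stays injective when the coordinate set is enlarged.) Once $d' = d$ is in hand, the pointwise restriction, combined with the semi-definability established at the outset, shows that $X \cap U$ is a good submanifold.
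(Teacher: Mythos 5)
Your main argument --- shrink the chart box into $U$ and restrict the graph map --- is exactly the argument the paper has in mind; the paper's own proof is the single word ``Obvious,'' so your write-up supplies precisely what was suppressed. Your two caveats are also legitimate: for a genuinely arbitrary open $U$ the set $X \cap U$ need not be semi-definable (hence cannot be a good submanifold as defined), and in the paper's actual applications this is harmless because the relevant intersections are definable for other reasons. Under your two standing assumptions ($U$ semi-definable, chart domains open in $M^d$) the proof you give is complete and correct.

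The parenthetical fallback, however, is wrong, and not merely incomplete: if Definition \ref{def:manifold} is read literally (chart domains not required to be open in $M^d$), the lemma itself is false, so no repair along those lines can exist. Take any o-minimal structure, let $A = \{(t,t) \mid 0 \le t < 1\} \cup \{(t,0) \mid 0 \le t < 1\} \subseteq M^2$, and set $X = (A \times \{0\}) \cup \bigl(\,]2,3[^2 \times \{0\}\bigr) \subseteq M^3$ and $U = \,]-1,2[ \times ]-1,2[ \times ]-1,1[$. Then $\dim X = 2$, and for every $x \in X$ and every bounded open box $B$ containing $x$, the identity permutation together with the zero function on $\pi_2(X \cap B)$ is a chart in the literal sense, since nothing forces that domain to be open; so $X$ is a good submanifold. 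But $X \cap U = A \times \{0\}$ has dimension $1$ and is not locally a good submanifold at the origin: near $(0,0,0)$ the projection to the first coordinate identifies the two branches of $A$, the projection to the second collapses the horizontal branch, and the third coordinate is constant, so no permutation exhibits $X \cap U$ locally as a graph over $M^1$. This example also shows why your proposed repair cannot work: cell decomposition writes $A \times \{0\}$ near the origin as three cells, each individually a graph over one well-chosen coordinate, but their union is not; and the monotonicity you invoke points the wrong way, since injectivity of a coordinate projection passes to \emph{larger} coordinate sets, whereas you need to pass from injectivity over $d$ coordinates to injectivity over $d' < d$. Note finally that the paper never needs the lemma in this problematic regime: in Lemma \ref{lem:multi-cell-pre} the open set always removes from the good submanifold a subset of dimension strictly smaller than $\dim X$ (Lemma \ref{lem:reg_mfd} and the definitions of $Z_3$ and $Z_4$), so there $\dim(X \cap U) = \dim X$ and your main restriction argument is all that is required. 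So the right move is to keep your openness reading of the definition (or, equivalently, to add the hypothesis $\dim(X \cap U) = \dim X$ to the lemma) and to delete the parenthetical.
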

\begin{proof}
Obvious.
\end{proof}

\begin{lemma}\label{lem:open_mfd2}
Consider an o-minimal structure whose underlying space is $M$.
Let $\pi:M^n \rightarrow M^{n-1}$ be the projection forgetting the last coordinate.
Take a semi-definable open subset $U$ of $M^{n-1}$.
If a semi-definable subset $X$ of $M^n$ is locally the graph of continuous functions everywhere, $X \cap (U \times M)$ is also locally the graph of continuous functions everywhere.
\end{lemma}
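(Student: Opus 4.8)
The plan is to verify the local-graph property of $Y := X \cap (U \times M)$ one point at a time, reducing everything to the corresponding property already assumed for $X$ together with Lemma \ref{lem:open_mfd}. First I would note that $Y$ is semi-definable, being the intersection of the semi-definable sets $X$ and $U \times M$, so that the conclusion even makes sense. Now fix an arbitrary point $y \in Y$. Applying the hypothesis that $X$ is locally the graph of continuous functions everywhere to $y \in X$, I obtain a bounded open box $W = W' \times I$, with $W'$ a bounded open box in $M^{n-1}$ and $I$ a bounded open interval, such that $y \in W$, the set $D := \pi(X) \cap \pi(W) = \pi(X) \cap W'$ is a good submanifold, and $X \cap W$ is the graph of a continuous function $g \colon D \rightarrow I$.

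The key step is to shrink $W$ in the first $n-1$ coordinates so that its projection lands inside $U$. Since $y \in U \times M$, the projection $\pi(y)$ lies in $U$, and $U$ is open, so there is a bounded open box $W''$ with $\pi(y) \in W'' \subseteq U$. I then set $V := (W' \cap W'') \times I$. This $V$ is a bounded open box containing $y$, it satisfies $V \subseteq W$, and, crucially, $\pi(V) = W' \cap W'' \subseteq U$, so that $V \subseteq U \times M$.

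With this choice of $V$ I would check the two required conditions. For the base, because $\pi(V) \subseteq U$, a point $x' \in \pi(V)$ lies in $\pi(Y)$ if and only if it lies in $\pi(X)$: any point of $X$ lying over such an $x'$ automatically belongs to $U \times M$. Hence $\pi(Y) \cap \pi(V) = \pi(X) \cap \pi(V) = D \cap \pi(V)$, the last equality using $\pi(V) \subseteq W' = \pi(W)$. Since $\pi(V)$ is open and $D$ is a good submanifold, Lemma \ref{lem:open_mfd} shows that $D \cap \pi(V)$ is again a good submanifold. For the graph, since $V \subseteq U \times M$ we have $Y \cap V = X \cap V = (X \cap W) \cap V$, which is the graph of $g$ over $\{x' \in D : x' \in \pi(V)\} = D \cap \pi(V)$; here the constraint $g(x') \in I$ is automatic, as $g$ already takes values in $I$. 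Thus $Y \cap V$ is the graph of the continuous function $g|_{D \cap \pi(V)}$ defined on $\pi(Y) \cap \pi(V)$, which is precisely the local-graph condition at $y$. As $y \in Y$ was arbitrary, this establishes the lemma.

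The main point to get right is the identity $\pi(Y) \cap \pi(V) = \pi(X) \cap \pi(V)$: it can fail without the arrangement $\pi(V) \subseteq U$, and it is exactly what guarantees that the domain of the restricted graph is all of $\pi(Y) \cap \pi(V)$ rather than a proper subset of it. Everything else is routine bookkeeping about boxes and the behavior of graphs under restriction to a smaller box.
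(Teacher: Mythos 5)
Your proof is correct; the paper's own proof of this lemma is just the word ``Obvious,'' and your argument is precisely the routine verification being omitted: shrink the box furnished by the hypothesis at a point of $X \cap (U \times M)$ so that its projection lands inside the open set $U$, note that then $\pi(Y)\cap\pi(V)=\pi(X)\cap\pi(V)$ and $Y\cap V=X\cap V$, and invoke Lemma \ref{lem:open_mfd} for the base. There are no gaps, and the point you flag as the crux (the identity of the projections, which forces the domain of the restricted graph to be all of $\pi(Y)\cap\pi(V)$) is indeed the only place where the openness of $U$ is used.
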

\begin{proof}
Obvious.
\end{proof}

\begin{lemma}\label{lem:reg_mfd}
Consider an o-minimal structure whose underlying space is $M$.
Let $X$ be a semi-definable set.
The set $\operatorname{Reg}(X)$ is an open semi-definable subset of $X$.
We also have $\dim(\operatorname{Sing}(X))<\dim (X)$.
\end{lemma}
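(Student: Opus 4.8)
The plan is to push everything onto bounded open boxes, where semi-definable sets become definable and the cell decomposition theorem \cite[Chapter 3, Theorem 2.11]{vdD} applies, and then to reassemble the global statement using the localized dimension formula of Lemma \ref{lem:equiv_dim}; write $d=\dim X$ throughout. I would first prove openness directly from Definition \ref{def:manifold}. Suppose $x\in\operatorname{Reg}(X)$ is witnessed by a bounded open box $B\ni x$, a permutation $\sigma$, and a definable continuous map $f$ with $\widetilde{\sigma}(X\cap B)=\Gamma(f)$. Given any $x'\in X\cap B$, choose a bounded open box $B'$ with $x'\in B'\subseteq B$ and $\widetilde{\sigma}(B')=C_1\times C_2$, $C_1\subseteq M^d$. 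Then $\widetilde{\sigma}(X\cap B')=\Gamma(f)\cap(C_1\times C_2)$ is the graph of the restriction of $f$ to the definable set $C_1\cap f^{-1}(C_2)$, which is again a continuous definable graph over its own projection. Hence $x'\in\operatorname{Reg}(X)$, so $X\cap B\subseteq\operatorname{Reg}(X)$, and $\operatorname{Reg}(X)$ is open in $X$.

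For semi-definability, fix a bounded open box $U$ and pick a bounded open box $V$ with $\overline{U}\subseteq V$, so that $X\cap V$ is definable. Being locally a good submanifold of dimension $d$ is a local condition, and for $x\in U$ every sufficiently small box around $x$ is contained in $V$; thus $x\in\operatorname{Reg}(X)$ if and only if $X\cap V$ is locally a $d$-dimensional coordinate graph at $x$. The latter is expressible as a first-order condition over the definable set $X\cap V$ (quantifying over the box endpoints and over the finitely many permutations $\sigma$), so the set $R$ of points satisfying it is definable and $\operatorname{Reg}(X)\cap U=R\cap U$. This shows that $\operatorname{Reg}(X)$, and hence $\operatorname{Sing}(X)=X\setminus\operatorname{Reg}(X)$, is semi-definable.

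For the dimension inequality I would first settle the definable case: let $S$ be a definable set of dimension $d$ (in the role of $X\cap V$) and apply cell decomposition to write $S=\bigsqcup_i C_i$. Each $d$-dimensional cell $C_i$ is, after a coordinate permutation, the graph of a definable continuous function over an open cell in $M^d$, and by the dimension and frontier estimates \cite[Chapter 4]{vdD} each $C_i\cap\overline{C_j}$ with $j\neq i$ has dimension $<d$ (using $\dim\overline{C_j}=\dim C_j$ when $\dim C_j<d$, and $C_i\cap\overline{C_j}\subseteq C_i\cap\partial C_j$ when $\dim C_j=d$). Consequently $C_i\setminus\overline{S\setminus C_i}\subseteq\operatorname{Reg}(S)$, so $\operatorname{Sing}(S)$ lies inside the union of the lower cells together with the finitely many sets $C_i\cap\overline{S\setminus C_i}$, all of dimension $<d$; thus $\dim\operatorname{Sing}(S)<d$. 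To globalize, I would use Lemma \ref{lem:equiv_dim}: for each bounded box $U$ take $V\supseteq\overline{U}$ as above. If $\dim(X\cap V)<d$ the bound $\dim(\operatorname{Sing}(X)\cap U)<d$ is immediate; otherwise $\dim(X\cap V)=d$ and $\operatorname{Sing}(X)\cap U\subseteq\operatorname{Sing}(X\cap V)$ has dimension $<d$. Since each $\dim(\operatorname{Sing}(X)\cap U)$ is an integer at most $d-1$, the supremum over $U$ is also at most $d-1$, yielding $\dim\operatorname{Sing}(X)<\dim X$.

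The hard part will be the bookkeeping that matches the \emph{global} dimension $d=\dim X$ against $\dim(X\cap V)$: the notion $\operatorname{Reg}$ is framed in terms of the ambient dimension of $X$, whereas the cell decomposition lives on $X\cap V$, so I must check that the top-dimensional local-graph condition transfers faithfully (in particular that a box meeting $X\cap V$ in a single top cell yields a genuine $d$-dimensional graph) and that the first-order description of $R$ really captures Definition \ref{def:manifold}. The cell-theoretic identification of the regular points as containing $\bigcup_i\bigl(C_i\setminus\overline{S\setminus C_i}\bigr)$ is the technical core, and it is there that the frontier inequality does the essential work.
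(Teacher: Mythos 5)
Your proof is correct, and it follows the same overall skeleton as the paper's: reduce to a bounded box via Lemma \ref{lem:equiv_dim}, decompose the (now definable) trace of $X$ into finitely many pieces, and observe that points of top-dimensional cells away from the rest of the set are regular because cells are graphs of definable continuous maps after a coordinate permutation. The genuine difference lies in the decomposition tool used for the dimension estimate. The paper takes a \emph{stratification} of $\overline{B}$ partitioning $\partial B$, $X\cap B$ and $\operatorname{Sing}(X)\cap B$ (\cite[Chapter 4, Proposition 1.13]{vdD}), so that if another cell $C'$ of the partition accumulated at a point of a $d$-dimensional cell $C\subseteq X$, then $C\subseteq\partial C'$, forcing $\dim C'>d$ and hence $C'\cap X=\emptyset$, a contradiction; since $\operatorname{Sing}(X)\cap B$ is itself a union of strata, all of dimension $<d$, the bound follows at once. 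You instead use a plain cell decomposition and compensate with the frontier estimates $\dim\partial C_j<\dim C_j$ and $\dim\overline{C_j}=\dim C_j$, bounding $\operatorname{Sing}(S)$ inside the lower-dimensional cells together with the finitely many sets $C_i\cap\overline{C_j}$, $j\neq i$. Both arguments are sound: stratification buys a cleaner accumulation argument and makes the singular locus literally a union of cells, while your route invokes only the more basic cell decomposition theorem plus the frontier inequality. You are also more explicit than the paper on the two points it dismisses as obvious, namely openness (your restriction-of-the-graph argument) and semi-definability (first-order expressibility of the local-graph condition over the definable set $X\cap V$); your bookkeeping with $V\supseteq\overline{U}$ and the two cases on $\dim(X\cap V)$ is a correct, if slightly more cautious than necessary, way to make the transfer between the global dimension $d=\dim X$ and the local data precise.
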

\begin{proof}
It is obvious that $\operatorname{Reg}(X)$ is an open subset of $X$.
Take an arbitrary bounded open box $U$.
The set $\operatorname{Reg}(X) \cap U$ is obviously definable in the o-minimal structure because $X \cap U$ is definable.
It implies that $\operatorname{Reg}(X)$ is semi-definable.

Set $d=\dim(X)$.
Take an arbitrary bounded open box $B$.
We have only to show that $\dim (\operatorname{Sing}(X) \cap B) < d$ by Lemma \ref{lem:equiv_dim}.

Get a stratification of $\overline{B}$ partitioning $\partial B$, $X \cap B$ and $\operatorname{Sing}(X) \cap B$ by \cite[Chapter 4, Proposition 1.13]{vdD}.
Recall that a stratification of $\overline{B}$ is a partition of $\overline{B}$ into finitely many cells such that the frontier of a cell is a finite union of cells.
Let $C$ be an arbitrary cell contained in $X$ of dimension $d$.
The semi-definable set $X$ is a good submanifold of $M^n$ for any $x \in C$.
In fact, we have $C \cap U=X \cap U$ for any sufficiently small open box $U$ containing the point $x$.
Otherwise, there exists a cell $C'$ contained in $X$ such that $C' \cap U \neq \emptyset$ for any small open box $U$ containing the point $x$.
It means that $x \in \overline{C'}$.
We get $C \subseteq \partial C'$.
We have $\dim C'>d$ by \cite[Chapter 4, Theorem 1.8]{vdD}.
It means that $C' \cap X = \emptyset$.
Contradiction.
We have demonstrated $C$ is not contained in $\operatorname{Sing}(X) \cap B$.
It implies that that $\dim (\operatorname{Sing}(X) \cap B) < d$.
\end{proof}

\begin{lemma}\label{lem:cont_mfd}
Consider an o-minimal structure whose underlying space is $M$.
Let $\pi:M^n \rightarrow M^{n-1}$ be the coordinate projection forgetting the last coordinate.
Let $X$ be a semi-definable subset of $M^n$ such that $\pi(X)$ is semi-definable and a good submanifold, and the fiber $X \cap \pi^{-1}(x)$ is of dimension zero for any $x \in \pi(X)$.
We further assume that $\dim \pi(X)=\dim X$.
Let $S$ be the set of points at which $X$ is locally the graph of a continuous function.
Then, $S$ is semi-definable and we have $\dim(X \setminus S)<\dim(X)$.
\end{lemma}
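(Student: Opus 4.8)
The plan is to establish the two assertions separately: first the (essentially routine) semi-definability of $S$, and then the dimension inequality, which carries the real content.

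For semi-definability, I would fix a bounded open box $V$ and show that $S \cap V$ is definable in the o-minimal structure; by the definition of semi-definability this is enough. The key point is that membership of $x \in X \cap V$ in $S$ is a purely local condition: it asks for a bounded open box $U \ni x$ with $X \cap U$ equal to the graph of a continuous function on $\pi(X) \cap \pi(U)$. The clause that $\pi(X) \cap \pi(U)$ be a good submanifold is automatic here, since $\pi(X)$ is a good submanifold and $\pi(U)$ is open, so Lemma \ref{lem:open_mfd} applies. We may restrict attention to boxes $U$ contained in a fixed bounded box $V' \supseteq V$; then the whole condition refers only to the definable set $X \cap V'$, hence is expressible by a first-order formula, and $S \cap V$ is definable.

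For the dimension inequality, set $d = \dim X = \dim \pi(X)$. By Lemma \ref{lem:equiv_dim} it suffices to prove $\dim((X \setminus S) \cap B) < d$ for every bounded open box $B$, and over such a $B$ everything in sight is definable, so I may argue inside the o-minimal structure. I would apply the cell decomposition theorem \cite[Chapter 3, Theorem 2.11]{vdD} (enlarging $B$ slightly to avoid vertical boundary effects) to get a decomposition compatible with $X \cap B$ whose projection refines $\pi(X)$. Because every fiber of $X$ is zero-dimensional, no cell contained in $X$ can be of band type $(\ldots,1)$, as such a cell would have one-dimensional fibers; hence every cell of $X$ is the graph of a continuous function, and over each base cell $D$ the set $X$ is a finite union of separated continuous sections $f_1 < \cdots < f_r$. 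I would then show that any point lying over the relative interior $\operatorname{int}_{\pi(X)}(D)$ of a $d$-dimensional base cell $D$ already belongs to $S$: near such a point one chooses $U$ thin in the last coordinate, so that continuity and separation of the sections isolate a single sheet, and thin in the remaining coordinates, so that $\pi(X) \cap \pi(U) \subseteq D$; then $X \cap U$ is exactly the graph of one section on $\pi(X)\cap\pi(U)$.

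Consequently the bad base locus is contained in $E = \bigcup_{\dim D = d}(D \setminus \operatorname{int}_{\pi(X)}(D)) \cup \bigcup_{\dim D < d} D$. Each set $D \setminus \operatorname{int}_{\pi(X)}(D)$ has dimension $< d$, since in a local chart of the $d$-manifold $\pi(X)$ it becomes a definable subset of $M^d$ that contains none of its own interior, while the low-dimensional base cells obviously contribute dimension $< d$; hence $\dim E < d$. As the fibers of $X$ are finite, the finite-to-one projection $\pi$ gives $\dim((X \setminus S) \cap B) \le \dim(X \cap \pi^{-1}(E)) = \dim E < d$ by the standard dimension properties of o-minimal structures \cite[Chapter 4, Theorem 1.8]{vdD}, and taking the supremum over $B$ via Lemma \ref{lem:equiv_dim} yields $\dim(X \setminus S) < \dim X$. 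The main obstacle I anticipate is precisely the bookkeeping in this last stage: verifying that the locus where sheets collide or appear and disappear is genuinely captured by $E$ and really has dimension $< d$, and controlling the mismatch between $\pi(X \cap B)$ and $\pi(X) \cap \pi(B)$ at the vertical boundary of $B$, which I would absorb by passing to a slightly larger box.
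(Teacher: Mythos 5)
Your proof is correct, and its skeleton matches the paper's: reduce to a bounded box $B$ via Lemma \ref{lem:equiv_dim}, apply cell decomposition to $X \cap B$ together with $(\pi(X)\times M)\cap B$, note that the zero-dimensional fiber hypothesis forces every cell inside $X$ to be the graph of a continuous section over its base cell, and show that points of $X$ lying over suitably generic points of $d$-dimensional base cells belong to $S$ by isolating a single sheet with a box that is thin in the last coordinate. The genuine difference is in how the negligible base locus is certified. The paper refines the base decomposition to a stratification of $\overline{\pi(B)}$ via \cite[Chapter 4, Proposition 1.13]{vdD}, so that, exactly as in the proof of Lemma \ref{lem:reg_mfd}, $\pi(X)$ locally \emph{coincides} with each $d$-dimensional base stratum; then $T\cap B$ is covered by cells of dimension $<d$ and the bound is immediate. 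You instead keep an ordinary cell decomposition, declare the good points to be those over the relative interiors $\operatorname{int}_{\pi(X)}(D)$ of $d$-dimensional base cells, bound the exceptional set $E$ by a frontier/interior dimension argument, and finish with a finite-to-one projection step. Your route avoids the stratification proposition (a heavier tool) at the cost of the bookkeeping with $E$; the paper's route makes the sheet-isolation step cleaner because local coincidence of $\pi(X)$ with the stratum is built in.

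Two points in your write-up should be tightened, though neither is a fatal gap. First, $\pi(X)$ is only semi-definable, so the dimension bound for $D \setminus \operatorname{int}_{\pi(X)}(D)$ must be run inside the bounded box $\pi(B)$, where $\pi(X)\cap\pi(B)$ is definable; there the set is contained in the frontier of $(\pi(X)\cap\pi(B))\setminus D$, which has dimension $<d$ by \cite[Chapter 4, Theorem 1.8]{vdD}. Your chart-based justification has a wrinkle: the chart domain of a good submanifold (Definition \ref{def:manifold}) need not be open in $M^d$, so "interior in the chart" and "relative interior in $\pi(X)$" do not literally agree; the containment $B' \setminus \operatorname{int}_A(B') \subseteq B' \setminus \operatorname{int}_{M^d}(B')$ rescues the conclusion, but the frontier argument is cleaner. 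Second, the witness boxes $U$ for membership in $S$ must be taken inside $B$, so that $X\cap U=(X\cap B)\cap U$ is genuinely captured by the cells; once this is imposed, the mismatch between $\pi(X\cap B)$ and $\pi(X)\cap\pi(B)$ at the vertical boundary of $B$ that you flag as the main obstacle does not actually arise, and no enlargement of $B$ is needed.
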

\begin{proof}
It is obvious that $S$ is semi-definable.

Let $\mathcal R$ be the given o-minimal structure.
Set $d=\dim(X)=\dim \pi(X)$.
Take an arbitrary bounded open box $B$.
We have only to show that $\dim (T \cap B) < d$ by Lemma \ref{lem:equiv_dim}, where $T=X \setminus S$.

The intersections $X \cap B$ and $(\pi(X) \times M) \cap B$ are definable in $\mathcal R$.
We first apply the definable cell decomposition theorem for o-minimal structures \cite[Chapter 3, Theorem 2.11]{vdD}.
There exists a partition $\{C_1, \ldots, C_N\}$ of $B$ into cells partitioning $X \cap B$ and $(\pi(X) \times M) \cap B$.
Any cell $C$ contained in $X$ is the graph of a continuous function defined on $\pi(C)$.
Get a stratification of $\overline{\pi(B)}$ partitioning $\pi(C_1), \ldots, \pi(C_N)$ and $\pi(X) \cap \pi(B)$ by \cite[Chapter 4, Proposition 1.13]{vdD}.
Let $D_1, \ldots, D_L$ be the partition.
The family $\mathcal C=\{C_i \cap (D_j \times M)\;|\;1 \leq i \leq N, 1 \leq j \leq L, D_j \subset \pi(C_i)\}$ is a partition of $B$ into cells.

Take an arbitrary cell $C \in \mathcal C$ of dimension $d$ contained in $X$.
For any $x \in \pi(C)$, $\pi(X)$ is locally a good submanifold of $M^{m-1}$ at $x$ for the same reason as the proof of Lemma \ref{lem:reg_mfd}. 
Therefore, $X$ is locally the graph of a continuous function at every point in $C$ because $C$ is a cell.
We have shown that $\dim T<d$.
\end{proof}

\section{Geometry of almost o-minimal structures}\label{sec:almost}
We finally investigate almost o-minimal structures.
\subsection{Definably complete locally o-minimal structures}
We first introduce several lemmas on definably complete structures.
\begin{lemma}\label{lem:local0}
Let $\mathcal M=(M,<,\ldots)$ be a definably complete structure and $X$ be a definable subset of $M$.
Any open interval contained in $X$ is contained in a maximal open interval contained in $X$.
\end{lemma}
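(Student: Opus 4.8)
The plan is to fix an arbitrary point $c$ inside the given open interval $I \subseteq X$ and to build the maximal open interval around $c$ by extending as far as possible to the left and to the right while staying inside $X$. Writing $I = ]a,b[$, so that $a < c < b$, I would introduce the two sets
\[
A = \{ y \in M \mid y < c \text{ and } ]y,c] \subseteq X \}, \qquad B = \{ y \in M \mid y > c \text{ and } [c,y[ \subseteq X \}.
\]
The conditions $]y,c] \subseteq X$ and $[c,y[ \subseteq X$ are expressible by first-order formulas over the definable set $X$ (for instance $]y,c] \subseteq X$ is $\forall t\,(y < t \leq c \to t \in X)$), so $A$ and $B$ are definable. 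Both are nonempty, since $]a,c[ \subseteq A$ and $]c,b[ \subseteq B$; hence by definable completeness the infimum $\alpha := \inf A$ and the supremum $\beta := \sup B$ exist in $M \cup \{\pm\infty\}$. I then set $J = ]\alpha,\beta[$, which is a nonempty open interval because it contains $c$. The inclusions $]a,c[ \subseteq A$ and $]c,b[ \subseteq B$ give $\alpha \leq a$ and $\beta \geq b$, so $I \subseteq J$ automatically.

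The key step is to verify $J \subseteq X$. For $t \in J$ with $t < c$, the inequality $t > \alpha = \inf A$ means $t$ is not a lower bound of $A$, so there is some $y \in A$ with $y < t$; then $t \in ]y,c] \subseteq X$. The case $t > c$ is symmetric, using $B$ and the supremum $\beta$, while the case $t = c$ is immediate because $c \in I \subseteq X$. This establishes $J \subseteq X$.

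For maximality, suppose $J \subseteq J' \subseteq X$ for an open interval $J' = ]\alpha',\beta'[$, so that $\alpha' \leq \alpha$ and $\beta' \geq \beta$. If $\alpha' < \alpha$, I would choose $y$ with $\alpha' < y < \alpha$; then $]y,c] \subseteq ]\alpha',\beta'[ \subseteq X$ forces $y \in A$ and hence $\alpha \leq y$, contradicting $y < \alpha$. Thus $\alpha' = \alpha$, and symmetrically $\beta' = \beta$, so $J' = J$. Consequently $J$ is a maximal open interval contained in $X$, and it contains $I$.

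The only delicate point is the definability of $A$ and $B$, which is what allows definable completeness to supply the endpoints $\alpha$ and $\beta$; once that is in place, the remaining verifications are routine order-theoretic arguments. I expect no substantive obstacle beyond careful bookkeeping of the endpoint cases, including the unbounded possibilities $\alpha = -\infty$ or $\beta = +\infty$, in which the corresponding infimum/supremum arguments read off in exactly the same way.
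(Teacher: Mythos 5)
Your proof is correct and takes essentially the same approach as the paper: the paper also fixes $c \in I$, forms the definable sets of left and right endpoints of subintervals of $X$ through $c$, obtains $d=\inf$ and $e=\sup$ of these sets via definable completeness, and declares $]d,e[$ to be the desired maximal interval. Your sets $A$ and $B$ coincide with the paper's (the conditions $]y,c] \subseteq X$ and $]y,c[ \subseteq X$ are equivalent since $c \in X$); the only difference is that you spell out the verifications of $J \subseteq X$, $I \subseteq J$, and maximality, which the paper dismisses as obvious.
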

\begin{proof}
Let $I$ be an open interval contained in $X$.
Take a point $c$ with $c \in I$.
Set 
\begin{align*}
d &= \inf\{x \in M\;|\;(x<c) \wedge (\forall y, x<y<c \rightarrow y \in X)\} \in M \cup \{-\infty\}\text{ and }\\
e&= \sup\{x \in M\;|\;(x>c) \wedge (\forall y, c<y<x \rightarrow y \in X)\} \in M \cup \{\infty\}\text{.}
\end{align*}
They are well-defined because $\mathcal M$ is definably complete.
The open interval $]d,e[$ is obviously the maximal open interval containing the interval $I$ and contained in $X$.
\end{proof}

\begin{lemma}\label{lem:local1}
Consider a definably complete structure $\mathcal M=(M,<,\ldots)$.
The following are equivalent:
\begin{enumerate}
\item[(1)] The structure $\mathcal M$ is a locally o-minimal structure.
\item[(2)] Any definable set in $M$ either has a nonempty interior or it is closed and discrete. 
\end{enumerate}
\end{lemma}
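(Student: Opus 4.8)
The plan is to prove the two implications separately, treating $(1)\Rightarrow(2)$ as the routine direction and concentrating the effort on $(2)\Rightarrow(1)$.

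For $(1)\Rightarrow(2)$, I would fix a definable $X\subseteq M$ with empty interior (if $X$ has nonempty interior there is nothing to prove) and show it is closed and discrete. Local o-minimality supplies, around each point $y$, an open interval $I\ni y$ with $I\cap X$ a finite union of points and open intervals; since $X$, hence $I\cap X$, has empty interior, no open interval can occur, so $I\cap X$ is a finite set. Applying this at $y\in X$ shows $y$ is isolated in $X$, giving discreteness; applying it at $y\in\overline X$ and using that the finite set $I\cap X$ is closed forces $y\in X$, giving closedness.

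For the converse I would first record that for a definable $X\subseteq M$ the interior $\myint(X)=\{x\mid\exists a<x<b,\ ]a,b[\,\subseteq X\}$ is definable (no group is needed, only the order), and likewise $\myint(X^c)$. The key object is the common boundary $C=M\setminus(\myint(X)\cup\myint(X^c))$, a closed definable set, and the whole argument reduces to the claim that \emph{$C$ is discrete}. Granting this, local o-minimality follows quickly: since $C$ is closed and discrete, any $x$ has an open interval $I\ni x$ with $C\cap I\subseteq\{x\}$, and each of the two pieces of $I\setminus\{x\}$ is an interval disjoint from $C$, hence contained in the disjoint union of the open sets $\myint(X)$ and $\myint(X^c)$; by the definable connectedness of intervals in a definably complete structure, cf. \cite[Corollary 1.5]{M}, each such piece lies entirely in one of them. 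Thus $I\cap X$ is $\{x\}$ (when $x\in X$) together with those side-intervals lying in $\myint(X)$, which is a finite union of points and open intervals as required; the case $x\notin C$ is identical with $I\cap C=\emptyset$.

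The hard part will be the claim that $C$ has empty interior, whence $C$ is discrete by hypothesis~(2). Here is the route I would take. Suppose for contradiction that some open interval $J\subseteq C$; then $J$ meets neither $\myint(X)$ nor $\myint(X^c)$. Set $A=X\cap J$ and $B=X^c\cap J$. Since $J$ is open, $\myint(A)=\myint(X)\cap J=\emptyset$ and similarly $\myint(B)=\emptyset$, so hypothesis~(2) forces both $A$ and $B$ to be closed and discrete. But $A$ and $B$ partition $J$, so $B=J\setminus A$ is open in $J$; a nonempty open subset of the interval $J$ contains a subinterval and hence cannot be discrete in a dense order, forcing $B=\emptyset$. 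Then $J=A\subseteq X$ gives $J\subseteq\myint(X)$, contradicting $J\subseteq C$. This establishes $\myint(C)=\emptyset$, and then (2) applied to $C$ yields its discreteness. The delicate points to get right are the definability of the various interiors, the use of definable (rather than merely topological) connectedness of intervals, and the observation that a nonempty open set in a dense linear order is never discrete.
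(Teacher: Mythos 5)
Your proof is correct. Note that the paper itself gives no argument for this lemma: its ``proof'' is just the citation \cite[Lemma 2.3]{Fuji4}, so you have supplied what the paper leaves to an external reference. Your argument is the natural self-contained one, and each of the delicate points you flag is handled properly: for $(1)\Rightarrow(2)$, an empty-interior definable set meets each witnessing interval in a finite set (the convention that open intervals are nonempty, hence open with nonempty interior, rules them out of the union), which gives both discreteness and closedness; for $(2)\Rightarrow(1)$, the set $C=M\setminus(\myint(X)\cup\myint(X^c))$ is exactly the topological boundary of $X$, it is definable using the order alone, your partition argument correctly shows $\myint(C)=\emptyset$ (density of the order is indeed what makes a nonempty open set non-discrete), hypothesis (2) then makes $C$ closed and discrete, and definable completeness enters only --- but essentially --- through the definable connectedness of intervals \cite[Corollary 1.5]{M}, which forces each component of $I\setminus\{x\}$ into $\myint(X)$ or $\myint(X^c)$. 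One cosmetic remark: in your contradiction step both $A=J\setminus B$ and $B=J\setminus A$ are open, so either one being nonempty already yields the contradiction; and the closedness of $C$ is automatic (it is the complement of an open set), so (2) is really only needed for discreteness.
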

\begin{proof}
\cite[Lemma 2.3]{Fuji4}.
\end{proof}

We then consider a definably complete locally o-minimal structures.
\begin{lemma}\label{lem:local2}
Let $\mathcal M=(M,<,\ldots)$ be a definably complete locally o-minimal structure and $X$ be a definable subset of $M$.
Any element $x$ in $M$ satisfies exactly one of the following conditions:
\begin{enumerate}
\item[(1)] The point $x$ is an element of an open interval contained in either $X$ or $M \setminus X$;
\item[(2)] The point $x$ is a discrete point of $X$;
\item[(3)] The point $x$ is an endpoint of a maximal open interval contained in $X$.
\end{enumerate}
The sets consisting of the discrete points of $X$ and consisting the endpoints of maximal open intervals contained in $X$ are definable, discrete and closed.
\end{lemma}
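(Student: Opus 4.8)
The plan is to reduce everything to the local finite picture supplied by local o-minimality and then read off the three cases. Fix a point $x \in M$. Applying local o-minimality to the definable set $X$, I would choose a bounded open interval $I = {]a,b[}$ containing $x$ such that $I \cap X$ is a finite union of points and open intervals; then $I \setminus X$ is automatically of the same form. Because this decomposition is finite, for all sufficiently small $\varepsilon>0$ each of the one-sided punctured neighborhoods ${]x-\varepsilon,x[}$ and ${]x,x+\varepsilon[}$ is either entirely contained in $X$ or entirely disjoint from $X$; I would record these two truth values together with whether $x \in X$. Running through the resulting cases: if both sides agree with the membership of $x$, then $x$ lies in an open interval contained in $X$ or in $M \setminus X$, giving (1); if $x \in X$ but both sides miss $X$, then $x$ is isolated in $X$, giving (2); and in every remaining case at least one side carries an open subinterval of $X$ abutting $x$ while $x$ is not interior to $X$, so by Lemma \ref{lem:local0} that subinterval extends to a maximal open interval of $X$ having $x$ as an endpoint, giving (3). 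Checking mutual exclusivity is short: an isolated point of $X$ is an accumulation point of neither side, whereas an endpoint of a maximal open interval is an accumulation point of $X$, and neither of these can be interior to $X$ or to $M \setminus X$, so the three conditions are pairwise incompatible.

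For the final assertions, write $D$ for the set of discrete (isolated) points of $X$ and $E$ for the set of endpoints of maximal open intervals contained in $X$. Both are visibly definable: $D$ is defined by $x \in X \wedge \exists a\, \exists b\,(a<x<b \wedge \forall y\,((a<y<b \wedge y\neq x) \to y \notin X))$, and $E$ is the union of the left-endpoint and right-endpoint sets, where for instance $x$ is a right endpoint precisely when $\exists a\,(a<x \wedge \forall y\,(a<y<x \to y \in X))$ holds while $\exists b\,(b>x \wedge \forall y\,(x \le y<b \to y\in X))$ fails, the failure of the latter encoding non-extendability of the interval past $x$ (with Lemma \ref{lem:local0} guaranteeing that the maximal interval exists).

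It remains to see that $D$ and $E$ are discrete and closed, and here I would invoke Lemma \ref{lem:local1}: since $\mathcal M$ is definably complete and locally o-minimal, a definable subset of $M$ with empty interior is automatically closed and discrete. Thus it suffices to prove $\operatorname{int}(D)=\operatorname{int}(E)=\emptyset$. For $D$ this is immediate, as $D \subseteq X$ and any interval inside $D$ would consist of non-isolated points of $X$. For $E$ I would again use the local picture: inside the interval $I$ above, the finitely many points and open intervals comprising $I \cap X$ have only finitely many transition points, and every endpoint of a maximal open interval lying in $I$ is such a transition point, so $E \cap I$ is finite; an interval contained in $E$ would then be simultaneously infinite and locally finite, a contradiction. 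I expect the main obstacle to be bookkeeping rather than depth: organizing the one-sided case analysis so that "exactly one" is transparent, and pinning down the correct quantifier pattern for "endpoint of a maximal open interval" so that the definition of $E$ matches condition (3) exactly.
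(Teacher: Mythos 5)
Your proposal is correct and takes essentially the same route as the paper's proof: local o-minimality together with Lemma \ref{lem:local0} gives the trichotomy, the sets $D$ and $E$ are cut out by explicit first-order formulas, and Lemma \ref{lem:local1} converts their empty interiors into closedness and discreteness. The only differences are presentational (the paper defines $E$ as the definable set of points failing condition (1) minus $D$, rather than via one-sided endpoint formulas, and leaves the case analysis implicit), so there is nothing to correct.
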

\begin{proof}
Let $x$ be an arbitrary element in $M$.
There exists an open interval $I$ containing the point $x$ such that $X \cap I$ is a finite union of points and open intervals.
Therefore, by Lemma \ref{lem:local0}, exactly one of (1) through (3) is obviously satisfied.
Consider the definable set $$Y =\{x \in M \;|\;\forall a\  \forall b, a<x<b \rightarrow (\exists y, \exists z \ a<y<b, a<z<b, y \in X, z \not\in X)\}\text{.}$$
The formula defining the set $Y$ is obviously the negation of the condition (1).
Consider the sets
\begin{align*}
D &=\{ x \in M\;|\; x \text{ is a discrete set in } X\} \text{ and }\\
E &=Y \setminus D\text{.}
\end{align*}
The set $D$ is the set consisting of the discrete points of $X$.
The set $E$ is the set consisting the endpoints of maximal open intervals contained in $X$.
They are both definable.
Since they do not contain an open interval, they are discrete and closed by Lemma \ref{lem:local1}. 
\end{proof}

Lemma \ref{lem:local2} provides tests for a definably complete locally o-minimal structure being o-minimal or almost o-minimal.
\begin{corollary}\label{cor:local1}
Let $\mathcal M=(M,<,\ldots)$ be a definably complete locally o-minimal structure.
The structure $\mathcal M$ is o-minimal if and only if any definable discrete subset of $M$ is a finite set.
\end{corollary}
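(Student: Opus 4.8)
The forward implication is immediate from the definition of o-minimality: if $\mathcal M$ is o-minimal and $D \subseteq M$ is a definable discrete set, then $D$ is a finite union of points and open intervals, and since a discrete set contains no open interval, $D$ must in fact be a finite union of points, hence finite. The substance of the corollary is the converse, so I will concentrate on it.

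Assume that every definable discrete subset of $M$ is finite, and fix an arbitrary definable set $X \subseteq M$; the goal is to exhibit $X$ as a finite union of points and open intervals. The plan is to split $X$ into its interior part, which I will show consists of finitely many maximal open intervals, and the residual points, which I will show form a finite set. The key tool is Lemma \ref{lem:local2}, which produces the definable, discrete, and closed sets $D$ (the discrete points of $X$) and $E$ (the endpoints of maximal open intervals contained in $X$). By the standing assumption, both $D$ and $E$ are \emph{finite}.

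First I would handle the open part. Let $\mathcal I$ be the family of maximal open intervals contained in $X$; such intervals exist around every interior point of $X$ by Lemma \ref{lem:local0}. Distinct members of $\mathcal I$ are disjoint, since the union of two overlapping open intervals contained in $X$ would be a strictly larger such interval, contradicting maximality. If $M$ itself belongs to $\mathcal I$, then $X = M$ is a single open interval and we are done; otherwise every member of $\mathcal I$ has at least one finite endpoint, which by definition of $E$ lies in $E$. Since a given point of $E$ can serve as the left endpoint of at most one member of $\mathcal I$ and as the right endpoint of at most one, a counting argument gives $|\mathcal I| \le 2|E| < \infty$, so $\mathcal I = \{I_1, \ldots, I_r\}$ is finite. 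It then remains to control $X' = X \setminus (I_1 \cup \cdots \cup I_r)$. For any $x \in X'$, Lemma \ref{lem:local2} asserts that $x$ satisfies exactly one of its three alternatives. Alternative (1) is impossible: since $x \in X$, it would place $x$ in an open interval contained in $X$, which by Lemma \ref{lem:local0} lies in some member of $\mathcal I$, contradicting $x \notin \bigcup_j I_j$. Hence $x$ falls under alternative (2) or (3), i.e. $x \in D$ or $x \in E$, so $X' \subseteq D \cup E$ is finite and $X = X' \cup I_1 \cup \cdots \cup I_r$ is the desired decomposition.

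The only step requiring genuine care is the finiteness of $\mathcal I$, where one must rule out infinitely many maximal intervals accumulating; this is exactly what the finiteness of $E$ delivers, once the disjointness of the members of $\mathcal I$ and the ``at most two intervals per endpoint'' bookkeeping are in place. Everything else is a direct application of Lemmas \ref{lem:local0} and \ref{lem:local2} together with the hypothesis on definable discrete sets.
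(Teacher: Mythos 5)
Your proof is correct and follows essentially the same route as the paper: both reduce the converse to the finiteness of the sets $D$ and $E$ produced by Lemma \ref{lem:local2}, which the hypothesis on definable discrete sets delivers at once. The only difference is that the paper declares the remaining step ``immediate,'' whereas you spell out the bookkeeping (disjointness of maximal intervals, the bound $|\mathcal I|\le 2|E|$, and $X'\subseteq D\cup E$) that justifies it.
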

\begin{proof}
We have only to show that, for any definable subset $X$ of $M$, the set of discrete points and the set consisting of the endpoints of maximal open intervals contained in $X$ are finite.
It is immediate from Lemma \ref{lem:local2}.
\end{proof}

\begin{corollary}\label{cor:local2}
Let $\mathcal M=(M,<,\ldots)$ be a definably complete locally o-minimal structure.
The structure $\mathcal M$ is almost o-minimal if and only if any bounded definable discrete subset of $M$ is a finite set.
\end{corollary}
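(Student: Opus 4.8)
The plan is to mimic the proof of Corollary \ref{cor:local1}, replacing the global finiteness of discrete definable subsets of $M$ by its bounded counterpart. The forward implication is immediate: if $\mathcal M$ is almost o-minimal and $X \subseteq M$ is a bounded definable discrete set, then $X$ is a finite union of points and open intervals; since a discrete set contains no open interval, $X$ must be a finite union of points, hence finite.

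For the converse I would fix an arbitrary bounded definable subset $X$ of $M$ and aim to write it as a finite union of points and open intervals. Applying Lemma \ref{lem:local2} to $X$ produces the definable, discrete and closed set $D$ of discrete points of $X$ and the definable, discrete and closed set $E$ of endpoints of maximal open intervals contained in $X$. The one genuine departure from Corollary \ref{cor:local1} is the observation that both $D$ and $E$ are bounded: we have $D \subseteq X$, and each maximal open interval contained in $X$ is itself bounded (as $X$ is), so its endpoints lie in $\overline{X}$, which is bounded. Thus the standing hypothesis forces $D$ and $E$ to be finite.

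It then remains to reconstruct $X$. By Lemma \ref{lem:local0} every point of $\myint(X)$ lies in a maximal open interval contained in $X$, so $\myint(X)$ is the union of all such maximal intervals; these are pairwise disjoint and each is determined by its ordered pair of endpoints in the finite set $E$, so there are only finitely many of them and $\myint(X)$ is a finite union of open intervals. On the other hand, Lemma \ref{lem:local2} classifies each point $x \in X$ as lying in an open interval contained in $X$ (forcing $x \in \myint(X)$ since $x \in X$), as a discrete point of $X$, or as an endpoint of a maximal open interval contained in $X$; consequently $X \setminus \myint(X) \subseteq D \cup E$ is finite. Combining the two contributions shows that $X$ is a finite union of points and open intervals, so $\mathcal M$ is almost o-minimal.

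I expect the only delicate point to be the boundedness of $E$ together with the deduction that finitely many endpoints yield finitely many maximal open intervals; everything else is a direct transcription of the argument for Corollary \ref{cor:local1} through Lemmas \ref{lem:local0} and \ref{lem:local2}.
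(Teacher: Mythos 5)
Your proposal is correct and follows exactly the route the paper intends: the paper proves Corollary \ref{cor:local2} by repeating the argument of Corollary \ref{cor:local1} (via Lemmas \ref{lem:local0} and \ref{lem:local2}), and your only addition --- checking that the sets $D$ and $E$ are bounded, so that the bounded-finiteness hypothesis applies --- is precisely the adjustment the paper leaves implicit when it says the proof is ``in the same manner.'' No gaps.
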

\begin{proof}
We can prove it in the same manner as Corollary \ref{cor:local1}.
We omit the proof.
\end{proof}

\subsection{Basic properties of almost o-minimal structures}
We begin to study the basic properties of almost o-minimal structures.

\begin{lemma}\label{lem:almost1}
An almost o-minimal structure is definably complete.
\end{lemma}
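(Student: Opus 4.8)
The plan is to verify directly that every definable subset $X$ of $M$ admits a supremum in $M \cup \{\pm\infty\}$; the existence of an infimum then follows by the mirror-image argument. First I would dispose of the trivial cases: if $X = \emptyset$ set $\sup X = -\infty$, and if $X$ is unbounded above set $\sup X = +\infty$, both of which lie in $M \cup \{\pm\infty\}$. Thus the only substantial case is that $X$ is nonempty and bounded above, and the goal there is to show $\sup X \in M$.

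The key idea is to truncate $X$ to a bounded window so that almost o-minimality applies. Fix a point $a \in X$ and an upper bound $b \in M$ of $X$. Since $M$ has no endpoints, choose $a' < a$ and $b' > b$. Then $X \cap {]a', b'[}$ is a \emph{bounded} definable subset of $M$, so by almost o-minimality it is a finite union of points and open intervals, say $S = \{p_1, \ldots, p_r\} \cup {]q_1, s_1[} \cup \ldots \cup {]q_t, s_t[}$, which is nonempty because it contains $a$. As $S$ is bounded above by $b$, density forbids any right endpoint $s_j$ from being $+\infty$, so $m := \max\{p_1, \ldots, p_r, s_1, \ldots, s_t\}$ is a well-defined element of $M$. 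A short check shows $m = \sup S$: it is an upper bound of $S$, and it is least since each $p_i$ belongs to $S$ and each interval ${]q_j, s_j[}$ accumulates at $s_j$ by density.

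It then remains to identify $\sup S$ with $\sup X$. I would show that any upper bound $y$ of $S$ with $y \ge a$ is automatically an upper bound of $X$: for $x \in X$ with $x > a'$ we have $x \in S$ because $x \le b < b'$, hence $x \le y$; and for $x \in X$ with $x \le a'$ we have $x < a \le y$. Since $m = \sup S \ge a$, it follows that $m$ is the least upper bound of $X$, so $\sup X = m \in M$, completing this case.

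The argument is essentially routine; the only point demanding care is the reduction step, namely verifying that truncating $X$ to the bounded window ${]a', b'[}$ neither discards the top of $X$ (guaranteed by $a \in X$ lying strictly above $a'$) nor produces a supremum outside $M$ (guaranteed by boundedness forcing every interval endpoint of $S$ into $M$). The infimum is obtained by the symmetric truncation and the analogous endpoint analysis.
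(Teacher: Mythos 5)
Your proof is correct and follows essentially the same route as the paper's: reduce to a bounded definable truncation of $X$ (the paper uses $\{x \in X \mid x \geq c\}$ for some $c \in X$, you intersect with a bounded open interval $]a',b'[$), invoke almost o-minimality to write the truncation as a finite union of points and open intervals, and read off the supremum in $M$. Your write-up just makes explicit the endpoint analysis and the identification $\sup S = \sup X$ that the paper leaves as "obvious."
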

\begin{proof}
Let $M$ be the universe of the considered structure and $X$ be a nonempty definable subset of $M$.
We demonstrate that $\sup(X)$ is well-defined and $\sup(X) \in M \cup \{\infty\}$.
Take a point $c \in X$ and consider the set $Y=\{x \in X\;|\;x \geq c\}$.
We may assume that $X$ is bounded from below by considering $Y$ instead of $X$.
When $X$ is unbounded, we have $\sup(X) =\infty$.
Otherwise, $X$ is bounded and it is a finite union of points and open intervals by the definition.
It obviously that $\sup(X)$ is well-defined and $\sup(X) \in M$.

We can prove that $\inf(X)$ is well-defined and $\inf(X) \in M \cup \{-\infty\}$, similarly.
\end{proof}

\begin{corollary}\label{lem:almost2}
An almost o-minimal structure is an o-minimal structure or has an unbounded infinite discrete definable set.
\end{corollary}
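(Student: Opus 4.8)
The plan is to combine the two characterizations established just above, Corollary \ref{cor:local1} and Corollary \ref{cor:local2}, which both apply to definably complete locally o-minimal structures. So the first step is to observe that an almost o-minimal structure $\mathcal M$ falls into that class: it is definably complete by Lemma \ref{lem:almost1}, and it is locally o-minimal because, for any definable $X \subseteq M$ and any point $x$, choosing any bounded open interval $I$ containing $x$ makes $I \cap X$ a bounded definable set, hence a finite union of points and open intervals by the definition of almost o-minimality.

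Next I would argue by dichotomy. Either $\mathcal M$ is o-minimal, in which case the first alternative of the statement holds and there is nothing more to prove, or it is not. In the latter case I would apply Corollary \ref{cor:local1}: since $\mathcal M$ is a definably complete locally o-minimal structure that fails to be o-minimal, the characterization there forces the existence of a definable discrete subset $D$ of $M$ that is \emph{infinite}.

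Finally I would use almost o-minimality once more, this time through Corollary \ref{cor:local2}: every bounded definable discrete subset of $M$ must be finite. The set $D$ produced in the previous step is infinite, so it cannot be bounded; therefore $D$ is an unbounded infinite discrete definable subset of $M$, which is exactly the second alternative. This completes the proof.

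There is essentially no genuine obstacle here; the content is entirely a matter of lining up the correct preparatory results. The only point requiring a moment's care is the very first reduction, namely verifying that an almost o-minimal structure is locally o-minimal so that Corollaries \ref{cor:local1} and \ref{cor:local2} are applicable. The real substance of the statement lives in the contrast between those two corollaries: o-minimality is governed by \emph{all} definable discrete sets being finite, while almost o-minimality is governed only by the \emph{bounded} ones being finite, and the gap between these two conditions is precisely the unbounded infinite discrete definable set asserted by the corollary.
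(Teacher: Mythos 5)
Your proof is correct and follows essentially the same route as the paper, which derives the statement directly from Lemma \ref{lem:almost1}, Corollary \ref{cor:local1} and Corollary \ref{cor:local2}. The only addition is your explicit verification that almost o-minimality implies local o-minimality (so the two corollaries apply), a point the paper leaves implicit; this is a worthwhile detail but not a different argument.
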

\begin{proof}
Immediate from Lemma \ref{lem:almost1}, Corollary \ref{cor:local1} and Corollary \ref{cor:local2}.
\end{proof}

\begin{proposition}\label{prop:almost1}
A locally o-minimal structure is o-minimal if and only if it is almost o-minimal and satisfies the type-completeness property defined in \cite{S}.
\end{proposition}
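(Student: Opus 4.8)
The plan is to prove the two implications separately, in each case reducing the statement to the behavior of definable \emph{discrete} subsets of $M$ by means of Corollary \ref{cor:local1} and Corollary \ref{cor:local2}. For the forward direction, assume $\mathcal M$ is o-minimal. Then every definable subset of $M$ is a finite union of points and open intervals, so in particular every bounded one is, and $\mathcal M$ is almost o-minimal. For the type-completeness property of \cite{S}, take a definable $X \subseteq M$, write it as such a finite union, let $b$ exceed all of the isolated points and all finite right endpoints appearing in the decomposition, and choose $a$ symmetrically on the left; then $X \cap\, ]b,\infty[$ is either empty or the single tail $]b,\infty[$ coming from an interval unbounded above, and similarly for $X \cap\, ]-\infty,a[$. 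Hence $\mathcal M$ is type-complete, and the forward implication holds.

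For the converse, suppose $\mathcal M$ is a locally o-minimal structure that is both almost o-minimal and type-complete. By Lemma \ref{lem:almost1} it is definably complete, so both Corollary \ref{cor:local1} and Corollary \ref{cor:local2} are available. By Corollary \ref{cor:local1} it suffices to show that every definable discrete subset $D$ of $M$ is finite. Applying type-completeness to $D$, I obtain $a,b \in M$ such that $D \cap\, ]-\infty,a[$ and $D \cap\, ]b,\infty[$ are each either empty or an open interval. Since $D$ is discrete and a nonempty open interval in a dense linear order without endpoints is never discrete, both of these sets must be empty; therefore $D \subseteq [a,b]$ is bounded. Now almost o-minimality, invoked through Corollary \ref{cor:local2}, forces the bounded definable discrete set $D$ to be finite. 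As $D$ was arbitrary, Corollary \ref{cor:local1} yields that $\mathcal M$ is o-minimal.

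The only step requiring genuine care is the application of type-completeness to the discrete set $D$: the decisive observation is that discreteness is incompatible with containing an open interval, so the ``empty or open interval'' alternative at each infinity collapses to ``empty,'' confining $D$ to a bounded interval on which almost o-minimality takes over. Beyond this, the argument is a bookkeeping exercise in correctly combining the two corollaries, both of which rest on the definable completeness supplied by Lemma \ref{lem:almost1}; I do not anticipate any further obstacle.
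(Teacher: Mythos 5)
Your proof is correct and follows essentially the same route as the paper's: the paper's proof is a citation chain through Lemma \ref{lem:almost1} (definable completeness), Corollary \ref{cor:local1}, Corollary \ref{cor:local2}, and Schoutens' results, and you use exactly this skeleton, reducing o-minimality to finiteness of definable discrete subsets of $M$. The only difference is that where the paper cites \cite[Theorem 2.10, Corollary 2.11]{S}, you argue the type-completeness part directly (discreteness rules out the open-interval alternative at $\pm\infty$, forcing boundedness), which is a harmless, self-contained expansion of the same idea.
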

\begin{proof}
It immediately follows from \cite[Theorem 2.10, Corollary 2.11]{S}, Corollary \ref{cor:local1}, Corollary \ref{cor:local2} and Lemma \ref{lem:almost1}.
\end{proof}

An almost o-minimal structure is a uniformly locally o-minimal structure of the second kind by Corollary \ref{cor:second}.
An almost o-minimal expansion of an ordered field is o-minimal by \cite[Proposition 2.1]{Fuji}.
We do not expect that the multiplication is definable in a non-o-minimal almost o-minimal structure.
However, when we study bounded definable sets, even the multiplication definable only in the bounded regions is useful.
Therefore, we propose the following definition:
\begin{definition}
An expansion of dense linear order without endpoints $\mathcal M=(M,<,\ldots)$ \textit{has a bounded definable field structure} if there exist elements $0,1 \in M$ and binary maps $\oplus, \otimes: M \times M \rightarrow M$ such that the tuple $(M,<,0,1,\oplus,\otimes)$ is an ordered real closed field and, for any $a,b \in M$ with $a<b$ the restrictions $\oplus|_{]a,b[ \times ]a,b[}$ and $\otimes|_{]a,b[ \times ]a,b[}$ of the addition $\oplus$ and the multiplication $\otimes$ to $]a,b[ \times ]a,b[$ are definable in $\mathcal M$.
\end{definition}

We also need the following:
\begin{definition}
An ordered abelian group $(G, +, 0, <)$ is \textit{archimedean} if, for any positive $a,b \in G$, we have $na>b$ for some positive integer $n$. 
Here, $na$ denotes the summation of $n$ copies of $a$.
\end{definition}

We give examples of almost o-minimal structures.
\begin{proposition}\label{prop:almost2}
A locally o-minimal structure $\mathcal M =(M,<)$ is almost o-minimal if one of the following conditions is satisfied:
\begin{enumerate}
\item[(1)] All closed bounded intervals are compact;
\item[(2)] It is a uniformly locally o-minimal expansion of an ordered group of the second kind having bounded field structure;
\item[(3)] It is a definably complete locally o-minimal expansion of an archimedean ordered group, and the image of a nonempty definable discrete set under a coordinate projection is again discrete.
\end{enumerate}
\end{proposition}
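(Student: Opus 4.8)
The plan is to prove each of the three sufficient conditions separately, reducing in each case to the one-dimensional criterion supplied by Corollary \ref{cor:local2}: a definably complete locally o-minimal structure is almost o-minimal if and only if every bounded definable discrete subset of $M$ is finite. Since each hypothesis guarantees that $\mathcal M$ is definably complete (directly, or via Lemma \ref{lem:almost1}-type reasoning, or by assumption), the whole task collapses to showing that a bounded definable discrete $D \subseteq M$ must be finite.

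\medskip
\textbf{Case (1).} If all closed bounded intervals are compact, take a bounded definable discrete set $D$. Then $\overline{D}$ is contained in some closed bounded interval, hence compact, and by Lemma \ref{lem:dim0} (or the closedness half of Lemma \ref{lem:local2}) $D$ is closed, so $D=\overline{D}$ is a compact discrete set. A discrete subset of a compact space is finite, so $D$ is finite and Corollary \ref{cor:local2} applies. This is the cleanest case and I expect no obstacle here.

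\medskip
\textbf{Case (2).} Here $\mathcal M$ is a uniformly locally o-minimal expansion of an ordered group of the second kind with a bounded field structure. Since such a structure is definably complete, I would again aim for Corollary \ref{cor:local2}. The idea is to transport the bounded discrete set $D$ into an o-minimal setting using the bounded field operations $\oplus,\otimes$. Concretely, one can rescale a bounded interval containing $D$ onto a fixed interval $]{-1},1[$ where, thanks to the bounded field structure, the restricted field operations are definable; on this bounded region the induced structure behaves like an o-minimal expansion of a real closed field, in which a definable discrete set is necessarily finite. The main obstacle is bookkeeping: I must check that the definability of $\oplus,\otimes$ only on bounded boxes is enough to run the finiteness argument, i.e.\ that the relevant monotonicity/finiteness statements for definable subsets of a bounded interval hold purely from the bounded field data, without invoking multiplication globally. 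I expect this to follow by combining the local o-minimality with the field structure on the fixed bounded interval.

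\medskip
\textbf{Case (3).} Now $\mathcal M$ is a definably complete locally o-minimal expansion of an \emph{archimedean} ordered group, and the image of any nonempty definable discrete set under a coordinate projection is again discrete. Let $D \subseteq M$ be bounded, definable and discrete; I want to show $D$ is finite. By Lemma \ref{lem:local2} $D$ is closed and discrete, so it has a well-defined successor function: for $x \in D$ that is not the maximum, let $\operatorname{succ}(x)$ be the least element of $D$ exceeding $x$, which exists by definable completeness. The gaps $\operatorname{succ}(x)-x$ form a definable set of positive elements; if $D$ were infinite these gaps would accumulate toward $0$, and I would use the archimedean hypothesis together with the projection-stability condition to derive a contradiction — the archimedean property forbids an infinite bounded set of points from being uniformly separated, while any accumulation of gaps at $0$ would, via the difference map and the projection condition, produce a non-discrete image or a limit point inside the closed discrete set $D$. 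I expect this case to be the main obstacle, since the interaction of the archimedean condition with the projection hypothesis is the most delicate; the key technical step is to express ``$D$ is infinite and bounded'' as producing a definable one-dimensional family whose projection fails to be discrete, contradicting the hypothesis. Once the contradiction is obtained, $D$ is finite and Corollary \ref{cor:local2} completes the proof.
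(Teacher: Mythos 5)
Your cases (1) and (3) are essentially fine. Case (1) works either by your route or, even more directly, by covering a compact interval containing the bounded definable set with finitely many intervals witnessing local o-minimality; if you do go through Corollary \ref{cor:local2}, note that you need compactness of closed bounded intervals to force order-completeness (hence definable completeness) first. Case (3) is in substance the paper's own proof: $D$ is closed, one forms the successor function and the gap function $\rho(x)=\operatorname{succ}(x)-x$ on $E=D\setminus\{\sup D\}$, applies the projection hypothesis to the (discrete) graph of $\rho$ to conclude that $\rho(E)$ is discrete, and then the archimedean property bounds $|D|$. One point of care: discreteness of $\rho(E)$ alone does \emph{not} forbid the gaps from accumulating at $0$, since $0\notin\rho(E)$; you must invoke the fact that in a definably complete locally o-minimal structure a definable discrete set is closed, so that $d=\inf\rho(E)$ is attained in $\rho(E)$ and is therefore positive, and only then does archimedean-ness give $nd>\sup D-\inf D$ and hence $|D|\le n$.

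Case (2) has a genuine gap. Your plan rescales $D$ into $]-1,1[$ and then asserts that "on this bounded region the induced structure behaves like an o-minimal expansion of a real closed field, in which a definable discrete set is necessarily finite." But o-minimal behaviour on bounded regions is precisely what almost o-minimality asserts, so this step assumes the conclusion: the hypotheses only give local o-minimality there, and nothing yet rules out an infinite bounded definable discrete set inside $]-1,1[$. (Transporting the whole field onto $]-1,1[$ is also obstructed, since the transported operations would have to be defined through unbounded elements, whereas only bounded restrictions of $\oplus,\otimes$ are definable; and hypothesis (2) does not obviously grant definable completeness, so even your reduction to Corollary \ref{cor:local2} is unjustified in this case.) The missing idea is the "second kind" \emph{uniformity}, which your proposal never uses and which is the engine of the paper's argument: for a bounded definable $X\subseteq[-N,N]$ one forms the definable family $Y_t=\{t\otimes y \;|\; y\in X\}$, and uniform local o-minimality of the second kind at the origin provides a \emph{single} interval $I=]-L,L[$ and a \emph{single} $\varepsilon>0$ such that $Y_t\cap I$ is a finite union of points and open intervals for all $0<t<\varepsilon$ simultaneously; choosing $t<\varepsilon$ so small that $Y_t\subseteq I$, and pulling back along the order-isomorphism $y\mapsto t\otimes y$, shows that $X$ itself is a finite union of points and open intervals. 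Plain local o-minimality would only give an interval depending on each fixed $t$, and no single $t$ would then both shrink $X$ into that interval and retain the conclusion; this is exactly what your sketch cannot supply.
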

\begin{proof}
(1) 
Obvious. We omit the proof.

(2) Let $X$ be a bounded definable set in $M$.
We have the addition and the multiplication $\oplus, \otimes: M \times M \rightarrow M$ whose restriction to the product of bounded open intervals definable in $\mathcal M$.
We may assume that $X$ is contained in the bounded closed interval $[-N,N]$. 
Consider the set $Y=\{(t,x) \in [0,1] \times M\;|\; \exists y \in X,\ x=t \otimes y\}$.
The set $Y$ is definable because we assume the bounded definable multiplication.
Since $\mathcal M$ is uniformly locally o-minimal of the second kind, there exists an open interval $I=]-L,L[$ containing the origin and a small positive $\varepsilon>0$ such that, for any $0<t<\varepsilon$, the intersection $I \cap Y_t$ of $I$ with the fiber $Y_t = \{t \otimes y \in M\;|\; y \in X\}$ of $Y$ at $t$ is a finite union of points and open intervals.
Take $t>0$ smaller than $\varepsilon$ and satisfying $L \otimes t <N$.
We have $Y_t \subseteq I$.
Since $Y_t=Y_t \cap I$ is a finite union of points and open intervals, $X$ is also a finite union of points and open intervals.

(3) Let $D$ be a nonempty bounded definable discrete set.
We have only to show that it is a finite set by Corollary \ref{cor:local2}.
It is also closed by \cite[Lemma 2.4]{Fuji4}.
We may assume that $D$ has at least two points without loss of generality.
Set $m=\sup(D)$.
We get $m<\infty$ because $D$ is bounded.
We have $m \in D$ because $D$ is closed.
Set $E=D \setminus \{m\}$.
Consider the successor function $\mysucc:E \rightarrow M$ given by
$$\mysucc(x)=\inf\{y \in D\;|\; y>x\}\text{.}$$
Since $D$ is closed, we have $\mysucc(x) \in D$.

Consider the definable function $\rho:E \rightarrow M$ defined by $\rho(x)=\mysucc(x)-x$.
We have $\rho(x)>0$.
The graph of the map $\rho$ is definable and discrete.
The image $\rho(E)$ is the projection image of the graph and it is also discrete by the assumption.
Since $\rho(E)$ is discrete, it is closed by \cite[Lemma 2.4]{Fuji4} again.
Set $d=\inf \rho(E)$.
We have $d \in \rho(E)$ because $\rho(E)$ is closed.
In particular, we have $d>0$.
Set $l=\sup(D)-\inf(D)$.
Since the structure is archimedean, there exists a positive integer $n$ with $nd>l$.
By the definition of $\rho$, the cardinality of the set $D$ is not greater than $n$.
\end{proof}

\begin{example}
We can easily construct a locally o-minimal structure having bounded definable field structure which is not an expansion of an ordered field.
Consider an arbitrary o-minimal expansion of the real field $\widetilde{\mathbb R}$.
The structure $[0,1)_{\text{def}}$ is the structure whose universe is $[0,1)$ defined in \cite[Definition 2]{KTTT}.
The simple product of $\mathbb Z$ and $[0,1)_{\text{def}}$ has bounded definable field structure but it is not an expansion of an ordered field.
The definition of a simple product is found in \cite[Definition 14]{KTTT}.
\end{example}

\begin{remark}
The latter condition in Proposition \ref{prop:almost2}(3) is \cite[Definition 1.1(a)]{Fuji4}. 
This condition is satisfied in a model of DCTC \cite[Corollary 4.3]{S} and in a definably complete uniformly locally o-minimal expansion of an ordered group of the second kind \cite[Proposition 2.12]{Fuji4}.
\end{remark}

Almost o-minimality does not preserve under elementary equivalence.
\begin{proposition}\label{prop:not_almost}
Let $\mathcal M=(M,<,\ldots)$ be an almost o-minimal structure which is not o-minimal.
An $\omega$-saturated elementary extension of $\mathcal M$ is not almost o-minimal.
\end{proposition}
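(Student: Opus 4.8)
The plan is to exhibit, in the $\omega$-saturated elementary extension $\mathcal N=(N,<,\ldots)$, a bounded definable subset of $N$ that is infinite and discrete. Since an open interval of a dense linear order without endpoints has no isolated points, a discrete set contains no open interval; hence a discrete set that is a finite union of points and open intervals must be finite. Thus any bounded infinite discrete definable set witnesses that $\mathcal N$ is not almost o-minimal. (This is exactly the failure of the criterion in Corollary \ref{cor:local2}, noting that $\mathcal N\equiv\mathcal M$ is still a dense linear order without endpoints, so that almost o-minimality is meaningful for $\mathcal N$.)

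To produce such a set, first apply Corollary \ref{lem:almost2}: because $\mathcal M$ is almost o-minimal but not o-minimal, it has an unbounded infinite discrete definable subset $D$ of $M$, say defined by a formula $\delta(v,\overline{c})$ with parameters $\overline{c}\in M^k$. Discreteness of $D$ is expressed by the first-order sentence $\forall v\,(\delta(v)\to \exists s\,\exists t\,(s<v<t \wedge \forall w\,(s<w<t\wedge \delta(w)\to w=v)))$, so the set $\delta(N,\overline{c})$ defined in $\mathcal N$ is again discrete. For each $n$ let $\theta_n(x,y)$ be the formula, with parameters $\overline{c}$, asserting that the interval $]x,y[$ contains at least $n$ distinct elements of $D$, and consider the two-variable type $p(x,y)=\{x<y\}\cup\{\theta_n(x,y)\mid n\in\mathbb N\}$. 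Every finite fragment of $p$ is satisfiable in $\mathcal M$: since $D$ is infinite one may pick finitely many distinct points of $D$ and, using that the order has no endpoints, enclose them in an interval $]a,b[$. Hence $p$ is a consistent type over the finite parameter set $\overline{c}$, and $\omega$-saturation of $\mathcal N$ realizes it by some pair $(a,b)\in N^2$.

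For this pair the set $E=\delta(N,\overline{c})\cap\,]a,b[$ is definable, bounded (it lies in $]a,b[$), discrete (being a subset of the discrete set $\delta(N,\overline{c})$), and infinite (it contains at least $n$ points for every $n$). By the first paragraph $E$ is then a bounded definable set that is not a finite union of points and open intervals, so $\mathcal N$ is not almost o-minimal.

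The step requiring care is keeping the saturation demand at the level of $\omega$-saturation. The naive witness — a single element of $\delta(N,\overline{c})$ lying above every standard element of $D$ — would require realizing a type over the countably many standard points of $D$, hence $\omega_1$-saturation. Realizing instead the two-variable type $p$, whose parameters are only the finitely many $\overline{c}$, allows the endpoints $a$ and $b$ both to be nonstandard and infinitely close together while trapping infinitely many points of $\delta(N,\overline{c})$; this is precisely the phenomenon that cannot occur in $\mathcal M$, where almost o-minimality forces a bounded discrete definable set to be finite. The remaining verifications — that discreteness transfers and that $\mathcal N$ remains a dense linear order without endpoints — are immediate from elementary equivalence.
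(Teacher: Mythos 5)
Your proof is correct and takes essentially the same approach as the paper: both start from the unbounded infinite discrete definable set $D$ given by Corollary \ref{lem:almost2}, observe that discreteness is first-order and transfers to the extension, and use $\omega$-saturation to realize a finitely satisfiable type (over the finitely many parameters defining $D$) whose realization traps at least $n$ points of $D^{\mathcal N}$ in a bounded interval for every $n$, yielding a bounded infinite discrete definable set. The only difference is cosmetic: the paper anchors one endpoint at a fixed $c \in M$ and realizes a one-variable type after splitting into the cases $D_{>c}$ infinite or $D_{<c}$ infinite, whereas you realize a two-variable type with both endpoints free, which avoids that case split.
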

\begin{proof}
There exists a definable discrete infinite subset $D$ of $M$ by Corollary \ref{lem:almost2}.
It is closed by Lemma \ref{lem:local1}.
Take an arbitrary element $c \in M$.
We assume that $D_{>c}=\{x \in D\;|\; x>c\}$ is infinite.
We can prove the proposition in the same manner when $D_{<c}=\{x \in D\;|\; x<c\}$ is infinite.

We consider the formula $\Phi_n(x)$ defining that the open interval $]c,x[$ has at least $n$ elements in $D$ for any non-negative integer $n$.
The family $\{\Phi_n(x)\}$ is finitely satisfiable in $\mathcal M$ because $D_{>c}$ is infinite.
Let $\mathcal N=(N,<,\ldots)$ be an $\omega$-saturated elementary extension of $\mathcal M$.
We can find $d \in N$ such that $\mathcal N \models \Phi_n(d)$ for all $n$.
In particular, the definable set $]c,d[ \cap D^{\mathcal N}$ is not a finite union of points and open intervals.
Here, the notation $D^{\mathcal N}$ be the subset of $N$ defined by the same formula as $D$.
It implies that $\mathcal N$ is not almost o-minimal.
\end{proof}

The definition of a structure having bounded definable field structure seems to be technical.
However, as indicated in the following proposition, sufficiently complex almost o-minimal structure has bounded definable field structure.
\begin{proposition}\label{prop:field_str}
Consider an almost o-minimal expansion of an ordered group.
Assume further that there exists a strictly monotone homeomorphism from an unbounded open interval to a bounded open interval the graph of whose restriction to arbitrary bounded open subintervals of the unbounded open interval is definable.
Then, any definable function is piecewise linear or the structure has bounded definable field structure.
\end{proposition}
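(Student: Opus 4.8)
The plan is to reduce the statement to Theorem \ref{thm:xstr} after enlarging the ambient family of definable sets. First I would record that an almost o-minimal expansion of an ordered group is an $\mathfrak X$-expansion of an ordered divisible abelian group: it is definably complete by Lemma \ref{lem:almost1}, hence divisible and abelian, and its family of definable sets forms an $\mathfrak X$-structure in which ``$\mathfrak X$-definable'' means ``definable in $\mathcal M$''. Applying Theorem \ref{thm:in_omin} produces an o-minimal expansion $\mathcal R$ of an ordered group on the same universe such that every $\mathcal R$-definable set is definable in $\mathcal M$ and every bounded definable set of $\mathcal M$ is $\mathcal R$-definable. I would then pass to the $\mathfrak X$-structure $\mathfrak X(\mathcal R)$ of semi-definable sets in $\mathcal R$; by Corollary \ref{cor:in_omin} every set definable in $\mathcal M$ is semi-definable in $\mathcal R$, and since the addition of $\mathcal R$ is $\mathcal R$-definable and hence semi-definable, $\mathfrak X(\mathcal R)$ is again an $\mathfrak X$-expansion of an ordered divisible abelian group.

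The crucial linking step is to show that the hypothesized homeomorphism $g$ from an unbounded open interval $J$ onto a bounded open interval $I$ is $\mathfrak X(\mathcal R)$-definable, i.e. semi-definable in $\mathcal R$. For a bounded open box $U=U_1\times U_2$ the trace $\Gamma(g)\cap U$ is the graph of the restriction of $g$ to the bounded subinterval $J\cap U_1$, intersected with $U$; by hypothesis that restriction is definable in $\mathcal M$, and it is bounded because $I$ is bounded, so it is $\mathcal R$-definable by Theorem \ref{thm:in_omin}(ii). Thus $\Gamma(g)\cap U$ is $\mathcal R$-definable for every bounded box $U$, so $g$ (and equally its inverse $g^{-1}\colon I\to J$) is semi-definable. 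Consequently $\mathfrak X(\mathcal R)$ satisfies the hypothesis of Theorem \ref{thm:xstr}, which yields the dichotomy: either every semi-definable function in $\mathcal R$ is piecewise linear, or there are semi-definable binary operations $\oplus,\otimes\colon M^2\to M$ and an element $1'\in M$ making $(M,<,0,1',\oplus,\otimes)$ an ordered real closed field.

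It remains to transfer each alternative back to $\mathcal M$, and here the boundedness correspondence of Theorem \ref{thm:in_omin} does the work. In the field case, for any $a<b$ the restrictions $\oplus|_{]a,b[^2}$ and $\otimes|_{]a,b[^2}$ are bounded and semi-definable, hence $\mathcal R$-definable, hence definable in $\mathcal M$; since $(M,<,0,1',\oplus,\otimes)$ is an ordered real closed field, this is precisely a bounded definable field structure on $\mathcal M$. In the piecewise linear case, let $F\colon D\to M$ be any function definable in $\mathcal M$; its graph is semi-definable in $\mathcal R$, so $F$ is semi-definable and therefore piecewise linear in $\mathfrak X(\mathcal R)$. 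Given a bounded open box $B$ and a bounded open interval $I'$, the set $B\cap D\cap F^{-1}(I')$ is bounded, so any semi-definable partition of it witnessing linearity consists of bounded semi-definable, hence $\mathcal R$-definable, hence $\mathcal M$-definable, pieces; this is exactly the definition of $F$ being piecewise linear as an $\mathfrak X$-definable function of $\mathcal M$. I expect the main obstacle to be the linking step: verifying carefully that definability of $g$ merely on bounded subintervals, together with Theorem \ref{thm:in_omin}(ii), upgrades $g$ to a genuinely semi-definable map so that Theorem \ref{thm:xstr} becomes applicable, and then checking that the two conclusions descend to $\mathcal M$ precisely along bounded sets rather than on all of $M^n$.
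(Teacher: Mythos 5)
Your proposal is correct and follows essentially the same route as the paper's proof: pass to the o-minimal structure $\mathcal R$ of Theorem \ref{thm:in_omin}, work in the semi-definable $\mathfrak X$-structure $\mathfrak X(\mathcal R)$, apply Theorem \ref{thm:xstr}, and transfer the dichotomy back to $\mathcal M$ along bounded sets. You even spell out two steps the paper leaves implicit, namely the verification that the hypothesized homeomorphism is semi-definable and the descent of the piecewise-linear partition to $\mathcal M$-definable pieces.
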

\begin{proof}
An almost o-minimal expansion $\mathcal M$ of an ordered group is obviously an $\mathfrak X$-structure.
It is an $\mathfrak X$-expansion of an ordered divisible abelian group by Lemma \ref{lem:almost1} and \cite[Proposition 2.2]{M}.
Take an o-minimal expansion of an ordered group $\mathcal R$ given in Theorem \ref{thm:in_omin}. 
Any set definable in $\mathcal R$ is definable in $\mathcal M$ and any bounded set definable in $\mathcal M$ is definable in $\mathcal R$.

Consider the $\mathfrak X$-structure $\mathfrak X(\mathcal R)$ of semi-definable sets in $\mathcal R$.
There exists a strictly monotone homeomorphism between a bounded interval and an unbounded interval which is $\mathfrak X$-definable in $\mathfrak X(\mathcal R)$ by the assumption of the proposition.
By Theorem \ref{thm:xstr}, either any function $\mathfrak X$-definable in $\mathfrak X(\mathcal R)$ is piecewise linear or there exists binary maps $\oplus, \otimes:M^2 \rightarrow M$ $\mathfrak X$-definable in $\mathfrak X(\mathcal R)$ such that $(M,<,0,1,\oplus,\otimes)$ is an ordered real closed field.
In the former case, any function definable in $\mathcal M$ is piecewise linear because it is also $\mathfrak X$-definable in $\mathfrak X(\mathcal R)$

Consider the latter case.
The restriction of the addition $\oplus$ to the bounded open box $]a,b[ \times ]a,b[$ is definable in $\mathcal R$ and; therefore, it is definable in $\mathcal M$.
It is the same for the multiplication $\otimes$.
We have demonstrated that the structure $\mathcal M$ has bounded definable field structure.
\end{proof}

\subsection{Uniform local definable cell decomposition}
\subsubsection{Preliminary}
We begin to study uniform local definable cell decomposition for almost o-minimal structures.
The author developed the theory of uniformly locally o-minimal structures of the second kind and their dimension theory in \cite{Fuji,Fuji3,Fuji4}.
An almost o-minimal structure is a definably complete uniformly locally o-minimal structures of the second kind by Corollary \ref{cor:second} and Lemma \ref{lem:almost1}.
We use the following facts:

\begin{proposition}\label{prop:olddim}
Consider a DCULOAS structure $\mathcal M=(M, <, 0, +, \ldots)$.
The following assertions hold true:
\begin{enumerate}
\item[(1)] Let $f:X \rightarrow M^n$ be a definable map. 
We have $\dim(f(X)) \leq \dim X$.
\item[(2)] Let $f:X \rightarrow M^n$ be a definable map. 
The notation $\mathcal D(f)$ denotes the set of points at which the map $f$ is discontinuous. 
 The inequality $\dim(\mathcal D(f)) < \dim X$ holds true.
\item[(3)] (Addition Property)
Let $\varphi:X \rightarrow Y$ be a definable surjective map whose fibers are equi-dimensional; that is, the dimensions of the fibers $\varphi^{-1}(y)$ are constant.
We have $\dim X = \dim Y + \dim \varphi^{-1}(y)$ for all $y \in Y$.  
\end{enumerate}
\end{proposition}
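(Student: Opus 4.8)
The three assertions are the basic dimension-theoretic facts for structures admitting local definable cell decomposition, and the engine for all of them is Theorem \ref{thm:dcd} together with the observation that, by Definition \ref{def:dimension}, dimension is a purely local invariant: the condition $\dim X \geq m$ is witnessed at a single point $x$ through the behaviour of $X$ inside open boxes about $x$, and since that condition is monotone in the box it suffices to test it on arbitrarily small, hence bounded, boxes. The plan is therefore to fix a witnessing point, pass to a bounded open box $B$, apply Theorem \ref{thm:dcd} to obtain a finite cell decomposition of $B$ compatible with the relevant definable sets, run the usual o-minimal dimension arguments on this finite decomposition (where a cell of type $(i_1,\dots,i_n)$ has dimension $\sum_j i_j$), and then read off the global conclusions by taking suprema over witnessing points, exactly as in Lemma \ref{lem:equiv_dim}.

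The first sub-lemma I would isolate is that a coordinate projection never increases dimension: $\dim \pi(S) \leq \dim S$ for definable $S$. By composing one-variable projections it suffices to treat $\pi : M^n \to M^{n-1}$ forgetting the last coordinate, and inside a bounded box a cell of type $(i_1,\dots,i_n)$ is carried by $\pi$ onto a cell of type $(i_1,\dots,i_{n-1})$, dropping the dimension by $i_n \in \{0,1\}$; the local-dimension reduction then yields the claim. For assertion (1) I would pass to the graph $\Gamma(f) \subseteq M^{m+n}$ and note that $f(X)$ is a coordinate projection of $\Gamma(f)$, so $\dim f(X) \leq \dim \Gamma(f)$ by the sub-lemma; it then remains to prove the graph-dimension identity $\dim \Gamma(f) = \dim X$. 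The inequality $\dim X \leq \dim \Gamma(f)$ is again a projection, and for the reverse I would, in each bounded box, cell-decompose $\Gamma(f)$ so that every top-dimensional cell inside $\Gamma(f)$ is the graph of a continuous function over its own projection and hence has the same dimension as that projection, which lies in $X$. Thus assertions (1) and (2) both flow from a single cell decomposition of the graph.

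For assertion (2), the same local cell decomposition of $\Gamma(f)$ shows that on each top-dimensional cell the map $f$ is continuous, so $\mathcal D(f) \cap B$ is confined to cells of strictly smaller dimension; taking the supremum over bounded boxes gives $\dim \mathcal D(f) < \dim X$. For assertion (3) I would, inside bounded boxes, decompose $X$ and $Y$ simultaneously and compatibly with $\varphi$ so that over each cell of the decomposition of $Y$ the fibres of $\varphi$ have constant type and the cell types add, producing the local identity $\dim X = \dim Y + \dim \varphi^{-1}(y)$; the global equidimensionality hypothesis is what lets one patch these box-local identities into the stated equality.

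The main obstacle throughout is globalisation. Unlike the o-minimal setting, a definable set in a DCULOAS structure is only a \emph{finite} union of cells inside each bounded box and may break into infinitely many pieces globally (for instance where it meets an unbounded discrete definable set), so the o-minimal proofs cannot be transcribed verbatim. The crux is to verify that each dimension (in)equality is insensitive to this, which is precisely what the local character of Definition \ref{def:dimension} supplies. The genuinely delicate point is (3), where ``equidimensional fibres'' is a global hypothesis while additivity is proved box by box, so one must confirm that the box-local fibre dimensions agree with the global fibre dimension before patching. These facts are in any event developed in \cite{Fuji,Fuji3,Fuji4}, and the argument sketched here reconstructs them from local definable cell decomposition.
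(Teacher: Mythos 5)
Your proposal contains a genuine gap, and it sits exactly where you locate the ``crux.'' First, a technical misreading: Theorem \ref{thm:dcd} does not allow you to ``pass to a bounded open box $B$'' of your choosing and decompose inside it; it only asserts that each point has \emph{some} (possibly very small) box around it admitting a finite cell decomposition. In a DCULOAS structure that is not almost o-minimal, a bounded definable set need not be a finite union of cells at all, so the bounded-box reduction of Lemma \ref{lem:equiv_dim} (proved for $\mathfrak X$-structures, hence available for almost o-minimal structures but not here) cannot be invoked. Second, and more fundamentally, your key sub-lemma $\dim \pi(S) \leq \dim S$ does not follow from the sketched ``local-dimension reduction.'' Dimension is local, but \emph{images are not}: the germ of $\pi(S)$ at a point $y$ downstairs is assembled from points of $S$ lying over $y$ at arbitrary heights, since the fibers of $\pi$ are unbounded. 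If $y$ witnesses $\dim \pi(S) \geq m$, there is no way to select a point upstairs all of whose box-neighborhoods project onto sets witnessing dimension $m$: for each candidate $(y,t)$, a box $B \times I$ only sees $S \cap (B \times I)$, and the part of $S$ responsible for the image near $y$ may escape every bounded $I$. What your globalization actually yields is $\pi(S) = \bigcup_{i} \pi(S \cap B_i)$ over \emph{infinitely} many small boxes, each image of dimension at most $\dim S$; but the dimension of Definition \ref{def:dimension} is not stable under infinite unions of definable sets (only the finite case follows from Proposition \ref{prop:dim}(b)), so the inequality does not pass to the union. This defect propagates to all three assertions: both directions of your graph identity $\dim \Gamma(f) = \dim X$ invoke the sub-lemma, (2) relies on the same decomposition, and in (3) the fibers $\varphi^{-1}(y)$ are global, typically unbounded, objects that no single box controls.

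This is not a presentational issue that more care would repair: assertion (1) is precisely the main theorem of \cite{Fuji3} (a paper devoted to that single inequality), (2) is its corollary, and (3) is \cite[Theorem 3.14]{Fuji4}. The present paper accordingly proves Proposition \ref{prop:olddim} by citation alone, and its own in-paper dimension theory (Proposition \ref{prop:dim}) pointedly omits any statement about images of definable maps. The proofs in \cite{Fuji3,Fuji4} require ingredients beyond local cell decomposition --- definable completeness used through suprema and infima of fibers, and the behavior of definable discrete sets under projection (compare the remark following Proposition \ref{prop:almost2}) --- exactly to tame the unbounded, non-local behavior your sketch elides. A self-contained argument would have to reconstruct that machinery; the local-to-global pattern of Lemma \ref{lem:equiv_dim} alone does not suffice.
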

\begin{proof}
(1) \cite[Theorem 1.1]{Fuji3};
(2) \cite[Corollary 1.2]{Fuji3};
(3) \cite[Theorem 3.14]{Fuji4}.
\end{proof}

We consider an almost o-minimal expansion of an ordered group $\mathcal M=(M, <$, $0, +, \ldots)$ in Section \ref{sec:multi} and Section \ref{sec:udcd}.
An $\mathcal M$-definable set is simply called definable in these sections. 
We introduce several notations.
The almost o-minimal structure $\mathcal M$ is simultaneously an $\mathfrak X$-expansion of an ordered divisible abelian group.
Applying Theorem \ref{thm:in_omin} to it, there exists an o-minimal expansion of an ordered group such that any set definable in this structure is definable in $\mathcal M$ and any bounded set definable in $\mathcal M$ is definable in the structure.
The notation $\myindr(\mathcal M)$ denote this o-minimal structure.
Sets semi-definable in $\myindr(\mathcal M)$ are simply called semi-definable in these sections.
Any definable set is semi-definable by the definitions of the o-minimal structure $\myindr(\mathcal M)$.

\subsubsection{Partition into multi-cells}\label{sec:multi}
\begin{definition}
Consider an expansion of a dense linear order $\mathcal M=(M,<,\cdots)$.
A subset $X$ of $M^{n+1}$ is called \textit{bounded in the last coordinate} if there exists a bounded open interval $I$ such that $X \subseteq M^n \times I$.
\end{definition}

\begin{lemma}\label{lem:ld2}
Consider an almost o-minimal structure $\mathcal M=(M,<,\ldots)$.
Let $X$ be a semi-definable subset of $M^{n+1}$ which is bounded in the last coordinate.
The image $\pi(X)$ is also semi-definable, where $\pi:M^{n+1} \rightarrow M^n$ is the projection forgetting the last coordinate.

In addition, if $X$ is semi-definably connected, then the image $\pi(X)$ is also semi-definably connected.
\end{lemma}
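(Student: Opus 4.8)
The plan is to handle the two assertions separately, first establishing semi-definability of the image and then using it to deduce the connectedness statement. Throughout, "semi-definable" means semi-definable in the o-minimal expansion $\myindr(\mathcal M)$ fixed in the preliminaries.

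For the semi-definability of $\pi(X)$, the key observation is that the hypothesis of being bounded in the last coordinate makes $\pi|_X$ proper. Fix a bounded open interval $I$ with $X \subseteq M^n \times I$. First I would check that, for every bounded open box $V$ in $M^n$, the product $V \times I$ is a bounded open box in $M^{n+1}$ and that $\pi(X \cap (V \times I)) = V \cap \pi(X)$; the inclusion that uses the hypothesis is that any witness $t$ of $y \in \pi(X)$ automatically lies in $I$, since $X \subseteq M^n \times I$. Because $X$ is semi-definable, $X \cap (V \times I)$ is definable in $\myindr(\mathcal M)$, and hence its projection $\pi(X \cap (V \times I)) = V \cap \pi(X)$ is definable in the o-minimal structure $\myindr(\mathcal M)$. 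As $V$ is an arbitrary bounded open box, this shows that $\pi(X)$ is semi-definable. Equivalently, one checks directly that $(\pi|_X)^{-1}(V) = X \cap (V \times I)$ is bounded, so $\pi|_X$ is proper, and then semi-definability of $\pi(X)$ is immediate from Definition \ref{def:x}(6) applied to the $\mathfrak X$-structure of semi-definable sets.

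For the connectedness statement, I would invoke the characterization of semi-definable connectedness in Theorem \ref{thm:connected}. Assuming $X$ semi-definably connected (and nonempty, the empty case being trivial), Theorem \ref{thm:connected}(3) gives that $X$ is semi-definably pathwise connected. Given $x, y \in \pi(X)$, I choose $x', y' \in X$ with $\pi(x') = x$ and $\pi(y') = y$, and take a definable continuous map $\gamma : [c_1, c_2] \to X$ with $\gamma(c_1) = x'$ and $\gamma(c_2) = y'$. Then $\pi \circ \gamma : [c_1, c_2] \to \pi(X)$ is definable in $\myindr(\mathcal M)$ (a composition of maps definable there), continuous, and joins $x$ to $y$ inside $\pi(X)$. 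Hence $\pi(X)$ is semi-definably pathwise connected, and since $\pi(X)$ is semi-definable by the first part, the implication $(3) \Rightarrow (1)$ of Theorem \ref{thm:connected} applied to $\pi(X)$ yields that $\pi(X)$ is semi-definably connected.

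I expect no serious obstacle here; the only point requiring care is the role of the boundedness hypothesis, which is exactly what guarantees that $\pi|_X$ is proper (equivalently, that a witnessing last coordinate stays in the fixed bounded interval $I$), allowing the reduction to a bounded box on which $X$ is genuinely definable. Without it the projection of a semi-definable set need not be semi-definable, so this hypothesis is essential rather than cosmetic.
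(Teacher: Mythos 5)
Your proof is correct; the paper itself offers no argument for this lemma (its proof reads ``Obvious''), and your first part captures exactly what makes it obvious: the identity $V \cap \pi(X) = \pi\bigl(X \cap (V \times I)\bigr)$ for every bounded open box $V$, equivalently the properness of $\pi|_X$, which is precisely the role of the bounded-in-the-last-coordinate hypothesis.

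One remark on the second part. By routing the connectedness claim through Theorem \ref{thm:connected} you import the hypothesis that the underlying structure is an expansion of an ordered group, since that theorem is stated for o-minimal expansions of ordered groups; this is harmless under the standing conventions of Section \ref{sec:multi} (where $\mathcal M$ is an almost o-minimal expansion of an ordered group and $\myindr(\mathcal M)$ comes from Theorem \ref{thm:in_omin}), but the lemma as literally stated assumes only an almost o-minimal structure. A more economical argument avoids Theorem \ref{thm:connected} entirely: if $\pi(X) = Y_1 \cup Y_2$ with $Y_1, Y_2$ disjoint, nonempty, semi-definable, and closed and open in $\pi(X)$, then $X \cap \pi^{-1}(Y_1)$ and $X \cap \pi^{-1}(Y_2)$ are disjoint, nonempty, closed and open in $X$, and they are semi-definable --- for a bounded open box $U = V \times J$, the set $X \cap \pi^{-1}(Y_i) \cap U$ is the intersection of the definable sets $X \cap U$ and $(Y_i \cap V) \times J$ --- contradicting the semi-definable connectedness of $X$. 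This pulls a disconnection of $\pi(X)$ back to one of $X$ directly, needs neither pathwise connectedness nor the group operation, and so proves the lemma in the generality in which it is stated; your version is nonetheless a valid proof in the setting where the lemma is actually used.
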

\begin{proof}
Obvious.
\end{proof}

We next define multi-cells.

\begin{definition}
Consider an almost o-minimal expansion of an ordered group $\mathcal M=(M,<,0,+,\ldots)$.
We define a \textit{multi-cell} $X$ in $M^n$ inductively.
\begin{itemize}
\item If $n=1$, either $X$ is a discrete definable set or all semi-definably connected components of the definable set $X$ are open intervals. 
\item When $n>1$, let $\pi:M^n \rightarrow M^{n-1}$ be the projection forgetting the last coordinate.
The projection image $\pi(X)$ is a multi-cell and, for any semi-definably connected component $Y$ of $X$, $\pi(Y)$ is a semi-definably connected component of $\pi(X)$ and $Y$ is one of the following forms:
\begin{align*}
Y&=\pi(Y) \times M  \text{,}\\
Y&=\{(x,y) \in \pi(Y) \times M \;|\; y=f(x)\} \text{,}\\
Y &= \{(x,y) \in \pi(Y) \times M \;|\; y>f(x)\} \text{,}\\
Y &= \{(x,y) \in \pi(Y) \times M \;|\; y<g(x)\} \text{ and }\\
Y &= \{(x,y) \in \pi(Y) \times M \;|\; f(x)<y<g(x)\}
\end{align*}
for some continuous functions $f$ and $g$ defined on $\pi(Y)$ with $f<g$.
\end{itemize} 
\end{definition}

A definable set is partitioned into finitely many multi-cells.
Its proof is long.
We divide the proof into several lemmas.

\begin{lemma}\label{lem:multi-cell-pre}
Consider an almost o-minimal expansion of an ordered group $\mathcal M=(M,<,0,+,\ldots)$ which is not o-minimal.
Let $X$ be a definable subset of $M^n$ with $n>1$ and $\pi:M^n \rightarrow M^{n-1}$ be the projection forgetting the last coordinate.
Assume that, for any $x \in M^{n-1}$, the fiber $X \cap \pi^{-1}(x)$ is at most of dimension zero.
Then, there exists a definable closed subset $Z$ of $M^{n-1}$ satisfying the following conditions:
 \begin{enumerate}
\item[(a)]  $\dim(Z) < \dim(\pi(X))$;
\item[(b)] The definable set $X \setminus \pi^{-1}(Z)$ is closed in $M^n \setminus \pi^{-1}(Z)$;
\item[(c)] The definable set $X \setminus \pi^{-1}(Z)$ is locally the graph of continuous functions everywhere;
\item[(d)] Any semi-definably connected component $C$ of $X \setminus \pi^{-1}(Z)$ is bounded in the last coordinate.
\end{enumerate}
\end{lemma}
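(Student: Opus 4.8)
The plan is to construct $Z$ as the closure of a finite union of definable sets, each of dimension strictly less than $d:=\dim\pi(X)$. Since passing to the closure neither raises dimension nor destroys definability (Corollary \ref{cor:dim}) and yields a closed set, any such $Z$ automatically satisfies (a). I would begin with a dimension count: over $\pi(X)$ every fibre $X\cap\pi^{-1}(x)$ is nonempty and discrete, hence of dimension $0$ by Lemma \ref{lem:dim0}, so the restriction $\pi|_X\colon X\to\pi(X)$ is a surjection with equidimensional fibres and the addition property (Proposition \ref{prop:olddim}(3)) gives $\dim X=\dim\pi(X)=d$. Consequently every exceptional locus constructed below must be shown to have dimension $<d$.

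For (b) I would put $\overline{\pi(\partial X)}$ into $Z$. The frontier $\partial X$ is definable, and $\dim\partial X<\dim X=d$ by Corollary \ref{cor:dim}, whence $\dim\pi(\partial X)\le\dim\partial X<d$ by Proposition \ref{prop:olddim}(1). For any $p\in\overline X\setminus X=\partial X$ we then have $\pi(p)\in Z$, so $\partial X\cap(M^n\setminus\pi^{-1}(Z))=\emptyset$; equivalently $X\setminus\pi^{-1}(Z)=\overline X\cap(M^n\setminus\pi^{-1}(Z))$ is closed in $M^n\setminus\pi^{-1}(Z)$, which is (b).

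For (c) I would first make the base a good submanifold by adjoining $\overline{\operatorname{Sing}(\pi(X))}$ to $Z$; this has dimension $<d$ by Lemma \ref{lem:reg_mfd}, and over the complement $\pi(X)\setminus Z$ lies in $\operatorname{Reg}(\pi(X))$, hence is a good submanifold by Lemma \ref{lem:open_mfd}. Writing $X'=X\cap\pi^{-1}(M^{n-1}\setminus\overline{\operatorname{Sing}(\pi(X))})$, the hypotheses of Lemma \ref{lem:cont_mfd} hold ($\pi(X')$ a good submanifold, zero-dimensional fibres, $\dim\pi(X')=\dim X'=d$), so the set $S$ of points at which $X'$ is locally the graph of a continuous function is semi-definable with $\dim(X'\setminus S)<d$. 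Adjoining $\overline{\pi(X'\setminus S)}$ to $Z$ forces $X\setminus\pi^{-1}(Z)$ into $S$, and since $M^{n-1}\setminus Z$ is open and semi-definable, Lemma \ref{lem:open_mfd2} propagates ``locally the graph of continuous functions everywhere'' from $S$ to $X\setminus\pi^{-1}(Z)=S\cap((M^{n-1}\setminus Z)\times M)$, giving (c).

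The main obstacle is (d): the reductions above still permit a whole semi-definably connected component to escape to infinity in the last coordinate even though, near each of its points, it is a bounded graph. I would control this by adjoining to $Z$ the closure of the locus $N$ of base points over which the discrete fibres fail to stay inside a single fixed bounded interval on a neighbourhood, and the crux is the estimate $\dim N<d$. Here almost o-minimality must enter essentially: if $N$ had dimension $d$, then applying the o-minimal cell decomposition on a bounded box to the vertically escaping fibres would produce a bounded infinite definable discrete subset of $M$, contradicting Corollary \ref{cor:local2}; equivalently, an escaping branch furnishes a definable strictly monotone homeomorphism between a bounded and an unbounded interval whose bounded restrictions are definable, so Proposition \ref{prop:field_str} leaves only the piecewise linear alternative (impossible on a bounded interval without infinitely many breakpoints) or a bounded definable field structure which, transported along this homeomorphism, would make $\mathcal M$ an expansion of an ordered field and hence o-minimal, contrary to hypothesis. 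Once $\dim N<d$ is secured, every component over $M^{n-1}\setminus Z$ remains in a fixed bounded interval in the last coordinate, yielding (d). I expect this dimension estimate for the vertical-escape locus to be the technical heart of the argument.
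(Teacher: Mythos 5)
Your handling of (a), (b) and (c) is essentially the paper's own argument: remove $\overline{\pi(\partial X)}$ for closedness, remove $\overline{\operatorname{Sing}(\pi(X))}$ to make the base a good submanifold, then apply Lemma \ref{lem:cont_mfd} and remove $\overline{\pi(X'\setminus S)}$, with the same dimension bookkeeping via Corollary \ref{cor:dim} and Proposition \ref{prop:olddim}. The gap is in (d), and it is genuine, in two distinct ways.

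First, the dimension estimate you call the technical heart is false. Take an unbounded discrete definable set $D\subseteq M$ (it exists by Corollary \ref{lem:almost2}, precisely because $\mathcal M$ is not o-minimal) and let $X=M\times D\subseteq M^2$. Every fiber equals $D$, hence has dimension zero, and $\pi(X)=M$, so $d=1$; but no base point has a neighbourhood over which the fibers stay in a fixed bounded interval, so your locus $N$ is all of $M$ and $\dim N=d$. (The lemma is nevertheless true here with $Z=\emptyset$: each component $M\times\{r\}$, $r\in D$, is bounded in the last coordinate, so local uniform boundedness of fibers is simply not what (d) asks for.) Second, even where $\dim N<d$ holds, removing $N$ does not yield (d): for $X=\{(x,x)\,:\,x\in M\}$ the fibers are singletons and locally uniformly bounded, so $N=\emptyset$ and all your exceptional loci are empty, yet the unique semi-definably connected component, the whole graph, is unbounded in the last coordinate. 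Here (d) forces you to \emph{cut} $X$ into infinitely many pieces, which no removal of a low-dimensional locus detected by local fiber behaviour can accomplish. This cutting mechanism is exactly what the paper's fourth exceptional set provides and what your proposal lacks: fix $D$ as above, unbounded in both directions after replacing it by $D\cup(-D)$, let $V_r$ be the boundary of the level set $X_3\cap(\pi(X_3)\times\{r\})$ inside $\pi(X_3)\times\{r\}$, and set $Z_4=\overline{\pi\left(\bigcup_{r\in D}V_r\right)}$; the bound $\dim\bigcup_{r\in D}V_r<d$ is checked cell by cell inside bounded boxes. After removing $\pi^{-1}(Z_4)$, every semi-definably connected component either lies inside a single level $M^{n-1}\times\{r\}$ or avoids all levels, and in the latter case semi-definable connectedness traps it between two elements of $D$; either way it is bounded in the last coordinate. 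In the graph example this $Z_4$ equals $D$ and cuts the graph into bounded pieces. Finally, your fallback via Proposition \ref{prop:field_str} cannot be repaired: a bounded definable field structure does not make $\mathcal M$ an expansion of an ordered field (the multiplication is only definable on bounded boxes), and the paper exhibits non-o-minimal almost o-minimal structures having bounded definable field structure, so no contradiction with non-o-minimality is available from that direction.
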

\begin{proof}
Set $d=\dim(X)$.
We have $\dim(\pi(X))=d$ by Proposition \ref{prop:olddim}(3).
Let $Z_1=\pi(\partial X)$.
We have $\dim Z_1 \leq \dim \partial X < \dim X=d$ by Corollary \ref{cor:dim} and Proposition \ref{prop:olddim}(1).
Set $X_1 =X \setminus \pi^{-1}(Z_1)$.
We have $\dim X_1=d$ by Proposition \ref{prop:dim}(b).
The definable set $X_1$ is closed in $M^n \setminus \pi^{-1}(Z_1)$.
 
We now set $Z_2 = {\operatorname{Sing}(\pi(X_1))}$ and $X_2 = X_1 \setminus \pi^{-1}(Z_2)$.
They are obviously definable in $\mathcal M$.
The definable set $\pi(X) \setminus (Z_1 \cup Z_2)=\pi(X_1) \setminus Z_2 = \pi(X_2)$ is a good submanifold by Lemma \ref{lem:open_mfd} and  Lemma \ref{lem:reg_mfd}.
We get $\dim Z_2<d$ by Lemma \ref{lem:reg_mfd} and Corollary \ref{cor:dim}.
We also have $\dim X_2=d$ for the same reason as above.

We want to apply Lemma \ref{lem:cont_mfd} to $X_2$.
We have $\dim X_2=\dim \pi(X_2)$ by Proposition \ref{prop:olddim}(3).
The assumption in Lemma \ref{lem:cont_mfd} is satisfied.
Let $S$ be the set of points at which $X_2$ is locally the graph of a continuous function.
It is obviously definable in $\mathcal M$.
We have $\dim(X_2 \setminus S)<d$ by Lemma \ref{lem:cont_mfd}.
Set $Z_3=\overline{\pi(X_2 \setminus S)}$.
We have $\dim(Z_3)<d$ by Corollary \ref{cor:dim} and Proposition \ref{prop:olddim}(1).
The definable set $X_3=X_2 \setminus \pi^{-1}(Z_3)$ is locally the graph of a continuous function at any point in $X_3$.
We have $\dim X_3=d$.

There exists an unbounded discrete definable subset $D$ of $M$ by Corollary \ref{lem:almost2}.
We may assume that $\inf(D)=-\infty$ and $\sup(D)=\infty$ by considering $D \cup (-D)$ in place of $D$ because the group operation is definable.
Let $V_r$ be the boundary of $X_3 \cap (\pi(X_3) \times \{r\})$ in $\pi(X_3) \times \{r\}$ for any $r \in D$.
Set $W=\bigcup_{r \in D} V_r$ and $Z_4=\overline{\pi(W)}$.
The set $W$ is definable.
In fact, $W$ is given by $\{(x,r) \in M^{n-1} \times M\;|\; r \in D, \ \text{the point }x \text{ is contained in the boundary of } X_3 \cap \pi^{-1}(r) \text{ in }\pi(X_3) \times \{r\}\}$ and it is definable.

We demonstrate that $\dim W<d$.
Take an arbitrary bounded open box $B$ in $M^n$.
We have only to demonstrate that $\dim(B \cap W)<d$ by Lemma \ref{lem:equiv_dim}.
Note that $B \cap W$ is definable in $\myindr(\mathcal M)$.
Take an arbitrary cell $C$ contained in $B \cap W$.
We have to show that $\dim C<d$ by \cite[Chapter 3, (1.1)]{vdD}.
There exists $r \in D$ with $C \subseteq V_r$.
We have $\dim (C)< \dim (X_3 \cap (\pi(X_3) \times \{r\})) \leq \dim(X_3)$ by Corollary \ref{cor:dim}.
We get the inequality $\dim W<d$, and consequently, we obtain $\dim Z_4<d$ by Corollary \ref{cor:dim} and Proposition \ref{prop:olddim}(1).

Set $Z=Z_1 \cup Z_2 \cup Z_3 \cup Z_4$.
We are now ready to demonstrate that the conditions (a) through (d) are satisfied.
The condition (a) is now immediate by Proposition \ref{prop:dim}(b).
The condition (b) is satisfied because $\partial X$ is contained in $\pi^{-1}(Z)$.
The condition (c) follows from the definition of $Z_3$, Lemma \ref{lem:open_mfd} and Lemma \ref{lem:open_mfd2}.

The remaining task is to show that the condition (d) is satisfied.
Set $X_{\text{flat}}=X \cap \left((\pi(X) \setminus Z) \times D\right)$ and $X_{\text{flat},r}=X \cap ((\pi(X) \setminus Z) \times \{r\}) $ for all $r \in D$ for simplicity of notations.
By the condition (c) and the definition of $Z_4$, we obtain the following assertion.
\medskip

$(*)$: For any $x \in X_{\text{flat}}$, there exists an open box $U$ containing the point $x$ such that $X \cap U= X_{\text{flat},r} \cap U$ for some $r \in D$, $\pi(X) \cap \pi(U)$ is a good submanifold of $M^n$ and $X \cap U$ is the graph of a constant function defined on $\pi(X) \cap \pi(U)$.
\medskip

In fact, by the definition of $Z_4$, we can take an open box $U$ containing the point $x$ such that $X_{\text{flat}} \cap U= X_{\text{flat},r} \cap U$ for some $r \in D$ and $U \cap \pi^{-1}(Z_4) = \emptyset$.
Shrinking $U$ if necessary, we may assume that $\pi(X) \cap \pi(U)$ is a good submanifold of $M^n$ and $X \cap U$ is the graph of a continuous function on $\pi(X) \cap \pi(U)$ by the condition (c).
If $X \cap U$ is not the graph of a constant function, it intersects with $V_r$, and it means that $U \cap \pi^{-1}(Z_4) \neq \emptyset$.
Contradiction.
\medskip

Fix an arbitrary semi-definably connected component $C$ of $X \setminus \pi^{-1}(Z)$.
We consider two cases, separately.

We first consider the case in which $X_{\text{flat}} \cap C \neq \emptyset$.
We demonstrate that $C \subseteq X_{\text{flat},r}$ for some $r \in D$.
Take a point $x\in X_{\text{flat}} \cap C $.
The point $x$ is contained in $X_{\text{flat},r}$ for some $r \in D$.
We lead to a contradiction assuming that $y \not\in X_{\text{flat},r}$ for some $y \in C$.
There exists an $\myindr(\mathcal M)$-definable continuous curve $\gamma:[c_1,c_2] \rightarrow C$ such that $\gamma(c_1)=x$ and $\gamma(c_2)=y$ by Theorem \ref{thm:connected}.
%It has a bounded image by \cite[Proposition 1.17]{M} because $\mathcal M$ is definably complete by Lemma \ref{lem:almost1}.
%It is definable in $\myindr(\mathcal M)$ by the definition of $\myindr(\mathcal M)$.
The set $\gamma^{-1}(X_{\text{flat},r})=\gamma^{-1}(M^{n-1} \times \{r\})$ is a finite union of points and open intervals because it is definable in the o-minimal structure $\myindr(\mathcal M)$.
It is closed and does not coincide with the closed interval $[c_1,c_2]$ by the assumption.
Therefore, there exists a point $d \in [c_1,c_2]$ such that $\gamma(d) \in X_{\text{flat},r}$ and, for any small $\varepsilon>0$, we can take points $s_1,s_2 \in [c_1,c_2]$ satisfying that $|s_i-d|<\varepsilon$ for $i=1,2$, $\gamma(s_1) \in X_{\text{flat},r}$ and $\gamma(s_2) \not\in X_{\text{flat},r}$.
It implies that $X \cap U \neq X_{\text{flat},r} \cap U$ for any open box $U$ containing the point $\gamma(d)$.
It contradicts the assertion (*).
We have demonstrated that $C$ is contained in $X_{\text{flat},r}$.
Now, the condition (d) is clearly satisfied.

We next consider the case in which $X_{\text{flat}} \cap C = \emptyset$.
We demonstrate that $C \subseteq M^{n-1} \times ]r_1,r_2[$ for some $r_1,r_2 \in M$.
Let $\pi_2:M^n \rightarrow M$ be the projection onto the last coordinate.
We demonstrate that there exists $r_1 \in M$ such that $\pi_2(C) > r_1$.
Assume the contrary.
Take a point $x_1 \in C$, then there exists an $R \in D$ with $\pi_2(x_1) > R$ because $\inf(D)=-\infty$.
We can get $x_2 \in C$ with $\pi_2(x_2)<R$ by the assumption.
Then $C = \{x \in C\;|\; \pi_2(x)>R\} \cup \{x \in C\;|\; \pi_2(x)<R\}$ is a partition into two non-empty open and closed subsets.
It contradicts the assumption that $C$ is semi-definably connected.
We have demonstrated the existence of $r_1$.
We can take $r_2 \in M$ with $\pi(C)<r_2$ in the same manner.
This concludes the assertion (d).
\end{proof}

The following lemma is the major induction step of the proof of Theorem \ref{thm:multi-cell}.

\begin{lemma}\label{lem:multi-cell-pre2}
Consider an almost o-minimal expansion of an ordered group $\mathcal M=(M,<,0,+,\ldots)$ which is not o-minimal.
Let $X$ be a definable subset of $M^n$ and $\pi:M^n \rightarrow M^{n-1}$ be the projection forgetting the last coordinate.
Assume that, for any $x \in M^{n-1}$, the fiber $X \cap \pi^{-1}(x)$ is at most of dimension zero.
Assume further that any definable subset of $M^{n-1}$ is partitioned into finitely many multi-cells.

Then, the definable set $X$ is also partitioned into finitely many multi-cells.
Furthermore, the projection images of two distinct multi-cells are disjoint.
\end{lemma}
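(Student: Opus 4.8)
The plan is to induct on $d=\dim X$, which equals $\dim \pi(X)$ by the addition property (Proposition \ref{prop:olddim}(3)). I would apply Lemma \ref{lem:multi-cell-pre} to obtain the closed definable set $Z \subseteq M^{n-1}$ satisfying (a)--(d) and split $X = X' \cup X''$ into $X' = X \setminus \pi^{-1}(Z)$ and $X'' = X \cap \pi^{-1}(Z)$. Since the fibers of $X$ are zero-dimensional, the addition property together with (a) gives $\dim X'' \le \dim Z < d$, so the induction hypothesis partitions $X''$ into finitely many multi-cells with pairwise disjoint projections; when $d=0$, condition (a) forces $Z=\emptyset$, so $X''=\emptyset$ and the recursion terminates. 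It remains to treat $X'$, whose projection is the definable set $P := \pi(X') = \pi(X) \setminus Z$.

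First I would partition $P$ into finitely many multi-cells $P_1,\dots,P_k$ using the hypothesis on definable subsets of $M^{n-1}$ (so the $P_i$ are disjoint), set $X'_i = X' \cap \pi^{-1}(P_i)$, and show each $X'_i$ is a multi-cell with $\pi(X'_i)=P_i$. As $P_i \subseteq P$ every point of $P_i$ is covered, giving $\pi(X'_i)=P_i$, which is a multi-cell by construction; by the definition of a multi-cell it then suffices to prove that every semi-definably connected component $Y$ of $X'_i$ is the graph of a continuous function over $\pi(Y)$, and that $\pi(Y)$ is a semi-definably connected component of $P_i$ (the four region-type forms are excluded because the fibers are zero-dimensional). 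Here $Y$ is bounded in the last coordinate by (d), so $\pi(Y)$ is semi-definably connected by Lemma \ref{lem:ld2}; and by (c) the restriction of $\pi$ to $X'_i$ is a local homeomorphism and an open map onto $P_i$, so $\pi(Y)$ is open in $P_i$.

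The heart of the matter is single-valuedness, namely that $\pi|_Y$ is injective. I would use the bottom section $f(x)=\min\bigl(Y \cap \pi^{-1}(x)\bigr)$, which is well defined because each such fiber is definable (semi-definable intersected with a bounded box, by (d)), bounded and discrete, hence finite, and which is semi-definable and continuous. Using that $X'_i$ is locally a single graph by (c) and that distinct local sheets stay strictly separated over $P$ (a crossing or merging would contradict the local-graph property), one checks that the graph of $f$ is both open and closed in $Y$; since $Y$ is semi-definably connected and the graph of $f$ is a nonempty semi-definable subset, the graph of $f$ equals $Y$, so $Y$ is a single continuous graph. Closedness of $\pi(Y)$ in $P_i$ then follows by lifting, through the continuous section, a curve in $\pi(Y)$ converging to a boundary point (Corollary \ref{cor:curve_selection}) and using that semi-definably connected components are closed in $X'_i$ (Theorem \ref{thm:connected}); together with openness this makes $\pi(Y)$ a component of $P_i$, so $X'_i$ is a multi-cell.

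Finally, the multi-cells coming from $X'$ have projections $P_1,\dots,P_k$ partitioning $P$, while those coming from $X''$ have projections partitioning $\pi(X)\cap Z$; since $P$ and $Z$ are disjoint, all the resulting multi-cells have pairwise disjoint projections, which proves both assertions. The main obstacle is precisely the globalisation in the third paragraph: upgrading the purely local graph structure furnished by Lemma \ref{lem:multi-cell-pre}(c) to the assertion that each semi-definably connected component is a single continuous graph whose image is a full component of the base. This is where semi-definable connectedness, boundedness in the last coordinate, and the order structure of the zero-dimensional fibers must be combined, and it is the step I expect to require the most care.
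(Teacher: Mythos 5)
Your proposal is correct and follows essentially the same route as the paper: induction on dimension, reduction via Lemma \ref{lem:multi-cell-pre}, pulling back a multi-cell partition of the base $\pi(X)\setminus Z$, and showing that each semi-definably connected component projects onto a component of the base (open via the local-graph property, closed via curve selection and the monotonicity theorem in $\myindr(\mathcal M)$) and is a single continuous graph. The only cosmetic difference is the single-valuedness step: you show the minimum section is clopen in a component and invoke semi-definable connectedness once, whereas the paper first proves the set $T$ of multi-point fibers is clopen in $\pi(C)$ and then derives a contradiction from $T=\pi(C)$ using the supremum section --- the same ingredients in a slightly different order.
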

\begin{proof}
We prove the lemma by induction on $\dim(X)$.
When $\dim(X)=0$, $X$ is a discrete closed definable set and its projection images are also discrete and closed by Lemma \ref{lem:dim0} and Proposition \ref{prop:olddim}(1).
Therefore, X itself is a multi-cell.

Next we consider the case in which $\dim(X)>0$.
We can find a definable closed subset $Z$ of $M^{n-1}$ satisfying the following conditions by Lemma \ref{lem:multi-cell-pre}.
 \begin{enumerate}
\item[(a)]  $\dim(Z) < \dim(\pi(X))$;
\item[(b)] The definable set $X \setminus \pi^{-1}(Z)$ is closed in $M^n \setminus \pi^{-1}(Z)$;
\item[(c)] The definable set $X \setminus \pi^{-1}(Z)$ is locally the graph of continuous functions everywhere;
\item[(d)] Any semi-definably connected component $C$ of $X \setminus \pi^{-1}(Z)$ is bounded in the last coordinate.
\end{enumerate}

The lemma holds true for $X \cap \pi^{-1}(Z)$ by the induction hypothesis because $\dim(X \cap \pi^{-1}(Z)) \leq \dim(Z) < \dim(\pi(X))=\dim(X)$ by Proposition \ref{prop:olddim}(3).
Replacing $X$ with $X \setminus \pi^{-1}(Z)$, we may further assume the followings:
\begin{itemize}
\item $X$ is closed in $\pi^{-1}(\pi(X))$;
\item $X$ is locally the graph of continuous functions everywhere;
\item any semi-definably connected component of $X$ is bounded in the last coordinate.
\end{itemize}
We can partition $\pi(X)$ into finitely many multi-cells by the assumption.
Hence, we may assume that $\pi(X)$ is a multi-cell.
We demonstrate that $X$ is a multi-cell in this case.
Let $C$ be a semi-definably connected component of $X$.
We have only to show the following assertions:
\begin{itemize}
\item $\pi(C)$ is a semi-definably connected component of $\pi(X)$.
\item $C$ is the graph of a continuous function defined on $\pi(C)$.
\end{itemize}

We first demonstrate that $\pi(C)$ is a semi-definably connected component of $\pi(X)$.
The image $\pi(C)$ is semi-definable and semi-definably connected by Lemma \ref{lem:ld2} because $C$ is bounded in the last coordinate.
Therefore, we have only to show that $\pi(C)$ is open and closed in $\pi(X)$.

We first show that $\pi(C)$ is open.
Take an arbitrary point $x \in C$. 
There exists an open box $U$ containing the point $x$ such that $X \cap U$ is the graph of a continuous function on $\pi(X) \cap \pi(U)$.
Shrinking $U$ if necessary, we may assume that $U$ is bounded and $X \cap U$ is definably connected as a set definable in $\myindr(\mathcal M)$.
On the other hand, $C \cap U$ is a union of definably connected components of $X \cap U$ because $C$ is semi-definably connected.
It implies that $C \cap U=X \cap U$.
The intersection $C \cap U$ is the graph of a continuous function on $\pi(X) \cap \pi(U)$.
In particular, we have $\pi(X) \cap \pi(U)=\pi(C\cap U) \subseteq \pi(C) \cap \pi(U) \subseteq \pi(X) \cap \pi(U)$.
We get $\pi(C) \cap \pi(U) = \pi(X) \cap \pi(U)$.
We have demonstrated that $\pi(C)$ is open in $\pi(X)$.

We next demonstrate that $\pi(C)$ is closed in $\pi(X)$. 
Assume for contradiction that we can take a point $x$ in the frontier of $\pi(C)$ in $\pi(X)$.
There exists a continuous curve $\gamma:]0,\varepsilon[ \rightarrow \pi(C)$ definable in $\myindr(\mathcal M)$ with $\displaystyle\lim_{t \to 0}\gamma(t) = x$ by Corollary \ref{cor:curve_selection}.
Define $f_u:]0,\varepsilon[ \rightarrow M$ by $f_u(t)=\sup\{y \in M\;|\; (\gamma(t),y) \in C\}$.
The definable set $\{(t,y) \in ]0,\varepsilon[ \times M\;|\; (\gamma(t),y) \in C\}$ is definable in $\myindr(\mathcal M)$ because $C$ is bounded in the last coordinate.
Therefore, the function $f_u$ is definable in $\myindr(\mathcal M)$.
We may assume that $f_u$ is continuous and monotone by the monotonicity theorem for o-minimal structures \cite[Chapter 3, Theorem 1.2]{vdD} by taking a sufficiently small $\varepsilon>0$ if necessary.
The limit $y=\displaystyle\lim_{t \to 0} f_u(t)$ exists because the function $f_u$ definable in $\myindr(\mathcal M)$ is bounded and monotone.
We have $(x,y) \in X$ because $X$ is closed in $\pi^{-1}(\pi(X))$.
We get $(x,y) \in C$ because $C$ is closed in $X$ by Theorem \ref{thm:connected}.
It means $x \in \pi(C)$.
Contradiction to the assumption that $x$ is a point in the frontier of $\pi(C)$ in $\pi(X)$.

We next demonstrate that $C$ is the graph of a continuous function defined on $\pi(C)$. 
We have only to show that the restriction of $\pi$ to $C$ is injective by the condition (b).
Set 
\begin{equation*}
T=\{x \in \pi(C)\;|\; |\pi^{-1}(x) \cap C| > 1\}\text{.}
\end{equation*}
We have only to demonstrate that $T$ is an empty set.
We first show that $T$ is semi-definable. 
Consider the set $S=\{(x,y_1,y_2) \in M^{n-1} \times M \times M\;|\; (x,y_1) \in C \text{, } (x,y_2) \in C \text{ and } y_1 < y_2\}$. 
The semi-definable set $S$ is bounded in the last coordinate, and the image $S'$ of $S$ under the projection forgetting the last coordinate is semi-definable by Lemma \ref{lem:ld2}.
It is obvious that $S'$ is also bounded in the last coordinate and $T=\pi(S')$.
The set $T$ is semi-definable using Lemma \ref{lem:ld2} again.

The set $T$ is open in $\pi(C)$.
In fact, take an arbitrary point $x \in T$.
There exist $y_1<y_2 \in M$ with $(x,y_1),(x,y_2) \in C$.
By the condition (b), there exists an open box $B$ with $x \in B \cap \pi(C)$ such that $X \cap \pi^{-1}(B)$ contains the graphs of two continuous functions whose values at $x$ are $y_1$ and $y_2$, respectively.
Therefore, $B \cap \pi(C)$ is contained in $T$, and $T$ is open in $\pi(C)$.

We next show that $T$ is closed in $\pi(C)$.
Assume the contrary.
Take a point $x \in \pi(C) \cap \partial T$.
We can take the unique $y \in M$ with $(x,y) \in C$ because $x \not\in T$.
There exists a continuous curve $\gamma:]0,\varepsilon[ \rightarrow \pi(C) \cap T$ definable in $\myindr(\mathcal M)$ such that $\displaystyle\lim_{t \to 0}\gamma(t)=x$ by Corollary \ref{cor:curve_selection}.
We define the maps $\eta_u,\eta_l:]0,\varepsilon[ \rightarrow M$ by
\begin{align*}
&\eta_u(t)=\sup\{u \in M\;|\; (\gamma(t),u) \in C\} \text{ and }\\
&\eta_l(t)=\inf\{u \in M\;|\; (\gamma(t),u) \in C\} \text{.}
\end{align*}
They are well-defined because $C$ is bounded in the last coordinate.
Take a sufficiently small $\varepsilon>0$.
The two functions $\eta_u$ and $\eta_l$ are definable in $\myindr(\mathcal M)$ and continuous and they have the limits $y_u = \displaystyle\lim_{t \to 0} \eta_u(t) \in M$ and $y_l = \displaystyle\lim_{t \to 0} \eta_l(t) \in M$ for the same reason as above.
We have $\eta_u(t) \not=\eta_l(t)$ because $\gamma(t) \in T$.
We have $(x,y_u) \in C$ and $(x, y_l) \in C$ because $C$ is closed in $\pi^{-1}(\pi(C))$.
We therefore get $y=y_l=y_u$ because $x \not\in T$.
The condition (b) fails at $(x,y)$ because $\eta_u(t) \not=\eta_l(t)$.
Contradiction.
We have shown that $T$ is closed in $\pi(C)$.

Since $\pi(C)$ is semi-definably connected and $T$ is open and closed in $\pi(C)$, we have either $T=\pi(C)$ or $T=\emptyset$.
We have only to lead to a contradiction assuming that $T=\pi(C)$.
Define the function $f_u:\pi(C) \rightarrow M$ by $f_u(x)=\sup\{t\;|\; (x,t) \in C\}$.
We can easily show that its graph is a semi-definable set because $C$ is bounded in the last coordinate.
It is a continuous function.
In fact, let $\mathcal D$ be the set of all the points at which $f_u$ is discontinuous.
Take a point $x \in \mathcal D$.
Let $V$ be the intersection of $\pi^{-1}(x)$ with the closure of the graph of $f_u|_{\pi(C) \setminus \{x\}}$, where $f_u|_{\pi(C) \setminus \{x\}}$ denote the restriction of $f_u$ to $\pi(C) \setminus \{x\}$.
The closure of the graph of $f_u|_{\pi(C) \setminus \{x\}}$ is semi-definable by Lemma \ref{lem:frontier}.
The set $V$ is semi-definable and bounded.
Consequently, $V$ is definable in $\myindr(\mathcal M)$ by the definition of semi-definable sets.
There exists a point $(x,y) \in V$ with  $y \not= f_u(x)$ by the assumption.
Note that $(x,y) \in C$ because $C$ is closed in $\pi^{-1}(\pi(C))$.
Since $X$ is locally the graph of continuous functions everywhere, the set $C$ is locally the graphs of continuous functions $g$ and $h$ defined on a neighborhood of $x$ in $\pi(X)$ at $(x,y)$ and $(x,f_u(x))$, respectively. 
Take a sufficiently small $\varepsilon >0$. 
Since $g$ and $h$ are continuous and $g(x)<h(x)$, we have $g(x')+\varepsilon<h(x')$ if $x'$ is sufficiently close to $x$.
We also get $h(x') \leq f_u(x')$ by the definition of the function $f_u$. 
We then have $g(x')+\varepsilon < f_u(x')$ for any $x'$ sufficiently close to $x$ and 
we obtain $(x,y)=(x,g(x)) \not\in V_x$.
Contradiction.
We have demonstrated that the function $f_u$ is continuous.

Consider the graph $\{(x,y) \in C\;|\; y = f_u(x)\}$.
It is easy to prove that the graph is an open and closed proper subset of $C$ using the fact that $X$ is locally the graph of continuous functions everywhere.
Contradiction to the assumption that $C$ is semi-definably connected.
\end{proof}

The following theorem is the main theorem in this subsection.
\begin{theorem}\label{thm:multi-cell}
Consider an almost o-minimal expansion of an ordered group.
A definable set is partitioned into finitely many multi-cells.
\end{theorem}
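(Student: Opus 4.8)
The plan is to prove Theorem \ref{thm:multi-cell} by induction on the ambient dimension $n$, reducing the general case to the fiber-dimension-zero situation already handled in Lemma \ref{lem:multi-cell-pre2}. First I would dispose of the o-minimal case: if $\mathcal M$ is o-minimal then the ordinary definable cell decomposition theorem applies and every definable set is a finite union of cells, which are in particular multi-cells (each has a single semi-definably connected component of the required form). So we may assume $\mathcal M$ is not o-minimal and invoke the machinery developed above, where the o-minimal structure $\myindr(\mathcal M)$ and the notion of semi-definability are available.

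For the inductive step, let $X \subseteq M^n$ be definable and let $\pi:M^n \rightarrow M^{n-1}$ be the projection forgetting the last coordinate. The key is to split $X$ according to the fiber dimension over $\pi$. Let $X^{(1)}=\{x \in X\;|\;\dim(X \cap \pi^{-1}(\pi(x)))=1\}$ be the union of fibers that are not of dimension zero, and let $X^{(0)}=X \setminus X^{(1)}$ be the part with zero-dimensional fibers. By the local o-minimal cell decomposition (Corollary \ref{cor:second}) applied fiberwise, a one-dimensional fiber $X \cap \pi^{-1}(a)$ has nonempty interior in $M$ on a semi-definable subset, and the set $\pi(X^{(1)})$ together with the description of $X^{(1)}$ over it is controlled; the portion $X^{(1)}$ has the shape where fibers contain open intervals, i.e. it is (after partitioning its base by the induction hypothesis on $M^{n-1}$ into multi-cells) itself built from pieces of the form $\pi(Y)\times M$ or slabs $\{f<y<g\}$, $\{y>f\}$, $\{y<g\}$. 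The complementary part $X^{(0)}$ has all fibers of dimension zero, so Lemma \ref{lem:multi-cell-pre2} applies directly — provided its hypothesis that every definable subset of $M^{n-1}$ partitions into multi-cells holds, which is exactly the outer induction hypothesis.

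Concretely, I would proceed as follows. Set up the outer induction on $n$, the base case $n=1$ being immediate from the definition of multi-cell (a bounded-or-not definable subset of $M$ has semi-definably connected components that are points or open intervals, and one separates the discrete part from the interval part, each a multi-cell). For the inductive step with the induction hypothesis "every definable subset of $M^{n-1}$ is a finite union of multi-cells,'' I would first partition the fiber-dimension-one part. The set of fibers containing an interval is governed by the endpoint functions; using Lemma \ref{lem:multi-cell-pre} style arguments (frontier removal, passage to good submanifolds, local graph structure) one extracts the definable boundary functions $f,g$ on the base, then partitions the base into multi-cells by the induction hypothesis so that over each base piece the vertical structure is one of the five standard forms. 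The fiber-dimension-zero part is handed to Lemma \ref{lem:multi-cell-pre2}. Finally one checks that the resulting pieces are genuine multi-cells and that projection images of distinct pieces can be arranged disjoint, refining the base partition further using the induction hypothesis if necessary.

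The main obstacle I anticipate is the fiber-dimension-one part, i.e. describing $X^{(1)}$ cleanly as multi-cells. Lemma \ref{lem:multi-cell-pre2} only treats zero-dimensional fibers, so the boundary functions $f$ and $g$ bounding the vertical intervals must be produced as \emph{semi-definable} functions whose graphs have zero-dimensional fibers, and one must check they fall under the zero-dimensional machinery after a suitable base partition; care is needed because these boundary functions may be discontinuous or undefined on a lower-dimensional subset of the base, which must be removed and handled separately by the induction hypothesis applied to a lower-dimensional definable subset of $M^{n-1}$. Equally delicate is ensuring the compatibility condition in the definition of multi-cell — that for each semi-definably connected component $Y$ of $X$, the image $\pi(Y)$ is a full semi-definably connected component of $\pi(X)$ rather than a proper piece — which relies on the boundedness-in-the-last-coordinate control from Lemma \ref{lem:multi-cell-pre}(d) and the frontier analysis via Corollary \ref{cor:curve_selection}, exactly as in Lemma \ref{lem:multi-cell-pre2}.
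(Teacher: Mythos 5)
Your skeleton does match the paper's proof: induction on $n$, disposing of the o-minimal case by ordinary cell decomposition, splitting $X$ according to the shape of the fiber components over the last-coordinate projection, feeding the zero-dimensional-fiber parts to Lemma \ref{lem:multi-cell-pre2}, and building the interval parts as slabs bounded by endpoint sets, with the base handled by the induction hypothesis. (One small mismatch: the paper splits \emph{pointwise inside each fiber} --- isolated points of a fiber go to the point-part $X_{\text{pt}}$ even when the same fiber also contains intervals --- whereas your split by fiber dimension leaves those isolated points inside $X^{(1)}$; this is harmless, since $X^{(1)}$ can be re-split by the same first-order conditions.)

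The genuine gap is in the bounded-interval part, and the fix you anticipate is aimed at the wrong difficulty. Discontinuity of the endpoint functions is not the issue: one never manipulates endpoint functions directly, but instead applies Lemma \ref{lem:multi-cell-pre2} to the definable sets $Y_{\text{upper}}$ and $Y_{\text{lower}}$ of upper and lower endpoints of the maximal fiber intervals (these have zero-dimensional fibers), and the multi-cell structure of those sets already furnishes continuous bounding functions on each semi-definably connected component of the base. The real obstruction is \emph{abutting intervals}: a fiber may contain two maximal open intervals $]y_1,y'[$ and $]y',y_2[$ sharing the endpoint $y'$, so that a lower-endpoint graph meets $Y_{\text{upper}}$. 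This is exactly what breaks the verification that each piece $X_i = X \cap \pi^{-1}(\pi(Y_{\text{upper},i}))$ is a multi-cell: the argument that the bounding graphs $f$ and $g$ never cross (via the intermediate value property, Lemma \ref{lem:intermediate}) and that each semi-definably connected component of $X_i$ is exactly the slab $\{(x,y)\;|\;f(x)<y<g(x)\}$ requires that closures of distinct semi-definably connected components of a fiber be disjoint. This failure is not a lower-dimensional phenomenon --- for instance every fiber of $X$ could equal $]0,1[ \cup ]1,2[$ --- so deleting a lower-dimensional bad subset of the base, as you propose, cannot repair it. The paper's device is to first split every bounded fiber interval at its midpoint (divisibility is available by definable completeness), producing $X_{\text{upper}}$, $X_{\text{middle}}$, $X_{\text{lower}}$: the middle part has zero-dimensional fibers and goes to Lemma \ref{lem:multi-cell-pre2}, while the upper and lower parts satisfy the disjoint-closure condition, after which the slab verification goes through. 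A second ingredient you omit is the matching step: before forming slabs one must refine the multi-cell partitions of $Y_{\text{upper}}$ and $Y_{\text{lower}}$, using the induction hypothesis on $M^{n-1}$ together with the disjointness of projections guaranteed by Lemma \ref{lem:multi-cell-pre2}, so that $\pi(Y_{\text{upper},i})$ and $\pi(Y_{\text{lower},j})$ are either equal or disjoint; without this the upper and lower bounding functions need not live over the same base pieces.
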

\begin{proof}
Consider the case in which the structure in consideration is o-minimal.
A definable set is partitioned into finitely many cells by \cite[Chapter 3, Theorem 2.11]{vdD}.
It is also a partition into finitely many multi-cells because a cell is simultaneously a multi-cell.

We next consider the case in which the structure is not o-minimal.
Let $\mathcal M=(M,<,0,+,\ldots)$ be an almost o-minimal expansion of an ordered group.
Let $X$ be a definable subset of $M^n$.
We demonstrate that the set $X$ is partitioned into finitely many multi-cells.
We prove it by induction on $n$.
Consider the case in which $n=1$.
The theorem is clear when $X=\emptyset$ or $X=M$.
We consider the other cases.
Let $X_1$ be the union of all the maximal open intervals contained in $X$, which is definable.
In fact, the set $X_1$ is described as follows:
\begin{align*}
X_1 &=\{x \in X\;|\; \exists \varepsilon >0,\ \forall y \in M,\ |x-y| < \varepsilon \rightarrow y \in X\} \text{.}
\end{align*}
The set $X_2 = X \setminus X_1$ is the set of the isolated points and the endpoints of the maximal open intervals in $X$ and it is a discrete closed definable set by Lemma \ref{lem:almost1} and Lemma \ref{lem:local2}.
The decomposition $X=X_1 \cup X_2$ is a partition into multi-cells. 

We next consider the case in which $n>1$.
Let $\pi:M^n \rightarrow M^{n-1}$ be the projection forgetting the last coordinate.
Consider the sets 
\begin{align*}
&X_{\text{oi}}=\{(x,y) \in M^{n-1} \times M \;|\; \exists \varepsilon >0,\  \forall y' \in M,\  |y'-y|<\varepsilon \rightarrow (x,y') \in X\} \text{,}\\
&X_{\forall}=\{(x,y) \in M^{n-1} \times M \;|\; \forall y' \in M,\  (x,y') \in X\} \text{,}\\
&X'_{\infty}=\{(x,y) \in M^{n-1} \times M \;|\;  \forall y' \in M,\ y'>y \rightarrow (x,y') \in X\} \text{ and }\\
&X'_{-\infty}=\{(x,y) \in M^{n-1} \times M \;|\; \forall y' \in M,\ y'<y \rightarrow (x,y') \in X\} \text{.}
\end{align*}
The subscript $\text{oi}$ of $X_{\text{oi}}$ is an acronym of open intervals.
Set 
\begin{align*}
&X_{\text{boi}}=X_{\text{oi}} \setminus (X_{\forall} \cup X'_{\infty} \cup X'_{-\infty})\text{,}\\ &X_{\infty} = X'_{\infty} \setminus X_{\forall}\text{,}\\
&X_{-\infty} = X'_{-\infty} \setminus X_{\forall}\text{ and }\\
&X_{\text{pt}} = X \setminus (X_{\text{boi}} \cup X_{\infty} \cup  X_{-\infty} \cup X_{\forall})\text{.}
\end{align*}
The subscripts $\text{boi}$ of $X_{\text{boi}}$ and $\text{pt}$ of $X_{\text{pt}}$ represent bounded open intervals and points, respectively.
The definable set $X$ is partitioned as follows:
\begin{equation*}
X = X_{\text{boi}} \cup X_{\infty} \cup  X_{-\infty} \cup X_{\forall} \cup X_{\text{pt}}\text{.}
\end{equation*}
By the definition and Lemma \ref{lem:local2}, semi-definably connected components of non-empty fibers of $X_{\text{boi}}$,  $X_{\infty}$,  $X_{-\infty}$, $X_{\forall}$ and $X_{\text{pt}}$ are a bounded open interval, an open interval unbounded above and bounded below, an open interval bounded above and unbounded below, M and a point, respectively.

We have only to show that the above five definable sets are partitioned into multi-cells.
The definable set $X_{\text{pt}}$ is partitioned into multi-cells by Lemma \ref{lem:multi-cell-pre2}.
As to $X_{\forall}$, there exists a partition into multi-cells $\pi(X_{\forall})=\bigcup_{i=1}^k Y_i$ by the induction hypothesis.
Set $X_{\forall,i}= Y_i \times M$, then the partition $X_{\forall}=\bigcup_{i=1}^k X_{\forall,i}$ is a partition into multi-cells.

Consider the set 
\begin{equation*}
Y_{\infty}=\{(x,y) \in \pi(X_{\infty}) \times M\;|\; (x,y) \not\in X_{\infty},\ \forall y',\ y'>y \rightarrow (x,y') \in X_{\infty}\}\text{.}
\end{equation*}
The definable sets $Y_{\infty}$ consists of the lower endpoints of fibers of $X_{\infty}$.
In particular, $Y_\infty$ satisfies the assumption of Lemma \ref{lem:multi-cell-pre2}.
Let $Y_\infty=\bigcup_{i=1}^k Y_{\infty,i}$ be a partition into multi-cells given by Lemma \ref{lem:multi-cell-pre2}.
Set $X_{\infty,i}=X_{\infty} \cap \pi^{-1}(\pi(Y_{\infty,i}))$.
We claim that each definable set $X_{\infty, i}$, $1 \leq i \leq k$, is a multi-cell.
In fact, it is clear that the projection image $\pi(X_{\infty,i})$ is a multi-cell because $\pi(X_{\infty,i})=\pi(Y_{\infty,i})$.
Since $Y_{\infty,i}$ is a multi-cell, it is the graph of a continuous function $f$ defined on $\pi(Y_{\infty,i})$.
It is obvious that $X_{\infty,i}=\{(x,y) \in \pi(X_{\infty,i}) \times M\;|\; y>f(x)\}$ by the definition.
Hence, the definable set $X_{\infty,i}$ is a multi-cell, and $X_{\infty}=\bigcup_{i=1}^k X_{\infty,i}$ is a partition into multi-cells.

We can show that the definable set $X_{-\infty}$ is partitioned into multi-cells in the same way.

The remaining task is to demonstrate that $X_{\text{boi}}$ is partitioned into multi-cells.
We may assume the followings:
\begin{enumerate}
\item[(i)] All the semi-definably connected components of non-empty fibers of $X$ are bounded open intervals; 
\item[(ii)] For any $x \in \pi(X)$, the closures of two distinct semi-definably connected components of $X \cap \pi^{-1}(x)$ have an empty intersection.
\end{enumerate}
In fact we can assume (i) by setting $X=X_{\text{boi}}$.
Let us prove why we can also assume (ii).
Consider the definable set 
\begin{align*}
Y_{\text{both}} &= \{(x,y_1,y_2) \in \pi(X) \times M^2\;|\; (x,y_1) \not\in X, (x,y_2) \not\in X, y_1<y_2,\\
&\qquad \forall c,\ y_1 < c < y_2 \rightarrow (x,c) \in X\}\text{.}
\end{align*}
Set 
\begin{align*}
X_{\text{upper}} &= \{(x,y) \in \pi(X) \times M\;|\; \exists y_1, y_2, \ (x,y_1,y_2) \in Y_{\text{both}},\  (y_1+y_2)/2 <y <y_2\}\text{,}\\
X_{\text{middle}} &= \{(x,y) \in \pi(X) \times M\;|\; \exists y_1, y_2, \ (x,y_1,y_2) \in Y_{\text{both}}, \  y=(y_1+y_2)/2\}\text{ and }\\
X_{\text{lower}} &= \{(x,y) \in \pi(X) \times M\;|\; \exists y_1, y_2, \ (x,y_1,y_2) \in Y_{\text{both}},\  y_1<y<(y_1+y_2)/2\}\text{.}
\end{align*}
The definable set $X_{\text{middle}}$ can be partitioned into finitely many multi-cells by Lemma \ref{lem:multi-cell-pre2}.
The closures of two distinct semi-definably connected components of $X_{\text{upper}} \cap \pi^{-1}(x)$ have empty intersections for all $x \in \pi(X)$.
The fiber $X_{\text{lower}} \cap \pi^{-1}(x)$ also enjoys the same property.
Therefore, we may assume that the definable set $X$ satisfies the assumption (ii) by setting $X=X_{\text{upper}}$ and $X=X_{\text{lower}}$.
\medskip

Consider the definable sets 
\begin{align*}
Y_{\text{upper}} &= \{(x,y) \in \pi(X) \times M\;|\; (x,y) \not\in X,\  \exists \varepsilon >0,\ \forall c,\ y-\varepsilon < c < y\\
&\qquad \rightarrow (x,c) \in X\}\text{ and }\\
Y_{\text{lower}} &= \{(x,y) \in \pi(X) \times M\;|\; (x,y) \not\in X,\  \exists \varepsilon >0,\ \forall c,\ y < c < y+\varepsilon\\
&\qquad \rightarrow (x,c) \in X\}\text{.}
\end{align*}
For any $x \in \pi(X)$, the fiber $Y_{\text{upper}} \cap \pi^{-1}(x)$ is the set of the upper endpoints of the maximal open intervals contained in $X \cap \pi^{-1}(x)$ by the assumption (i).
The fiber $Y_{\text{lower}} \cap \pi^{-1}(x)$ is the set of the lower endpoints of the maximal open intervals.
By Lemma \ref{lem:multi-cell-pre2}, both $Y_{\text{upper}}$ and  $Y_{\text{lower}}$ are partitioned into finitely many multi-cells.
Let $Y_{\text{upper}}=\bigcup_{i=1}^k Y_{\text{upper},i}$ and $Y_{\text{lower}}=\bigcup_{j=1}^l Y_{\text{lower},j}$ be these partitions, respectively.
We have $\pi(Y_{\text{upper},i_1}) \cap \pi(Y_{\text{upper},i_2})=\emptyset$ by Lemma \ref{lem:multi-cell-pre2} if $i_1 \not=i_2$.
We may further assume that, for all $1 \leq i \leq k$ and $1 \leq j \leq l$, we have either $\pi(Y_{\text{upper},i})=\pi(Y_{\text{lower},j})$ or  $\pi(Y_{\text{upper},i}) \cap \pi(Y_{\text{lower},j}) = \emptyset$.
In fact, for all $1 \leq i \leq k$ and $1 \leq j \leq l$, the definable set $\pi(Y_{\text{upper},i}) \cap \pi(Y_{\text{lower},j})$ is partitioned as a finite union of multi-cells by the induction hypothesis.
Let $\pi(Y_{\text{upper},i}) \cap \pi(Y_{\text{lower},j})=\bigcup_{m=1}^{p(i,j)} Z_{ijm}$ be this partitions.
Set $Y_{\text{upper},ijm}=Y_{\text{upper},i} \cap \pi^{-1}(Z_{ijm})$ and $Y_{\text{lower},ijm}=Y_{\text{lower},j} \cap \pi^{-1}(Z_{ijm})$.
They are obviously multi-cells satisfying the requirement.

Set $X_i = X \cap \pi^{-1}(\pi(Y_{\text{upper},i}))$.
We have a partition $X=\bigcup_{i=1}^k X_i$.
The remaining task is to show that each $X_i$ is a multi-cell.
Take an arbitrary semi-definably connected component $C$ of $X_i$ and an arbitrary point $\hat{z} \in C$.
Set $\hat{x}=\pi(\hat{z})$ and $\hat{z}=(\hat{x},\hat{y})$ for some $\hat{y} \in M$.
Since semi-definably connected components of the fiber $X \cap \pi^{-1}(\hat{x})$ are bounded open intervals by the assumption (i), there exist $y_u, y_l \in M$, $1 \leq i' \leq k$ and $1 \leq j' \leq l$ with $y_l < \hat{y} < y_u$, $(\hat{x},y_u) \in Y_{\text{upper},i'}$, $(\hat{x},y_l) \in Y_{\text{lower},j'}$ and $(\hat{x},y) \in X$ for all $y_l<y<y_u$.
We have $\pi(Y_{\text{upper},i'})=\pi(Y_{\text{lower},j'})$ by the assumption.
Let $Z$ be its semi-definably connected component of $\pi(X_i)$ containing the point $\hat{x}$.
There are two continuous function $f$ and $g$ defined on $Z$ such that $y_l=f(\hat{x})$, $y_u=g(\hat{x})$ and the graphs of $f$ and $g$ are semi-definably connected components of $Y_{\text{lower},j'}$ and $Y_{\text{upper},i'}$, respectively, because $Y_{\text{lower},j'}$ and $Y_{\text{upper},i'}$ are multi-cells.

We demonstrate that $f(x)<g(x)$ on $Z$ and $$C=\{(x,y) \in Z \times M\;|\; f(x)<y<g(x)\}\text{.}$$
We first show that the graph of $f$ does not intersect $Y_{\text{upper}}$. 
In particular, we have $f(x)<g(x)$ on $Z$ by Lemma \ref{lem:intermediate}.
Assume the contrary.
Let $x' \in Z$ and $y'=f(x')$ with $(x',y') \in Y_{\text{upper}}$. 
By the definition of $f$ and $Y_{\text{upper}}$, there exist $y_1,y_2 \in M$ with $y_1<y'<y_2$ such that $\{x\} \times ]y_1,y'[$ and $\{x\} \times ]y',y_2[$ are semi-definably connected components of the fiber $X \cap \pi^{-1}(x)$.
The intersection of their closures is not empty.
This contradicts (ii).

We next show that $C=\{(x,y) \in Z \times M\;|\; f(x)<y<g(x)\}$.
The set $C$ is contained in $\{(x,y) \in Z \times M\;|\; f(x)<y<g(x)\}$ because the latter set is closed and open in $X$ by the definition.
We demonstrate the opposite inclusion.
Assume the contrary.
Let $(x',y')$ be a point satisfying $x' \in Z$, $f(x')<y'<g(x')$ and $(x',y') \not\in C$.
By the assumption (i), there exists $\overline{y} \in M$ with $f(x')<\overline{y} \leq y'$ and 
$(x',\overline{y}) \in Y_{\text{upper}}$.
Since we have $\pi(Y_{\text{upper},i_1}) \cap  \pi(Y_{\text{upper},i_2})=\emptyset$ for all $i_1 \not= i_2$, we have $(x',\overline{y}) \in Y_{\text{upper},i'}$.
Since $Y_{\text{upper},i'}$ is a multi-cell, the semi-definably connected component of $Y_{\text{upper},i'}$ containing the point $(x',\overline{y})$ is the graph of some continuous function $g'$ defined on $Z$.
We have $f(x')<g'(x')<g(x')$.
The graph of $g'$ does not intersect the graph of $g$ because $Y_{\text{upper},i'}$ is a multi-cell.
The graph of $g'$ does not intersect the graph of $f$ because the graph of $f$ does not intersect $Y_{\text{upper}}$ as we demonstrated previously. 
We get $y_l=f(\hat{x})<g'(\hat{x})<g(\hat{x})=y_u$ by Lemma \ref{lem:intermediate}.
We obtain $(\hat{x},g'(\hat{x})) \not\in X$, which contradicts the fact that $(\hat{x},y) \in X$ for all $y_l<y<y_u$.
\end{proof}

\begin{remark}
The notion of special submanifolds defined in \cite{F,M2,T,Fuji4} is similar to that of multi-cells.

Consider an expansion of a densely linearly order without endpoints $\mathcal M=(M,<$,$\ldots)$.
Let $\pi:M^n \rightarrow M^d$ be a coordinate projection.
A definable subset is a \textit{$\pi$-special submanifold} or simply a \textit{special submanifold} if, $\pi(X)$ is a definable open set and, for every point $x \in \pi(X)$, there exists an open box $U$ in $M^d$ containing the point $x$ satisfying the following condition:
For any $y \in X \cap \pi^{-1}(x)$, there exist an open box $V$ in $M^n$ and a definable continuous map $\tau:U \rightarrow M^n$ such that $\pi(V)=U$, $\tau(U)=X \cap V$ and the composition $\pi \circ \tau$ is the identity map on $U$.

Let $\{X_i\}_{i=1}^m$ be a finite family of definable subsets of $M^n$.
A \textit{decomposition of $M^n$ into special submanifolds partitioning $\{X_i\}_{i=1}^m$} is a finite family of special submanifolds $\{C_i\}_{i=1}^N$ such that $\bigcup_{i=1}^NC_i =M^n$, $C_i \cap C_j=\emptyset$ when $i \not=j$ and either $C_i$ has an empty intersection with $X_j$ or is contained in $X_j$ for any $1 \leq i \leq m$ and $1 \leq j \leq N$.

For instance, a DCULOAS structure admits decomposition into special submanifolds \cite{Fuji4}.
A $d$-minimal expansion of an ordered field also admits decomposition into special submanifolds \cite{M2,T}.

A multi-cell is a special manifold, but the converse is false.
The projection image of a multi-cell under the projection forgetting the last coordinate  is again a multi-cell, but it is not true for a special manifold. 
We need a decomposition into multi-cells in order to prove Theorem \ref{thm:uniform}.
\end{remark}

\subsubsection{Uniform local definable cell decomposition}\label{sec:udcd}

In this subsection, we first show that an almost o-minimal expansion of an ordered group $\mathcal M=(M,<,0,+,\ldots)$ has a uniformity property.
We also prove the uniform local definable cell decomposition theorem introduced in Section \ref{sec:intro} using this uniformity property.

We need the following technical definition.

\begin{definition}
Consider an almost o-minimal expansion of an ordered group $\mathcal M=(M,<,0,+,\ldots)$.
Let $X \subseteq M^n$ be a multi-cell and $Y$ be a discrete definable subset of $X$.
Let $\pi_k:M^n \rightarrow M^k$ denote the projection onto the first $k$ coordinates for all $1 \leq k \leq n$.
Note that  $\pi_n$ is the identity map.
The definable set $Y$ is a \textit{representative set of semi-definably connected components of $X$} if the intersection of $\pi_k(Y)$ with any semi-definably connected component of $\pi_k(X)$ is a singleton for any $1 \leq k \leq n$. 
\end{definition}

\begin{lemma}\label{lem:onept}
Consider an almost o-minimal expansion of an ordered group $\mathcal M=(M,<,0,+,\ldots)$.
Let $X \subseteq M^{m+n}$ be a multi-cell and $\pi:M^{m+n} \rightarrow M^m$ be the projection onto the first $m$ coordinates.
There exists a definable subset $Y$ of $X$ such that $Y \cap \pi^{-1}(x)$ is a representative set of semi-definably connected components of $X \cap \pi^{-1}(x)$ for any $x \in \pi(X)$. 
\end{lemma}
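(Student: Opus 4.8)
The plan is to induct on $n$, peeling off the last coordinate so as to respect the recursive structure of a multi-cell. Write the coordinates of $M^{m+n}$ as $(x;y_1,\dots,y_n)$ with $x\in M^m$; let $p\colon M^{m+n}\to M^{m+n-1}$ forget $y_n$ and $q\colon M^n\to M^{n-1}$ forget the last fibre coordinate, so that $\pi=\pi''\circ p$ for the projection $\pi''\colon M^{m+n-1}\to M^m$ onto the first $m$ coordinates. Since $p(X)$ is again a multi-cell, the induction hypothesis applied to $p(X)$ and $\pi''$ produces a definable $Y'\subseteq p(X)$ whose fibre $W'_x:=Y'\cap(\pi'')^{-1}(x)$ is a representative set of $G_x:=(p(X))_x$ for every $x\in\pi(X)$. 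Writing $F_x:=X\cap\pi^{-1}(x)$ and $\rho_k$ for the projection of the fibre coordinates onto the first $k$ of them, one has $q(F_x)=G_x$ and hence $\rho_k(F_x)=\rho_k(G_x)$ for $k\le n-1$, so $W'_x$ already controls every level below the top one; only the level $k=n$ must be produced by hand.

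The basic tool is a uniform definable choice of one point in each semi-definably connected component of an arbitrary definable $S\subseteq M$. As an almost o-minimal structure is definably complete (Lemma \ref{lem:almost1}), for $y\in S$ the quantities $a(y)=\inf\{t\le y : [t,y]\subseteq S\}$ and $b(y)=\sup\{t\ge y : [y,t]\subseteq S\}$ are well defined in $M\cup\{\pm\infty\}$, are constant along the component of $S$ through $y$, and record its two endpoints. Fixing a positive $e\in M$ (available since $M$ has no endpoints) and using that $M$ is divisible (\cite[Proposition 2.2]{M}) to form midpoints, declare $y$ to be the chosen point of its component precisely when one of the following holds: $a(y)=b(y)$; or $a(y),b(y)$ are finite and $y+y=a(y)+b(y)$; or $a(y)$ is finite, $b(y)=\infty$ and $y=a(y)+e$; or $a(y)=-\infty$, $b(y)$ is finite and $y=b(y)-e$; or $a(y)=-\infty$, $b(y)=\infty$ and $y=0$. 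These cases are exhaustive and mutually exclusive, each singles out exactly one point of the component regardless of whether its endpoints belong to $S$, and the condition is definable uniformly in $S$; write $R(S)$ for the set of chosen points.

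I now set $Y=\{(x,w,y)\in X : (x,w)\in Y'\text{ and }y\in R(S_{x,w})\}$, where $w=(y_1,\dots,y_{n-1})$ and $S_{x,w}=\{y'\in M:(x,w,y')\in X\}$ is the fibre of $X$ over the point $(x,w)$; this set is definable and contained in $X$. Its fibre $W_x=Y\cap\pi^{-1}(x)$ satisfies $q(W_x)=W'_x$, so $\rho_k(W_x)=\rho_k(W'_x)$ meets every component of $\rho_k(F_x)=\rho_k(G_x)$ in a single point for each $k\le n-1$ by the induction hypothesis; that $W_x$ is discrete follows from discreteness of $W'_x$ (Lemma \ref{lem:dim0}) together with the discreteness of each $R(S_{x,w})$. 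The base case $n=1$ is the bare selection $Y=\{(x,y)\in X:y\in R(X_x)\}$.

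The crux is the top level $k=n$: one must check that $W_x$ meets each semi-definably connected component of $F_x$ exactly once. Since $X$ is a multi-cell, every component $\widetilde C$ of $X$ is a graph or a band over the component $p(\widetilde C)$ of $p(X)$ cut out by continuous functions; fixing $x$ turns these into graphs and bands over $(p\widetilde C)_x$, and a band over a semi-definably connected base is semi-definably connected by Theorem \ref{thm:connected}. The real difficulty is to exclude the merging of fibres, namely to prove the separation statement that every semi-definably connected component $E$ of $G_x$ lies in the $x$-fibre of a single component of $p(X)$. Granting this, the components of $F_x$ above $E$ are exactly the graphs and bands of those $\widetilde C$ whose base $p(\widetilde C)$ has $x$-fibre containing $E$, each surjecting onto $E$, and the unique representative $w_E\in W'_x\cap E$ then meets each of them once through $R(S_{x,w_E})$; ranging over the components $E$ of $G_x$ yields the claim. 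I expect to prove the separation statement by an auxiliary induction on the number of fibre coordinates: components of $p(X)$ over a common base are stacked with strict inequalities $f<g$, so their $x$-fibres are separated by a gap, while components over distinct bases have $x$-fibres separated by the same property applied with one fewer fibre coordinate. This auxiliary separation property, rather than the selection or the bookkeeping, is where the main work lies.
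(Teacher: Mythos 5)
Your construction coincides with the paper's: the paper proves the case $n=1$ by selecting, in each fibre, the isolated points ($Y_p$), the midpoints of the maximal bounded open intervals ($Y_c$), the points $\rho_u(x)+\varepsilon$ and $\rho_l(x)-\varepsilon$ for the unbounded intervals, and $0$ when the fibre is all of $M$ --- which is exactly your $R(S)$ --- and handles $n>1$ by applying the induction hypothesis to $\pi_1(X)$ over the first $m$ coordinates to get $Y_1$, then applying the $n=1$ selection to $X'=X\cap\pi_1^{-1}(Y_1)$, which is exactly your $Y=\{(x,w,y)\;|\;(x,w)\in Y',\ y\in R(S_{x,w})\}$. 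So the approach, the selection, and the bookkeeping at levels $k\le n-1$ all match.

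The difference is in what gets verified. The separation statement you defer is precisely the step the paper compresses into ``it is easy to demonstrate that $Y$ is also a representative set of semi-definably connected components of $X$'', so you have correctly isolated where the content lies; but your sketch of that step is not right as stated. Distinct components of a multi-cell over a common base $Z$ are \emph{not} in general stacked with strict inequalities: the bands $\{(w,y)\;|\;w\in Z,\ f_1(w)<y<g_1(w)\}$ and $\{(w,y)\;|\;w\in Z,\ g_1(w)<y<g_2(w)\}$, with the same boundary function $g_1$, are disjoint and open in $Z\times M$, hence are genuinely distinct semi-definably connected components. That case is harmless: fibres of disjoint open bands may share an endpoint but can never merge, two disjoint open intervals being clopen in their union. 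Strictness is needed, and is not free, exactly for a graph against a band: disjointness of $\Gamma(f)$ and $B=\{(w,y)\;|\;w\in Z,\ g(w)<y<h(w)\}$ only gives $f\le g$ or $f\ge h$, and if $f(w_0)=g(w_0)$ at some $w_0$ then a graph fibre sits at an endpoint of a band fibre, fibre components merge into half-open intervals, and the one-point-per-component count for your $Y$ (and for the paper's) breaks. The missing idea is a connectedness argument: if $f(w_0)=g(w_0)$, then $\Gamma(f)\cup B$ is semi-definably connected --- both pieces are semi-definably connected by Theorem \ref{thm:connected}, and any semi-definable clopen subset containing $B$ contains the point $(w_0,f(w_0))$ of $\overline{B}$, hence meets, hence contains, $\Gamma(f)$ --- so $\Gamma(f)$ and $B$ would lie in one semi-definably connected component of the multi-cell, contradicting that each is itself a component. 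Note that you need this non-touching fact at every level of your auxiliary induction, including the top one: it is what guarantees that the components of $F_x$ over $E$, and of $S_{x,w_E}$, really are the individual graph and band fibres of components of $X$. With that argument supplied, your auxiliary induction closes and your proof is the paper's proof.
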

\begin{proof}
We demonstrate the lemma by the induction on $n$.
We first consider the case in which $n=1$.
Consider the following definable sets:
\begin{align*}
&S_{\infty} = \{x \in \pi(X)\;|\; \forall y \in M,\ (x,y) \in X\}\text{,}\\
&S_u = \{x \in \pi(X)\;|\; \exists y \in M, \ \forall z, \ z > y \rightarrow (x,z) \in X\} \setminus S_\infty \text{ and }\\
&S_l = \{x \in \pi(X)\;|\; \exists y \in M, \ \forall z, \ z < y \rightarrow (x,z) \in X\} \setminus S_\infty\text{.}
\end{align*}
The definable functions $\rho_u:S_u \rightarrow M$ and $\rho_l:S_l \rightarrow M$ are given as follows:
\begin{align*}
\rho_u(x) &= \inf\{y \in M\;|\; \forall z, \ z > y \rightarrow (x,z) \in X\}\text{ and }\\
\rho_l(x) &= \sup\{y \in M\;|\; \forall z, \ z < y \rightarrow (x,z) \in X\}\text{.}
\end{align*}
It is well-defined by Lemma \ref{lem:almost1}.
We set
\begin{align*}
%Y_u &= \{(x,y) \in S_u \times \mathbb R\;|\; x \in S_u,\ y = \rho_u(x)\}\text{,}\\
%Y_l &= \{(x,y) \in S_l \times \mathbb R\;|\; x \in S_u,\ y = \rho_l(x)\}\text{,}\\
Y_c &= \{(x,y_1,y_2) \in \pi(X) \times M^2\;|\; (x,y_1) \not\in X, (x,y_2) \not\in X, y_1<y_2,\\
&\qquad \forall c,\ y_1 < c < y_2 \rightarrow (x,c) \in X\}\text{ and }\\
Y_p &= \{(x,y) \in X\;|\; \exists \varepsilon > 0,\  \forall c,\ 0<|y-c|<\varepsilon \rightarrow (x,c) \not\in X\}\text{.}
\end{align*}
We finally set
\begin{align*}
Y &= \{(x,\rho_u(x)+\varepsilon) \in M^{m+1}\;|\; x \in S_u\} \cup \{(x,\rho_l(x)-\varepsilon) \in M^{m+1}\;|\; x \in S_l\}\\
&\quad  \cup \{(x,y) \in M^{m+1}\;|\; \exists y_1, y_2, \ (x,y_1,y_2) \in Y_c,\ y = (y_1+y_2)/2\}\\
&\quad  \cup Y_p \cup (S_\infty \times \{0\})\text{,}
\end{align*}
where $\varepsilon$ is a fixed positive element in $M$.
The definable set $Y \cap \pi^{-1}(x)$ is obviously a representative set of semi-definably connected components of $X \cap \pi^{-1}(x)$ for any $x \in \pi(X)$ by the definition of multi-cells. 

We consider the case in which $n>1$.
The notations $\pi_1:M^{m+n} \rightarrow M^{m+n-1}$ and $\pi_2: M^{m+n-1} \rightarrow M^{m}$ denote the projections forgetting the last coordinate and onto the first $m$ coordinates, respectively.
The projection image $\pi_1(X)$ is a multi-cell by the definition of multi-cells.
There exists a definable subset $Y_1 \subseteq \pi_1(X)$ such that the definable set $Y_1 \cap \pi_2^{-1}(x)$ is a representative set of semi-definably connected components of $\pi_1(X) \cap \pi_2^{-1}(x)$ for any $x \in \pi(X)$ by applying the induction hypothesis to $\pi_1(X)$ and $\pi_2$.
Set $X'=X \cap \pi_1^{-1}(Y_1)$, and apply the lemma for $n=1$ to $X'$ and $\pi_1$.
We can find a representative set $Y$ of semi-definably connected components of $X'$.
It is easy to demonstrate that $Y$ is also a representative set of semi-definably connected components of $X$.
\end{proof}

\begin{theorem}[Uniformity theorem]\label{thm:uniform}
Consider an almost o-minimal expansion of an ordered group $\mathcal M=(M,<,0,+,\ldots)$.
For any definable subset $X$ of $M^{n+1}$ and a positive element $R \in M$, there exists a positive integer $K$ such that, for any $a \in M^n$, the definable set $X \cap (\{a\} \times ]-R,R[)$ has at most $K$ semi-definably connected components.
\end{theorem}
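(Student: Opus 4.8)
The plan is to count the components of the bounded fibres by turning that count into the cardinality of the vertical fibres of an auxiliary definable set, and then to bound those cardinalities using boundedness in the last coordinate.

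First I would replace $X$ by $X'=X\cap(M^n\times{]-R,R[})$, which is definable, bounded in the last coordinate, and has the same intersections with the sets $\{a\}\times{]-R,R[}$. By Theorem~\ref{thm:multi-cell} write $X'=X'_1\cup\cdots\cup X'_p$ with each $X'_j$ a multi-cell; every $X'_j$ is then bounded in the last coordinate. For subsets of $M$ the number of semi-definably connected components of a finite union is at most the sum of the numbers of components of the pieces, so it is enough to find, for each $j$, an integer $K_j$ bounding the number of components of $(X'_j)_a$ uniformly in $a$, and then take $K=\sum_j K_j$.

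Fix a multi-cell $W=X'_j$ and let $\pi\colon M^{n+1}\to M^n$ be the projection onto the first $n$ coordinates. Lemma~\ref{lem:onept} produces a definable $Y\subseteq W$ whose fibre $Y\cap\pi^{-1}(a)$ is a representative set of the semi-definably connected components of $W_a$; hence $|Y\cap\pi^{-1}(a)|$ equals the number of components of $W_a$ for every $a$. Thus everything reduces to bounding, uniformly in $a$, the cardinality of the vertical fibres of a definable set $Y$ that is bounded in the last coordinate and has finite vertical fibres. The decisive special case is $\dim Y=0$, i.e.\ $Y$ discrete (Lemma~\ref{lem:dim0}): writing $\pi'\colon M^{n+1}\to M$ for the projection onto the last coordinate, Proposition~\ref{prop:olddim}(1) gives $\dim\pi'(Y)\le\dim Y=0$, so $\pi'(Y)$ is discrete; being contained in ${]-R,R[}$ it is a bounded discrete definable subset of $M$, hence finite by almost o-minimality (Corollary~\ref{cor:local2} and Lemma~\ref{lem:almost1}). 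If $|\pi'(Y)|=q$ then $Y\cap\pi^{-1}(a)\subseteq\pi'(Y)$ forces $|Y\cap\pi^{-1}(a)|\le q$ for all $a$. In particular this settles every multi-cell $W$ whose base $\pi(W)$ is zero-dimensional, because there $Y$ is discrete by the addition property (Proposition~\ref{prop:olddim}(3)).

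The general case is where the difficulty lies. When $\pi(W)$ has positive dimension the set $Y$ is a union of graphs of continuous functions over the base components, so it is no longer discrete and $\pi'(Y)$ may be an interval; the quantity to be controlled is the number of sheets of $W$ over a base component, which is constant on each semi-definably connected component of $\pi(W)$ but might a priori increase along the (possibly infinitely many) components. I would attack this by induction on $\dim Y$, using that $\dim\partial Y<\dim Y$ by Corollary~\ref{cor:dim} and that $\partial Y$ is again bounded in the last coordinate, with the discrete case above as the base of the induction. For band-type fibres this works cleanly: the endpoints of the fibre-intervals lie in $\partial W$ and constitute a definable set of strictly smaller dimension with finite fibres, to which the induction applies and which bounds the number of intervals. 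The \emph{main obstacle} is the graph-type case, in which distinct sheets lying over different base components may merge at the lower-dimensional frontier, so that the frontier's fibre count can undercount the sheets; ruling out such unbounded accumulation of sheets—again via the frontier dimension drop of Corollary~\ref{cor:dim} reduced to the discrete base case—is the technical heart of the argument.
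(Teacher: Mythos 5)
Your frame matches the paper's proof up to a point: restrict to $X^{<R}=X\cap(M^n\times{]-R,R[})$, decompose into multi-cells by Theorem \ref{thm:multi-cell}, bound each multi-cell separately and sum, and, in the discrete case, project to the last coordinate, note that the image is zero-dimensional (Proposition \ref{prop:olddim}(1)), hence discrete, bounded, hence finite by almost o-minimality, and use that distinct points of a single vertical fibre have distinct last coordinates. That last manoeuvre is exactly how the paper finishes. The genuine gap is the positive-dimensional case, which you explicitly leave open: the set $Y$ you extract from Lemma \ref{lem:onept} is only a \emph{fibrewise} representative set, i.e.\ a union of continuous sections over the base, so it is not discrete, its last-coordinate projection can contain intervals, and your fallback plan --- induction on $\dim Y$ through the frontier (Corollary \ref{cor:dim}) --- is precisely the argument that fails when sheets over distinct base components merge along the frontier, as you yourself concede. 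Conceding that ruling out this accumulation ``is the technical heart of the argument'' means the theorem has not been proved; nothing in your sketch excludes a multi-cell with infinitely many graph-type components, over infinitely many base components, whose per-fibre sheet numbers grow without bound while the frontier has small fibres.

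The idea you are missing is to choose the counting set \emph{globally} rather than fibrewise. Write $\pi_1:M^{n+1}\rightarrow M^n$ and $\pi_2:M^{n+1}\rightarrow M$ for the projections onto the first $n$ coordinates and onto the last coordinate. The paper's definition of a representative set of semi-definably connected components of $X_i$ (the definition preceding Lemma \ref{lem:onept}, which is how that lemma is actually used in the paper's proof of this theorem) supplies a \emph{discrete} definable $Y_i\subseteq X_i$ containing exactly one point of each semi-definably connected component of $X_i$, and such that $\pi_1(Y_i)$ meets each semi-definably connected component $Z$ of $\pi_1(X_i)$ in a single point $a'_Z$; consequently every point of $Y_i$ lying over $Z$ lies in the one vertical fibre $\pi_1^{-1}(a'_Z)$. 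Because $X_i$ is a multi-cell, each of its components over $Z$ is a graph or a band over all of $Z$, so the number of semi-definably connected components of $X_i\cap\pi_1^{-1}(a)$ is constant for $a\in Z$ and equals $\left|Y_i\cap\pi_1^{-1}(a'_Z)\right|$. Since $Y_i\cap\pi_1^{-1}(a'_Z)$ lies in a single vertical fibre, $\pi_2$ is injective on it, so this number is at most $\left|\pi_2(Y_i)\right|$ --- one bound valid for \emph{all} base components $Z$ simultaneously, which is what defeats the merging-sheets obstruction. Now your own discrete-case argument, applied to $Y_i$ instead of to a fibrewise section set, closes the proof: $Y_i$ is discrete, so $\pi_2(Y_i)$ is zero-dimensional, discrete, contained in $]-R,R[$, hence finite, and $K=\sum_i\left|\pi_2(Y_i)\right|$ works. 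No induction on dimension is needed.
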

\begin{proof}
Consider the set $X^{<R}:= X \cap (M^n \times ]-R,R[)$.
Apply Theorem \ref{thm:multi-cell} to $X^{<R}$.
We have a partition into multi-cells $X^{<R} = \bigcup_{i=1}^k X_i$.
Let $\pi_1:M^{n+1} \rightarrow M^n$ and $\pi_2:M^{n+1} \rightarrow M$ be the projections onto first $n$ coordinates and onto the last coordinate, respectively.
We next apply Lemma \ref{lem:onept} to $X_i$ and $\pi_2$.
For any $1 \leq i \leq k$, we can take a definable discrete subset $Y_i$ of $X_i$ which is a representative set of semi-definably connected components of $X_i$.
Since $Y_i$ is discrete, we have $\dim (Y_i) \leq 0$ by Lemma \ref{lem:equiv_dim}.
Set $Z_i = \pi_2(Y_i)$ for all $1 \leq i \leq k$.
We get $\dim(Z_i) \leq 0$ by Proposition \ref{prop:olddim}(1).
It implies that the definable set $Z_i$ is discrete.
The definable set $Z_i$ is included in the bounded open interval $]-R,R[$ by the definition.
Hence, the definable set $Z_i$ is a finite set for any $1 \leq i \leq k$ because $\mathcal M$ is almost o-minimal.
Set $K=\sum_{i=1}^k |Z_i|$.

When $a \in \pi_1(X_i)$ for some $1 \leq i \leq k$, there exists a point $a'_i \in \pi_1(Y_i)$ contained in the semi-definably connected component of $\pi_1(X_i)$ containing the point $a$.
The definable set $X_i \cap \pi_1^{-1}(a)$ has the same number of semi-definably connected components as $X_i \cap \pi_1^{-1}(a'_i)$ which is equal to $|Y_i \cap \pi_1^{-1}(a'_i)|$ 
by the definitions of multi-cells and representative sets of their semi-definably connected components.
Let $\operatorname{NC}(S)$ denote the number of semi-definably connected components of a definable set $S$.
We therefore have
\begin{align*}
\operatorname{NC}(X \cap (\{a\} \times ]-R,R[)) &= \operatorname{NC}(X^{<R} \cap \pi_1^{-1}(a))
\leq \displaystyle\sum_{1 \leq i \leq k, a \in \pi_1(X_i)}\operatorname{NC}(X_i \cap \pi_1^{-1}(a))\\
&=\displaystyle\sum_{1 \leq i \leq k, a \in \pi_1(X_i)} \operatorname{NC}(X_i \cap \pi_1^{-1}(a'_i))\\
&=\displaystyle\sum_{1 \leq i \leq k, a \in \pi_1(X_i)} |Y_i \cap \pi_1^{-1}(a'_i)|\\
& \leq \displaystyle\sum_{i=1}^k |\pi_2(Y_i)| = \displaystyle\sum_{i=1}^k \left|Z_i\right|=K\text{.}
\end{align*}
We have finished the proof.
\end{proof}

We now begin to demonstrate Theorem \ref{thm:main}.

\begin{proof}[Proof of Theorem \ref{thm:main}]
We first show the assertion for $n=1$.
For any definable set $S \subseteq M^{m+1}$,  the notation $\operatorname{bd}_m(S)$ denotes the set $\{(x,y) \in M^m \times M\;|\; y \in \operatorname{bd}(S_x)\}$.
Set $I=]\!-\!R,R[$, then $S' \cap I$ is a finite union of points and open intervals for any definable subset $S'$ of $M$ by the definition of almost o-minimality.
Set $X= \bigcup_{\lambda \in \Lambda}\operatorname{bd}_m(A_\lambda \cap I)$.
The fibers $X_b$ are finite sets for all $b \in M^m$.
It is obvious that any definable cell decomposition of $I$ partitioning $X_b \cap I$ partitions $\{(A_\lambda)_b \cap I\}_{\lambda\in\Lambda}$ for any point $b \in M^m$.

There exists a positive integer $K$ such that $|X \cap(\{b\} \times  I)| \leq K$ for any point $b \in M^m$ by Theorem \ref{thm:uniform}.
Set $S_i=\{b \in M^m\;|\; |X_b \cap I|=i\}$ for all $0 \leq i \leq K$.
The family $\{S_i\}_{i=0}^K$ partitions the parameter space $M^m$.
Let $y_j(b)$ be the $j$-th largest point of $X_b \cap I$ for all $b \in S_i$ and $1 \leq j \leq i$.
Set $y_0(b)=-R$ and $y_{i+1}(b)=R$ for all $b \in S_i$.
Applying Proposition \ref{prop:olddim}(2) inductively, we can find a partition into definable sets 
\begin{equation*}
S_i = S_{i0} \cup \ldots \cup S_{im}
\end{equation*}
such that either $S_{ik}=\emptyset$ or $\dim(S_{ik})=k$, and $y_j$ is continuous on $S_{ik}$ for any $0 \leq j \leq i$ and $0 \leq k \leq m$.
We set
\begin{align*}
&C_{ijk} =\{(x,y_j(x)) \in S_{ik} \times M\} \ \ (1 \leq j \leq  i)\text{ and }\\
&D_{ijk} = \{(x,y) \in S_{ik} \times M\;|\; y_j(x) < y < y_{j+1}(x)\} \ \ (0 \leq j \leq i)
\end{align*}
for any $0 \leq i \leq K$ and $0 \leq k \leq m$. 
Consider the family of maps $\mathcal F = \{\sigma: \Lambda \rightarrow \{0,1\}\}$.
Set 
\begin{align*}
&T^0_{ijk\sigma} =\{x \in S_{ik}\;|\; C_{ijk} \cap (\{x\} \times M) \text{ is contained in } A_\lambda \text{ iff } \sigma(\lambda)=1\} \ (1 \leq j \leq  i) \text{ and }\\
&T^1_{ijk\sigma} =\{x \in S_{ik}\;|\; D_{ijk} \cap (\{x\} \times M) \text{ is contained in } A_\lambda \text{ iff } \sigma(\lambda)=1\} \ (0 \leq j \leq i)
\end{align*}
for any $0 \leq i \leq K$, $0 \leq k \leq m$ and $\sigma \in \mathcal F$.
We finally set $C_{ijk\sigma}=C_{ijk} \cap (T^0_{ijk\sigma} \times M)$ and $D_{ijk\sigma}=D_{ijk} \cap (T^1_{ijk\sigma} \times M)$.
The partition 
\begin{equation*}
M^m \times I = \bigcup_{i=0}^K \left(\bigcup_{k=1}^m \left(\bigcup_{\sigma \in \mathcal F}\left(\bigcup_{j=1}^i C_{ijk\sigma} \cup \bigcup_{j=0}^i D_{ijk\sigma}\right)\right)\right)
\end{equation*}
 is the desired partition.
Furthermore, the above definable functions $y_j$ can be chosen as continuous functions on $p(C_{ijk\sigma})$ and $p(D_{ijk\sigma})$, where $p:M^{m+1} \rightarrow M^m$ is the projection forgetting the last coordinate.
It is clear that the type of the cell $(X_i)_b$ is independent of the choice of $b$ with $(X_i)_b \not= \emptyset$.

We consider the case in which $n>1$.
Let $\pi:M^{m+n} \rightarrow M^{m+n-1}$ be the projection forgetting the last coordinate.
Set $I=]-R,R[$.
Applying the theorem for $n=1$ to the family $\{A_\lambda\}_{\lambda\in\Lambda}$, there exists a partition $M^{m+n-1} \times I = Y_1 \cup \ldots \cup Y_l$ such that  $I=(Y_1)_b \cup \ldots \cup (Y_l)_b$ is a definable cell decomposition $I$ for any $b \in M^{m+n-1}$ and either $Y_i \subseteq A_\lambda$ or $Y_i \cap A_\lambda=\emptyset$ for any $1 \leq i \leq l$ and $\lambda \in \Lambda$. 
We can further assume that $Y_i$ is one of the following forms:
\begin{align*}
Y_i &= \{(x,f(x)) \in \pi(Y_i) \times M\} \text{ and }\\
Y_i &= \{(x,y) \in \pi(Y_i) \times M\;|\; f(x)<y<g(x)\}\text{,}
\end{align*}
where $f$ and $g$ are definable continuous functions on $\pi(Y_i)$ with $f<g$.

Set $B'=]-R,R[^{n-1}$.
Apply the induction hypothesis to the family $\{\pi(Y_i)\}_{i=1}^l$.
There exists a partition $M^{m} \times B' = Z_1 \cup \ldots \cup Z_q$ such that  $B'=(Z_1)_b \cup \ldots \cup (Z_q)_b$ is a definable cell decomposition $B'$ for any $b \in M^{m}$, and either $\pi(Y_i) \cap Z_j = \emptyset$ or $Z_j \subseteq \pi(Y_i)$ and the type of the cell $(Z_j)_b$ is independent of the choice of $b$ with $(Z_j)_b \not= \emptyset$ for all $i$ and $j$.

Set $X_{ij}=Y_i \cap \pi^{-1}(Z_j)$ for all $1 \leq i \leq l$ and $1 \leq j \leq q$.
Let $\{X_i\}_{i=1}^k$ be the family of non-empty $X_{ij}$'s.
It is easy to demonstrate that the family $\{X_i\}_{i=1}^k$ satisfies the requirement of the theorem.
We omit the details. 
\end{proof}

\begin{corollary}
Consider an almost o-minimal expansion of an ordered group $\mathcal M=(M,<,0,+,\ldots)$.
For any definable subset $X$ of $M^n$ and a positive element $R \in M$, there exists a positive integer $K$ such that the definable set $X \cap (b+B)$ has at most $K$ semi-definably connected components for all $b \in M^n$.
Here, $B=]\!-\!R,R[^n$ and $b+B$ denotes the set given by $\{x \in M^n\;|\; x-b \in B\}$.
\end{corollary}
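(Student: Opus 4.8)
The plan is to remove the moving window $b+B$ by a translation and then invoke the uniform local cell decomposition of Theorem \ref{thm:main}. Since the group operation is definable, I would introduce the definable set
\[
Z = \{(b,x) \in M^n \times M^n \;|\; x \in B \text{ and } b + x \in X\} \subseteq M^n \times B,
\]
whose fiber over $b$ is $Z_b = (X-b) \cap B$. The translation $\tau_b \colon x \mapsto x+b$ is a definable homeomorphism carrying $Z_b$ onto $X \cap (b+B)$. Because translation by a constant is proper and satisfies the bounded image condition by Lemma \ref{lem:bounded}(1), it preserves semi-definability and hence semi-definable connectedness, so $X \cap (b+B)$ and $Z_b$ have the same number of semi-definably connected components for every $b \in M^n$. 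It therefore suffices to bound the number of semi-definably connected components of $Z_b$ uniformly in $b$.

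Next I would apply Theorem \ref{thm:main} with parameter space $M^m$ for $m=n$, fiber box $B$, and the one-element family $\{Z\}$ of definable subsets of $M^{m+n}=M^{2n}$. This yields a finite partition $M^n \times B = X_1 \cup \ldots \cup X_k$ such that, for each $b \in M^n$, the fibers $(X_1)_b, \ldots, (X_k)_b$ form a definable cell decomposition of $B$, and each $X_i$ satisfies $X_i \subseteq Z$ or $X_i \cap Z = \emptyset$. Consequently $Z_b$ is the union of the nonempty cells $(X_i)_b$ with $X_i \subseteq Z$, hence a union of at most $k$ cells of $B$.

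I would then observe that each cell $(X_i)_b$ is semi-definably connected: it is bounded, so it is definable in $\myindr(\mathcal M)$ by Theorem \ref{thm:in_omin}, and a cell is definably connected in the o-minimal structure $\myindr(\mathcal M)$, which for a bounded set forces semi-definable connectedness via Theorem \ref{thm:connected}(2). A set that is a union of at most $k$ semi-definably connected subsets has at most $k$ semi-definably connected components, since each such subset lies in a single component. Thus $Z_b$, and therefore $X \cap (b+B)$, has at most $k$ semi-definably connected components, and setting $K=k$ finishes the argument. The only delicate points are the invariance of the component count under translation and the passage between semi-definable connectedness in $\mathcal M$ and definable connectedness in $\myindr(\mathcal M)$ for bounded sets; both are routine consequences of the cited results, so I expect no real obstacle beyond this bookkeeping.
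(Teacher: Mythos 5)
Your proof is correct and takes essentially the same approach as the paper: the paper likewise introduces an auxiliary definable set whose fibers over $b$ are translates of $X$ intersected with $B$ (it uses $\{(y,x)\;|\;x-y\in X\}$), applies Theorem \ref{thm:main} to that single set, and concludes because the fibers are unions of at most $K$ cells and cells are semi-definably connected. Your write-up merely makes explicit two steps the paper leaves implicit, namely that translation preserves the count of semi-definably connected components and that bounded cells are semi-definably connected via $\myindr(\mathcal M)$ and Theorem \ref{thm:connected}.
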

\begin{proof}
Consider the definable set $Y$ defined by 
\begin{equation*}
\{(y,x) \in M^n \times M^n\;|\; x-y \in X\} \text{.} 
\end{equation*}
Applying Theorem \ref{thm:main}, there exists a partition $M^{n} \times B = X_1 \cup \ldots \cup X_K$ such that  $B=(X_1)_b \cup \ldots \cup (X_K)_b$ is a definable cell decomposition $B$ partitioning the definable set $Y_b \cap B$ for any $b \in M^{n}$.
It means that the definable set $X \cap (b+B)$ is the union of at most $K$ cells.
The set $X \cap (b+B)$ has at most $K$ semi-definably connected components because cells are semi-definably connected.
We have finished the proof.
\end{proof}

\subsection{Structures elementarily equivalent to an almost o-minimal structure}

Structures elementarily equivalent to an almost o-minimal structure is not necessarily almost o-minimal as demonstrated in Proposition \ref{prop:not_almost}.
However, a weaker version of Theorem \ref{thm:main} holds true for such structures.

\begin{lemma}\label{lem:last}
Let $\mathcal M=(M,<,\ldots)$ be an expansion of a dense linear order.
Consider a definable set $C \subseteq M^n$ defined by a first-order formula with parameter $\overline{c}$.
There exists a first-order sentence with parameters $\overline{c}$ expressing the condition for $C$ being a definable cell of type $(j_1, \ldots, j_d)$.
\end{lemma}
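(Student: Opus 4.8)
The plan is to proceed by induction on the ambient dimension $n$, peeling off the last coordinate exactly as in the inductive definition of a cell. Write $\phi(\overline{x},\overline{c})$ for the formula defining $C$, where $\overline{x}=(x_1,\ldots,x_n)$, and write the given type as $(j_1,\ldots,j_n)$ since $C\subseteq M^n$. The crucial observation is that every operation used in the definition of a cell — taking the projection $\pi(C)$, reading off a fiber, imposing the graph/band shape, and checking continuity — is itself first-order definable from $\phi$ using the \emph{same} parameters $\overline{c}$. In particular the projection $\pi(C)\subseteq M^{n-1}$ is defined by $\psi(\overline{x}'):=\exists x_n\,\phi(\overline{x}',x_n,\overline{c})$, which again carries only the parameters $\overline{c}$; this is exactly what allows the induction to close.

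For the base case $n=1$ I would express ``$C$ is a $(0)$-cell'' by $\exists a\,\forall x\,(\phi(x,\overline{c})\leftrightarrow x=a)$, and ``$C$ is a $(1)$-cell'' as the disjunction of the four first-order descriptions of an open interval, namely $]a,b[$, $]a,\infty[$, $]{-}\infty,b[$, and $M$, each written with an existential quantifier over the finite endpoints and a biconditional against $\phi$. For the inductive step, assume the statement in dimension $n-1$. By the induction hypothesis applied to $\psi$ and the truncated type $(j_1,\ldots,j_{n-1})$, there is a sentence $\Theta(\overline{c})$ expressing ``$\pi(C)$ is a cell of type $(j_1,\ldots,j_{n-1})$''. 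If $j_n=0$, I would conjoin $\Theta(\overline{c})$ with the single-valuedness clause $\forall \overline{x}'\,\forall y_1\,\forall y_2\,((\phi(\overline{x}',y_1,\overline{c})\wedge\phi(\overline{x}',y_2,\overline{c}))\to y_1=y_2)$, which says $C$ is a graph over $\pi(C)$, together with a continuity clause. If $j_n=1$, I would conjoin $\Theta(\overline{c})$ with a clause saying every nonempty fiber $C_{\overline{x}'}$ is a bounded open interval $]f(\overline{x}'),g(\overline{x}')[$ — expressible as $\exists a\,\exists b\,(a<b\wedge\forall y(\phi(\overline{x}',y,\overline{c})\leftrightarrow a<y<b))$, which in particular forces $f<g$ — together with continuity clauses for the two endpoint functions.

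The one point that requires care, and which I regard as the main obstacle, is rendering continuity first-order and uniformly. The idea is to pick out the implicitly defined function by a formula: in the graph case the value at $\overline{x}'$ is the unique $y$ with $\phi(\overline{x}',y,\overline{c})$, and in the band case the lower endpoint is the unique $a$ with $\exists b(a<b\wedge\forall y(\phi(\overline{x}',y,\overline{c})\leftrightarrow a<y<b))$. Continuity at a point is then the $\varepsilon$--$\delta$ statement transcribed into the order topology: for every pair of bounds straddling the function value there is an open box around $\overline{x}'$ on which the formula-defined value stays between those bounds. Since the value is itself singled out by a formula built from $\phi$, this entire clause is first-order with parameters $\overline{c}$, and the conjunction of $\Theta(\overline{c})$ with the shape clause and the continuity clause is the desired sentence.

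Finally I would record two remarks. First, only a single type is fixed at the outset, so no uniformity over types is needed; each stage involves only finitely many possibilities. Second, definable completeness is never invoked: the existence of the endpoints $a,b$ (equivalently, of the values of $f$ and $g$) is \emph{asserted inside the formula} as part of the condition for being a cell, rather than deduced from the structure, so the argument applies to an arbitrary expansion of a dense linear order as claimed.
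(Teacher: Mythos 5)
Your proof is correct and takes essentially the same route as the paper: induction on $n$, applying the inductive hypothesis to the projection $\pi(C)$ (defined by existentially quantifying the last variable with the same parameters $\overline{c}$), and then expressing the fiber shape (graph or bounded open interval, with the endpoint existence asserted inside the formula so that no definable completeness is needed) and the continuity of the implicitly defined endpoint functions by first-order clauses. The only difference is one of detail: you write out the base case, the graph case, and the first-order rendering of continuity, all of which the paper dismisses as obvious or ``similar.''
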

\begin{proof}
We prove the lemma by the induction on $n$.
When $n=1$, the definable set $C$ is a cell if and only if $C$ is a point or an open interval.
This condition is clearly expressed by a first-order sentence.

We next consider the case in which $n>1$.
The notation $\pi:M^n \rightarrow M^{n-1}$ denotes the projection forgetting the last factor.
The condition for $\pi(C)$ being a cell is represented by a first-order sentence with parameters $\overline{c}$ by the induction hypothesis.
We only prove the lemma in the case in which the definable set $C$ is of the form
\begin{equation*}
C=\{(x,y) \in M^{n-1} \times M\;|\; f(x) < y < g(x)\}\text{,}
\end{equation*}
where $f$ and $g$ are definable continuous functions defined on $\pi(C)$.
We can demonstrate the lemma in the other cases in a similar way.
The above condition is equivalent to the following conditions:
\begin{itemize}
\item For any $x \in \pi(C)$, the fiber $C_x=\{y \in M\;|\; (x,y) \in C\}$ is a bounded interval.
\item Set $f(x)=\inf\{y \in M\;|\;(x,y) \in C\}$ and $g(x)=\sup\{y \in M\;|\;(x,y) \in C\}$ for any $x \in \pi(C)$, then $f$ and $g$ are continuous on $\pi(C)$.
\end{itemize}
The above conditions are obviously expressed by first-order sentences with parameters $\overline{c}$.
\end{proof}

\begin{theorem}\label{thm:udcd}
Consider a structure $\mathcal M = (M, <,0,+, \ldots)$ elementarily equivalent to an almost o-minimal expansion of an ordered group.
Let $\{A_\lambda\}_{\lambda\in\Lambda}$ be a finite family of definable subsets of $M^{m+n}$.
There exist an open box $B$ in $M^n$ containing the origin and a finite partition into definable sets 
\begin{equation*}
M^m \times B = X_1 \cup \ldots \cup X_k
\end{equation*}
such that $B=(X_1)_b \cup \ldots \cup (X_k)_b$ is a definable cell decomposition of $B$ for any $b \in M^m$ and either $X_i \cap A_\lambda = \emptyset$ or $X_i \subseteq A_\lambda$ for any $1 \leq i \leq k$ and $\lambda \in \Lambda$.
Here, the notation $S_b$ denotes the fiber of a definable subset $S$ of $M^{m+n}$ at $b \in M^m$.
\end{theorem}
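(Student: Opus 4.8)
The plan is to prove this by a first-order transfer. Fix an almost o-minimal expansion of an ordered group $\mathcal N$ with $\mathcal M \equiv \mathcal N$, which exists by hypothesis. Write the given sets as $A_\lambda = \{\overline x \in M^{m+n} : \mathcal M \models \phi_\lambda(\overline x; \overline a_\lambda)\}$ for fixed $L$-formulas $\phi_\lambda$ and parameter tuples $\overline a_\lambda$, and let $\overline a$ be the concatenation of the $\overline a_\lambda$, of length $s$. The idea is to run Theorem \ref{thm:main} in $\mathcal N$, encode its conclusion as a single first-order sentence with the help of Lemma \ref{lem:last}, and then instantiate that sentence in $\mathcal M$.

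First I would absorb the parameters into the base. In $\mathcal N$, consider the parameter-free family $\{\tilde A_\lambda\}$ of subsets of $M^{(m+s)+n}$ defined by $\tilde A_\lambda = \{(\overline u, \overline z, \overline v) : \phi_\lambda(\overline u, \overline v; \overline z_\lambda)\}$, where $(\overline u, \overline z) \in M^{m+s}$ now plays the role of the base and $\overline v \in M^n$ the last $n$ coordinates. Fix any positive $R \in N$ and apply Theorem \ref{thm:main} to $\{\tilde A_\lambda\}$ with box $]-R,R[^n$. This produces a finite partition $M^{m+s} \times ]-R,R[^n = \tilde X_1 \cup \ldots \cup \tilde X_{K}$ whose fibers give a cell decomposition of $]-R,R[^n$ over every base point, with each $\tilde X_i$ contained in or disjoint from each $\tilde A_\lambda$. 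Since $K$ is finite and $\overline z$ sits inside the base, restricting to base points $(\overline u, \overline z)$ for a fixed $\overline z$ shows that for \emph{every} parameter value the desired decomposition exists on $]-R,R[^n$ with at most $K$ cells; thus $K$ is a bound that is uniform over all parameter values, obtained for free from the uniformity in Theorem \ref{thm:main}.

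The key step is to phrase the existence of such a bounded-complexity uniform decomposition as one first-order sentence. The essential observation is that a uniform cell decomposition need not be quantified over as a family of sets: it is canonically determined by the fiberwise breakpoints coming from the frontiers of the fibers of the $A_\lambda$, and Lemma \ref{lem:last} shows that ``being a cell of type $(j_1,\ldots,j_d)$'' is first-order expressible. For the fixed bound $K$ I would then write an $L$-formula $\Phi_K(\overline z)$ asserting: there is $r>0$ so that, over every base point, the fiber intersected with $]-r,r[^n$ is partitioned into at most $K$ pieces, each a cell of some type; the types and their incidences are constant along the base strata, which are themselves definable by the number and mutual order of the breakpoints; and each resulting cell is contained in or disjoint from the set defined by $\phi_\lambda(\cdot;\overline z_\lambda)$ for each $\lambda$. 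Bounded cardinality of the breakpoint set, continuity of the breakpoints in the base, and compatibility with the $A_\lambda$ are all first-order. By the previous paragraph $\mathcal N \models \forall \overline z\, \Phi_K(\overline z)$, witnessed by $r=R$, so $\forall \overline z\, \Phi_K(\overline z) \in \operatorname{Th}(\mathcal N) = \operatorname{Th}(\mathcal M)$. Instantiating at $\overline z = \overline a$ in $\mathcal M$ yields $r$, the box $B=]-r,r[^n$ containing the origin, and the definable cells $X_i$.

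The main obstacle I anticipate is the bookkeeping in this encoding step: verifying carefully that the existence of a uniform cell decomposition of bounded complexity is genuinely first-order, i.e.\ that one avoids quantification over definable sets by describing the decomposition through its finitely many fiberwise breakpoints and invoking Lemma \ref{lem:last} for the cell types. It is also worth recording why the conclusion is weaker than Theorem \ref{thm:main}: in $\mathcal N$ the decomposition exists on $]-R,R[^n$ for every $R$, but the bound $K$ grows with $R$, so no single first-order sentence captures all $R$ at once; only the existential statement ``there is some $r$'' survives the transfer, which is exactly why $B$ is merely some box containing the origin rather than an arbitrary $]-R,R[^n$.
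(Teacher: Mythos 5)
Your overall transfer strategy (absorb the parameters into the base, run Theorem \ref{thm:main} in the almost o-minimal model, express the outcome in first order with the help of Lemma \ref{lem:last}, and pull it back through elementary equivalence) is the same as the paper's, and your first step, extracting a bound $K$ uniform over all parameter values, is fine. The gap is in the encoding step. You claim the uniform decomposition ``is canonically determined by the fiberwise breakpoints coming from the frontiers of the fibers of the $A_\lambda$,'' with base strata ``definable by the number and mutual order of the breakpoints,'' and your $\Phi_K(\overline z)$ asserts continuity of the breakpoint functions on those strata. For $n \geq 2$ that sentence is simply false in the almost o-minimal structure $\mathcal N$, so there is nothing to transfer. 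Concretely, take $n=2$, $m$ arbitrary, a positive $\varepsilon$, and the single set $A = \{(b,x,y) \in M^m \times M \times M \;|\; (x \leq 0 \wedge y=0) \vee (x>0 \wedge y=\varepsilon)\}$, which is definable in the pure ordered group language. For $r>\varepsilon$, over every point $(b,x)$ of the base $M^m \times\, ]-r,r[$ the fiber of $A$ in the last coordinate has exactly one breakpoint with the same incidence pattern, so your canonical stratum is the whole base; but the breakpoint function $(b,x) \mapsto f(x)$ (with $f$ the step function) is discontinuous along $x=0$, and the corresponding piece of your decomposition has fibers equal to the graph of a discontinuous function, which is not a cell. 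The locus where refinement is needed is invisible to breakpoint count, order and incidence: it is a discontinuity locus, and partitioning it in turn is exactly the nontrivial inductive work in the proof of Theorem \ref{thm:main} (via Proposition \ref{prop:olddim}(2), with projections of the new strata fed back into the sets being partitioned). If instead you drop the continuity clause from $\Phi_K$, the sentence becomes true in $\mathcal N$ but its instantiation in $\mathcal M$ no longer yields cells. Either way the argument breaks; this is not bookkeeping.

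The paper's proof shows how to avoid the problem, and this is the idea you are missing: do not try to make the decomposition canonical. After the same parameter absorption, apply Theorem \ref{thm:main} in the almost o-minimal model and take the \emph{specific} first-order formulas $\psi_i(x,y,\overline d)$, with parameter tuple $\overline d$, defining the pieces it produces, however non-canonically they were constructed. The conditions ``the sets defined by the $\psi_i(\cdot,\cdot,\overline d)$ partition $M^m \times B$ for some open box $B$ containing the origin,'' ``every fiber of each piece is a cell or empty'' (this is where Lemma \ref{lem:last} enters), and ``each piece is contained in or disjoint from each $A_\lambda$'' are all expressible as a first-order formula in the free variables $\overline d$. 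One then transfers the \emph{existential} sentence $\exists \overline d\,[\cdots]$ rather than your universal one, and takes a witness $\overline{d'}$ in the other model to define the $X_i$. Replacing your canonical breakpoint description by this existential quantification over defining parameters repairs the proof; the rest of your proposal, including the closing explanation of why only ``some box $B$'' rather than an arbitrary $]-R,R[^n$ survives the transfer, is sound.
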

\begin{proof}
Consider a structure $\mathcal N=(N,+,0,<,\ldots)$ elementarily equivalent to an almost o-minimal expansion of an ordered group $\mathcal M=(M,+,0,<,\ldots)$. 
We first reduce to the case in which the $A_\lambda$ are definable without parameters for all $\lambda \in \Lambda$.
There exist parameters $\overline{c} \in N^p$ and first-order formulae $\varphi_{\lambda}(x,y,\overline{c})$ with parameters $\overline{c}$ defining the definable sets $A_{\lambda}$ for all $\lambda \in \Lambda$.
Set $A'_\lambda =\{(z,x,y) \in N^p \times N^m \times N^n\;|\; \mathcal N \models \varphi_\lambda(x,y,z)\}$.
If the corollary holds true for the family $\{A'_\lambda\}_{\lambda \in \Lambda}$, the corollary also holds true for the family $\{A_\lambda\}_{\lambda \in \Lambda}$ because $A_\lambda$ is the fiber $(A'_\lambda)_{\overline{c}}=\{(x,y) \in N^m \times N^n\;|\; (\overline{c},x,y) \in A'_\lambda\}$.
Hence, we may assume that the $A_\lambda$ are definable without parameters for all $\lambda \in \Lambda$.
Let $\varphi_\lambda(x,y)$ denote the first-order formulae without parameters defining the definable sets $A_\lambda$.

Let $A_\lambda^{\mathcal M}$ be the definable subset of $M^{m+n}$ defined by the formula $\varphi_\lambda(x,y)$ for each $\lambda \in \Lambda$.
By Theorem \ref{thm:main}, there exist an open box $B^{\mathcal M}$ in $M^n$ containing the origin and a partition into definable sets 
\begin{equation*}
M^m \times B^{\mathcal M} = X_1^{\mathcal M} \cup \ldots \cup X_k^{\mathcal M}
\end{equation*}
such that the fibers $(X_i^{\mathcal M})_b$ are definable cells of a fixed type for all $b \in M^m$ with $(X_i^{\mathcal M})_b \not= \emptyset$ and either $X_i^{\mathcal M} \subseteq A_\lambda^{\mathcal M}$ or $X_i^{\mathcal M} \cap A_\lambda^{\mathcal M} = \emptyset$ for all $1 \leq i \leq k$.
There exist parameters $\overline{d} \in M^p$ and first-order formulas $\psi_i(x,y,\overline{d})$ with parameters $\overline{d}$ defining the definable sets $X_i^{\mathcal M}$ for all $1 \leq i \leq k$.

Using the first-order formulas $\psi_i(x,y,\overline{d})$, the condition that
\begin{itemize}
\item there exists an open box $B^{\mathcal M}$ in $M^n$ containing the origin and
\item $M^m \times B^{\mathcal M} = X_1^{\mathcal M} \cup \ldots \cup X_k^{\mathcal M}$
\end{itemize}
can be expressed by a first-order sentence $\Phi(\overline{d})$ with parameters $\overline{d}$.
Let $\Psi_i(\overline{d})$ be the sentence expressing the condition $X_i^{\mathcal M} \subseteq A_\lambda^{\mathcal M}$ or $X_i^{\mathcal M} \cap A_\lambda^{\mathcal M} = \emptyset$ for any $1 \leq i \leq k$.
The condition for the fiber $(X_i^{\mathcal M})_b$ being either a cell or an empty set for any $b \in M^m$ is expressed by a first-order formula $\Pi_i(\overline{d})$ with parameters $\overline{d}$ by Lemma \ref{lem:last}.
We have
\begin{equation*}
\mathcal M \models \Phi(\overline{d}) \wedge \bigwedge_{i=1}^k \left( \Psi_i(\overline{d}) \wedge \Pi_i(\overline{d}) \right)
\end{equation*}
by the definitions of $\Phi(\overline{d})$, $\Psi_i(\overline{d})$ and $\Pi_i(\overline{d})$.
We therefore get
\begin{equation*}
\mathcal M \models \exists \overline{d}\ \Phi(\overline{d}) \wedge \bigwedge_{i=1}^k \left( \Psi_i(\overline{d}) \wedge \Pi_i(\overline{d}) \right) \text{.}
\end{equation*}
Since $\mathcal N$ is elementarily equivalent to $\mathcal M$, we finally obtain 
\begin{equation*}
\mathcal N \models \exists \overline{d}\ \Phi(\overline{d}) \wedge \bigwedge_{i=1}^k \left( \Psi_i(\overline{d}) \wedge \Pi_i(\overline{d}) \right) \text{.}
\end{equation*}
Take $\overline{d'} \in N^p$ satisfying the above condition and set $X_i=\{(x,y) \in N^m \times N^n\;|\; \mathcal M \models \psi_i(x,y,\overline{d'})\}$ for all $1 \leq i \leq k$.
Then, there exists an open box $B$ in $N^n$ containing the origin such that the partition $N^m \times B=X_1 \cup \ldots \cup X_k$ is the desired partition.
\end{proof}

\begin{corollary}
A structure elementarily equivalent to an almost o-minimal expansion of an ordered group is a uniformly locally o-minimal structure of the first kind.
\end{corollary}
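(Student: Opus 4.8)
The plan is to read the corollary off Theorem~\ref{thm:udcd} almost verbatim, the sole extra ingredient being a translation to move an arbitrary base point to the origin. Recall that the definition of a uniformly locally o-minimal structure of the first kind (Definition~\ref{def:second}) asks that $\mathcal N$ be locally o-minimal and that, for every $n$, every definable $X \subseteq N^{n+1}$ and every $a \in N$, there be an open interval $I \ni a$ with $X_y \cap I$ a finite union of points and open intervals for \emph{all} $y \in N^n$. I would first observe that the case $n=0$ of this condition is exactly local o-minimality, so it is enough to verify the displayed condition for all $n \geq 0$; no separate treatment of local o-minimality is then required.

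Next I would set up the translation. The ordered-group axioms, together with density and the absence of endpoints, are expressible by first-order sentences, so they transfer under elementary equivalence; hence $\mathcal N$ is again an expansion of a densely linearly ordered group without endpoints, and for each $a \in N$ the map $z \mapsto a+z$ is a definable order isomorphism of $N$. Given $X \subseteq N^{n+1}$ and $a \in N$, I would pass to the definable set $X^a = \{(y,x) \in N^n \times N \mid (y, a+x) \in X\}$, which satisfies $X_y = a + (X^a)_y$ for every $y$. Thus it suffices to produce an open interval $I_0 \ni 0$ doing the job for $X^a$ uniformly in $y$, and then take $I = a + I_0$, using that $X_y \cap I = a + ((X^a)_y \cap I_0)$ is a translate of a finite union of points and open intervals.

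Finally I would apply Theorem~\ref{thm:udcd} to the one-element family $\{X^a\}$ with parameter dimension equal to $n$ and fibre dimension equal to $1$. This produces an open box $B \subseteq N^1$, i.e. an open interval $I_0 \ni 0$, and a partition $N^n \times I_0 = X_1 \cup \dots \cup X_k$ whose fibres $\{(X_1)_b, \dots, (X_k)_b\}$ form a one-dimensional definable cell decomposition of $I_0$ for \emph{every} $b \in N^n$, with each $X_i$ either contained in or disjoint from $X^a$. On each fibre, $(X^a)_b \cap I_0$ is then the union of those cells $(X_i)_b$ with $X_i \subseteq X^a$; since a one-dimensional cell is a point or an open interval, $(X^a)_b \cap I_0$ is a finite union of points and open intervals, and this holds simultaneously for all $b$.

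The step I would single out as the conceptual crux is the last one: the clause in Theorem~\ref{thm:udcd} that the decomposition is valid for \emph{all} $b \in M^m$ at once---so that the single interval $I_0$ works for every fibre---is precisely what upgrades the conclusion from second-kind to first-kind uniformity (one may take $B = N^n$ in Definition~\ref{def:second}). There is no genuine analytic obstacle here; all the hard work is already contained in Theorem~\ref{thm:udcd} and, through it, in the uniformity theorem and the partition into multi-cells. The only points demanding care are the transfer of the ordered-group structure under elementary equivalence, which legitimizes the translation, and the bookkeeping that identifies the ``uniform in $b$'' conclusion with first-kind uniformity.
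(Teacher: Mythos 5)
Your proof is correct and takes essentially the same approach as the paper, whose entire proof of this corollary is the single sentence that it follows immediately from Theorem~\ref{thm:udcd}. The translation $x \mapsto a+x$ (legitimate because the ordered-group axioms transfer under elementary equivalence) and the observation that the fibrewise cell decomposition over the single interval $I_0$ yields first-kind uniformity are exactly the routine details that the paper's ``immediately follows'' suppresses.
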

\begin{proof}
The corollary immediately follows from Theorem \ref{thm:udcd}.
\end{proof}

\end{document}